\newcommand{\fg}{\mathfrak{g}}
\newcommand{\cA}{\mathcal{A}}
\newcommand{\cC}{\mathcal{C}}
\newcommand{\cD}{\mathcal{D}}
\newcommand{\cF}{\mathcal{F}}
\newcommand{\cH}{\mathcal{H}}
\newcommand{\cK}{\mathcal{K}}
\newcommand{\cM}{\mathcal{M}}
\newcommand{\cO}{\mathcal{O}}
\newcommand{\cR}{\mathcal{R}}
\newcommand{\cS}{\mathcal{S}}
\newcommand{\cT}{\mathcal{T}}
\newcommand{\cV}{\mathcal{V}}
\newcommand{\cW}{\mathcal{W}}
\newcommand{\cX}{\mathcal{X}}
\newcommand{\cZ}{\mathcal{Z}}
\newcommand{\bB}{\mathbf{B}}
\newcommand{\bC}{\mathbf{C}}
\newcommand{\bF}{\mathbf{F}}
\newcommand{\bI}{\mathbf{I}}
\newcommand{\bL}{\mathbf{L}}
\newcommand{\bM}{\mathbf{M}}
\newcommand{\RP}{{\mathbb{RP}}}
\newcommand{\R}{\mathbb R}
\newcommand{\Z}{\mathbb Z}
\newcommand{\N}{\mathbb N}
\newcommand{\Zc}{\mathcal Z}
\newcommand{\Fc}{\mathcal F}
\newcommand{\Pc}{\mathcal P}
\newcommand{\Fb}{\mathbf F}
\newcommand{\Mb}{\mathbf M}
\newcommand{\id}{\mathrm {id}}
\newcommand{\area}{\operatorname{area}}
\newcommand{\dist}{\operatorname{dist}}
\newcommand{\dmn}{\mathrm{dmn}}
\newcommand{\diam}{\mathrm{diam}}
\renewcommand{\tilde}{\widetilde}
\newcommand{\rank}{\operatorname{rank}}
\newcommand{\link}{\operatorname{link}}
\newcommand{\spt}{\operatorname{spt}}
\newcommand{\vol}{\operatorname{vol}}
\newcommand{\x}{\times}
\newcommand{\ins}{\mathrm{in}}
\newcommand{\out}{\mathrm{out}}
\newcommand{\graph}{\mathrm{graph}}
\newcommand{\Ker}{\mathrm{ker}}
\newcommand{\Ric}{\operatorname{Ric}}
\newcommand{\Tan}{\operatorname{Tan}}
\newcommand{\injrad}{\mathrm{injrad}}
\newcommand{\GS}{\mathcal{S}}
\newcommand{\Is}{\mathfrak{Is}}
\newcommand{\VC}{\cV\cC}
\newcommand{\va}{\mathbf{a}}
\newcommand{\vb}{\mathbf{b}}
\newcommand{\bd}{\mathbf{d}}
\newcommand{\vu}{\mathbf{u}}
\newcommand{\vv}{\mathbf{v}}
\title[Five Minimal tori in 3-spheres of positive Ricci curvature]{Existence of 5 minimal tori in 3-spheres of positive Ricci curvature}
\author{Adrian Chun-Pong Chu}\author{Yangyang Li}
\address{Cornell University, Ithaca, NY 14853, USA}
\email{cc2938@cornell.edu}
\address{The University of Chicago, Department of Mathematics, Eckhart Hall,
5734 S University Ave,
Chicago, IL, 60637}
\email{yangyangli@uchicago.edu}
\date{\today}
\newtheorem{alphatheorem}{Theorem}[section]
\numberwithin{equation}{section}
\newtheorem{thm}{Theorem}[section]
\newtheorem{cor}[thm]{Corollary}
\newtheorem{prop}[thm]{Proposition}
\newtheorem{lem}[thm]{Lemma}
\newtheorem{claim}[thm]{Claim}
\theoremstyle{definition}
\newtheorem{defn}[thm]{Definition}
\newtheorem{exmp}[thm]{Example}
\newtheorem{rmk}[thm]{Remark}
\newtheorem*{question*}{Question}
\begin{document}
%\begin{abstract}In this paper, we prove the existence of 5 minimal tori in a 3-sphere equipped with a metric of positive Ricci curvature. This confirms a conjecture by B. White in [Whi91] in the case of positive Ricci curvature.\end{abstract}

\begin{abstract}
    In 1989, B. White conjectured that every Riemannian 3-sphere contains at least 5 embedded minimal tori. We confirm this conjecture for  3-spheres of positive Ricci curvature. While our proof uses min-max theory, the underlying heuristics are largely inspired by mean curvature flow.
\end{abstract}
\maketitle
\setcounter{tocdepth}{1}
\tableofcontents

\section{Introduction}\label{sect:intro}

    While enumerative geometry is traditionally a branch of algebraic geometry, there are, in fact, enumerative problems in differential geometry as well. 
    
    In 1905, Poincar\'e conjectured the existence of $3$ simple closed geodesic in every smooth, Riemannian 2-sphere~\cite{Poi05}. The number $3$ is optimal, as is shown in the example of an ellipsoid by Morse~\cite{Mor96}. In 1917, G. D. Birkhoff~\cite{Bir17} initially confirmed the existence of an (immersed) closed geodesic using a min-max argument, which can be further refined to show that the geodesic is simple. A universally accepted solution to Poincar\'e's conjecture was later given by M. Grayson in 1989 through curve shortening flow~\cite{Gra89}, building upon the earlier work of Lyusternik–Schnirelmann~\cite{LS29}.
    
    In one dimension higher, S.-T. Yau~\cite{Yau82} conjectured that there exist at least $4$ embedded minimal $2$-spheres in every Riemannian $3$-sphere.  Simon-Smith~\cite{Smi83} showed that there is always at least one embedded minimal $2$-sphere using min-max theory. More recently, Wang-Zhou~\cite{WZ23} confirmed Yau's conjecture for the case of metrics with positive Ricci curvature and bumpy metrics  (see also the work of Haslhofer-Ketover~\cite{HK19}). Furthermore, B. White~\cite{Whi89, Whi91} conjectured that every Riemannian $3$-sphere contains at least $5$ embedded minimal tori. In fact, he proved that every $3$-sphere of positive Ricci curvature contains at least one embedded minimal torus using a degree theory he developed alongside Hamilton's Ricci flow.

    In this paper, we confirm B. White's conjecture for 3-spheres with positive Ricci curvature using min-max theory:

    \begin{alphatheorem}\label{thm:main}
        Every smooth Riemannian $3$-sphere of positive Ricci curvature contains at least $5$ embedded minimal tori.
    \end{alphatheorem}
    
    Let us briefly explain where the number $5$ comes from. 
    
    The well-known Willmore conjecture~\cite{Wil65, Wil71}, which was proved by Marques-Neves~\cite{MN14}, implies that in the unit 3-sphere $\mathbb S^3\subset \R^4$, the \emph{Clifford torus}
    \[
        T^2 = \left\{x \in \mathbb{R}^4 : x^2_1 + x^2_2 = x^2_3 + x^2_4 = \frac{1}{2}\right\}\,,
    \]
    up to rigid motions in $\mathbb S^3$, is the unique embedded minimal surface with the second smallest area, after the equator. Moreover, the resolution of Lawson's conjecture on minimal tori~\cite{Law70} by S. Brendle~\cite{Bre13} indicates that the Clifford torus is the only embedded minimal torus in the $\mathbb S^3$. Consequently, the space of embedded minimal tori in $\mathbb{S}^3$ is exactly the space $\cC$ of Clifford tori, which is homeomorphic to $\mathbb{RP}^2 \times \mathbb{RP}^2$.  
    
    Let us also denote $\mathbb{S}^3$ by $(S^3, \bar g)$, where $\bar g$ is the Riemannian metric induced from $\mathbb{R}^4$. If we perturb the metric $\bar g$, the area functional defined on $\cC$ changes. However, the {\it Lyusternik-Schnirelmann number} of $\RP^2\x\RP^2$ is $5$, meaning  every smooth real function on $\RP^2\x\RP^2$ has at least $5$ critical points. This is due to the theorem that the Lyusternik-Schnirelmann number of a space is always strictly greater than its {\it cup length} in $\Z_2$-coefficients (the maximum number of first cohomology classes whose cup product is non-zero), and the topological fact that the cup length of $\RP^2\x\RP^2$ is $4$. Based on these facts, B. White proved that every $3$-sphere equipped with a metric sufficiently close to the round metric contains at least $5$ embedded minimal tori. 
    
    It is worth noting that the topological fact that the cup length of $\RP^2\x\RP^2$ is $4$ will also play a crucial role in our proof of Theorem~\ref{thm:main}.

    For more works on the construction of geodesics or minimal surfaces with controlled topological types, we refer to the works ~\cite{Str84, GJ86, Zho16, HK23, Ko23a, Ko23b}. Recently, X. Li and Z. Wang also have an independent work on the existence of minimal tori \cite{LW24}.

    %\footnote{Concurrent with this paper, an independent work of Xingzhe Li and Zhichao Wang asserted the  existence of at least $4$ (resp. $9$) embedded minimal tori for every $3$-sphere with positive Ricci curvature (resp. a bumpy metric that has  positive Ricci curvature).} 

\subsection{Heuristics for the proof}\label{subsect:heuristicsProof}

    Let us describe the heuristics behind the proof of Theorem~\ref{thm:main}. It is worth noting that while the heuristics rely on mean curvature flow, we do not actually use mean curvature flow in practice. Instead, we primarily use Simon-Smith min-max theory, incorporating some concepts from  Almgren-Pitts min-max theory.

    Let $(S^3,g_0)$ be a Riemannian $3$-sphere of positive Ricci curvature as given in Theorem~\ref{thm:main}. In their proof of the Willmore conjecture~\cite{MN14}, F. C. Marques and A. Neves defined a $5$-parameter family of \emph{flat cycles}, known as the {\em canonical family}, each associated to a closed embedded surface in the unit $3$-sphere. Building upon their construction, we define in $(S^3,g_0)$ a 9-parameter family $\Psi$ of surfaces of genus $1$ or $0$, possibly with singularities; see details in \S \ref{subsubsect:introPsi}.
    
    Now, we apply mean curvature flow to each member of $\Psi$. Recall that given a smooth, embedded surface $\Sigma\subset M$, {\it the mean curvature flow starting from $\Sigma$} is a family of surfaces $\{\Sigma_t\}_{t\in[0,T)}$ in $M$, with $\Sigma_0=\Sigma$, satisfying the equation
    \begin{equation*}
        \partial_t x=\mathbf H(x),
    \end{equation*}
    where $x$ is the position vector and $\mathbf H$ is the mean curvature vector. Moreover, mean curvature flow can be viewed as the gradient flow of the area functional.
    
    Following the heuristics from gradient flow in the finite-dimensional setting, we assume all such mean curvature flows exist uniquely for all time until the surfaces vanish. Although this assumption is likely false in reality due to the possibility of {\em fattening} (i.e. non-unique solutions of mean curvature flow~\cite{IW24}), we will ignore this complication in our heuristic argument. Thus, as $t\to +\infty$, we expect the family $\Psi$ to evolve into some family $\Xi$ that is ``optimal", in the sense that $\Xi$ contains minimal tori or minimal spheres, as well as  {\it eternal or ancient mean curvature flows connecting these minimal surfaces to one another or to the empty set}. These mean curvature flows act as gradient flow lines connecting critical points (see Figure~\ref{fig:pulltight}). Note that we assume all minimal tori or minimal spheres appearing in $\Xi$ have multiplicity one: This assumption is heuristically justified, as it is conjectured that under the condition of positive Ricci curvature, long time limits of mean curvature flow have multiplicity one~\cite[Conjecture 1.3]{CS24}.

    \begin{figure}[!ht]
        \centering
        \makebox[\textwidth][c]{\includegraphics[width=4in]{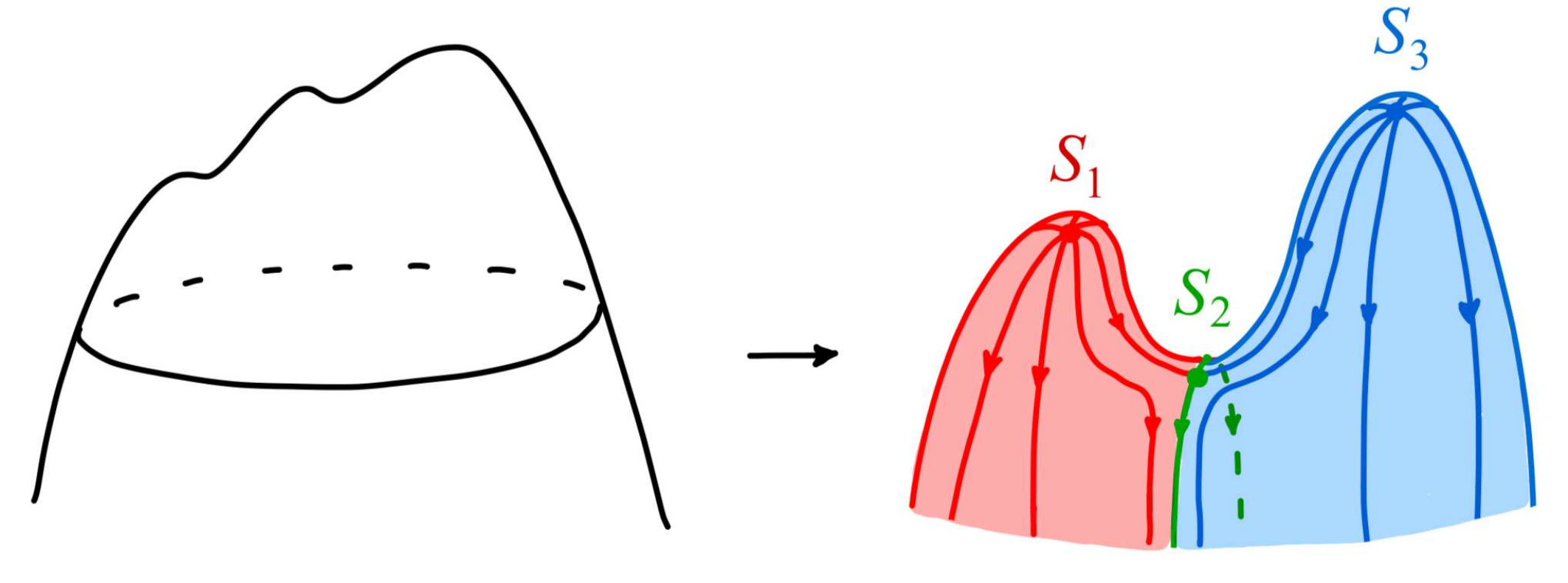}}
        \caption{The figure on the left shows the original family $\Psi$. The figure on the right shows $\Xi$, with the minimal surfaces $S_1,S_2,S_3$ detected, and ancient mean curvature flow lines originating from them. The red part is $T(S_1)$, the green part is $T(S_2)$, and the blue part is $T(S_3)$. }
        \label{fig:pulltight}
    \end{figure}

    For each minimal surface $S$ detected by $\Xi$, there are eternal or ancient mean curvature flows originating from $S$. Let $T(S)$ denote the collection of all time slices in such mean curvature flows. Note that $T(S)$ is a subset of $\Xi$, and can be viewed as  the ``unstable manifold" for $S$ (see Figure~\ref{fig:pulltight}). Moreover, $\Xi=\bigcup_S T(S)$ where $S$ ranges over all minimal surfaces (of genus $0$ or $1$) detected by $\Xi$. If $S$ is a minimal sphere, then each element of $T(S)$ should have genus $0$, possibly with singularities, since mean curvature flow does not increase genus~\cite{BK24}.

    To prove Theorem~\ref{thm:main}, suppose for the sake of contradiction that there are only $4$ minimal tori $T_1,T_2,T_3,T_4$ detected by $\Xi$. The case with fewer than $4$ minimal tori can be handled similarly. Then, letting $D_0:=\bigcup_S T(S)$ where $S$ ranges over the minimal {\em spheres} detected by $\Xi$, and $D_i:=T(T_i)$ for $i=1,\cdots,4$, we have
    \begin{equation}\label{eq:XiUnion}
        \Xi=D_0\cup D_1\cup D_2\cup D_3\cup D_4\,.
    \end{equation}

    Next, let us describe a crucial topological fact about $\Psi$. In fact, ensuring this topological fact holds is the main reason we constructed $\Psi$ using the Marques-Neves canonical family. Recall that the cycle space $\cZ_2(S^3;\Z_2)$ is weakly homotopy equivalent to $\RP^\infty$, and thus, the cohomology ring $H^*(\cZ_2(S^3;\Z_2);\Z_2)$ can be written as $\Z_2[\bar\lambda]$, where $\bar\lambda$ denotes the unique non-trivial element in $H^1(\cZ_2(S^3;\Z_2);\Z_2)$. Since each member of $\Psi$ can be viewed as a mod $2$ cycle, we can  view $\Psi$ as a subset of $\cZ_2(S^3;\Z_2)$. Let $\lambda$ be the element in $H^1(\Psi;\Z_2)$ induced by $\bar\lambda$.  The crucial topological fact is that, there exist $\alpha,\beta\in H^1(\Psi;\Z_2)$ such that
    \begin{equation}\label{eq:cupLength9}
        \lambda^5\cup\alpha^2\cup\beta^2\ne 0\,.
    \end{equation}

    By construction, $\Xi$ and $\Psi$ are homotopic to each other as sweepouts, so we can view $\lambda,\alpha$ and $\beta$ as elements of $H^1(\Xi;\Z_2)$ as well. Hence, by an elementary Lyusternik–Schnirelmann argument, \eqref{eq:XiUnion} and \eqref{eq:cupLength9} together immediately imply that one of the following must hold:
    \begin{enumerate}
        \item The element $\lambda^5|_{D_0}$ in $ H^5(D_0;\Z_2)$ induced by $\lambda^5$ is non-zero.
        \item The element $\alpha|_{D_1}$ in $ H^1(D_1;\Z_2)$ induced by $\alpha$ is non-zero.
        \item The element $\alpha|_{D_2}$ in $ H^1(D_2;\Z_2)$ induced by $\alpha$ is non-zero.
        \item The element $\beta|_{D_3}$ in $ H^1(D_3;\Z_2)$ induced by $\beta$ is non-zero.
        \item The element $\beta|_{D_4}$ in $ H^1(D_4;\Z_2)$ induced by $\beta$ is non-zero.
    \end{enumerate}
    And to derive a contradiction, it suffices to show that statements (1) to (5) are all impossible.

    To show that (1) is impossible, first note that by construction, $D_0$ consists entirely of genus $0$ surfaces, possibly with singularities. Now, let us consider $D_0$ as a family in the unit round $3$-sphere $\mathbb{S}^3$, instead of the original $3$-sphere $(S^3,g_0)$. Note  the following two facts:
    \begin{itemize}
        \item If we apply the Simon-Smith min-max theorem to $D_0$ in $\mathbb{S}^3$, then by Wang-Zhou's multiplicity one theorem~\cite{WZ23}, we detect the multiplicity-one equatorial $2$-sphere, which has area $4\pi$. 
        \item On the other hand, if (1) holds, meaning $\lambda^5|_{D_0}\ne 0$, then by definition, $D_0$ is a $5$-sweepout in the sense of the Almgren-Pitts min-max theory. Since the Simon-Smith width must be greater than or equal to the Almgren-Pitts width, the Simon-Smith width of the family $D_0$ (viewed in $\mathbb{S}^3$) is at least the $5$-width of $\mathbb{S}^3$, which is $2\pi^2$ by C. Nurser~\cite{Nur16}.
    \end{itemize}
    The above two facts are contradictory, so (1) is impossible.

    To show that (2) is impossible, note that by backtracking the mean curvature flow lines, $D_1$ can be deformation retracted to the single minimal torus $\{T^1\}$. Thus, $H^1(D_1;\Z_2)$ is zero, making (2) is impossible. Similarly, (3) through (5) are also impossible. This concludes the proof of Theorem~\ref{thm:main}.

\subsection{Details behind the  heuristics.}\label{subsect:mainIdea} 
    The above heuristic argument contains several details and inaccuracy that need to be addressed.

\subsubsection{The $9$-parameter family $\Psi$}\label{subsubsect:introPsi}
    Let us now explain the construction of the family $\Psi$. Let $\cC$ (resp. $\tilde \cC$) denote the set of unoriented (resp. oriented) Clifford tori in the unit 3-sphere $\mathbb S^3$. Note that $\cC\cong\RP^2\x\RP^2$, and $\tilde \cC$ is a double cover of this space. For each $\Sigma\in\tilde\cC$, based on Marques-Neves' canonical family, C. Nurser in~\cite{Nur16} constructed a 5-sweepout
    \[
        \Phi^\Sigma_5:\RP^5\to\cZ_2(\mathbb S^3;\Z_2)\,.
    \]    
    Here is his construction: The set of conformal diffeomorphisms of $\mathbb S^3$ can be parametrized by the open unit 4-ball $\mathbb B^4$, which we denote by $\{F_v\}_{v\in {\mathbb B^4}}$. For each image $F_v(\Sigma)$, which is oriented, we consider the level surfaces of the {\em signed} distance function to $F_v(\Sigma)$. This gives us a $5$-parameter family of surfaces, possibly with singularities and possibly empty, parametrized by ${\mathbb B^4}\x [-\pi,\pi]$, since the set of possible signed distances from $F_v(\Sigma)$ lies in the range $[-\pi,\pi]$.
    
    Marques-Neves~\cite[\S 5]{MN14} showed that by ingeniously reparametrizing this family {\em near} the boundary of its parameter space, one can continuously extend the family to the boundary, obtaining a $\overline{{\mathbb B^4}}\x [-\pi,\pi]$-family. Moreover, any two ``antipodal" points on the boundary of this parameter space can be identified, as they represent the same surface (but with opposite orientation), thereby yielding an $\RP^5$-family $\Phi^\Sigma_5$. 
    
    Repeating this for every oriented Clifford torus $\Sigma$, we obtain an $\RP^5\x\tilde\cC$-family. The subfamilies $\Psi^\Sigma_5$ and  $\Psi^{-\Sigma}_5$ have the same images for each $\Sigma\in\tilde \cC$, where $-\Sigma$ denotes $\Sigma$ with an opposite orientation. By identifying all such pairs of subfamilies $\Psi^\Sigma_5$ and  $\Psi^{-\Sigma}_5$, we obtain a family $\Psi$ with parameter space $Y$, which is an $\RP^5$-bundle over $\cC\cong\RP^2\x\RP^2$. This family consists only of genus $0$ and genus $1$ surfaces, possibly with singularities, and is suitable for running the Simon-Smith min-max process. 

\subsubsection{Overcoming the issue of fattening}\label{subsubsect:fatteningIssue}
    In the heuristic argument of \S \ref{subsect:heuristicsProof}, we proposed using mean curvature flow to obtain an ``optimal family" $\Xi$ from $\Psi$. In practice, the authors were unable to implement this strategy due to the phenomenon of fattening. Indeed, if any member of $\Psi$ fattens along the mean curvature flow, then the flow no longer generates a canonical deformation that is continuous across the entire family $\Psi$ of initial conditions. As a result, it is unclear how one might obtain the desired family $\Xi$ using this approach.

    Instead of using mean curvature flow, we employ a scheme of repeatedly applying the Simon-Smith min-max to $\Psi$ in order to obtain $\Xi$. Furthermore, we will define a process called {\it pinch-off process} that resembles mean curvature flow, so that each member $\Psi(x)$ will deform under this pinch-off process to some member $\Xi(x)$. Crucially, this process is genus-non-increasing in time. More precisely, a pinch-off process involves three types of behavior: (1) isotopy, (2) neck-pinch surgery, (3) shrinking some components in points: See Figure \ref{fig:pinchOff2}.  The ``genus-non-increasing" nature of pinch-off process (discussed in \S \ref{min-max_iiiOffAndMCF}) should be compared with the analogous results on mean curvature flow by B. White~\cite{Whi95}. At the end, although the family $\Xi$ we obtain may not be as well-behaved as to be considered ``optimal" in the sense described in \S \ref{subsect:heuristicsProof}, it is sufficient for our purposes.  
    \begin{figure}[h]
        \centering
        \makebox[\textwidth][c]{\includegraphics[width=2.5in]{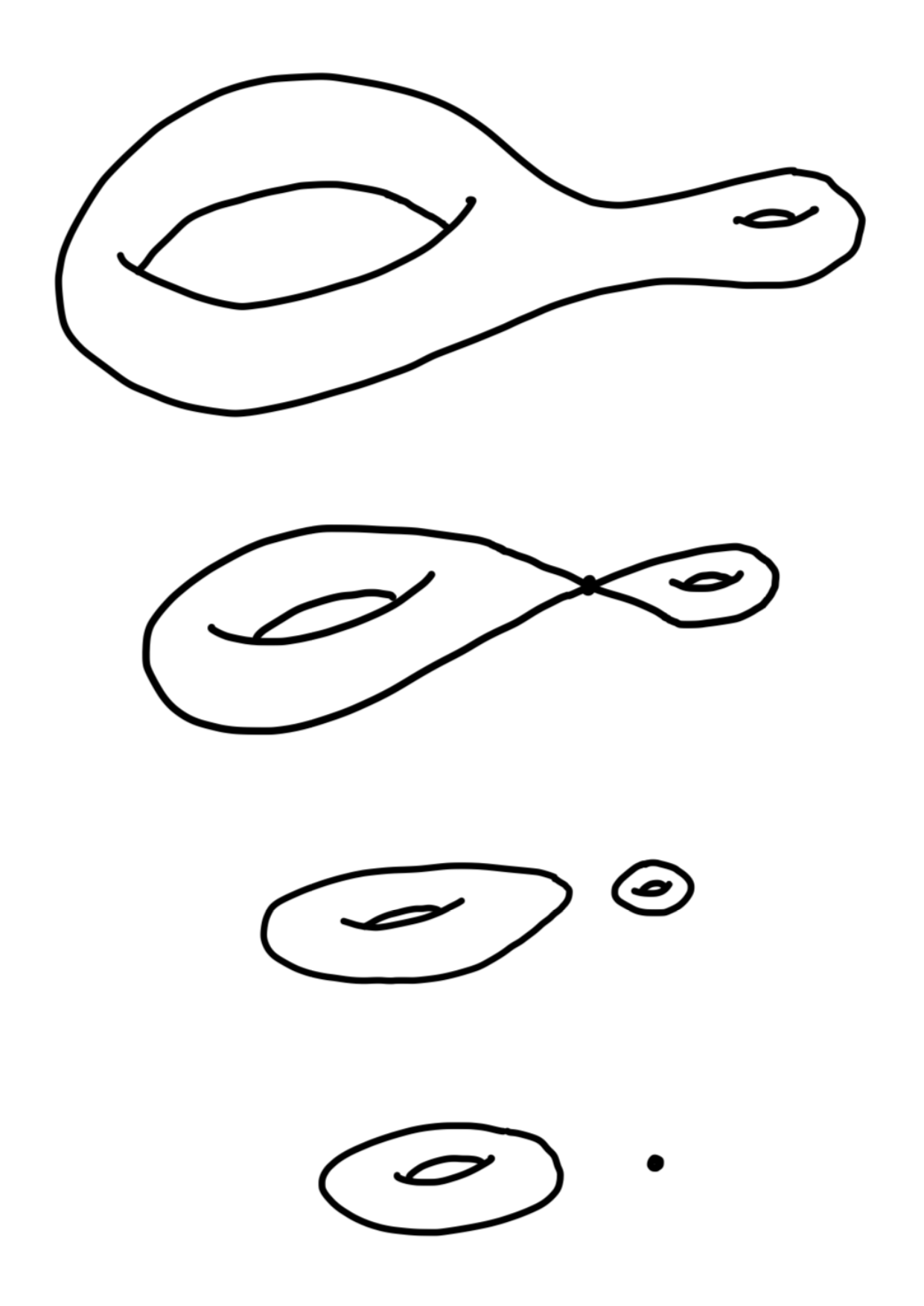}}
        \caption{This is an example of pinch-off process. It has one neck-pinch surgery, and one connected component shrunk to a point.}
        \label{fig:pinchOff2}
    \end{figure}

    One might ask why we need the pinch-off process. Indeed, solely by the usual Simon-Smith min-max theorem, one already could homotope the initial family $\Psi$ to some other family that is, a certain sense, optimal. The problem is the following. When this optimal family detects a minimal {\it sphere}, let us consider the ``cap" region of this optimal family that is near the minimal sphere. Every member of this cap would only be varifold-close to the minimal sphere, and thus could actually have genus $1$ but with a short non-trivial loop. As we will see in \S \ref{subsubsect:obtainXi}, this would be problematic (and deviates from the heuristic picture in \S \ref{subsect:heuristicsProof} too). Hence, we need to use the pinch-off process, to eliminate the short non-trivial loops, and ensure that the cap consists of only genus $0$ surfaces. 

    We remark that, this ``interpolation" process of modifying the whole high area cap such that every member becomes genus 0 is a non-trivial step. Namely, we need to perform the pinch-off process continuously for every member. We will briefly explain the strategy in \S \ref{sect:introInterpolate}.    
    %We will explain below in greater details why the pinch-off process is necessary, by actually constructing the family $\Xi$ we use in practice.

\subsubsection{Obtaining $\Xi$ through repeated applications of the min-max theorem}\label{subsubsect:obtainXi}

    First, we assume that the 3-sphere $(S^3,g_0)$ given has only finitely many minimal tori. Then, we perturb the metric $g_0$ such that it has only finitely many minimal spheres, all of which are non-degenerate and have linearly independent areas over $\Z$, with no linear combination over $\Z$ of their areas equal to the area of any minimal torus; see \S \ref{sect:perturbMetric}.
    
    In the first stage of min-max process, we apply the Simon-Smith min-max construction to $\Psi$, homotoping it obtain a new pulled-tight ``optimal" family $\tilde\Psi^1$ that detects finitely many min-max minimal surfaces of genus $0$ or $1$. For simplicity, we will assume the new parameter space $\dmn(\tilde \Psi^1)$ is also $Y$. 
    
    In the new perturbed metric, using D. Ketover's strengthened genus bound~\cite{Ket19} and Wang-Zhou's multiplicity one theorem~\cite{WZ23}, which builds on Sarnataro-Stryker's work~\cite{SS23}, it is not hard to see that there are two possible cases about the detected minimal surfaces: (1) each min-max minimal surface is a multiplicity one minimal torus, or (2) only one min-max minimal surface is detected, and it is a multiplicity-one sphere. Note that the Frankel property is needed, which is ensured by the assumption of positive Ricci curvature. For simplicity, in case (1), we assume that there is only one minimal torus.

    In both cases, we consider a subset $C^1\subset \dmn(\tilde \Psi^1)$ such that the restriction $\tilde\Psi^1|_{C^1}$ is close to the minimal surface $V^1$ detected. The family $\tilde\Psi^1|_{C^1}$ is like a ``cap" of large area, such that its complement $\Psi^1:=\tilde\Psi^1|_{\overline{Y\backslash C^1}}$ has the property that the maximal area of $\Psi^1$ is strictly less than the width of $\Psi$, $\area(V^1)$. Let $Y^1$ denote the new domain $\dmn(\Psi^1)=\overline{Y\backslash C^1}$.
    
    For case (2) only, we will prove  certain interpolation results that allow us, without loss of generality, to assume that $\tilde\Psi^1|_{C^1}$ consists entirely of genus $0$ surfaces, possibly with singularities: {\it This is  where the pinch-off process described in \S \ref{subsubsect:fatteningIssue} comes into place.} 

    Now, we proceed to the second stage of the min-max process, this time applied to $\Psi^1$, regardless of whether case (1) or (2) occurred previously. The width at this stage will be strictly smaller than the previous width, leading to some new minimal surface. We repeat this process iteratively. 
    
    Namely, at the $k$-th stage of the  min-max process, we start with a family $\Psi^{k-1}$ with parameter space $Y^{k-1}$. Applying the Simon-Smith min-max theorem to $\Psi^{k-1}$ would produce a  family $H^k$ with parameter space $[0,1]\x Y^{k-1}$ , resembling a homotopy, such that $H^k(0,\cdot)=\Psi^{k-1}$, and the new family $\tilde\Psi^k:=H^k(1,\cdot)$ (with parameter space $Y^{k-1}$) is an ``optimal" family detecting a multiplicity-one minimal surface $V^k$ of genus $0$ or $1$. 
    
    Let $C^k \subset Y^{k-1}$ be a ``cap" such that $\tilde\Psi^k|_{C^k}$ has large area and is near $V^k$. As before, if $V^k$ has genus $0$, we can assume, by applying pinch-off process, that $\tilde\Psi^k|_{C^k}$ consists entirely of genus $0$ surfaces, possibly with singularities. We then consider the new parameter space $Y^k:=\overline{Y^{k-1}\backslash C^k}$, and proceed with the $(k+1)$-th stage of   min-max process, by applying the Simon-Smith min-max theorem  to the  family $\Psi^k:=\tilde\Psi^k|_{\overline{Y^{k-1}\backslash C^k}}$. For simplicity, we assume that each $Y^k$ is a $9$-dimensional manifold with boundary.

    Since we have perturbed the metric $g_0$ so that all minimal spheres are non-degenerate, there must be only finitely many of them, by Choi-Schoen's compactness result~\cite{CS85}. Consequently, since the width of $\Psi^k$ decreases strictly with each iteration, the repetitive min-max process must terminate. Namely, there exists a $K>0$ such that at the $K$-th stage of min-max process, the width of $\Psi^{K-1}$ is zero. At this point, there exists a family $H^K$ (which acts like a homotopy) with parameter space $[0,1]\x Y^{K-1}$, where $H^K(0,\cdot)=\Psi^{K-1}$, and $\tilde \Psi^K:=H^K(1,\cdot)$ has a maximal area arbitrarily close to zero.

    Finally, let us construct the desired ``optimal" family $\Xi$. Consider the following list of $2K-1$ Simon-Smith families of genus $\leq 1$. For the sake of visualization, readers may find it helpful  to refer to Figure~\ref{fig:gluingSchemeIntro}. 
    \iffalse \begin{comment}
    \[
        \begin{array}{c c}
            (1) & \tilde \Psi^1|_{C^1}\\
            {\color{red}(2)} & {\color{red} H^2|_{[0,1]\x\partial Y^1}}\\
            (3) & \tilde \Psi^2|_{C^2}\\
            {\color{RoyalBlue}(4)} & {\color{RoyalBlue}H^3|_{[0,1]\x\partial Y^2}}\\
            ... & \\
            {\color{ForestGreen}{(2K-4)}} & {\color{ForestGreen}H^{K-1}|_{[0,1]\x\partial Y^{K-2}}}\\
            (2K-3) & \tilde \Psi^{K-1}|_{C^{K-1}}\\
            {\color{brown}(2K-2)} & {\color{brown}H^K|_{[0,1]\x\partial Y^{K-1}}}\\
            (2K-1) & \Psi^K\\
        \end{array}
    \]
    \end{comment}\fi
    
    \begin{description}
        \item[{\makebox[4em][r]{(1)}}] $\tilde \Psi^1|_{C^1}$
        \item[{\makebox[4em][r]{\color{red}(2)}}]   {\color{red} $H^2|_{[0,1]\x\partial Y^1}$}
        \item[{\makebox[4em][r]{(3)}}] $\tilde \Psi^2|_{C^2}$
        \item[{\makebox[4em][r]{\color{RoyalBlue}(4)}}] {\color{RoyalBlue}$H^3|_{[0,1]\x\partial Y^2}$}
        \item[{\makebox[4em][r]{...}}] 
        \item[{\makebox[4em][r]{\color{ForestGreen}{($2K-4$)}}}] {\color{ForestGreen}$H^{K-1}|_{[0,1]\x\partial Y^{K-2}}$}
        \item[{\makebox[4em][r]{($2K-3$)}}] $\tilde \Psi^{K-1}|_{C^{K-1}}$
        \item[{\makebox[4em][r]{\color{brown}($2K-2$)}}] {\color{brown}$H^K|_{[0,1]\x\partial Y^{K-1}}$}
        \item[{\makebox[4em][r]{($2K-1$)}}] $\Psi^K$
    \end{description}
    
    \begin{figure}[h]
        \centering
        \makebox[\textwidth][c]{\includegraphics[width=5.5in]{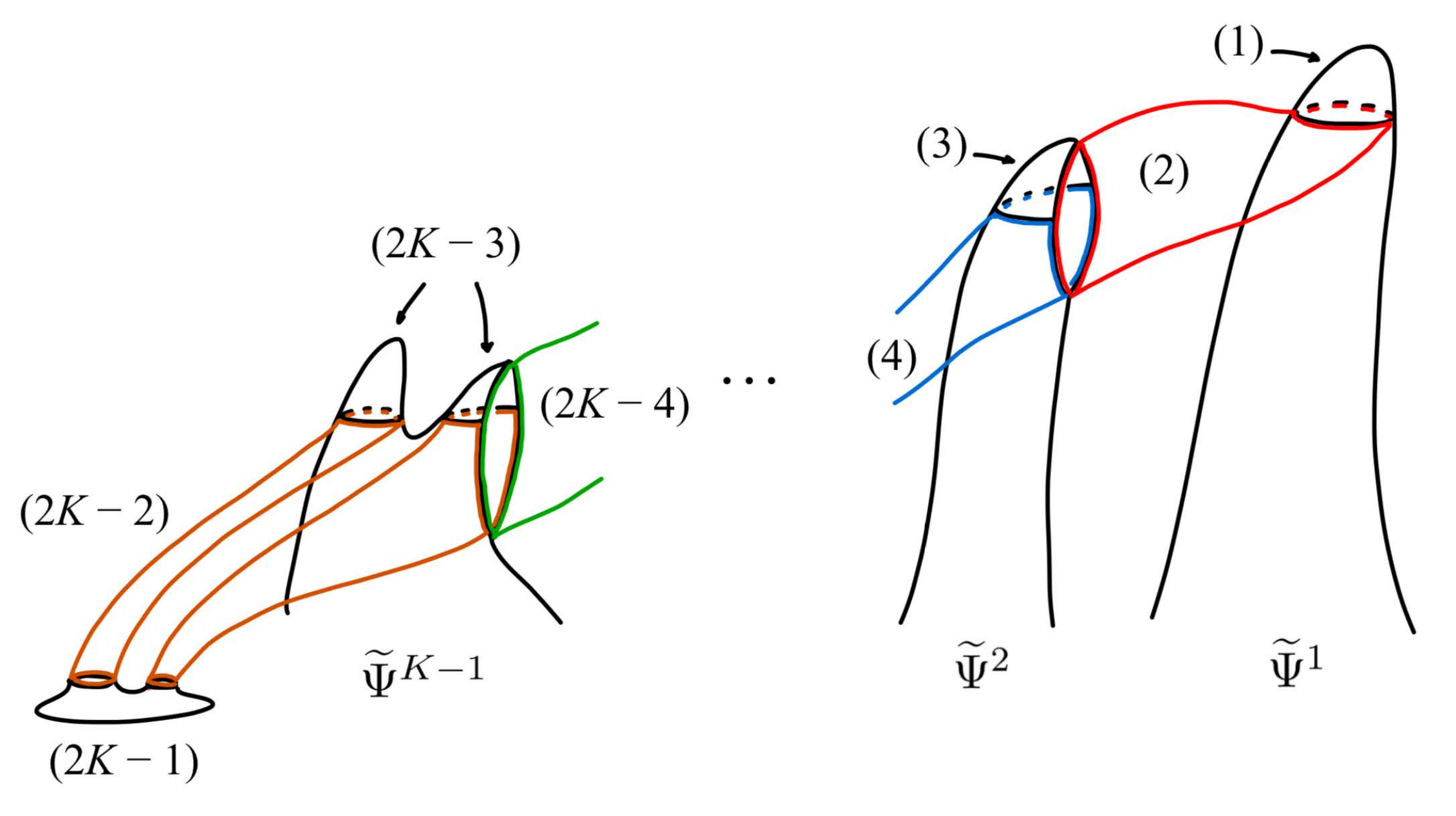}}
        \caption{Constructing $\Xi$.}
        \label{fig:gluingSchemeIntro}
    \end{figure}
    Immediately from the definition of these $2K-1$ families, there exists a natural way to glue their boundaries together, as illustrated in Figure~\ref{fig:gluingSchemeIntro}. Through this process, we obtain a new $9$-parameter Simon-Smith family $\Xi$ that can be deformed to the original family $\Psi$; see Proposition~\ref{prop:XiPsiHomotopic}.
    
\subsubsection{Decomposing $\Xi$ into ``unstable manifolds"} 
    
    In the heuristic argument in \S \ref{subsect:heuristicsProof}, we decomposed $\Xi$ into a union of ``unstable manifolds" $T(S)$ associated with each minimal surface $S$ detected in $\Xi$. There are two issues with this approach.

    First, the family $\Xi$ we obtained in \S \ref{subsubsect:obtainXi}, when viewed as a subset of the cycle space $\cZ_2(S^3;\Z_2)$,  may have complicated topology and thus be difficult to work with from an algebraic topology perspective. Thus, in practice we actually work with its parameter space $\dmn(\Xi)$, and decompose $\dmn(\Xi)$ instead.

    Second, our family $\Xi$ no longer consists of mean curvature flow lines. Instead of defining $T(S)$, we are going to define the {\em trace} $T(C)$ of each cap $C=C^1,C^2,\cdots,C^{K-1}$, and each trace $T(C)$ will be a subset of $\dmn(\Xi)$.

    Let us fix a cap $C=C^k$, obtained at the $k$-th stage of the min-max process. Consider the following sets, {\em which can all be viewed as subsets of $\dmn(\Xi)$} (see Figure \ref{fig:trace}):
    \begin{itemize}
        \item {\color{red}$B_k$}$\;:=C$,
        \item  ${\color{RoyalBlue}B_{k+1}}:=[0,1]\x\partial Y^k=\dmn(H^{k+1})$,
        \item  ${\color{ForestGreen}B_{k+2}}:=[0,1]\x\left((\{1\}\x\partial Y^k)\cap(\{0\}\x\partial Y^{k+1})\right)\subset \dmn(H^{k+2})$. Here, the subsets  $\{1\}\x\partial Y^k,\{0\}\x\partial Y^{k+1}\subset\dmn(\Xi)$ can both also be viewed as subsets of $\dmn(\tilde\Psi^{k+1})$
        \item ${ B_{k+3}}:=[0,1]\x\left((\{1\}\x\partial Y^{k+1})\cap(\{0\}\x\partial Y^{k+2})\right)\subset\dmn(H^{k+3})$.
        \item ...
    \end{itemize}
    Finally, we define the trace of $C$ by
    \[
        T(C):=B_k\cup B_{k+1}\cup\cdots\cup B_K\subset\dmn(\Xi)\,.
    \]
    A crucial property of $T(C)$ is that, by definition, $T(C)$ admits a strong deformation retraction back onto the cap $C$.
    \begin{figure}[h]
        \centering
        \makebox[\textwidth][c]{\includegraphics[width=2.7in]{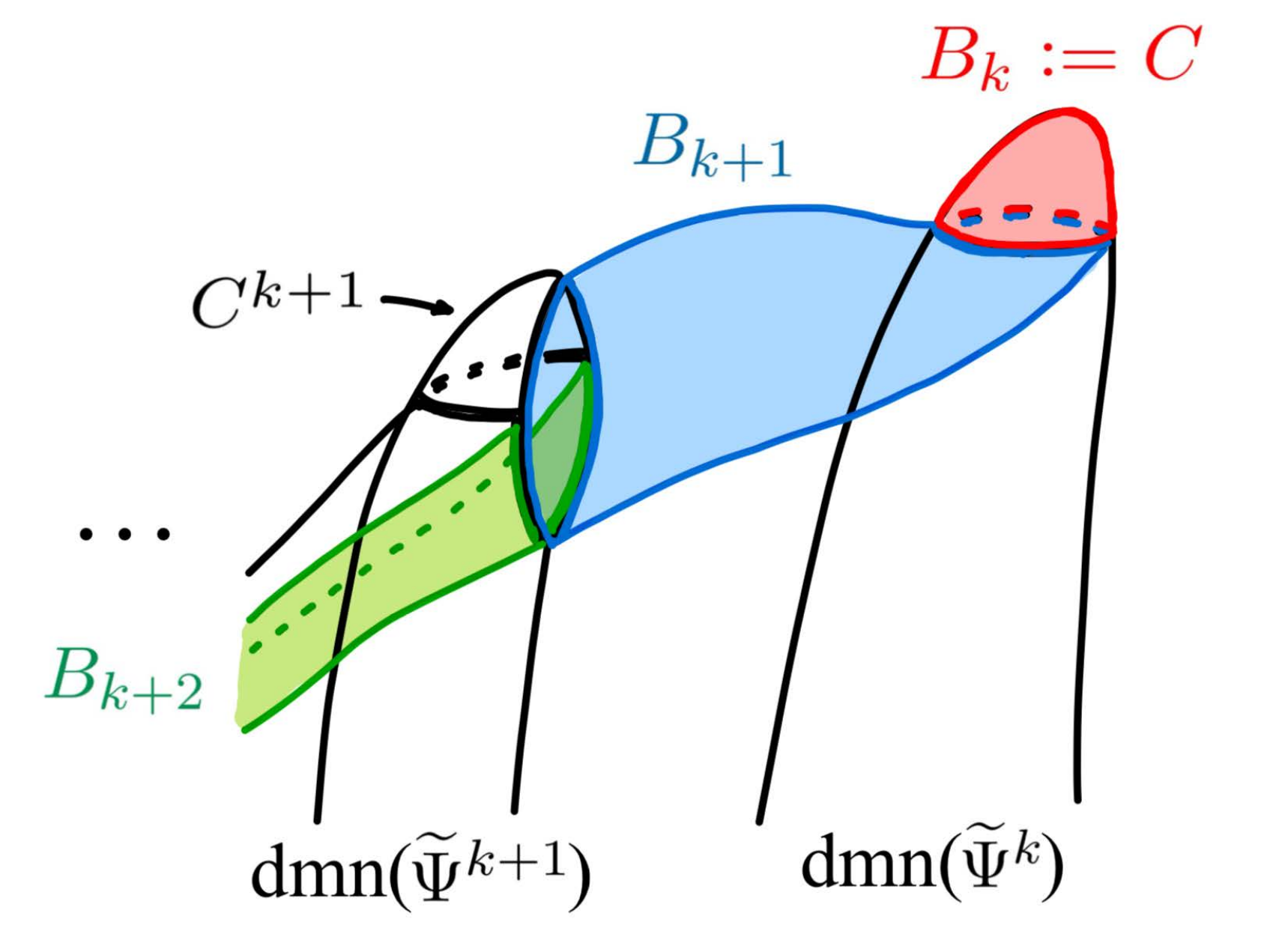}}
        \caption{The trace $T(C)$ in $\dmn(\Xi)$.}
        \label{fig:trace}
    \end{figure}
        
    In fact, if $C=C^k$ is a cap associated with a genus 0 minimal surface, then by construction $\Xi|_{T(C)}$ consists entirely of genus 0 surfaces, possibly with singularities, because $\tilde\Psi^k|_{C^k}$ does and the ``homotopies" given by $H^{k+1},H^{k+2},...$ do not increase genus (this should be compared with the situation in \S \ref{subsect:heuristicsProof}, where  mean curvature flow does not increase genus).

    Now, let $N$ be the number of minimal tori we detected throughout the first $K-1$ stages of the min-max process. To prove Theorem~\ref{thm:main}, we only need to show that $N\geq 5$.

    Let us see why $N=4$ is impossible; the cases for $N=0,...,3$ are similar. Given a cap $C=C^k$, we say that it has {\it genus $0$} if the minimal surface $V^k$ associated has genus $0$, and {\it genus $1$} if $V^k$ has genus $1$. For example, when $N=4$, we have four genus $1$ caps, and finitely many genus $0$ caps. We can express $\dmn(\Xi)$ as the union of the following five subsets: 
    \begin{itemize}
        \item Let $D_0$ be the union of $\dmn(\Psi^K)$
        and $\bigcup_C T(C)$, where $C$ ranges over all the genus $0$ caps. 
        \item For each genus $1$ cap $C$, we consider its trace $T(C)$. There are in total $4$ such traces, which we will denote by $D_1,\cdots,D_4$.
    \end{itemize}
    It follows directly from the definition of $\Xi$ that 
    \[
        \dmn(\Xi)=D_0\cup D_1\cup D_2\cup D_3\cup D_4\,.
    \]

\subsubsection{Topological arguments}
    With this setup, we can proceed with the remaining steps of the heuristic argument in \S \ref{subsect:heuristicsProof}. However, instead of considering the cohomology rings of $\Psi$ and $\Xi$, we should focus on those of $Y$ and $\dmn(\Xi)$. For example, we should instead define $\lambda$ as an element of $ H^1(Y;\Z_2)$, by $\lambda:=\Psi^*(\bar\lambda)$, where $\Psi$ is viewed as a map into $\cZ_2(S^3;\Z_2)$. The crucial topological fact is that, there exist $\alpha,\beta\in H^1(Y;\Z_2)$ such that
    \begin{equation}
        \lambda^5\cup\alpha^2\cup\beta^2\ne 0\,.
    \end{equation}
    Moreover, since $Y$ and $\dmn(\Xi)$ are actually homotopy equivalent, $\lambda,\alpha$ and $\beta$ can be viewed as elements of $H^1(\dmn(\Xi);\Z_2)$ as well.

    Then, as in \S \ref{subsect:heuristicsProof}, we apply a Lyusternik-Schnirelmann argument to derive a contradiction. The proof that $\lambda^5|_{D_0}=0$ proceeds similarly to the earlier argument. However, proving $\alpha|_{D_i}=0$ for  $i =1,...,4$ requires more essential modifications. For instance, unlike in \S \ref{subsect:heuristicsProof}, $D_1$ may not be contractible (let us focus on $i=1$, as the cases of $i=2,3,4$ are the same). Thus, we would instead prove $\alpha|_{D_1}=0$ by showing that the map 
    \[
        H_1(D_1;\Z_2)\to H_1(\dmn(\Xi);\Z_2)
    \] 
    induced by the inclusion $D_1\hookrightarrow\dmn(\Xi)$ is trivial. 

    Recall that $D_1$ admits a strong   deformation retraction onto the genus $1$ cap $C$ it corresponds to. Therefore, it suffices to show that for any loop $c\subset C$, $[c]=0$ in $H_1(\dmn(\Xi);\Z_2)$. To achieve this, we show that the cap $C$ can be assumed to be close in the {\em $\bF$-metric for currents} to the minimal torus corresponding to $C$. Consequently, the restriction $\Xi|_c$ actually gives a family of genus $1$ surfaces with singularities. Using the homotopy between $\dmn(\Xi)$ and $Y$, we can deform $\Xi|_c$ back to some subfamily $\Psi|_{c_0}$ of $\Psi$, for some loop $c_0\subset Y=\dmn(\Psi)$, while ensuring that throughout the homotopy, {\em all punctate surfaces are of genus $1$}. This is possible because the process of obtain the final family $\Xi$ from the original family $\Psi$ is, in some sense, genus non-increasing (similar to running mean curvature flow). In particular, it suffices to show that the loop $c_0\subset Y$ is homologically trivial.

    Let $Z_0\subset Y$ be the set of parameters corresponding to (unoriented) Clifford tori under $\Psi$. From the definition of $\Psi$ and the fact that all members of $\Psi|_{c_0}$ have genus one, it is not hard to show that one can deform $c_0$ into $Z_0$. Thus, it suffices to show that $c_0$ is homologically trivial in $Z_0$. Note that $Z_0\cong \RP^2\x\RP^2$, so $H_1(Z_0;\Z_2)$ consists four elements:   $(0,0), (0,1), (1,0), (1,1)$. 
    
    First, using the fact that $\Xi|_{c}$ is close to a single minimal torus,  $[c_0]$ cannot be $(0,1)$ or $(1,0)$, as both correspond to $1$-sweepouts under $\Psi$. As for $(1,1)$, one can check that it gives rise to, under $\Psi$, a loop of {\em oriented} Clifford tori, $\{\Sigma(\theta)\}_{\theta\in[0,2\pi]}$, such that the followings hold:
    \begin{itemize}
        \item We can continuously pick an interior region $\ins(\Sigma(\theta))$, bounded by $\Sigma(\theta)$, for each $\theta$.
        \item While the interior regions $\ins(\Sigma(0))$ and $\ins(\Sigma(2\pi))$ coincide, the isomorphism map from $\pi_1(\ins(\Sigma(0)))$ to $\pi_1(\ins(\Sigma(2\pi)))$ (both groups are $\Z$) induced by the motion $\{\Sigma(\theta)\}_{\theta\in[0,2\pi]}$ is not the identity but $-1$.
    \end{itemize}  We would show that this is impossible, due to the fact that $\Xi|_{c}$ is close to a single minimal torus, and that $\Psi|_{c_0}$ was obtained from $\Xi|_{c}$ through a homotopy which is a family consisting of {\em genus one} surfaces possibly with singularities.

    Hence, $[c_0]$ must be the trivial element $(0,0)$, as desired. This means $\alpha|_{D_1}=0$. Similarly, $\alpha|_{D_2}$, $\beta|_{D_3}$, $\beta|_{D_4}$ are all trivial. This leads to a contradiction, as shown in \S \ref{subsect:heuristicsProof}. This completes the proof of Theorem~\ref{thm:main}.

\subsubsection{Interpolation}\label{sect:introInterpolate}
Lastly, let us briefly explain the interpolation process mentioned at the end of \S \ref{subsubsect:fatteningIssue}.

Suppose we have a ``cap'' region $C$ consisting of genus $1$ surfaces, {\it each being $\bF$-close to a minimal sphere} ($\bF$ is the distance for currents). It is expected that each such surface contains a short non-trivial loop. If such short non-trivial loops could be selected in a continuous manner, 
one might perform a continuous family of surgeries on the cap to produce a new cap consisting of genus $0$ surfaces. However, to the best of the authors' knowledge, there is no canonical method for locating such a continuous family of short loops.

Instead, we replace the neck-pinch surgery by a two-step ``pinch-off process". First, for each surface $\Sigma$ in $C$, we identify a $3$-ball $B(p_\Sigma, r_\Sigma)\subset S^3$ that contains a short non-trivial loop of $\Sigma$. We then perform a sequence of neck-pinch surgeries on $\Sigma$  along each loop component of $\Sigma \cap \partial B(p_\Sigma, r_\Sigma)$, so that afterwards $\Sigma $ avoids the sphere $\partial B(p_\Sigma, r_\Sigma)$. In the second step, we shrink everything inside $B(p_\Sigma, r_\Sigma)$ to a point. 

The advantage of this alternative method is that it avoids the need to choose the $3$-balls continuously. Instead, we fix an open covering $\{O_i\}_i$ of $C$ and select a suitable $3$-balls $B_i$ for each $O_i$ such that $\partial B_i \cap \partial B_j = \emptyset$ if $O_i \cap O_j \neq \emptyset$. If such a selection exists, one can construct a continuous family of two-step pinch-off processes in $C$ using a standard cut-off argument. Fortunately, this is indeed feasible, and a similar construction was already carried out by Pitts~\cite{Pit81}; see Section~\ref{sect:min-max_iii} for details.

The actual construction is more involved than the outline given above, due to the following reasons:
\begin{enumerate}
    \item Proving the existence of a short  non-trivial loop on each surface requires some careful argument.
    \item It is crucial to control the area during  interpolation  in order to ensure at each repetition  of the  min-max procedure, the width drops.
    \item The ``surfaces'' in the cap $C$ may not be smooth, so we need multiple balls to handle each one appropriately.
\end{enumerate}

\subsection{Open problems}

    First of all, the full conjecture of B. White is still open. However, in light of the work of~\cite{WZ23}, we are more optimistic about the existence of at least five embedded minimal tori for a generic metric, though this number might not be sharp for generic metrics, as discussed below.

    Recall that the Morse number of a manifold is the minimum number of critical points a Morse function can have. Since $\mathbb{RP}^2 \times \mathbb{RP}^2$ has Morse number $9$, B. White~\cite{Whi91} conjectured the existence of at least $9$ embedded minimal tori in a $3$-sphere equipped with a generic metric.
    
    Inspired by the works of~\cite{MN16, Li23a, Li23b}, and as a further inquiry based on the current work, we conjecture that in every $3$-sphere with positive Ricci curvature, there exist at least 5 embedded minimal tori of Morse index at most $9$. 

    It is also natural to raise the question of counting minimal surfaces with higher genus. For instance, let $\cM_2$ denote the space of embedded, unoriented, genus $2$ Lawson surfaces in the unit $3$-sphere. This space is a closed $6$-dimensional manifold. In the spirit of~\cite{Whi91}, we conjecture that in every Riemannian $3$-sphere, the number of closed, embedded, genus $2$ minimal surfaces is at least one plus the cup length of $\cM_2$. This conjecture is, of course, based on the fact that the Lyusternik–Schnirelmann number of a manifold is bounded below by one plus the cup length. We also conjecture that in every Riemannian $3$-sphere equipped with a generic metric, the number of closed, embedded, genus $2$ minimal surfaces is at least the Morse number of $\cM_2$. 
    
    Finally, we pose the following questions as potential steps towards resolving the fattening issue described in \S \ref{subsubsect:fatteningIssue} from a purely mean curvature flow perspective. Given a smooth, embedded, closed surface $\Sigma$ in a 3-manifold, let $\mathfrak{M}$ denote the union of all time slices of all mean curvature flows with initial data $\Sigma$, in whichever appropriate weak sense. Is the set $\mathfrak{M}$ connected? More ambitiously, is $\mathfrak{M}$ contractible? 
    
    If $\mathfrak{M}$ is contractible, it may be possible to run mean curvature flow to the whole family $\Psi$ and surpass the time of fattening; see also the recent work of J. Bernstein, L. Chen, and L. Wang regarding the set of expanders flowing out of a cone~\cite{BW22,BCW24}.

\subsection{Organization} 
    This paper is divided in two main parts. In part I, we first present the preliminary results in \S \ref{sect:prelim}, which primarily concern about various min-max theories, and then prove the main theorem in \S \ref{sect:mainProof}. In part II, we prove various theorems and propositions used in part I.
\subsection*{Acknowledgment} 
    We are deeply grateful to Zhihan Wang for his generous help, which has been instrumental in the completion of this project. We also wish to thank Andr\'e Neves for his constant support and guidance. The second author would also like to thank Jonanthan Zung for answering questions on geometric topology, especially those related to Example~\ref{ex:tori_interpolation}.

    This material is based upon work supported by the National Science Foundation under Grant No. DMS-1928930, while the first author was in residence at the Simons Laufer Mathematical Sciences Institute (formerly MSRI) in Berkeley, California, during the Fall semester of 2024. The second author was partially supported by the AMS-Simons travel grant.
    
\part{Main arguments}

\section{Preliminaries}\label{sect:prelim}
    Throughout this paper, all the ambient Riemannian manifolds $(M, g)$ are assumed to be smooth and closed. We will use $M$ to denote the manifold when the metric $g$ is clear from the context.
    
\subsection{Notations} 
    \begin{itemize}
        \item $\bI_n(M;\Z_2)$: the set of integral $n$-dimensional currents in $M$ with $\Z_2$-coefficients.
        \item $\Zc_{n}(M;\Z_2)\subset \bI_n(M;\Z_2)$: the subset consisting of elements $T$ such that $T=\partial Q$ for some $Q\in\bI_{n+1}(M;\Z_2)$. Such $T$ are also referred to as {\it (modulo $2$) flat $n$-cycles}.
        \item $\Zc_{n}( M;\nu;\Z_2)$ with $\nu=\cF,\bF,\bM$: the set $\Zc_{n}( M;\Z_2)$ equipped with the three topologies given corresponding respectively to the  {\em flat norm $\Fc$},  the {\em $\Fb$-metric}, and the  {\em mass norm $\Mb$} (see~\cite{Pit81} and the survey paper~\cite{MN20}). For the flat norm, there are two definitions that, by the isoperimetric inequality, would induce the same topology:
        \[
            \cF(T):=\inf\{\bM(P)+\bM(Q):T=P+\partial Q\}\,,
        \]
        and 
        \[
            \cF(T):=\inf\{\bM(Q):T=\partial Q\}\,.
        \]
        In this paper, we will use the second definition.
        \item $\mathcal{V}_n(M)$: the closure, in the varifold weak topology, of the space of $n$-dimensional rectifiable varifolds in $M$. There is a $\bF$ metric on $\mathcal{V}_n(M)$, which also induces the varifold weak topology when restricted to any subset of $\mathcal{V}_n(M)$ whose elements have volume bounded above by a constant.
        \item $\norm{V}$: the Radon measure induced on $M$ by  $V\in \mathcal{V}_n(M)$.
        %\item For any $a$, the varifold topology on $\{V\in \Vc_n(M):\norm{V}(M)\leq a\}$ can be induced by an {\it $\Fb$-metric} defined by Pitts in~\cite[p.66]{Pit81}.
        \item $|T|$: the varifold in $\cV_n(M)$ induced by a current $T\in \Zc_{n}(M;\Z_2)$, or a countable $n$-rectifiable set $T$. In the same spirit, given a map $\Phi$ into $\Zc_n(M;\Z_2)$, the associated map into $ \mathcal{V}_n(M)$ is denoted by $|\Phi|$.
        \item $\spt(\cdot)$: the support of a current or a measure.
        \item $[W]$: the $\Z_2$-current induced by $W$, if $W$ is a countably $2$-rectifiable set with $\cH^2(W) < \infty$. In the same spirit, given a map $f$ whose images are countably $2$-rectifiable sets, the associated map into the space $\cZ_n(M; \Z_2)$ is denoted by $[f]$.
        \item $[\mathcal{W}] := \{[\Sigma_i]\} \subset \cZ_n(M;\Z_2)$ for a set $\mathcal{W}$ of varifolds $\{V_i\} \subset \cV_n(M)$, each associated with a countably $n$-rectifiable set $\Sigma_i$.
        \item $\bB^\nu_\varepsilon(\cdot)$:  the open $\varepsilon$-neighborhood of an element or a subset of the space $\Zc_n(M;\nu;\Z_2)$.
        \item $\bB^{\Fb}_\varepsilon(\cdot)$: the open $\varepsilon$-neighborhood  of an element or  a subset of $\cV_n(M)$ under the $\Fb$-metric.
        \item $\cC(M)$: the space of Caccioppoli sets in $M$, equipped with the metric induced by the Lebesgue measure of the symmetric difference.
        \item $\partial^* \Omega$: the reduced boundary of $\Omega \in \cC(M)$.
        \item $\nu_\Omega$: the inward pointing normal of $\Omega \in \cC(M)$.
        \item $\Gamma^\infty(M)$: the set of smooth Riemannian metrics on $M$.
        \item $B_r(p)$: the open $r$-neighborhood of a point $p$.
        \item $\fg(S)$: the genus of a surface $S$. 
        \item $\ins(S)$ ($\out(S)$): the inside (outside), open region of an oriented hypersurface $S$, provided that $S$ separates $M$.
    \end{itemize}
    
    For an $m$-dimensional cube $I^m = \mathbb{R}^m \cap \{x : 0 \leq x_i \leq 1, i = 1,2, \cdots, m\}$, we can give it {\em cubical complex structures} as follows.
    \begin{itemize}
        \item  $I(1,j)$: the cubical complex on $I := [0,1]$ whose $1$-cells and $0$-cells are respectively 
        \[
            [0,1/3^j],[1/3^j,2/3^j],\dots,[1-1/3^j,1]\quad \textrm{ and }\quad [0],[1/3^j],[2/3^j],\dots,[1]\,.
        \]
        \item $I(m,j)$: the cubical complex structure
        \[
            I(m,j)=I(1,j)\otimes\dots\otimes I(1,j)\quad(m\textrm{ times})
        \]
        on $I^m$. $\alpha = \alpha_1 \otimes \cdots \otimes \alpha_m$ is a {\em $q$-cell} of $I(m, j)$ if and only if each $\alpha_i$ is a cell in $I(1, j)$ and there are exactly $q$ $1$-cells. A cell $\beta$ is a {\em face} of a cell $\alpha$ if and only if $\beta \subset \alpha$ as sets.
    \end{itemize}

    We call $X \subset I(m, j)$ a {\em cubical subcomplex of $I(m, j)$} if every face of a cell in $X$ is also a cell in $X$. For convenience, we also call $X$ a {\em cubical complex} without referring to the ambient cube. We denote by $|X|$ the {\em underlying space} of $X$. For the sake of simplicity, we will also consider a complex and its underlying space as identical unless there is ambiguity.

    Given a cubical subcomplex $X$ of some $I(m, j)$, for $j' > j$, one can {\em refine} $X$ to a cubical subcomplex
    \[
        X(j') := \{\sigma \in I(m, j'): \sigma \cap |X| \neq \emptyset\}
    \]
    of $I(m, j')$. For the sake of convenience, we will denote the refined cubical subcomplex by $X$ unless there is ambiguity.
    
\subsection{Simon-Smith min-max theory}
    In his Ph.D. thesis~\cite{Smi83}, written under the supervision of Simon, F. Smith combined the Almgren-Pitts theory with results from Almgren-Simon~\cite{AS79} and Meeks-Simon-Yau~\cite{MSY82}, proving the existence of a minimal 2-sphere in any Riemannian 3-sphere. This theory is now known as \emph{Simon-Smith min-max theory}. Later, Pitts and Rubinstein~\cite{PR86} announced, without publishing proofs, an extension of the work to more general min-max constructions in 3-dimensional manifolds, dealing with multiparameter families, surfaces of higher genus, and bounds on the Morse index. This program was eventually completed by De Lellis-Pellandini~\cite{DP10} and Ketover~\cite{Ket19}. Most recently, Wang-Zhou~\cite{WZ23} proved a multiplicity one theorem for the Simon-Smith min-max theory.

    In most of the aforementioned literature, the focus is on the space of smooth surfaces, occasionally augmented with certain degenerating sets of measure zero. This setup allows the use of smooth topology, which helps simplify arguments related to regularity and genus bounds. However, in this paper, such a space is not sufficiently broad, as we need to interpolate between tori and spheres, requiring allowance for singularities both on and off the surface; see \S \ref{sect:min-max_iii}. We will refer to this type of singular surface as a \emph{punctate surface} to distinguish it from existing notions in the literature.
    
    In this subsection, let $(M, g)$ be a closed Riemannian 3-manifold.

    \begin{defn}[Punctate surfaces]\label{def:punctate_surf}
        A closed subset $\Sigma \subset M$ is called a {\em punctate surface} provided that:
        \begin{enumerate}
            \item The 2-dimensional Hausdorff measure $\cH^2(\Sigma) \in (0, \infty)$.
            \item There exists a finite set $P \subset \Sigma$ such that $\Sigma \setminus P$ is an orientable, smooth, embedded surface. In this case, we call $P$ a \emph{punctate set} of $\Sigma$.
            \item Let $\Sigma_{\mathrm{iso}}$ be the set of isolated points of $\Sigma$. Then the complement of $\Sigma\backslash \Sigma_{\mathrm{iso}}$ is a disjoint union of two open regions of $M$, each having $\Sigma\backslash \Sigma_{\mathrm{iso}}$ as its topological boundary.
        \end{enumerate}
        We denote the space of punctate surfaces in $M$ by $\GS(M)$. 

        For each $\fg_0 \in \N$ (the set of non-negative integers), $\Sigma \in \GS(M)$ is called {\em a genus $\fg_0$ punctate surface} provided that there exists a punctate set $P$ and a sequence $r_i \to 0$ such that
        \[
            \fg(\Sigma) := \lim_{r \to 0} \fg(\Sigma \setminus B_{r_i}(P)) = \fg_0\,.
        \]
        
        Additionally, as in the aforementioned literature, we augment the space of punctate surfaces with degenerating sets of measure zero, and denote 
        \[
            \GS^*(M) := \GS(M)\cup\{\Sigma\subset M:\Sigma\textrm{ is a closed subset}, \cH^2(\Sigma)=0\}\,.
        \]
    \end{defn}

    \begin{figure}[!ht]
        \centering
        \includegraphics[width=0.7\linewidth]{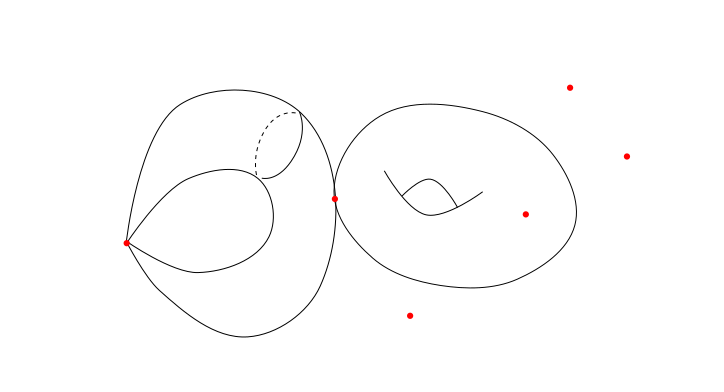}
        \caption{A punctate surface of genus $1$ with a punctate set (red dots).}
        \label{fig:a punctate surface}
    \end{figure}

    \begin{rmk}
        A punctate surface is simply the union of a surface with finitely many singularities and a finite set of points outside the surface (see Figure~\ref{fig:a punctate surface}). Therefore, the smallest possible punctate set of $\Sigma$ is the union of the singular points of $\Sigma \setminus \Sigma_\text{iso}$ and $\Sigma_\text{iso}$. As for why we concern about isolated points of a punctate surface, and why we allow the punctate set to contain some regular points of a punctate surface, see Remark \ref{rmk:moreAboutPunctateSet}.
    \end{rmk}
    
    \begin{rmk}
        For any $\Sigma \in \GS(M)$ with a punctate set $P$, by Sard's theorem, there exists a set of full measure $E \subset (0, \infty)$ such that for every $r \in E$, $\Sigma \setminus B_r(P)$ is a smooth surface with  boundary. Note that the boundary is a finite union of disjoint loops.
    
        A smooth surface with boundary can be obtained from a closed surface by removing finitely many disjoint disks. The genus of this surface is the genus of the original closed surface. Also note that when a surface is disconnected, its genus is defined as the sum of the genus of each of its connected components. 

        Moreover, for $r_1 > r_2 > 0$ such that both $\Sigma \setminus B_{r_i}(P)$ are smooth surfaces with boundary, by~\cite[Chapter~1~\S 2.1~Lemma~1.5]{CM12}, we have the inequality on their genus,
        \[
            \mathfrak{g}(\Sigma\setminus B_{r_1}(P)) \leq \mathfrak{g}(\Sigma\setminus B_{r_2}(P))\,.
        \]
        Hence, the limit
        \[
            \lim_{E \ni r \to 0} \mathfrak{g}(\Sigma \setminus B_{r}(P)) \in \N \cup \{\infty\}
        \]
        always exists. 
        
        In particular, the definition of $\mathfrak{g}(\Sigma)$ is independent of the choice of the sequence $r_i$. In addition, one can verify that the definition is independent of the choice of the punctate set $P$.
    \end{rmk}

    \begin{defn}[Simon-Smith family]\label{def:Simon_Smith_family}
        Let $X$ be a cubical subcomplex of some $I(m, j)$.
        A map $\Phi: X \to \GS^*(M)$ is called a {\em Simon-Smith family}, provided that:
        \begin{enumerate}[label=\normalfont(\arabic*)]
            \item \label{item:Hausdorff_cts} 
            The composition map $x\mapsto\cH^2 \circ \Phi(x)$ is continuous. 
            \item \label{item:closedFamily}
            For any $x_0 \in X$ and any open set $U \supset \Phi(x_0)$, there exists a neighborhood $O \subset X$ of $x_0$ such that for any $x \in O$, $\Phi(x) \subset U$.
            \item \label{item:SingPointsUpperBound} For each $x_0 \in X$ with $\Phi(x_0) \in \GS(M)$, we can choose a punctate set $P_\Phi(x_0) \subset \Phi(x_0)$ such that 
            \[
                N_P(\Phi) := \sup_{x:\Phi(x) \in \GS(M)} |P_\Phi(x)|<\infty\,.
            \]
            \item \label{item:SimonSmithFamilyLocalSmooth}
            For the punctate set $P_\Phi(x_0)$ chosen in~\ref{item:SingPointsUpperBound}, for any $x_0\in X$ with $\Phi(x_0) \in \GS(M)$ and on any open set $U \subset \subset M \backslash P_\Phi(x_0)$, $\Phi(x)\to \Phi(x_0)$ smoothly whenever $x\to x_0$.
        \end{enumerate}
        In this case, we call $X$ the {\it parameter space} of $\Phi$.
        
        In addition, for a non-negative integer $\mathfrak{g}_0$, we call $\Phi$ a {\it  Simon-Smith family of genus $\leq \mathfrak{g}_0$}, if $\Phi$ also satisfies:
        \begin{enumerate}[label=\normalfont(\arabic*)]
            \setcounter{enumi}{4}
            \item\label{item:SimonSmithFamilyGenus} For each $x \in X$ with $\Phi(x) \in \GS(M)$, $\Phi(x)$ has genus at most $\fg_0$.
        \end{enumerate}
    \end{defn} 

    \begin{rmk}
        Condition (2) implies that the family $\Phi$ is in some sense closed. More precisely, the subset 
        $$\{(x,p)\in(X,M):p\in\Phi(x)\}$$
        is closed in $X\x M$. This closedness property will be really  convenient in later parts (in \S \ref{min-max_iiiOffAndMCF} and the proof of Proposition \ref{prop:identifyHomoGroups}). 
    
        In the literature, there exist various notions of {\em generalized family of surfaces} in the Simon-Smith min-max setting. Our definition of Simon-Smith family can be viewed as a generalization of those in the following sense:
        Our continuity conditions (2), (3) and (4) are essentially the same as those in~\cite[Definition~2.2]{DR18}, but we also allow general measure-zero sets in the family. Our genus bound condition (5) generalizes the definitions in~\cite{DP10,CFS20,Fra21} without assuming a dense set of smooth surfaces in the family.
    \end{rmk}

    \begin{exmp}\label{ex:tori_interpolation} 
        At the end of \S \ref{subsubsect:fatteningIssue} we mentioned a ``pinch-off process," for the purpose of interpolating from a family of tori to a family of genus $0$ surfaces; see Theorem \ref{thm:mapping_cylinder}. This is, in fact, why we need to consider surfaces with singularities, more precisely, punctate surfaces. Let us illustrate the necessity of this with the following example.
    
        Let $\mathbb{S}^2 \subset \mathbb{R}^3$ be the unit $2$-sphere centered at the origin. We consider an $\mathbb{S}^2$-family of tori $\{\Sigma_p\}_{p \in \mathbb{S}^2}$, constructed as follows: Fix a small constant $\varepsilon > 0$ and for each $p \in \mathbb{S}^2$, let $\Sigma_p$ be obtained by replacing the small neighborhoods of $p$ and $-p$ with a thin cylinder connecting $p$ and $-p$ (see Figure~\ref{fig:tori}). For sufficiently small $\varepsilon > 0$, each $\Sigma_p$ is very close to $\mathbb{S}^2$ as varifolds or as reduced boundaries of Caccioppoli sets. However, we claim that there does not exist a $[0, 1] \times \mathbb{S}^2$-family $H$ of smooth surfaces with genus 0 or 1 in $\R^3$ such that:
        \begin{enumerate}[label=\normalfont(\arabic*)]
            \item For each $p \in \mathbb{S}^2$, $H(0, p) = \Sigma_p$ and $H(1, p) = \mathbb{S}^2$.
            \item $H|_{[0, 1)}$ is a continuous family of tori in the smooth topology.
            \item For any sequence $(t_i, p_i) \to (1, p)$, the tori $H(t_i, p_i)$ converge smoothly to $\mathbb{S}^2$ except at finitely many points in $\mathbb{S}^2$.
        \end{enumerate} 

    \begin{figure}[!ht]
        \centering
        \includegraphics[width=0.7\linewidth]{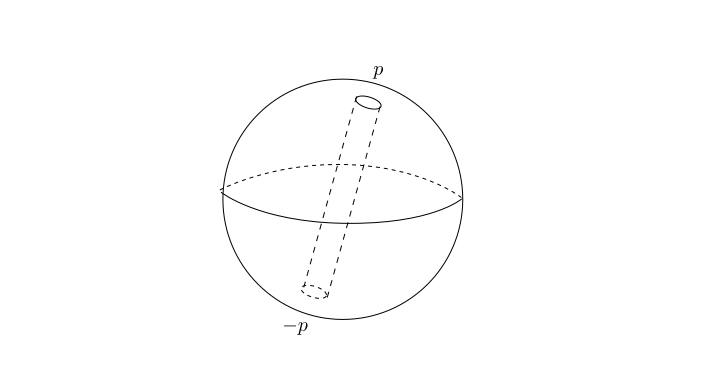}
        \caption{A torus close to a round sphere.}
        \label{fig:tori}
    \end{figure}
        
        Indeed, suppose by contradiction such map $H$ exists.  Then for each $t \in [0, 1)$, consider a solid torus bundle $E_t$ over $\mathbb{S}^2$, where the fiber at each $p \in \mathbb{S}^2$ is the solid torus bounded by $H(t, p)$. Note that $E_0$ is homotopy equivalent to the unit tangent bundle to $\mathbb{S}^2$, and thus, has a nontrivial Euler class. However, for $t$ sufficiently close to $1$, every fiber of $E_t$ contains the origin, implying that $E_t$ has a section, and thus the Euler class of $E_t$ is $0$. This leads to a contradiction.

        On the other hand, if surfaces with singularities are allowed in the interpolation, then the above task of interpolating the family $\{\Sigma_p\}_{p\in\mathbb S^2}$ to $\mathbb S^2$ can clearly be done, by pinching all thin necks at the origin.

        \iffalse
        Namely, using  the interpolation method discussed in \S \ref{sect:min-max_iii}, it is possible to  interpolate from a family of tori to a family of genus 0 punctate surfaces, such that the interpolation family  respects the Simon-Smith min-max setting, i.e. the interpolation family satisfies Definition \ref{def:Simon_Smith_family} below. See Theorem \ref{thm:mapping_cylinder} for this type of result.
        \fi
    \end{exmp}

    \begin{rmk}\label{rmk:moreAboutPunctateSet}
        In the definition of punctate surface, we also care about isolated points of a punctate surface, and allow the punctate set to contain some regular points of a punctate surface for the following reasons:
        \begin{enumerate}
            \item First, fix two points $p,q\in M$ with distance at least $2$, and let $\Sigma_n$ be the union of two disjoint spheres, one centered at $p$ with radius $1$, and the other centered at $q$ with radius $1/n$. In our Simon-Smith min-max argument, we need to allow such convergence phenomenon within a family (see Theorem \ref{thm:mapping_cylinder}). Thus, in view of the closedness condition, Definition \ref{def:Simon_Smith_family} \ref{item:closedFamily}, the limit of $\Sigma_n$ is a sphere centered at $p$, {\it union the isolated point $q$}. Hence, the limit is a punctate surface, where its punctate set contains $\{q\}$.
            \item In the construction of the family  $\Psi$ in \S \ref{sect:family_Psi},  we allow a family to contain a sequence of smooth tori converging to a {\it smooth} sphere, smoothly {\it except at a regular point} of the sphere. Therefore, we need to consider punctate sets that contain regular points of the surfaces.
        \end{enumerate}
    \end{rmk}

    \begin{prop}[Simon-Smith families are sweepouts]\label{prop:SS_AP}
        \begin{enumerate}[label = \normalfont(\arabic*)]
            \item[]
            \item \label{item:prop:SS_AP_1} Every element $\Sigma$ in $\GS^*(M)$ is associated with a unique $2$-cycle $[\Sigma]$ in $\cZ_2(M; \Z_2)$. In particular, if $\cH^2(\Sigma)=0$, then $[\Sigma] = 0$.
            \item \label{item:prop:SS_AP_2} Moreover, for a Simon-Smith family $\Phi$, the induced map 
            \[
                [\Phi]: X \to \cZ_2(M;\bF;\Z_2)
            \]
            is continuous with respect to the $\bF$-metric.
        \end{enumerate}
    \end{prop}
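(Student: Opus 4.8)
The plan is to prove part (1) first, then bootstrap to part (2) using the local smooth convergence and the mass-continuity built into the definition of a Simon-Smith family.

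For (1): given $\Sigma \in \GS^*(M)$, if $\cH^2(\Sigma) = 0$ we simply set $[\Sigma] = 0$, which is visibly a cycle. If $\Sigma \in \GS(M)$, condition (3) of Definition \ref{def:generalized_surf} says that $\Sigma \setminus \Sigma_{\text{iso}}$ is the topological boundary of some open region $\Omega \subset M$. I would first observe that $\Sigma_{\text{iso}}$ is finite (it is contained in the finite set $P$, since an isolated point of $\Sigma$ cannot lie on the smooth surface $\Sigma \setminus P$), hence $\cH^2$-negligible, so $\cH^2(\Sigma) = \cH^2(\partial \Omega)$ and in particular $\partial \Omega$ has finite perimeter; thus $\Omega$ is a Caccioppoli set and its reduced boundary $\partial^* \Omega$ carries a well-defined integral $\Z_2$-current $T_\Sigma := \partial [\![\Omega]\!] = \partial^*\Omega$, which is by construction a boundary, hence a cycle in $\cZ_2(M;\Z_2)$. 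Then I would set $[\Sigma] := T_\Sigma$ and check the two claims: (a) $|T_\Sigma| = |\Sigma|$ as varifolds, which follows because $\partial^*\Omega$ and the smooth surface $\Sigma \setminus P$ agree $\cH^2$-a.e.\ (the singular set $P$ and the isolated points are $\cH^2$-null, and away from $P$ the set $\Sigma$ is a smooth embedded hypersurface bounding $\Omega$ locally, so the reduced boundary coincides with it); and (b) uniqueness of the \emph{cycle} $[\Sigma]$ — even though $\Omega$ may not be unique (one could take the complementary region), the resulting current differs only by $[\![M]\!]$'s boundary, i.e.\ by $0$ in $\Z_2$-coefficients, so $[\Sigma]$ depends only on $\Sigma$. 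This uniqueness-up-to-complement point is the one genuinely content-bearing observation in (1), and it is exactly why $\Z_2$-coefficients are used.

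For (2): I must show $x \mapsto [\Phi(x)]$ is $\bF$-continuous. Recall the $\bF$-metric on $\cZ_2(M;\Z_2)$ dominates both the flat metric $\cF$ and the varifold $\mathbf F$-distance of the induced varifolds; so it suffices to prove (i) $x \mapsto |\Phi(x)| = |[\Phi(x)]|$ is continuous in the varifold $\mathbf F$-metric, and (ii) $x \mapsto [\Phi(x)]$ is continuous in the flat topology $\cF$. For (i): fix $x_0$ and a test function; decompose $M = U \sqcup (M\setminus U)$ where $U$ is a small neighborhood of the finite set $P(x_0)$. On $M \setminus \bar U$, condition (4) gives smooth convergence $\Phi(x) \to \Phi(x_0)$, hence convergence of the varifolds there; on $U$, condition (2) — continuity of $x \mapsto \cH^2 \circ \Phi(x)$, i.e.\ of total mass — together with the mass convergence already established on the complement forces $\|\Phi(x)\|(U) \to \|\Phi(x_0)\|(U)$, and since $\cH^2(\Sigma_{\text{iso}} \cup P) = 0$ this small-mass piece contributes negligibly as $U$ shrinks; a standard $\varepsilon/2$ argument (choose $U$ so $\|\Phi(x_0)\|(U) < \varepsilon$, then use continuity of total mass to control $\|\Phi(x)\|(U)$ for $x$ near $x_0$) yields varifold convergence. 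For (ii): flat continuity follows from (i) plus uniform mass bounds — indeed by the compactness of $\cZ_2(M;\Z_2)$ in the flat topology and lower semicontinuity of mass, any flat-convergent subsequence has a limit cycle whose mass is at most $\lim \cH^2(\Phi(x))= \cH^2(\Phi(x_0))$ and whose induced varifold is $|\Phi(x_0)|$ by (i); since $[\Phi(x_0)]$ is the unique cycle with that varifold support structure (it is determined by the region it bounds, which varies continuously by condition (3) and the smooth convergence), the full sequence converges flatly to $[\Phi(x_0)]$.

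The main obstacle I anticipate is step (ii), specifically pinning down that the \emph{current} (not just the varifold) converges in the flat metric across points where the singular set $P(x)$ can move or change cardinality — one must ensure that no flat-distance mass "leaks" near $P(x_0)$, which is where the Caccioppoli-set description in condition (3) does the real work: the regions $\Omega_x$ with $\partial^* \Omega_x = [\Phi(x)]$ converge in $L^1$ (equivalently in the Caccioppoli metric) because they converge smoothly away from the shrinking neighborhood $U$ of $P(x_0)$ and $|U| \to 0$, and $\partial$ is continuous from $(\cC(M), L^1)$ to $(\cZ_2(M;\Z_2), \cF)$. Packaging this $L^1$-convergence of the bounding regions cleanly — and handling the ambiguity $\Omega_x$ vs.\ $M \setminus \Omega_x$ consistently along the family, which is legitimate precisely because we work mod $2$ — is the technical heart of the argument; everything else is bookkeeping with the $\varepsilon$'s.
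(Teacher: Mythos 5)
Your proposal is correct in substance but takes a genuinely different route from the paper, and along the way you make one misstep that you then repair yourself.

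For (1), the paper defines $[\Sigma]$ directly as the $\Z_2$-current induced by the rectifiable set $\Sigma\setminus P$ and checks independence of $P$; it is terse about \emph{why} this current is a cycle. Your route through condition (3) of Definition \ref{def:generalized_surf} --- identifying $\Sigma\setminus\Sigma_{\text{iso}}$ as the reduced boundary of a Caccioppoli set $\Omega$, noting $\Sigma_{\text{iso}}\subset P$ is finite hence $\cH^2$-null, and then taking $[\Sigma]:=\partial^*\Omega$ --- makes the ``cycle'' claim and the mod-$2$ independence from the choice $\Omega$ vs.\ $M\setminus\Omega$ transparent. This is cleaner and is implicitly what justifies the paper's shortcut.

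For (2), the paper estimates $\bF([\Phi(y_i)],[\Phi(x)])$ directly by a triangle inequality, splitting across $\overline{B}_{2r}(P)$, using smooth convergence outside and the mass-continuity condition to squeeze the inside piece. You instead split $\bF$ into its varifold and flat summands and treat them separately; this is fine since $\bF(S,T)=\cF(S-T)+\mathbf F(|S|,|T|)$, though your justification (``the $\bF$-metric \emph{dominates} both, so it suffices to prove both'') is stated backwards --- what you actually use is that $\bF$ is bounded \emph{above} by the sum of the two. Your varifold estimate (i) is essentially the paper's computation and is correct. Your flat estimate (ii) as first stated has a real gap: a flat-convergent subsequence $[\Phi(y_{i_j})]\to T$ together with $|\Phi(y_{i_j})|\to|\Phi(x_0)|$ as varifolds does \emph{not} give $|T|=|\Phi(x_0)|$; one only gets $\|T\|\le\||\Phi(x_0)|\|$ (the paper's Lemma~6.1, quoting \cite[2.1(f)]{Pit81}), and the phrase ``the unique cycle with that varifold support structure'' is not earned at that point. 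However, you repair this in your final paragraph: the $L^1$ (Caccioppoli) convergence $\Omega_{y_i}\to\Omega_{x_0}$ --- forced by smooth convergence away from a shrinking neighborhood $U$ of $P(x_0)$ plus $|U|\to 0$ --- and the elementary bound $\cF(\partial^*\Omega_1-\partial^*\Omega_2)\le\vol(\Omega_1\triangle\Omega_2)$ (using the paper's chosen form of the flat norm) give flat continuity directly. That argument is correct and is arguably the most conceptual way to see (ii); if you rewrite the proof, lead with the Caccioppoli-set convergence and drop the compactness/lower-semicontinuity detour, which does not close on its own.
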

    \begin{rmk}
        (2) implies that our Simon-Smith family induces a sweepout in the Almgren-Pitts min-max theory. 
    \end{rmk}
    \begin{proof}[Proof of Proposition~\ref{prop:SS_AP}]
        By Definition~\ref{def:punctate_surf}, for $\Sigma \in \GS(M)$, we can choose a punctate set $P \subset \Sigma$ such that $\Sigma \backslash P$ is a smooth surface with finite $\cH^2$ measure, so we can define
        \[
            [\Sigma] := [\Sigma \setminus P] \in \cZ_2(M; \Z_2)\,.
        \]
        Note that the definition is independent of the choice of punctate sets $P$. For $\cH^2(\Sigma)=0$, we can simply define
        \[
            [\Sigma] := 0 \in \cZ_2(M; \Z_2)\,.
        \]

        For (2), for any $x \in X$ and any sequence $\{y_i\} \subset X$ with $\lim_i y_i = x$, it suffices to prove that $[\Phi(y_i)] \to [\Phi(x)]$ with respect to the $\bF$-metric.
        There are two cases: (1) $\cH^2(\Phi(x))=0$, or (2) $\cH^2(\Phi(x))>0$.
        \medskip
    
        \paragraph*{\bf Case 1: $\cH^2(\Phi(x))=0$}
        In this case, by Definition~\ref{def:Simon_Smith_family}~\ref{item:Hausdorff_cts},
            \[
                \bM([\Phi(y_i)]) = \cH^2(\Phi(y_i)) \to \cH^2(\Phi(x)) = 0\,,
            \]
            and thus, $[\Phi(y_i)] \to 0 = [\Phi(x)]$ with respect to the $\bF$-metric.
        \medskip
        \paragraph*{\bf Case 2: $\cH^2(\Phi(x))>0$}
        In this case, we can assume that every $\Phi(y_i) \in \GS(M)$ and let $P := P_\Phi(x)$ defined in Definition~\ref{def:Simon_Smith_family}~\ref{item:SingPointsUpperBound}. For any $\varepsilon > 0$, by the facts that $\cH^2(\Phi(x)) < \infty$, that $\Phi(x)$ is closed, and by Sard's theorem, there exists $r > 0$ such that
        \begin{equation}\label{eqn:cH_x_inside}
            \cH^2(\Phi(x) \cap \overline{B}_{2r}(P)) < \frac{\varepsilon}{4}\,,
        \end{equation}
        \begin{equation}\label{eqn:small_balls}
            \cH^3(\overline{B}_{2r}(P)) < \varepsilon / 4\,,
        \end{equation}
        and $\partial B_{2r}(P)$ intersects transversally with $\Phi(x)$ and each $\Phi(y_i)$, where each intersection is a finite union of loops.
            
        By Definition~\ref{def:Simon_Smith_family}~\ref{item:SimonSmithFamilyLocalSmooth}, $\Phi(y_i) \to \Phi(x)$ smoothly outside $\overline{B}_{r}(P)$. In particular, we have:
        \begin{itemize}
            \item 
            \begin{equation}\label{eqn:cF_i}
                \limsup_i \cF([\Phi(y_i)], [\Phi(x)]) \leq \cH^3(\overline{B}_{2r}(P)) \leq \frac{\varepsilon}{4}\,.
            \end{equation}
            \item Outside $\overline{B}_{2r}(P)$, we have
            \begin{equation}\label{eqn:bF_i_outside}
                \lim_i \bF(|\Phi(y_i) \setminus \overline{B}_{2r}(P)|, |\Phi(x) \setminus \overline{B}_{2r}(P)|) = 0\,.
            \end{equation}
            In particular, $\lim_i \cH^2(\Phi(y_i) \setminus \overline{B}_{2r}(P)) = \cH^2(\Phi(x) \setminus \overline{B}_{2r}(P))$.
            \item Inside $\overline{B}_{2r}(P)$, 
            \begin{equation}\label{eqn:cH_i_inside}
            \begin{aligned}
                & \quad \limsup_i \cH^2(\Phi(y_i) \cap \overline{B}_{2r}(P))\\
                &= \limsup_i \left(\cH^2(\Phi(y_i)) - \cH^2(\Phi(y_i) \setminus \overline{B}_{2r}(P))\right)\\
                &= \lim_i \cH^2(\Phi(y_i)) - \lim_i \cH^2(\Phi(y_i) \setminus \overline{B}_{2r}(P))\\
                &= \cH^2(\Phi(x)) - \cH^2(\Phi(x) \setminus \overline{B}_{2r}(P))\\
                &= \cH^2(\Phi(x) \cap \overline{B}_{2r}(P)) < \frac{\varepsilon}{4}\,.
            \end{aligned}
            \end{equation}
        \end{itemize}
        Note that by the triangle inequality, we have
        \[\begin{aligned}
            &\quad \bF(|\Phi(y_i)|, |\Phi(x)|)\\
            &= \bF(|\Phi(y_i)|, |\Phi(x)|) + \cF(|\Phi(y_i)|, |\Phi(x)|)\\
            &\leq \bF(|\Phi(y_i)|, |\Phi(y_i) \setminus \overline{B}_{2r}(P)|) + \bF(|\Phi(x) \setminus \overline{B}_{2r}(P)|, |\Phi(x)|)\\
            & \quad + \bF(|\Phi(y_i) \setminus \overline{B}_{2r}(P)|,|\Phi(x) \setminus \overline{B}_{2r}(P)|) \\
            &= \bF(|\Phi(y_i) \cap \overline{B}_{2r}(P)|, 0) + \bF(0, |\Phi(x) \cap \overline{B}_{2r}(P)|)\\
            &\quad + \bF(|\Phi(y_i) \setminus \overline{B}_{2r}(P)|,|\Phi(x) \setminus \overline{B}_{2r}(P)|) \\
            &= \cH^2(\Phi(y_i) \cap \overline{B}_{2r}(P)) + \cH^2(\Phi(x) \cap \overline{B}_{2r}(P))\\
            &\quad + \bF(|\Phi(y_i) \setminus \overline{B}_{2r}(P)|, |\Phi(x) \setminus \overline{B}_{2r}(P)|)\,.
        \end{aligned}\]
        Applying \eqref{eqn:cH_x_inside}, \eqref{eqn:cF_i}, \eqref{eqn:bF_i_outside} and \eqref{eqn:cH_i_inside}, for any $\varepsilon > 0$, we obtain
        \begin{align*}
            \limsup_i \bF([\Phi(y_i)], [\Phi(x)]) &\leq \limsup_i \cF([\Phi(y_i)], [\Phi(x)]) \\
            &\qquad + \limsup_i \bF(|\Phi(y_i)|, |\Phi(x)|)\\
            &\leq \varepsilon\,,
        \end{align*}
        and thus, $\lim_i \bF([\Phi(y_i)], [\Phi(x)]) = 0$.
    \end{proof}

    \begin{defn}[Homotopy class]\label{def:homotopyClass}
        Two Simon-Smith families $\Phi_0$ and $\Phi_1$, which are parametrized by the same parameter space $X$, are said to be {\em homotopic} to each other if there exists a continuous map
        \[
           \varphi: [0, 1] \times X \to \operatorname{Diff}^\infty(M)
        \] such that:
        \begin{enumerate}
            \item $\varphi(0, x) = \operatorname{Id}$ for all $x \in X$.
            \item $\varphi(1, x)(\Phi_0(x)) = \Phi_1(x)$ for all $x \in X$.
        \end{enumerate}
        Note that $\textrm{Diff}^\infty(M)$ denotes the diffeomorphism group of $M$ equipped with the $C^\infty$ topology.
        
        The set of all families homotopic to a Simon-Smith family $\Phi$ is called the {\em homotopy class associated to $\Phi$}, and is denoted by $\Lambda(\Phi)$.
    \end{defn}
    \begin{rmk}\label{rmk:homotopy}
        The family
        \[
            H: [0, 1] \times X \to \GS^*(M), \quad (t, x) \mapsto \varphi(t, x)(\Phi(x))
        \] is also a Simon-Smith family. In particular, for every $t \in [0, 1]$,
        \[
            \Phi_t: X \to \GS^*(M), \quad x \mapsto \varphi(t, x)(\Phi(x))
        \] is a Simon-Smith family in $\Lambda(\Phi_0)$.

        Furthermore, for any choice of punctate sets
        \[
            P_\Phi: \{x \in X : \Phi(x) \in \GS(M)\} \to M
        \]
        for $\Phi$ satisfying the conditions \ref{item:SingPointsUpperBound} and \ref{item:SimonSmithFamilyLocalSmooth} in Definition~\ref{def:Simon_Smith_family}, any isotopy $\varphi$ would  induce a choice of $P_{\Phi'}(x) := \varphi(1, x) \circ P_\Phi(x)$ for every $\Phi':=\varphi(1,\cdot)\circ\Phi \in \Lambda(\Phi)$. Consequently, we may assume that $N_{P} < \infty$ is a constant within the homotopy class $\Lambda(\Phi)$.
        
        In addition, if $\Phi$ is of genus $\leq \mathfrak{g}_0$, then so is every $\Phi' \in \Lambda(\Phi)$.
    \end{rmk}

    \begin{defn}[Simon-Smith width]
        Given a homotopy class $\Lambda$, its {\em width} is defined by
        \[
            \mathbf{L}(\Lambda):=\inf_{\Phi\in \Lambda}\sup_{x\in X}\cH^2(\Phi(x))\,.
        \]
    \end{defn}
    
    \begin{defn}[Minimizing sequence and min-max sequence]
        A sequence $\{\Phi_i\}$ in $\Lambda$ is said to be {\em minimizing} if 
        \[
            \lim_{i\to\infty} \sup_{x\in X}\cH^2(\Phi_i(x))=\mathbf{L}(\Lambda)\,.
        \]
        If $\{\Phi_i\}$ is a minimizing sequence in $\Lambda$ and $\{x_i\} \subset X$ satisfies
        \[
            \lim_{i\to\infty} \cH^2(\Phi_i(x_i))=\mathbf{L}(\Lambda)\,,
        \]
        then $\{\Phi_i(x_i)\}$ is called a {\em min-max sequence}. 
        For a minimizing sequence $\{\Phi_i\}$, we define its \textit{critical set} $\bC(\{\Phi_i\})$ to be the set of all subsequential varifold-limit of its min-max sequences:
        \[
            \bC(\{\Phi_i\}):=\{V=\lim_j|\Phi_{i_j}(x_j)|: x_j\in X, \| V\|(M)=\bL(\Lambda)\}\,.
        \]
        And $\{\Phi_i\}$ is called \textit{pulled-tight} if every varifold in $\bC(\{\Phi_i\})$ is stationary.
    \end{defn}

    \begin{defn}\label{def:minimal_surf_varifolds}
        In a closed Riemannian $3$-manifold $(M, g)$, for $L > 0$, let $\cW_L(M, g)$ be the set of all varifolds $W \in \cV_2(M)$, with $\|W\|(M) = L$, of the form 
        \[
            W=m_1|\Gamma_1|+\cdots+m_l|\Gamma_l|\,,
        \]
        where $\{m_j\}$ is a set of positive integers and $\{\Gamma_j\}$ is a disjoint set of smooth, connected, embedded minimal surface in $(M, g)$. 
        
        For a nonnegative integer $\mathfrak{g}_0 \geq 0$, let $\cW_{L, \leq \mathfrak{g}_0}(M, g) \subset \cW_L(M, g)$ consist of all the varifolds satisfying the genus bound 
        \begin{equation}\label{eq:genus_bound}
        \sum_{j\in I_O}m_j\fg(\Gamma_j)+ \frac{1}{2}\sum_{j\in I_N}m_j(\fg(\Gamma_j)-1)\leq \fg_0\,,
        \end{equation}
        where $\Gamma_j$ is orientable if $j \in I_O$ and non-orientable if $j \in I_N$. 
        
        For simplicity, when the ambient manifold $(M, g)$ is clear from context, we may use the notations $\cW_L$ and $\cW_{L, \leq \mathfrak{g}_0}$ without further specification.
    \end{defn}
    \begin{rmk}
        Note that the genus of a non-orientable surface $\Sigma$ is the number of cross-caps needed to be attached to a two-sphere in order to obtain a surface homeomorphic to $\Sigma$.
    \end{rmk}
    
    \begin{thm}[Simon-Smith min-max theorem]\label{thm:minMax}
        Given an orientable closed Riemannian $3$-manifold $(M, g)$, a cubical subcomplex $X$ of some $I(m, k)$, a Simon-Smith family $\Phi: X \to \GS^*(M)$ of genus $\leq \fg_0$, and a positive real number $r > 0$, if $L := \mathbf{L}(\Lambda(\Phi)) > 0$, 
        then there exists a minimizing sequence $\{\Phi_{i}\}$ in $\Lambda(\Phi)$ such that:
	\begin{enumerate}[label=\normalfont(\arabic*)]
            \item\label{item:minMaxPulltight} The sequence $\{\Phi_i\}$ is pulled-tight and moreover,
            \[
                \bC(\{\Phi_{i}\})\cap \cW_{L, \leq \mathfrak{g}_0} \neq \emptyset\,.
            \]
            \item\label{item:minMaxMultiplicityone} There exists a $W \in \cW_{L, \leq \mathfrak{g}_0}$ such that every connected component $\Gamma_j$ in $W$ satisfies:
		\begin{enumerate}[label=\normalfont(\alph*)]
			\item If $\Gamma_j$ is unstable and two-sided, then $m_j = 1$.
			\item If $\Gamma_j$ is one-sided, then its connected double cover is stable.
		\end{enumerate}
            \item \label{item:minMaxW} Furthermore, there exists $\eta > 0$ such that for all sufficiently large $i$,
		\[
			\cH^2(\Phi_{i}(x)) \geq L - \eta \implies |\Phi_i(x)| \in \bB^{\Fb}_{r}(\cW_{L, \leq \mathfrak{g}_0})\,.
		\]
	\end{enumerate}
    \end{thm}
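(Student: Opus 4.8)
The plan is to run the Simon--Smith min--max scheme in the Colding--De Lellis formulation, feeding in the genus estimates of De Lellis--Pellandini and Ketover and the multiplicity-one theorem of Wang--Zhou (itself built on Sarnataro--Stryker). The recurring subtlety is that Definition \ref{def:genusGFamily} is weaker than the notions in those references---only $\cH^2\circ\Phi$ is continuous, smooth convergence is required only away from the uniformly finite sets $P(x)$, and members of vanishing $\cH^2$ are allowed---so at each stage one must check that the deformation used stays inside this class and that all bad behaviour localizes to the finitely many points $P(x)$, removable since $\cH^2(\Phi(x))<\infty$. To begin, since $\mathbf{L}(\Lambda(\Phi))=L>0$, Proposition \ref{prop:SS_AP} shows $[\Phi]$ is an honest $\bF$-continuous sweepout into $\cZ_2(M;\Z_2)$, so the Almgren--Pitts/Colding--De Lellis pull-tight deformation---realizable by a continuous family of ambient diffeomorphisms, hence within the homotopy class of Definition \ref{def:homotopyClass}---produces a pulled-tight minimizing sequence $\{\Phi_i\}\subset\Lambda(\Phi)$; by Remark \ref{rmk:homotopy} the bound $N_P$ and the genus bound $\leq\fg_0$ are retained.

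Next, via the Colding--De Lellis combinatorial lemma one extracts, after a further deformation and a subsequence, a min-max sequence $\{\Phi_i(x_i)\}$ that is \emph{almost minimizing in sufficiently small annuli} and varifold-converges to some $V\in\bC(\{\Phi_i\})$ with $\|V\|(M)=L$. The Schoen--Simon curvature estimates and the replacement arguments of Meeks--Simon--Yau and Simon--Smith then force $V=\sum_j m_j|\Gamma_j|$ with $\Gamma_j$ smooth, closed, connected, embedded, pairwise disjoint and minimal, so $V\in\cW_L$. To upgrade to $V\in\cW_{L,\leq\fg_0}$ one tracks genus along the construction: the almost-minimizing property allows neck surgeries inside the small annuli, and the lifting argument of De Lellis--Pellandini in Ketover's sharpened form turns the genus-$\leq\fg_0$ hypothesis on $\Phi$ into precisely the weighted inequality \eqref{eq:genus_bound}, the $\tfrac12(\fg(\Gamma_j)-1)$ term for one-sided components coming from passing to the connected double cover. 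This gives the first conclusion.

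For the multiplicity-one statement, one invokes Wang--Zhou: if some two-sided unstable component occurred with multiplicity, their catenoid-type deformation (after Sarnataro--Stryker), performed inside the class of genus-$\leq\fg_0$ Simon--Smith families, would produce a competitor strictly lowering the maximal area near that component, contradicting $\mathbf{L}(\Lambda(\Phi))=L$. Hence, after such a deformation, the limiting $W\in\cW_{L,\leq\fg_0}$ has $m_j=1$ on every two-sided unstable $\Gamma_j$ and a stable connected double cover on every one-sided $\Gamma_j$, which is the second conclusion; one also checks this last deformation is compatible with the previous steps so that the pulled-tight and genus conclusions persist.

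Finally, the $\bF$-closeness assertion follows by compactness. Strengthen the minimizing sequence once more so that \emph{every} min-max sequence is almost minimizing in small annuli; then the preceding steps give $\bC(\{\Phi_i\})\subset\cW_{L,\leq\fg_0}$. Since $\cW_{L,\leq\fg_0}$ has fixed area $L$ and bounded topology by \eqref{eq:genus_bound}, it is compact in the $\bF$-metric (Choi--Schoen, resp.\ Sharp, compactness), and a standard contradiction argument---if some $x_i$ had $\cH^2(\Phi_i(x_i))\geq L-1/i$ yet $|\Phi_i(x_i)|\notin\bB^{\Fb}_r(\cW_{L,\leq\fg_0})$, pass to a varifold limit, which lies in $\bC(\{\Phi_i\})\subset\cW_{L,\leq\fg_0}$, and use mass convergence to promote this to $\bF$-convergence---yields the required $\eta>0$. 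The genuine obstacle is the two middle steps in this weak setting: making the Colding--De Lellis almost-minimizing machinery and the Ketover genus surgery work with only the weak continuity of Definition \ref{def:genusGFamily}, with mass-zero members, and with the singular sets $P(x)$ present; granting that, the Wang--Zhou input is essentially a black box, modulo verifying their deformations do not raise genus.
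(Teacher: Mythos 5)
Your proposal follows the same broad arc as the paper (pull-tight, almost-minimizing in annuli via a combinatorial deformation, regularity and genus bound via De Lellis--Pellandini/Ketover, multiplicity one via Wang--Zhou, then promoting to the $\bF$-closeness in (3)), but there are two places where the argument as stated has a genuine gap or misattributes the key tool.

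First, for the multi-parameter almost-minimizing step you invoke ``the Colding--De Lellis combinatorial lemma.'' The relevant deformation in the paper is actually Pitts' combinatorial argument from \cite[Theorem~4.10, Proposition~4.9]{Pit81}, adapted to the isotopy setting (Proposition~\ref{prop:an_deform}). The paper is explicit that it is \emph{not} following De Lellis--Ramic's multi-parameter scheme, because that scheme alters the homotopy class (allowing families not arising from isotopies) and hence the definition of almost-minimizing; instead it keeps the Colding--De Lellis definition of ``$(\varepsilon,\delta)$-deformation'' but runs Pitts' cell-by-cell combinatorics to extract the almost-minimizing varifold for a cubical parameter space. If you try to import Colding--De Lellis's one-parameter combinatorial lemma directly, you do not get the multi-parameter admissible-annuli statement you need, and if you import De Lellis--Ramic, you leave $\Lambda(\Phi)$.

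Second, your treatment of conclusion (3) is where the real gap lies. You propose to ``strengthen the minimizing sequence once more so that every min-max sequence is almost minimizing in small annuli,'' so that $\bC(\{\Phi_i\})\subset\cW_{L,\leq\fg_0}$, and then argue by compactness. But the combinatorial argument only gives you \emph{one} almost-minimizing min-max sequence per deformation; it does not upgrade the whole critical set. The paper instead follows Marques--Neves: it fixes $\tilde\eta$ from Lemma~\ref{lem:close_replace_balls}, identifies the compact ``bad'' subset $\cK=\bC(\{\Phi_i\})\setminus\bB^\bF_{r/2}(\cW^{m,\tilde\eta}_{L,\leq\fg_0})$ (varifolds in the critical set that are stationary but not stable-in-admissible-annuli of small outer radius), covers $\cK$ by finitely many $\bF$-balls, and then applies Proposition~\ref{prop:an_deform} to deform the family away from the parameter-space ``bad points'' $D_i$ while only changing the surfaces inside balls of radius $2\tilde\eta$. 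Lemma~\ref{lem:close_replace_balls} is then what turns ``$\Phi^*_i(x)$ and $\Phi_i(x)$ agree outside a few $\tilde\eta$-balls and have nearly the same mass'' into actual $\bF$-closeness, which is how one shows the new critical set lands inside $\bB^\bF_{3r/4}(\cW^{m,\tilde\eta}_{L,\leq\fg_0})$. Compactness of $\cW^m_{L,\leq\fg_0}$ (Sharp) is used, but only as a finiteness input for the covering; the engine is the explicit annular deformation plus the replacement lemma, not a bare contradiction/compactness argument. Your item (2) is fine in spirit, though the proof there is structural use of the PMC min-max machinery of \cite{WZ23} (Theorem~\ref{thm:rel_PMC_min_max}), not a hands-on catenoid competitor inserted directly into $\Lambda(\Phi)$; and your flagging of the $P(x)$-singular-set issue for the Ketover lifting lemma is exactly what Proposition~\ref{prop:improv_lifting} and Lemma~\ref{lem:genus_collapse} address.
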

    
    This min-max theorem essentially builds upon the foundational results in~\cite{Smi83, DP10, Ket19, MN21, WZ23}. We postpone the detailed proof of this theorem to \S \ref{sect:min-max_i}.
    
\subsection{Relative Simon-Smith min-max theory}
    In this subsection, we fix {\it a pair of parameter spaces}, a cubical subcomplex $X$ and its subcomplex $Z \subset X$. We adopt the concept of relative $(X, Z)$-homotopy class introduced in~\cite{Zho20} to the Simon-Smith min-max theory as follows.

    \begin{defn}[Relative homotopy class]
        Two Simon-Smith families $\Phi_0$ and $\Phi_1$ parametrized by the same parameter space $X$ with $\Phi_0\vert_Z = \Phi_1\vert_Z$ are said to be {\em homotopic relative to $\Phi_0\vert_Z$} to each other if there exists a continuous map
        \[
           \varphi: [0, 1] \times X \to \operatorname{Diff}^\infty(M)
        \] 
        such that: 
        \begin{enumerate}
            \item $\varphi(0, x) = \operatorname{Id}$ for all $x \in X$.
            \item $\varphi(t, z) = \operatorname{Id}$ for all $t \in [0, 1]$ and $z \in Z$.
            \item $\varphi(1, x)(\Phi_0(x)) = \Phi_1(x)$ for all $x \in X$.
        \end{enumerate} 
        
        The set of all families homotopic relative to $\Phi\vert_Z$ to a Simon-Smith family $\Phi$ is called the {\em relative $(X, Z)$-homotopy class of $\Phi$}, and is denoted by $\Lambda_Z(\Phi)$.
    \end{defn}

    \begin{defn}[Relative Simon-Smith min-max width]
        Given a $(X, Z)$-relative homotopy class $\Lambda_Z$, its {\em width} is defined by
        \[
            \mathbf{L}(\Lambda_Z):=\inf_{\Phi\in \Lambda_Z}\sup_{x\in X}\cH^2(\Phi(x))\,.
        \]
    \end{defn}
    \begin{defn}[Minimizing sequence and min-max sequence for Relative Simon-Smith min-max]
        A sequence $\{\Phi_i\}$ in $\Lambda_Z$ is said to be {\em minimizing} if 
        \[
            \lim_{i\to\infty} \sup_{x\in X}\cH^2(\Phi_i(x))=\mathbf{L}(\Lambda_Z)\,.
        \]
        If $\{\Phi_i\}$ is a minimizing sequence and $\{x_i\} \subset X$ satisfies 
        \[
            \lim_{i\to\infty} \cH^2(\Phi_i(x_i))=\mathbf{L}(\Lambda_Z)\,,
        \]
        then $\{\Phi_i(x_i)\}$ is called a {\em min-max sequence}. 
        For a minimizing sequence $\{\Phi_i\}$, we define its \textit{critical set} $\bC(\{\Phi_i\})$ to be the set of all subsequential varifold-limit of its min-max sequences:
        \[
            \bC(\{\Phi_i\}):=\{V=\lim_j|\Phi_{i_j}(x_j)|: x_j\in X, \| V\|(M)=\bL(\Lambda_Z)\}\,.
        \]
        And $\{\Phi_i\}$ is called \textit{pulled-tight} if every varifold in $\bC(\{\Phi_i\})$ is stationary.
    \end{defn}

    \begin{lem}
        For a Simon-Smith family $\Phi: X \to \GS^*(M)$ and a subcomplex $Z \subset X$, we have 
        \[
            \bL(\Lambda(\Phi)) \leq \bL(\Lambda_Z(\Phi))\,.
        \]
    \end{lem}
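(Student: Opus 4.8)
The plan is to obtain the inequality as a purely formal consequence of a containment of homotopy classes; no geometric input is needed beyond the definitions.

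The one point I would need to check is that $\Lambda_Z(\Phi)\subseteq\Lambda(\Phi)$. So let $\Phi'\in\Lambda_Z(\Phi)$: by definition $\Phi'|_Z=\Phi|_Z$, and there is a continuous map $\varphi\colon[0,1]\times X\to\operatorname{Diff}^\infty(M)$ with $\varphi(0,x)=\operatorname{Id}$ for every $x\in X$, with $\varphi(t,z)=\operatorname{Id}$ for every $t\in[0,1]$ and every $z\in Z$, and with $\varphi(1,x)(\Phi(x))=\Phi'(x)$ for every $x\in X$. Discarding the middle requirement, this very map $\varphi$ is exactly the data demanded by Definition \ref{def:homotopyClass} for $\Phi'$ to be homotopic to $\Phi$; hence $\Phi'\in\Lambda(\Phi)$, which gives the inclusion.

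It then remains only to compare the two widths. By definition, $\bL(\Lambda(\Phi))$ and $\bL(\Lambda_Z(\Phi))$ are infima of one and the same functional $\Psi\mapsto\sup_{x\in X}\cH^2(\Psi(x))$ --- the former taken over $\Lambda(\Phi)$, the latter over its subcollection $\Lambda_Z(\Phi)$ --- and the asserted inequality $\bL(\Lambda(\Phi))\ge\bL(\Lambda_Z(\Phi))$ is then immediate from this containment. (One should also note that the homotopy $\varphi$ realizing a relative homotopy induces, as in Remark \ref{rmk:homotopy}, a Simon--Smith family, so there is no compatibility issue when passing between the two classes.)

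I do not anticipate any substantive obstacle here: the statement is a soft bookkeeping fact, and the only step that is not a tautology --- the inclusion $\Lambda_Z(\Phi)\subseteq\Lambda(\Phi)$ --- reduces, exactly as above, to unwinding Definition \ref{def:homotopyClass} together with the definition of the relative homotopy class.
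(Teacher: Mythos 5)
Your argument has exactly the same structure as the paper's own one-line proof (which simply cites the containment $\Lambda_Z(\Phi)\subset\Lambda(\Phi)$), and your verification of that containment by unwinding the two definitions of homotopy is correct. However, there is a direction error that you have carried over: the containment gives the \emph{opposite} inequality to the one you claim to derive. If $\Lambda_Z(\Phi)\subset\Lambda(\Phi)$, then the infimum of $\Phi'\mapsto\sup_x\cH^2(\Phi'(x))$ over the \emph{larger} set $\Lambda(\Phi)$ is at most its infimum over the smaller set $\Lambda_Z(\Phi)$; that is, the containment yields
\[
\bL(\Lambda(\Phi))\ \le\ \bL(\Lambda_Z(\Phi)),
\]
not $\ge$. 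Your sentence ``the asserted inequality $\bL(\Lambda(\Phi))\ge\bL(\Lambda_Z(\Phi))$ is then immediate from this containment'' is therefore false as written. A concrete sanity check: take $X$ a single point and $Z=X$; then $\Lambda_Z(\Phi)$ is the singleton $\{\Phi\}$, so $\bL(\Lambda_Z(\Phi))=\cH^2(\Phi(x_0))$, while $\bL(\Lambda(\Phi))$ can be strictly smaller by pushing the surface down by an isotopy, so $\ge$ fails.

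The inequality sign in the lemma statement is almost certainly a typo in the paper. In the only place the lemma is invoked (the proof of Proposition \ref{prop:not5sweepout}), the authors detect a width bound via the \emph{relative} min-max theorem and then conclude that the \emph{non-relative} width is at most the same number — precisely the $\le$ direction. So the intended statement is $\bL(\Lambda(\Phi))\le\bL(\Lambda_Z(\Phi))$, and the proof you give (containment of homotopy classes plus the monotonicity of infima) is correct for that corrected statement. When writing this up, you should either record the corrected inequality, or at minimum not assert that the containment implies the $\ge$ inequality, since it implies the reverse.
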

    \begin{proof}
        This follows immediately from the fact that $\Lambda_Z(\Phi) \subset \Lambda(\Phi)$.
    \end{proof}

    \begin{thm}[Relative Simon-Smith min-max theorem]\label{thm:relative_minMax}
        Given an orientable closed Riemannian $3$-manifold $(M, g)$, a pair of cubical subcomplexes $Z \subset X$ of some $I(m, k)$, a Simon-Smith family $\Phi: X \to \GS^*(M)$ of genus $\leq \fg_0$, and a positive real number $r > 0$, if 
        \[
            L = \mathbf{L}(\Lambda_Z(\Phi)) > \sup_{z \in Z} \cH^2(\Phi(z))\,,
        \]
        then there exists a minimizing sequence $\{\Phi_i\}$ in $\Lambda_Z(\Phi)$ such that conclusions (1), (2) and (3) in Theorem~\ref{thm:minMax} still hold.
    \end{thm}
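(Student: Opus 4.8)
The plan is to reduce the relative theorem to the absolute theorem (Theorem \ref{thm:minMax}) by a doubling-type trick at the level of parameter spaces, so that no new min-max machinery needs to be built. First I would exploit the hypothesis $L = \mathbf{L}(\Lambda_Z(\Phi)) > \sup_{z \in Z} \cH^2(\Phi(z))$: this strict inequality guarantees that every min-max sequence for the relative problem has its critical set concentrated in $X \setminus Z$, since for $i$ large the areas over $Z$ are bounded above by $\sup_{z\in Z}\cH^2(\Phi(z)) + o(1) < L$. This is the analogue of the ``no concentration on the boundary'' phenomenon in \cite{Zho20}, and it is what allows the relative widths to behave like absolute widths.

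Next I would set up the comparison. One standard approach: glue two copies of $X$ along $Z$ to form $\hat X = X \cup_Z X$, and extend $\Phi$ to $\hat \Phi: \hat X \to \GS^*(M)$ by using $\Phi$ on each copy (they agree on $Z$ since any competitor in $\Lambda_Z(\Phi)$ is fixed on $Z$). A homotopy of $\hat\Phi$ rel nothing, when restricted, need not be symmetric, but one can symmetrize: given a minimizing sequence for $\Lambda(\hat\Phi)$ one produces a minimizing sequence for $\Lambda_Z(\Phi)$ by pushing the lower-area copy's deformation onto both, or more cleanly, by noting $\mathbf{L}(\Lambda(\hat\Phi)) = \mathbf{L}(\Lambda_Z(\Phi))$ because the strict inequality decouples the two halves near the critical level. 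Then apply Theorem \ref{thm:minMax} to $\hat\Phi$: the resulting pulled-tight minimizing sequence $\{\hat\Phi_i\}$ has $\bC(\{\hat\Phi_i\}) \cap \cW_{L,\leq\fg_0} \neq \emptyset$ with properties (1)–(3), and every associated min-max sequence $\hat\Phi_i(x_i)$ has $x_i \notin Z$ for large $i$ (by the area bound on $Z$), hence lands in one of the two copies of $X$; restricting to that copy and re-symmetrizing gives the desired $\{\Phi_i\} \subset \Lambda_Z(\Phi)$.

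Alternatively — and this may be cleaner to write — one avoids gluing entirely and instead re-runs the proof of Theorem \ref{thm:minMax} verbatim, observing that the pull-tight procedure, the combinatorial (Almgren–Pitts-type) argument extracting a min-max varifold, and the genus and multiplicity-one refinements of \cite{Ket19, WZ23} are all \emph{local in the parameter space}: they only ever modify $\Phi_i$ near parameters $x$ where $\cH^2(\Phi_i(x))$ is close to $L$. Since such parameters avoid a fixed neighborhood of $Z$ once $i$ is large, every deformation can be taken to equal $\operatorname{Id}$ on $Z$, i.e. it stays inside $\Lambda_Z(\Phi)$. Conclusions (1)–(3) are statements about the critical set and about parameters with $\cH^2(\Phi_i(x)) \geq L - \eta$, all of which live away from $Z$, so they transfer unchanged.

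The main obstacle is the interpolation/discretization step: the Simon-Smith min-max argument passes through Almgren–Pitts discrete sweepouts and back, and one must check that the interpolation machinery (converting a discrete family to a continuous one and vice versa, as in \cite{MN21, WZ23}) can be performed relative to $Z$ — i.e. without disturbing $\Phi|_Z$. This is where the strict inequality $L > \sup_{z\in Z}\cH^2(\Phi(z))$ is essential: it provides a definite area gap, so one chooses the interpolation parameters fine enough that all adjustments occur in the open set $\{x : \cH^2(\Phi_i(x)) > \sup_{z\in Z}\cH^2(\Phi(z)) + \delta\}$, which is compactly contained in $X \setminus Z$. Granting that the quantitative constants in \cite{MN21, Ket19, WZ23} depend only on the ambient geometry and $\fg_0$ (not on $X$), the relative statement follows. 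I expect the write-up to consist mainly of bookkeeping to certify this locality, rather than any genuinely new estimate.
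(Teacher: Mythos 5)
Your second approach is exactly what the paper does: since $L > \sup_{z\in Z}\cH^2(\Phi(z))$, one re-runs the proof of Theorem \ref{thm:minMax} and simply composes every deformation with a cut-off on $X$ that vanishes on the compact sublevel set $\{x : \cH^2(\Phi(x)) \leq (L + \sup_Z\cH^2\circ\Phi)/2\}$, which contains a neighborhood of $Z$. Your doubling construction is unnecessary, and the ``main obstacle'' you flag (interpolation between discrete Almgren--Pitts sweepouts and continuous families) does not actually arise here: the paper's Simon--Smith framework stays continuous throughout — deformations are isotopies built via Proposition \ref{prop:an_deform} and cut-off functions on $X$, and \S\ref{subsect:Refinement_MN} explicitly notes that the $(\varepsilon,\delta)$-deformation machinery sidesteps the interpolation lemmas of \cite{MN21} — so no relative discretization needs to be justified.
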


    This relative min-max theorem can be proved in much the same way as~\ref{thm:minMax}, carefully following the steps of that proof. A proof is also provided in \S \ref{sect:min-max_i}.
    
\subsection{Almgren-Pitts min-max theory}
    In 1962, after F.J. Almgren completed his thesis~\cite{Alm62}, H. Federer suggested that he develop a Morse theory for minimal surfaces. In~\cite{Alm65}, Almgren initiated a program to develop a geometric version of the calculus of variations in the large, aimed at proving the existence of critical points for the area functional. This program is based on what is nowadays called the \emph{Almgren isomorphism theorem}~\cite{Alm62, Alm65} (see also~\cite[\S 2.5]{LMN18}). In particular, when equipped with the flat topology, the modulo $2$ cycle space $\Zc_{n}(M;\Z_2)$ is weakly homotopy equivalent to $\R\mathbb P^\infty$. The rich topological structure of $\Zc_{n}(M;\Z_2)$ makes Morse theory possible. 
    
    In the following, we denote the cohomology ring of $\Zc_{n}(M;\Z_2)$ by $\Z_2[\bar\lambda]$, where $\bar \lambda \in H^1(\Zc_{n}( M;\Z_2), \Z_2)$ is the generator. Let $\Pc_p$ be the set of all $\Fb$-continuous maps $\Phi:X\to \Zc_2(M; \bF; \Z_2)$, where $X$ is a finite simplicial complex, such that $\Phi^*(\bar\lambda^p)\ne 0$. Elements of $\Pc_p$ are called {\it $p$-sweepouts.} Note that every finite cubical complex is homeomorphic to a finite simplicial complex and vice versa (see~\cite[\S 4]{BP02}). So in the above notion of $p$-sweepouts, we may as well require $X$ to be a finite cubical complex.

    \begin{defn}[Almgren-Pitts $p$-width]
        Denoting by $\dmn(\Phi)$ the domain of $\Phi$, the {\it $p$-width} of $(M,g)$ is defined by 
        \[
            \omega_p(M,g):=\inf_{\Phi\in\Pc_p}\sup_{x\in \dmn(\Phi)}\Mb(\Phi(x))\,.
        \]
    \end{defn}
        
    \begin{rmk}\label{def_equiv_width}
        There is an equivalent definition of $p$-widths from~\cite[Remark 5.7]{MN21}: First, an $\Fc$-continuous map $\Phi:X\to\Zc_n(M;\Z_2)$ is said to have {\it no concentration of mass} if 
        \[
            \lim_{r\to 0}\sup_{x\in X,q\in M}\norm{\Phi(x)}(B_r(q))=0\,.
        \]
        Then, when defining the $p$-width, instead of using the collection $\Pc_p$, we use the collection of all $\Fc$-continuous maps $\Phi$ with no concentration of mass such that $\Phi^*(\bar\lambda^p)\ne 0$. 

        The interpolation results therein imply that the two definitions yield the same $p$-widths.
    \end{rmk}

    As in Simon-Smith min-max theory, we can define the notion of minimizing sequences, critical sets, and min-max sequences for the $p$-widths.

    In this paper, the only result we require from the Almgren-Pitts min-max theory is that the $5$-width of the unit $3$-sphere $(S^3,\bar g)$ is greater than $4\pi$. In fact, C. Nurser~\cite{Nur16} proved that
    \[
        \omega_5(S^3,\bar g)=2\pi^2\,.
    \]

\section{Proof of Theorem~\ref{thm:main} }\label{sect:mainProof} 

    In this section, we will prove our main theorem, Theorem~\ref{thm:main}: Given a  $3$-sphere $(S^3,g_0)$ of positive Ricci curvature, it contains at least $5$ embedded minimal tori. The proof consists of five key steps: 
    \begin{enumerate}
        \item Define a 9-parameter Simon-Smith family $\Psi$ of genus $\leq 1$,  building upon the $5$-parameter canonical family discovered by Marques-Neves in their proof of the Willmore conjecture~\cite{MN14}.
        \item Repeatedly apply the Simon-Smith min-max theory to $\Psi$. Namely, each time we detect a minimal surface, which must be either a multiplicity one minimal sphere or torus by Wang-Zhou's multiplicity one theorem~\cite{WZ23}, we remove the region in the family near the minimal surface (which is like a cap), resulting in a new family of maximal area strictly less than the area of the minimal surface. We repeat the min-max process on this new family.
        \item  Assemble pieces we obtained in step 2 to get a $9$-parameter family $\Xi$, which is, in a certain sense, homotopic to $\Psi$.
        \item  Let $N$ be the number of embedded minimal tori obtained via min-max in Step 2. Then we show that the parameter space $\dmn(\Xi)$ of $\Xi$ can be decomposed into a union of subsets $D_0,D_1,\cdots,D_N$, such that the restriction $\Xi|_{D_0}$ is not a 5-sweepout, while for each $i=1,\cdots,N$, the image of $H_1(D_i;\Z_2)$ mapped into $H_1(\dmn(\Xi);\Z_2)$ is trivial.
        \item  Show that $N\geq 5$ by analyzing the cohomology ring of $\dmn(\Xi)$ and applying a Lyusternik-Schnirelmann argument.
    \end{enumerate}
    
\subsection{A 9-parameter family}\label{subsect:9param_family}
    Let us first define a $9$-parameter family $\Psi$ of surfaces in the unit $3$-sphere. It is worth noting that F. C. Marques previously considered a similar family to prove that the $8$-width of the unit $3$-sphere is $2\pi^2$~\cite{Mar23}.

    For simplicity, throughout \S\ref{subsect:9param_family}, we denote the unit $3$-sphere by $\mathbb{S}^3$, and the unit open $4$-ball in $\R^4$ by $\mathbb{B}^4$. We fix an unoriented Clifford torus $\Sigma_0$ in $\mathbb{S}^3$.
    
    Consider for each $v\in \mathbb{B}^4$ the conformal diffeomorphism $F_v:\mathbb{S}^3 \to \mathbb{S}^3$ given by
    \begin{equation}\label{eq_conf_map}
        F_v(x)=\frac{1-|v|^2}{|x-v|^2}(x-v)-v\,,
    \end{equation}
    which pushes everything away from $v/|v|$ and towards $-v/|v|$ whenever $v\ne 0$. 
    
    Next, {\it we choose an orientation for $\Sigma_0$.} This gives an inward normal direction for $\Sigma_0$, and consequently, for each $F_v(\Sigma_0)$. With this orientation, we can define the signed distance $d_v:\mathbb{S}^3 \to \R$ to $F_v(\Sigma_0)$, which is positive outside $F_v(\Sigma_0)$ and negative inside. 
    
    Then we define the following continuous family of $2$-cycles: For each pair $(v,t)\in \mathbb{B}^4\x [-\pi,\pi]$, let
    \begin{equation} \label{5family}
        \Sigma(v,t) := \partial\{x\in \mathbb{S}^3:d_v(x)<t\}\in \cZ_2(\mathbb S^3;\Z_2)\,.
    \end{equation}
    Marques-Neves~\cite[\S 5]{MN14} showed that by ingeniously reparametrizing this family of cycles {\it near} the boundary of the parameter space $\mathbb{B}^4 \x [-\pi,\pi]$, one can actually continuously (in the flat topology) extend this collection to the boundary, and consequently obtains a continuous family of $2$-cycles in $\cZ_2(\mathbb S^3;\Z)$ parametrized by $\overline{\mathbb{B}^4} \x [-\pi,\pi]$. Furthermore, any two ``antipodal" points $(v,t)$ and $(-v,-t)$ on the boundary would correspond to two $2$-cycles that differ exactly by a sign. Then, by identifying the boundary of parameter space via $(v,t)\sim (-v,-t)$ and forgetting the orientations of all cycles, C. Nurser in~\cite[\S 3.4.2]{Nur16} obtained an $\cF$-continuous map 
    \[
        \Phi_5:\RP^5\to\Zc_2( \mathbb S^3;\Z_2)\,,
    \] 
    which he showed to be a 5-sweepout by proving $\Phi_5^*(\bar\lambda^5)\ne 0$ and $\Phi_5$ has no concentration of mass ($\bar\lambda$ is the non-trivial element of $H^1(\cZ_2(\mathbb S^3;\Z_2);\Z_2)$). 
    The details of this procedure was carried out carefully in 
   ~\cite[\S 3.4.2]{Nur16}, but for the sake of completeness, we will also provide the details in \S\ref{sect:family_Psi}. 

    Now, for each {\it oriented} Clifford torus $\Sigma$, one can repeat the above procedure, with $\Sigma$ in place of $\Sigma_0$, to obtain a map
    \[
        \Phi^\Sigma_5:\RP^5\to\Zc_2(\mathbb S^3;\Z_2)\,.
    \]
    Thus, if we let $\tilde \cC$ be the space of oriented Clifford tori, we immediately obtain a map 
    \begin{equation}\label{eq:PsiTilde}
        \RP^5\x\tilde \cC\to\Zc_2(\mathbb S^3;\Z_2)\,.
    \end{equation}
    However, the topology of the parameter space of this family would not satisfy our need. We would want to identify Clifford tori of opposite orientations. Thus, we wish to identify the subfamily $\Phi^\Sigma_5$ and  $\Phi^{-\Sigma}_5$ of the family (\ref{eq:PsiTilde}), where $-\Sigma$ is  $\Sigma$ with an opposite orientation. We claim that this can be done. More precisely, let us consider instead the space $\cC$ of {\it unoriented} Clifford tori, which is topologically an $\RP^2\x\RP^2$~\cite[\S 3.4.3]{Nur16}. We will prove in \S \ref{sect:family_Psi} the following theorem.

    \begin{thm}\label{thm:PsiDef}
        There exists an $\RP^5$-bundle $Y$ over $\RP^2\x\RP^2$, and a Simon-Smith family of genus $\leq 1$
        \[
            \Psi:Y\to \GS^*(S^3)\,,
        \]
        satisfying the following property: If $\Sigma$ is an oriented Clifford torus and $b\in\RP^2\x\RP^2$ corresponds to $\{\Sigma,-\Sigma\}$, then there exists a homeomorphism $f:Y_{b}\to\RP^5$ such that $\Phi^\Sigma_5\circ f=[\Psi|_{Y_{b}}]$.
    \end{thm}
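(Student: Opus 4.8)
The plan is to build the $\RP^5$-bundle $Y$ over $\cC\cong\RP^2\times\RP^2$ as an associated bundle, and then to verify that the Marques--Neves/Nurser construction is sufficiently natural (equivariant) to descend to a globally defined Simon--Smith family over $Y$. The starting point is the map \eqref{eq:PsiTilde}, which I would first reinterpret bundle-theoretically: the assignment $\Sigma\mapsto\RP^5$ (the parameter space of $\Phi^\Sigma_5$) is a bundle over $\tilde\cC$, and since $\Phi^{-\Sigma}_5$ and $\Phi^\Sigma_5$ have the same image as maps into $\cZ_2(\mathbb S^3;\Z_2)$ (after forgetting orientations), the deck transformation $\Sigma\mapsto-\Sigma$ of the double cover $\tilde\cC\to\cC$ lifts to an involution of the total space covering it. Taking the quotient by this involution produces an $\RP^5$-bundle $Y\to\cC$ together with a well-defined map $|\Psi|:Y\to\cV_2(S^3)$ (or, after choosing compatible orientations on the level surfaces, a map into $\GS^*(S^3)$), whose restriction to the fiber $Y_b$ over $b=\{\Sigma,-\Sigma\}$ is $[\Phi^\Sigma_5]$ up to the homeomorphism $f:Y_b\to\RP^5$ implicit in the construction. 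This is exactly the stated compatibility property.

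The key steps, in order, would be: (1) recall Nurser's construction of $\Phi^\Sigma_5$ from $\Sigma$ in enough detail to see that every step --- the conformal reparametrization family $\{F_v\}_{v\in\mathbb B^4}$, the signed distance function $d_v$ to $F_v(\Sigma)$, the level-set cycles $\Sigma(v,t)$, the Marques--Neves boundary reparametrization near $\partial(\overline{\mathbb B^4}\times[-\pi,\pi])$, and the final antipodal identification --- depends on $\Sigma$ only through the oriented submanifold $F_v(\Sigma)$, hence is equivariant under the natural $O(4)$-action (or at least under the relevant structure group) and in particular under orientation reversal; (2) assemble these fiberwise constructions into a continuous family over $\tilde\cC$, using that $\tilde\cC$ is a homogeneous space and the construction is continuous in $\Sigma$ in the smooth topology, giving an honest fiber bundle with fiber $\RP^5$; (3) descend along the double cover $\tilde\cC\to\cC$ by checking that the involution $\Sigma\mapsto-\Sigma$ acts on each fiber $\RP^5$ by a homeomorphism compatible with the (unoriented) cycle images --- this is where one uses that reversing the orientation of $\Sigma$ reverses the sign of $d_v$, hence sends $\Sigma(v,t)$ to $\Sigma(v,-t)$, which is the same \emph{unoriented} surface, and that the antipodal identification on $\RP^5$ already absorbs the sign; (4) verify that the resulting map $\Psi:Y\to\GS^*(S^3)$ satisfies the four (five, with the genus bound) conditions of Definition \ref{def:genusGFamily}: the level sets of a signed distance function to an embedded torus are either empty, or generalized surfaces of genus $0$ or $1$ with a uniformly bounded number of singular points (the singularities occur only at finitely many focal/cut points), conditions \eqref{item:SimonSmithFamilyVarifoldTopo}--\eqref{item:SimonSmithFamilyLocalSmooth} follow from smooth dependence of $d_v$ on $v$ away from the focal set together with the Marques--Neves continuity across the boundary, and the genus bound \eqref{item:SimonSmithFamilyGenus} with $\fg_0=1$ holds because level sets of the distance function to a torus in $S^3$ have genus at most $1$ (each level set bounds a region on one side whose topology interpolates between a solid torus and its complement).

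The main obstacle I anticipate is step (4), specifically controlling the genus and the singular set uniformly across the \emph{entire} $9$-parameter family --- in particular near the reparametrized boundary of $\overline{\mathbb B^4}\times[-\pi,\pi]$, where the surfaces degenerate (to great spheres, or to small spheres, or to $\Sigma$ itself scaled). One must check that the Marques--Neves reparametrization, which was designed to make the family continuous in the \emph{flat} topology as a family of cycles, can be arranged (or is automatically) continuous in the stronger Simon--Smith sense (smooth convergence away from finitely many points, with $N_P(\Psi)<\infty$), and that no level set in the family acquires genus $\geq 2$ or an unbounded number of singularities in the limit. A secondary but genuine point is the bundle-theoretic one in steps (2)--(3): one must make sure the ``implicit homeomorphism'' $f:Y_b\to\RP^5$ can be chosen so that the gluing/descent is continuous in $b$, i.e.\ that the local trivializations of $Y\to\cC$ are compatible with Nurser's parametrization fiberwise; this is essentially automatic from the naturality in $\Sigma$ established in step (1), but it must be stated carefully. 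The detailed verification of these continuity and genus estimates is deferred to \S\ref{sect:property8sweepout}, as the theorem statement indicates.
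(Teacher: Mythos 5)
Your overall plan matches the paper's: establish an orientation-reversal symmetry of Nurser's family, form the $\RP^5$-bundle $Y$ as a quotient of $\RP^5\times\tilde\cC$ by the $\Z_2$-action, descend $\Psi$ to $Y$, and then verify the Simon--Smith axioms. The symmetry you identify in your step (3) — that reversing the orientation of $\Sigma$ negates the signed distance and sends $\Sigma(v,t)$ to $\Sigma(v,-t)$, which the antipodal identification on $\RP^5$ already absorbs — is indeed the content of the paper's Proposition \ref{prop:TildePsiSym}, and the quotient construction is exactly what the paper does. Up to that point the proposal is on track.

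The genuine gap is in your step (4), at the genus bound and the uniform singular-set bound. You assert that ``level sets of the distance function to a torus in $S^3$ have genus at most $1$ (each level set bounds a region on one side whose topology interpolates between a solid torus and its complement),'' and that the singularities occur only at finitely many cut/focal points. Neither claim is correct in the generality you invoke it: for a generic embedded torus the signed distance function is not Morse, its cut locus can have positive-dimensional strata, and sublevel sets can pick up extra handles along the way — ``interpolating'' between a solid torus and the complement of a solid torus does not constrain the intermediate boundary genus. The paper does not rely on such a general statement; it proves Proposition \ref{lem:behaviourSigmavtz}, which exploits the fact that $F_v(\Sigma)$ is, like the Clifford torus, a \emph{channel surface}: the envelope of a one-parameter family of round spheres $F_v(S_\beta)$. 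The level set $\Sigma(v,t)$ is then the envelope of the spheres with the shifted radii $\tilde r(\beta)+t$, and the whole topological analysis hinges on Claim \ref{claim:tildeROneMinOneMax}, an explicit computation with the conformal map \eqref{eq_conf_map} showing that $\tilde r(\beta)$ has exactly one local minimum and one local maximum for every $v\notin\{0\}\cup\Sigma_0$. That is what forces each level set to be a smooth torus, a sphere with at most two conical singularities, a great circle, a point, or empty — nothing of higher genus, and never more than two singular points — and hence gives $N_P(\Psi)<\infty$ and genus $\le 1$ across the entire $9$-parameter family, as well as making the local-smooth-convergence condition \eqref{item:SimonSmithFamilyLocalSmooth} verifiable by an explicit choice of $P(v,t)$ near $\partial\Omega_\varepsilon(z)$. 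Deferring ``the detailed verification'' to \S\ref{sect:property8sweepout} is circular here, since that verification \emph{is} the proof; you would need to supply the channel-surface/one-min-one-max argument (or an equivalent replacement) for the proposal to close.
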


    Note that in the above, $Y_b$ denotes  the fiber at $b$,  $\Psi|_{Y_b}$  the restriction of $\Psi$ on $Y_b$, and  $[\Psi|_{Y_b}]$  the induced map into $\cZ_2(\mathbb S^3;\Z_2)$. Also note that since we are in the Simon-Smith setting, $\Psi$ maps into the set $\GS^*(S^3)$ of punctate surfaces instead of the cycle space $\cZ_2(\mathbb S^3;\Z_2)$.

    \begin{rmk}
        In~\cite[\S 3.4.3]{Nur16} C. Nurser introduced a $9$-parameter family which is parametrized by $\RP^5\x\RP^2\x\RP^2$. However, that family is actually not well-defined. The issue is that to define an $\RP^5$-family as described above, one needs to specify an orientation for the Clifford torus. There is not a continuous way to assign an orientation to each unoriented Clifford torus in $\RP^2\x\RP^2$.
    \end{rmk}

\subsection{Repetitive min-max} 

    Let $(S^3,g_0)$ be the $3$-sphere given in our main theorem, Theorem~\ref{thm:main}. To prove the theorem, we can assume $(S^3,g_0)$ has only finitely many embedded minimal tori; otherwise, we would already have at least five embedded minimal tori and the proof would be complete.

    In \S\ref{subsect:9param_family}, we obtained a $9$-parameter Simon-Smith family $\Psi$ of genus $\leq 1$ on the unit $3$-sphere. But this family can also be viewed as a Simon-Smith family in $S^3$ with any metric.
    
    We will soon run the min-max process repeatedly starting from $\Psi$. To ensure that the process terminates in finitely many steps and that all the minimal surfaces detected have multiplicity one, we need to perturb $g_0$ to a new metric $g'$ with the following properties:
    \begin{prop}\label{prop:perturbMetric}
        For a Riemannian metric $g$ of positive Ricci curvature on $S^3$, if there are finitely many embedded minimal tori in $(S^3, g)$, then there exists another metric $g'$ of positive Ricci curvature such that: 
        \begin{enumerate}[label=\normalfont(\arabic*)]
            \item\label{item:sameNumber}  $(S^3,g)$ and $(S^3,g')$ have the same number of embedded minimal tori.
            \item $(S^3,g')$ has finitely many embedded minimal spheres, each of which is non-degenerate.
            \item \label{item:area_sphere_neq_torus} For any embedded minimal sphere $S$ in $(S^3, g')$ and any positive integers $m$, the value $m \cdot \area_{g'}(S)$ can never equal the $g'$-area of an embedded minimal torus in $(S^3, g')$.
            \item \label{item:area_sphere_linear_indep} For a pair of distinct embedded minimal spheres $S_1, S_2$ in $(S^3, g')$ and a pair of positive integers $m_1, m_2$, we have
            \[
                m_1\cdot \area_{g'}(S_1) \neq m_2 \cdot \area_{g'}(S_2)\,.
            \]
        \end{enumerate}
    \end{prop}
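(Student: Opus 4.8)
\textbf{Proof proposal for Proposition \ref{prop:perturbMetric}.}

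The plan is to obtain $g'$ as a small perturbation of $g$ which is a generic (bumpy) metric among those close to $g$, and then to arrange the area-independence conditions by a further arbitrarily small perturbation. First, I would recall that the bumpy metric theorem of White (in its version allowing prescribed control) guarantees that bumpy metrics are residual in $\Gamma^\infty(S^3)$; since positive Ricci curvature is an open condition, bumpy metrics of positive Ricci curvature are residual in a $C^\infty$-neighborhood of $g$. Pick such a $g_1$ very close to $g$. Because $g_1$ is bumpy, every embedded minimal sphere in $(S^3,g_1)$ is non-degenerate, and by the Choi--Schoen compactness theorem \cite{CS85} (the genus and, via the Frankel property from $\Ric>0$, the absence of accumulation, together with the area bound coming from the fact that non-degenerate minimal surfaces are isolated) there are only finitely many embedded minimal spheres; this gives conditions (2). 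For condition (1), I would invoke the stability of the count of minimal tori: here one has to be slightly careful, since a priori a perturbation could create new minimal tori or destroy old ones. The point is that by hypothesis $(S^3,g)$ has finitely many embedded minimal tori, each of which, after a generic perturbation, is non-degenerate; non-degenerate minimal surfaces persist uniquely under small perturbations (implicit function theorem), and no new minimal tori of bounded area can appear in a sufficiently small neighborhood of $g$ by a contradiction-and-compactness argument (a sequence of metrics $g_i \to g$ with new minimal tori $T_i$ would, by Choi--Schoen-type compactness and the genus bound, subconverge to a minimal torus of $(S^3,g)$, forcing the $T_i$ to eventually coincide with one of the finitely many fixed tori). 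Area bounds are not an issue: in a positively curved $3$-sphere, minimal surfaces of bounded genus have a uniform area bound depending only on a lower bound on $\Ric$ and the genus. Hence condition (1) holds for $g_1$.

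Next, to achieve (3) and (4), I would perform one further perturbation supported away from the finitely many minimal spheres and tori, chosen so small that it preserves (1), (2), and positivity of Ricci curvature. Conditions (3) and (4) are real-analytic-type (in fact just algebraic) constraints on the finite list of areas: writing $a_1,\dots,a_n$ for the $g_1$-areas of the finitely many minimal spheres and $b_1,\dots,b_m$ for the areas of the finitely many minimal tori, we want to rule out the countably many ``bad'' hyperplanes $\{m_i a_i = b_j\}$ and $\{m_1 a_1 = m_2 a_2\}$ as $m, m_i$ range over positive integers. Since there are only countably many such forbidden relations and each cuts out a set of measure zero (a proper affine subspace) in the space of possible area-tuples, it suffices to check that the area-tuple $(a_1,\dots,a_n,b_1,\dots,b_m)$ can be moved in an $n{+}m$-parameter family of metric perturbations with surjective derivative — i.e., the areas can be varied independently. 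This is a standard first-variation computation: for a non-degenerate minimal surface $\Sigma$ and a metric variation $g_1 + s\,h$, the derivative of $\area(\Sigma_s)$ (where $\Sigma_s$ is the implicitly-continued minimal surface) at $s=0$ is $\tfrac12 \int_\Sigma \langle h, g_1|_\Sigma \rangle \, d\mathcal{H}^2$ (the terms involving the variation of $\Sigma$ vanish by minimality), so by choosing $h$ supported in disjoint small neighborhoods of each surface one varies each area independently. Therefore a generic arbitrarily small perturbation lands outside all the bad sets, giving $g'$ with all four properties.

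The main obstacle, I expect, is condition (1) — specifically, ruling out the creation of \emph{new} minimal tori under the perturbation. Non-degeneracy gives persistence and local uniqueness of the existing tori, but to exclude new ones one genuinely needs a compactness input with a uniform area bound, and one must be careful that the relevant compactness (Choi--Schoen) controls the genus in the limit so that a limit of tori is a torus (and in particular does not degenerate, given the non-degeneracy of the limit metric's minimal surfaces and the Frankel property). Everything else — bumpiness as a residual condition, openness of $\Ric > 0$, the measure-zero / transversality argument for (3) and (4), and the first-variation formula for areas — is routine. One should also double-check the claim implicit in (2) that a bumpy positively-curved metric has only finitely many minimal \emph{spheres}: this again uses the uniform area bound for genus-$0$ minimal surfaces in a positively curved $S^3$ plus Choi--Schoen compactness, together with non-degeneracy forcing isolation in varifold topology, hence finiteness.
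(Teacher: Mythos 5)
Your overall plan — bumpy-metric perturbation plus Choi--Schoen compactness for finiteness and non-degeneracy of the spheres, then a further small perturbation localized near each surface combined with a measure-zero/transversality argument for the area independence — is the same skeleton the paper uses, and your computation for varying areas is essentially Lemma~\ref{Lem_MetricPerturb_Abstract} plus the explicit family of conformal bumps near each $S_j$ that the paper constructs at the end. So (2), (3), (4) are fine modulo your treatment of (1).

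The gap is in (1), and it is genuine: your persistence-and-compactness argument assumes, but the hypothesis does not grant, that the $g$-minimal tori are non-degenerate. You write that ``each of which, after a generic perturbation, is non-degenerate'' and then invoke the implicit function theorem, but that generic perturbation is the one being constructed — it cannot be assumed to leave the tori intact. If some $g$-minimal torus $T$ is degenerate, a bumpy perturbation $g_1$ near $g$ need not have a unique $g_1$-minimal torus near $T$: there could be several or none. Your compactness argument only shows that any $g_i$-minimal torus $T_i$ with $g_i \to g$ subconverges (with multiplicity one, since a higher-multiplicity or genus-dropping limit would force a degenerate stable sphere, excluded by $\Ric > 0$) to some $g$-minimal torus $\Sigma_\infty \in \cT$; it does not force $T_i = \Sigma_\infty$. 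The step ``forcing the $T_i$ to eventually coincide with one of the finitely many fixed tori'' needs exactly the local uniqueness that non-degeneracy would give and which you do not have. This is the issue you flag at the end but do not actually resolve.

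The paper circumvents this by never perturbing the metric near the tori at all. Lemma~\ref{Lem_MetricPerturb_Neighb of Torus} produces an open set $U \supset \bigcup_j T_j$ and a $C^\infty$ neighborhood $\cO$ of $g_0$ such that (a) no minimal sphere of bounded area is contained in $U$, and (b) for $g \in \cO$ with $g|_U = g_0|_U$, every $g$-minimal torus of bounded area already lies in $\cT$. Part (b) uses the compactness argument you sketch, but it terminates differently: once $T_j \subset U$ for large $j$, and $g_j|_U = g_0|_U$, the torus $T_j$ is $g_0$-minimal, hence $T_j \in \cT$ by the finiteness hypothesis — no non-degeneracy needed. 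The price is that the bumpy-metric argument must then be carried out inside the constrained space $\Gamma^q$ of metrics conformal to $g_0$ and equal to $g_0$ on $U$; one cannot simply quote White's theorem. The paper reproves a Sard--Smale statement (Lemma~\ref{Lem_MetricPerturb_WhiteBumpyMetric}) in this constrained space, and the key input is part (a) of Lemma~\ref{Lem_MetricPerturb_Neighb of Torus} together with unique continuation of Jacobi fields: since every sphere sticks out of $U$, a conformal bump supported outside $\overline{U}$ still detects any nontrivial Jacobi field, so White's Condition (C) holds. That constrained bumpy-metric lemma is the technical heart of the paper's proof and the piece your outline is missing.
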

    
    The proof of the proposition above will be postponed to  \S \ref{sect:perturbMetric}. In fact, from the proof, the metric $g'$ can be made arbitrarily close to $g$ in $C^\infty$, and all the $g'$-minimal tori coincide with the $g$-minimal tori. However, we will not need these facts. 

    Now, let us apply the above proposition to $(S^3,g_0)$ to obtain a modified metric $g'$ satisfying the above properties. To prove Theorem~\ref{thm:main}, it suffices to show that $(S^3,g')$ has at least five embedded minimal tori. Therefore, in what follows, we will focus on $(S^3, g')$.
    We will need the following useful lemma.

    \begin{lem}\label{lem:W_one_elmt}
        In $(S^3, g')$, for any $L > 0$, suppose that $\mathcal{W}_{L, \leq 1}$ (defined using (\ref{eq:genus_bound})) has a varifold associated with a multiplicity-one minimal surface $\Sigma$.
        \begin{enumerate}[label=\normalfont(\arabic*)]
            \item If $\Sigma$ is a minimal torus, then every varifold in $\mathcal{W}_{L, \leq 1}$ is associated with a multiplicity-one minimal torus. 
            \item If $\Sigma$ is a multiplicity-one minimal sphere, then 
            \[
                \mathcal{W}_{L, \leq 1} = \{|\Sigma|\}\,.
            \]
        \end{enumerate}
        In particular, $\mathcal{W}_{L, \leq 1}$ consists of finitely many varifolds, each associated with a multiplicity-one minimal surface.
    \end{lem}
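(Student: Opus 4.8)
The plan is to exploit the two number-theoretic genericity conditions in Proposition~\ref{prop:perturbMetric}, namely \eqref{item:area_sphere_neq_torus} and \eqref{item:area_sphere_linear_indep}, together with the Frankel property coming from $\Ric > 0$. Suppose $W \in \cW_{L,\leq 1}$ contains a multiplicity-one minimal surface $\Sigma$, and write a general element of $\cW_{L,\leq 1}$ as $W' = \sum_j m_j |\Gamma_j|$ with $\|W'\|(S^3) = L = \area_{g'}(\Sigma)$ and the genus bound \eqref{eq:genus_bound}. Since $(S^3,g')$ has $\Ric>0$, the Frankel property holds: any two closed embedded minimal surfaces intersect, hence the disjoint collection $\{\Gamma_j\}$ appearing in $W'$ consists of a \emph{single} connected minimal surface $\Gamma$, so $W' = m\,|\Gamma|$ for a single positive integer $m$ and connected embedded minimal $\Gamma$. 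Likewise $\Sigma$ is connected.

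\smallskip
First I would handle case (1), where $\Sigma$ is a minimal torus, so $L = \area_{g'}(\Sigma)$ is the area of an embedded minimal torus. Take any $W' = m\,|\Gamma| \in \cW_{L,\leq 1}$. If $\Gamma$ were a sphere, then $L = m\,\area_{g'}(\Gamma)$ would express the area of an embedded minimal torus as a positive integer multiple of the area of an embedded minimal sphere, contradicting \eqref{item:area_sphere_neq_torus}. So $\Gamma$ is not a sphere. Now invoke the genus bound \eqref{eq:genus_bound}: since $\Phi$ has genus $\leq 1$, we get $m\,\fg(\Gamma) \leq 1$ if $\Gamma$ is orientable, or $\tfrac12 m(\fg(\Gamma)-1)\leq 1$ if $\Gamma$ is non-orientable. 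If $\Gamma$ is a non-orientable surface, its orientable double cover has area $2m\,\area_{g'}(\Gamma) = 2L$, which is incompatible with being a torus of area $L$ — more directly, an embedded non-orientable surface in $S^3$ cannot bound, but each element of $\GS^*(S^3)$ is a boundary by Definition~\ref{def:generalized_surf}(3) and Proposition~\ref{prop:SS_AP}\eqref{item:prop:SS_AP_1}, so in fact $I_N=\emptyset$ throughout and we may assume $\Gamma$ orientable. Then $m\,\fg(\Gamma)\leq 1$ forces either $m=1,\ \fg(\Gamma)=1$, or $\fg(\Gamma)=0$; the latter is the sphere case already excluded. Hence $W' = |\Gamma|$ with $\Gamma$ a multiplicity-one minimal torus, proving (1).

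\smallskip
Next, case (2): $\Sigma$ is a multiplicity-one minimal sphere, so $L = \area_{g'}(\Sigma)$. Let $W' = m\,|\Gamma| \in \cW_{L,\leq 1}$ with $\Gamma$ connected embedded minimal (orientable, as above). If $\fg(\Gamma)\geq 1$, then $\Gamma$ is a torus (genus bound), and $L = m\,\area_{g'}(\Gamma)$ would again contradict \eqref{item:area_sphere_neq_torus}. So $\Gamma$ is a minimal sphere, and $m\,\area_{g'}(\Gamma) = L = 1\cdot\area_{g'}(\Sigma)$; by \eqref{item:area_sphere_linear_indep} (applied with the pair $(\Gamma,\Sigma)$, noting it also rules out $m\neq 1$ even when $\Gamma=\Sigma$, and rules out $\Gamma\neq\Sigma$ entirely) we conclude $m=1$ and $\Gamma = \Sigma$. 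Thus $\cW_{L,\leq 1} = \{[\Sigma]\}$.

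\smallskip
For the final clause: in case (2) finiteness is immediate. In case (1), every element is $|\Gamma|$ for an embedded minimal torus $\Gamma$ with $\area_{g'}(\Gamma) = L$; by the hypothesis that $(S^3,g')$ has only finitely many embedded minimal tori (Proposition~\ref{prop:perturbMetric}\eqref{item:sameNumber}), there are only finitely many such, hence $\cW_{L,\leq1}$ is finite. The only point requiring a little care — and the main obstacle — is ruling out multiplicities and extra components cleanly; this is precisely where the Frankel property (for connectedness/disjointness) and the arithmetic conditions \eqref{item:area_sphere_neq_torus}, \eqref{item:area_sphere_linear_indep} (for multiplicity one) are used, and one must also double-check that non-orientable components genuinely cannot occur, which follows since every generalized surface bounds in $S^3$.
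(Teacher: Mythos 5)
Your proof is correct and follows essentially the same route as the paper's: Frankel (from positive Ricci) forces each $W'\in\cW_{L,\leq 1}$ to be $m|\Gamma|$ for a single connected embedded minimal $\Gamma$, the genus bound restricts $\Gamma$ to sphere-with-multiplicity or multiplicity-one torus, and then Proposition~\ref{prop:perturbMetric}\eqref{item:area_sphere_neq_torus} and \eqref{item:area_sphere_linear_indep} dispatch the two cases. One small inaccuracy: Proposition~\ref{prop:perturbMetric}\eqref{item:area_sphere_linear_indep} applies only to \emph{distinct} spheres, so it does not ``rule out $m\neq 1$ even when $\Gamma=\Sigma$'' as you parenthetically claim; when $\Gamma=\Sigma$ the identity $m\cdot\area_{g'}(\Gamma)=\area_{g'}(\Sigma)$ already forces $m=1$ by positivity of area, with no arithmetic genericity needed. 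That caveat aside, the argument is sound and you even spell out (more than the paper does) the orientability point and the finiteness conclusion.
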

    \begin{proof}
        Since $(S^3, g')$ has positive Ricci curvature, the Frankel property holds, i.e., every pair of connected embedded minimal surfaces intersects with each other. Consequently, every varifold in $\mathcal{W}_{L, \leq 1}$ is associated with a surface, which is either a minimal torus with multiplicities, or a minimal sphere with multiplicities. 

        For (1), by \ref{item:area_sphere_neq_torus} of Proposition~\ref{prop:perturbMetric}, every varifold in $\mathcal{W}_{L, \leq 1}$ is associated with a multiplicity-one minimal torus. 

        For (2), by \ref{item:area_sphere_neq_torus} of Proposition~\ref{prop:perturbMetric} again, every varifold in $\mathcal{W}_{L, \leq 1}$ is associated with a minimal sphere with multiplicities. By  \ref{item:area_sphere_linear_indep}
        of Proposition~\ref{prop:perturbMetric}, we have $\mathcal{W}_{L, \leq 1} = \{[\Sigma]\}$.
    \end{proof}

    We now apply the min-max process repeatedly, starting with our canonical family $\Psi$, treated as a family in $(S^3,g')$. We choose some $d_0>0$ to be less than the distance  $\bF(\Sigma,\Sigma')$   for any two minimal surfaces $\Sigma,\Sigma'$  of genus 0 or 1 in $(S^3, g')$  ($\bF$ denotes the distance for currents).  Note, this is possible as we assumed  $(S^3, g')$ has only  finitely many minimal tori and spheres.  (Though, in \S \ref{sect:genus1caps}, we will need to choose an even smaller $d_0$, depending on $g'$ only; see the paragraph following the proof of Proposition~\ref{prop:tauSigma}.)
    
\subsubsection{First stage.}\label{subsubsect:firstStage} 
    In $(S^3, g')$, applying Theorem~\ref{thm:minMax} to $\Psi$ with $r=d_0$, we obtain a varifold $V^1$ in $\cW^1:=\cW_{\bL(\Lambda(\Psi)),\leq 1}$, where $V^1$ is induced by a multiplicity-one minimal surface of genus $0$ or $1$. By Lemma~\ref{lem:W_one_elmt}, we have the following dichotomy:
    \begin{itemize}
        \item[Case 1] $\cW^1$ consists finitely many finitely many varifolds, each associated to a multiplicity-one minimal torus.
        \item[Case 2] $\cW^1 = \{V^1\}$ where $\spt \|V^1\|$ has genus $0$.
    \end{itemize}

    Let us investigate case 1 first. 
    \medskip

    \paragraph*{\bf Case 1.}
    From Theorem~\ref{thm:minMax}~\ref{item:minMaxW} it follows immediately that there exists a $\delta>0$ and some $\Psi'\in\Lambda(\Psi)$ such that 
    \[
        \bM(\Psi'(x))\geq \bL(\Lambda(\Psi))- \delta\;\;\Rightarrow\;\;|\Psi'(x)|\in\bB^\bF_{d_0}(\cW^1)\,.
    \]
    However, this is not enough for us to prove Theorem~\ref{thm:trivialInFirsthomo}. A stronger result is required.
    
    \begin{thm}\label{thm:currentsCloseInBoldF}
        Let $(M, g)$ be an orientable closed Riemannian $3$-manifold, $\Lambda$ be a homotopy class of a Simon-Smith families of genus $\leq \fg_0$ such that $L = \mathbf{L}(\Lambda) > 0$. Suppose that $\cW_{L, \leq\mathfrak{g}_0}$ consists of finitely many varifolds, each associated with a connected multiplicity-one minimal surface.
        Then for any $r > 0$, there exists $\eta>0$ and $\Phi \in \Lambda$ such that 
        \[
          \bM(\Phi(x))\geq L -\eta \implies [\Phi(x)] \in \bB^\bF_r([\cW_{L, \leq \mathfrak{g}_0}]) \,.
        \]
    \end{thm}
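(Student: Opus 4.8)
The plan is to upgrade the varifold-level conclusion of Theorem \ref{thm:minMax}(\ref{item:minMaxW}) to a current-level ($\bF$-metric on $\cZ_2$) conclusion, using the hypothesis that $\cW_{L,\leq\fg_0}$ is a \emph{finite} set of varifolds each coming from a \emph{connected, multiplicity-one} minimal surface. The key point is that for such a configuration the varifold essentially determines the current (up to the $\Z_2$-ambiguity of which side is filled), so varifold-closeness should propagate to current-closeness once we rule out the ``wrong-side'' cycle. Concretely, write $\cW_{L,\leq\fg_0}=\{V_1,\dots,V_n\}$ with $V_j=|\Sigma_j|$, $\Sigma_j$ connected embedded minimal. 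Since each $\Sigma_j$ is two-sided (it bounds, being a boundary of a Caccioppoli set as the varifold limit is smooth and embedded) there are exactly two cycles in $\cZ_2(M;\Z_2)$ with support $\Sigma_j$, namely $[\Sigma_j]$ itself and, well, $[\Sigma_j]$ again — over $\Z_2$ a connected embedded surface has a unique associated cycle. So actually $[\cW_{L,\leq\fg_0}]=\{[\Sigma_1],\dots,[\Sigma_n]\}$ is a finite subset of $\cZ_2(M;\Z_2)$, and the map $V_j\mapsto[\Sigma_j]$ is a bijection onto it.

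First I would establish the elementary topological lemma: if $\{T_i\}\subset\cZ_2(M;\Z_2)$ is a sequence of boundaries with $|T_i|\to V_j$ in the $\bF$-metric on varifolds, then $T_i\to[\Sigma_j]$ in the $\bF$-metric on $\cZ_2(M;\Z_2)$. The mass converges, $\bM(T_i)=\|V_i\|(M)\to\|V_j\|(M)=\bM([\Sigma_j])$; by the compactness theorem for integral currents (with the mass bound) a subsequence converges in flat norm to some cycle $T_\infty$; lower semicontinuity of mass plus the varifold convergence forces $|T_\infty|\le V_j$, hence (equality of masses) $|T_\infty|=V_j$, so $\spt T_\infty\subseteq\Sigma_j$, and since $\Sigma_j$ is a connected surface the only nonzero sub-cycle is $[\Sigma_j]$ itself — here we must also note $T_\infty\ne0$ since $\bM(T_\infty)=\|V_j\|(M)=L>0$. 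Thus $T_\infty=[\Sigma_j]$, and because the limit is independent of the subsequence, the full sequence converges. Upgrading flat convergence plus mass convergence to $\bF$-convergence is standard (this is how the $\bF$-metric is designed to behave on sequences with converging mass).

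Next I would run the min-max machinery: apply Theorem \ref{thm:minMax} to $\Lambda$ to get a pulled-tight minimizing sequence $\{\Phi_i\}$ with the property that for each $s>0$ there is $\eta_s>0$ with $\cH^2(\Phi_i(x))\ge L-\eta_s\implies |\Phi_i(x)|\in\bB^\bF_s(\cW_{L,\leq\fg_0})$ for all large $i$. I claim that for every $r>0$ there is $s>0$ and $i_0$ so that, along this sequence, $|\Phi_i(x)|\in\bB^\bF_s(\cW_{L,\leq\fg_0})$ together with $\cH^2(\Phi_i(x))\ge L-\eta_s$ forces $[\Phi_i(x)]\in\bB^\bF_r([\cW_{L,\leq\fg_0}])$. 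Suppose not: then there are $x_{i}$ with $|\Phi_{i}(x_{i})|$ converging (after passing to a subsequence, using compactness of $\cW_{L,\leq\fg_0}$ — it is finite) to some $V_j$ in $\bF$, but $[\Phi_{i}(x_{i})]$ staying $r$-away from every $[\Sigma_k]$; by the lemma $[\Phi_{i}(x_{i})]\to[\Sigma_j]$, a contradiction. Then take $\Phi:=\Phi_i$ for $i$ large and $\eta:=\eta_s$. Since $\bM(\Phi(x))=\cH^2(\Phi(x))$ for the associated current of a generalized surface, this is exactly the desired statement.

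The main obstacle — and the place where I expect the real work to hide — is the claim that the varifold limit $V_j$ ``being the full multiplicity-one minimal surface $\Sigma_j$'' genuinely pins down the cycle: one must be careful that no mass escapes or collapses in the $\bF$-metric sense, i.e. that $\bF$-varifold-convergence $|T_i|\to V_j$ really gives $\bM(T_i)\to\bM([\Sigma_j])$ and not merely $\limsup$, and that the flat limit $T_\infty$ is nonzero and supported exactly on $\Sigma_j$ rather than on a proper (measure-zero, hence cycle-trivial) subset. The connectedness and multiplicity-one hypotheses are used precisely here: without them $V_j$ could be $2|\Sigma_j|$ or a union of components, for which the associated $\Z_2$-cycle is not determined by the varifold (e.g. an even-multiplicity surface has trivial $\Z_2$-cycle, and a disconnected surface admits several). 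A secondary technical point is the passage from (flat convergence $+$ mass convergence) to $\bF$-convergence of cycles, but this is a known feature of Pitts' $\bF$-metric and can be cited from \cite{Pit81} or \cite{MN20}.
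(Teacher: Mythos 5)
Your key lemma is false, and this is exactly the point the paper's proof works hardest on. You write that a flat-convergent subsequence $T_i\to T_\infty$ together with $|T_i|\to V_j$ in the varifold $\bF$-metric forces $|T_\infty|=V_j$, because "$\bM(T_\infty)=\|V_j\|(M)=L>0$." But mass is only lower semicontinuous under flat convergence: you get $\bM(T_\infty)\leq L$, not equality, and $T_\infty=0$ is a genuine possibility. The paper's Lemma~\ref{lem:Pitts_currents_varifolds} plus the constancy theorem give exactly $T_\infty\in\{0,[\Gamma_k]\}$, and Corollary~\ref{cor:BoldFclose_twocomps} records precisely that the varifold-close cycles split into two well-separated flat-components, one near $0$ and one near $[\Gamma_k]$. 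Your argument offers nothing to rule out the $T_\infty=0$ branch, and so the proposed lemma does not hold as stated.

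The paper's proof handles this with substantial extra machinery that your proposal does not see. First, Proposition~\ref{prop:AM_current_close} shows that a generalized surface $\Sigma$ which is varifold-close to $\Gamma_k$, smooth and genus-zero in a small ball, and almost minimizing in that ball, cannot be flat-close to $0$ (the replacement produced by the isotopy-minimization is a minimal disk separating the ball, and an area/volume comparison contradicts $\cF([\Sigma],0)\to 0$). The contrapositive is then used: any high-area slice whose cycle is flat-close to $0$ must admit $(\varepsilon,\delta)$-deformations in many annuli. This lets the paper invoke Proposition~\ref{prop:an_deform} to further deform the pulled-tight family $\Phi_i$, pushing the area of all such "wrong-component" slices strictly below $L-\eta/2$, while leaving the "right-component" slices untouched. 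Only after this additional homotopy does the high-area region of the family land in the flat-neighborhood of $[\cW_{L,\leq\fg_0}]$. Your proposal, by contrast, concludes by simply taking $\Phi:=\Phi_i$ for large $i$; without the extra deformation this does not suffice, and no choice of $\eta$ can make it so if the $T_\infty=0$ slices persist. In short: the passage from varifold-closeness to current-closeness is not automatic even with the mass bound — it requires the almost-minimizing structure from the min-max process, used in Proposition~\ref{prop:AM_current_close}, and a further modification of the family.
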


    The notation $[\cW_{L, \leq \mathfrak{g}_0}]$ used here is as introduced in \S \ref{sect:prelim}, and $\bB^\bF_r([\cW_{L, \leq \mathfrak{g}_0}])$ denotes the $r$-neighborhood of $[\cW_{L, \leq \mathfrak{g}_0}]$ in $\cZ_2(M; \bF; \Z_2)$. We will prove Theorem~\ref{thm:currentsCloseInBoldF} in \S \ref{sect:min-max_ii}. 

    Applying the above theorem with $(M, g) = (S^3, g')$, $\Lambda = \Lambda(\Psi)$, $\mathfrak{g}_0 = 1$ and $r=d_0$, we obtain $\eta > 0$ and $\Phi \in \Lambda(\Psi)$ satisfying the aforementioned property. For later reference, we denote $\delta_1 := \eta$ and $\tilde\Psi^1 := \Phi$.
    
    By refining the cubical complex structure of the parameter space $Y$, we can obtain a $9$-dimensional subcomplex $C^1$ of $Y$, such that 
    \begin{equation}\label{eq:Phi_iMassBound}
        \bM(\tilde \Psi^1(x))\geq \bL(\Lambda(\Psi))-\delta_1/2 \implies x\in C^1,
    \end{equation}
    and  
    \begin{equation}\label{eq:C1liesIn} 
        x\in C^1 \implies [\tilde
        \Psi^1(x)]\in\bB^\bF_{d_0}([\cW^1]).
    \end{equation} 
    Roughly speaking, $C^1$ is the ``cap'' where $\tilde \Psi^1$ has large area and  is close to some embedded minimal tori.
    
    Since $(S^3,g')$ has only finitely many embedded minimal tori, by the definition of $d_0$, and by further refining $Y$, we can assume that $C^1$ can be decomposed into a {\it disjoint} union of $9$-dimensional  subcomplexes $C^1_1,\cdots,C^1_{n_1}$ such that: For each $j=1,\cdots,n_1$ there is a distinct embedded minimal torus  $T^1_j\in \GS^*(S^3)$, with  $|T^1_j|\in \cW^1$, such that the image 
    \[
        [\tilde \Psi^1](C^{1}_j)\subset \bB^\bF_{d_0}([T^1_j])\,.
    \]
    
    Finally, we define 
    \[
        \Psi^1 := \tilde \Psi^1|_{\overline{Y\backslash C^1}}\,.
    \]
    By \eqref{eq:Phi_iMassBound}, $\Psi^1$ satisfies that
    \begin{equation}\label{eq:Psi1MassBound}
        \sup\cH^2\circ\Psi^1<\bL(\Lambda(\Psi)),
    \end{equation}
    Later when we apply the min-max process for the second time, we will start with $\Psi^1$, and the area upper bound guarantees new min-max minimal surfaces will be detected. For notational convenience later, let us denote $\tilde Y^1:= \dmn(\tilde\Psi^1)=Y$  and $Y^1=\overline{Y\backslash C^1}=\dmn(\Psi^1)$.
    \medskip
    
    \paragraph*{\bf Case 2.}
    In this case, the goal is once again to construct a map $\Psi^1$ that satisfies the bound (\ref{eq:Psi1MassBound}). Additionally, we want $\Psi^1$, when restricted to the boundary of the removed cap, to form a Simon-Smith family of genus $0$.
    To achieve this, we will need to apply an important interpolation theorem. But first, let us introduce some concepts.

    \begin{figure}[h]
        \centering
        \makebox[\textwidth][c]{\includegraphics[width=3in]{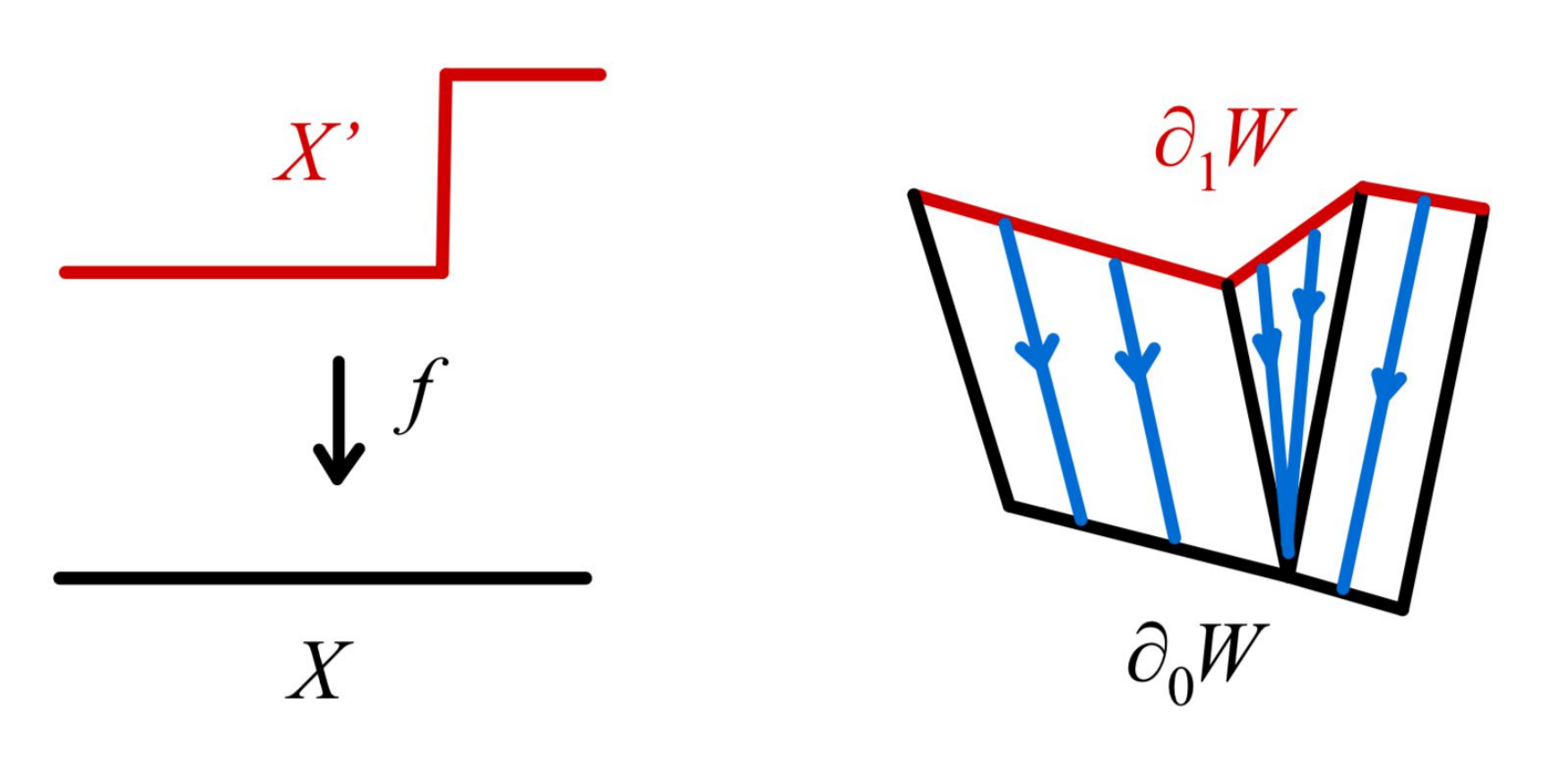}}
        \caption{The figure on the left shows a map $f:X'\to X$. The figure on the right shows the mapping cylinder $W:=M_f$, where the blue lines describe the deformation retraction of $W$ onto $\partial_0 W$ induced by the map $F_W$.}
        \label{fig:w}
    \end{figure}
    
    Given two topological spaces $X$ and $X'$, and a continuous map $f:X'\to X$, the {\it mapping cylinder} of $f$ is defined by 
    \[
        M_f := (([0, 1] \times X') \sqcup X) / \sim\,,
    \]
    where $(0, x) \sim f(x)$ for all $x \in X'$. 
    
    In the following, we will primarily be interested in the case where $X'$ and $X$ are both finite cubical complexes, and $f:X'\to X$ is a surjective homotopy equivalence that is a {\it cubical  map}, i.e., every cell of $X'$ is mapped to a cell of $X$. In this case, given the mapping cylinder $W:=M_f$, which can be viewed as a simplicial complex, we define the following subsets of $W$:
    \[ 
        \partial_0 W:=(\{0\}\x X')/\sim\,,\quad \partial_1 W:=(\{1\}\x X')/\sim\,.
    \]
    Note that $\partial_0W\cong X$ and $\partial_1W\cong X'$. Heuristically, $W$ is like a cobordism between $\partial_0 W$ and $\partial_1 W$.    Also,  we will sometimes by abuse of notation just denote $\partial_0W$ by $X$, and $\partial_1W$ by $X'$. Now, consider the map 
    \[
        [0,1]\x [0,1] \x X'\to [0,1] \x X'
    \]
    defined by sending each $(t,s,x')$ to $(ts,x')$. This induces a map 
    \begin{equation}\label{eq:Fw}
        F_W:[0,1]\x W\to W
    \end{equation}
    (see Figure~\ref{fig:w}). The key property of $F_W$ is that the map $(t,w)\mapsto F_W(1 - t, w)$,  $(t, w) \in [0, 1] \x W$, is a {\it strong deformation retraction} of $W$ onto $\partial_0W $: This means
    for all $t\in [0,1],w\in W,x\in \partial_0 W$ we have
    \[
        F_W(1, w) = w\,,\quad F_W(0, w)\in \partial_0 W\,,\quad F_W(t, x)= x\,.
    \]
    By~\cite[\S 4]{BP02}, $W$ can also be viewed as a cubical complex.

    Now, we can state the interpolation theorem.

    \begin{thm}\label{thm:mapping_cylinder}
        In a closed $3$-dimensional Riemannian manifold $(M, g)$, let $\Phi: X \to \GS^*(M)$ be a Simon-Smith family of genus $\leq 1$, where $X$ is a cubical subcomplex in $I^m$. Suppose that $L = \bL(\Lambda(\Phi)) > 0$, and the set $\cW_{L, \leq 1}$ consists of exactly one varifold associated with a multiplicity-one nondegenerate minimal sphere. Then after refining $X$, there exists a cubical subcomplex $X'$ in $I^{m+1}$, a homotopy equivalence
        $f: X' \to X\,,$
        which is a surjective cubical map, and a Simon-Smith family $\Phi': X' \to \GS^*(M)$ of genus $\leq 1$ with the following properties.
        \begin{enumerate}[label=\normalfont(\arabic*)]
            \item There exists an $\eta>0$ such that for $x \in X'$,  
                \[
                    \cH^2(\Phi'(x))\geq L - \eta\quad\Longrightarrow\quad \fg(\Phi'(x))=0\,.
                \]
            \item\label{item:mapping_cylinder} Let $W$ be the mapping cylinder $M_f = (([0, 1] \times X') \sqcup X) / \sim$ of $f$, where $(0, x) \sim f(x)$ for all $x \in X'$. There exists a Simon-Smith family $H: W \to \cS^*(M)$ of genus $\leq 1$ such that:
            \begin{enumerate}[label=\normalfont(\alph*)]
                \item $H|_{\partial_0 W}=\Phi$ and $H|_{\partial_1 W} = \Phi'$.
                \item\label{item_item:pinch-off} For all $x \in X'$, $t \mapsto H(t, x)$ is a {\em pinch-off process} (see Definition~\ref{defn:pinch_off}). 
            \end{enumerate}
        \end{enumerate}
    \end{thm}
    
    Heuristically, if a torus is sufficiently to a sphere, one can find a loop within a small ball. Under mean curvature flow, it is expected that the flow would undergo a {\em neck pinch} along the loop, yielding a genus $0$ punctate surface. However, as explained in \S \ref{subsubsect:fatteningIssue}, we cannot employ mean curvature flow to reduce the genus. Instead, we {\em pinch off} the small ball containing the loop. This process involves a finite number of neck-pinch surgeries, so that the surface becomes disjoint from the small ball, and then shrinking the components inside the ball into a point.

    Note that the following definition is purely topological.
    
    \begin{defn}\label{defn:pinch_off}
        In a closed Riemannian manifold $(M, g)$, a Simon-Smith family $\Phi: [a, b] \to \GS^*(M)$, where the parameter space $[a, b]$ is viewed as a time interval, is called a {\em pinch-off process} if the following holds. 
        
        There exist finitely many spacetime points $(t_1,p_1),\cdots,(t_n,p_n)$, where $t_i\in[a,b]$ and $p_i\in\Phi(t_i)$, such that: 
        \begin{enumerate}[label=\normalfont(\arabic*)]
            \item For each $(t,p)$, where $t\in[a,b]$ and $p\in\Phi(t)$, different from any $(t_i,p_i)$, there exists an open time interval $J\subset [a,b]$ around $t$, a one-parameter group of diffeomorphisms $\{\varphi_{t'}\}_{t' \in J} \subset \operatorname{Diff}(M)$ and a open ball $U\subset S^3$ around $p$ such that for all $t' \in J$,
            \[
                \varphi_{t'}(\Phi(t)) \cap U = \Phi(t') \cap U\,.
            \]
            \item For each $i=1,\cdots,n$, there exists an open time interval $J\subset [a,b]$ around $t_i$ and a ball $U\subset S^3$ around $p_i$ such that the family 
            \[
                \{\Phi(t')\cap U\}_{t'\in J}
            \] is of one of the following types:
            \begin{enumerate}[label=\normalfont(\alph*)]
                \item A {\it surgery process}: Pinching a (topological) cylinder at the point $p_i$, at time $t_i$, to obtain a (topological) double cone, and then either splitting it into two smooth discs or leaving it  intact.
                \item A {\it shrinking process}: For each $t'\in J$, $\Phi(t')$ does not intersect $\partial U$, and $\{ \Phi(t')\cap U\}_{t' \in J\cap[a,t_i)}$ is induced by a one-parameter group of diffeomorphisms in $U$, and $\Phi(t')\cap U = p_i$ for all $t' \in J \cap [t_i,b]$.
           \end{enumerate}
        \end{enumerate}
    \end{defn}

    \begin{rmk} 
        Let us compare pinch-off process and mean curvature flow. Mean curvature flow could introduce more complicated singularities than pinch-off process. Namely, under mean curvature flow, besides the above two types of singularities, there could also be singularities modeled by non-compact self-shrinkers of genus greater than $0$. 
        
        Furthermore, as we will prove in  \S \ref{min-max_iiiOffAndMCF}, a pinch-off process simplifies the topology of the initial condition in certain sense. This can be compared to B. White's result that mean curvature flow simplifies the topology of the surfaces~\cite{Whi95}.
    \end{rmk}

    We will prove the interpolation theorem, Theorem~\ref{thm:mapping_cylinder}, in \S\ref{sect:min-max_iii}. Here is a brief outline: First, we show that if a torus is close to a sphere, in the varifold sense, it must contain a non-trivial loop in a {\it small} ball. Heuristically, we can then pinch this loop to obtain a punctate surface with genus $0$. The main challenge lies in the fact that the choice of loops may not be continuous in a Simon-Smith family, causing the pinching process to fail in producing a desired deformation map $H$. To address this, we will extend Pitts's combinatorial arguments~\cite[Theorem~4.10]{Pit81}, and construct a map $H$ which consists of a union of pinch-off processes.

    Note that $H$ itself is not a homotopy between $\Phi$ and $\tilde\Phi$ in the sense of Simon-Smith min-max theory due to surgeries and shrinking, but the induced maps $[\Phi]$ and $[\Phi']$  are in the same homology class in $\cZ_2(M;\bF;\Z_2)$, via $[H]$. 

    Applying  Theorem~\ref{thm:mapping_cylinder} to $\Psi$, we obtain:
    \begin{itemize}
        \item a cubical complex $\tilde Y^1$, 
        \item a surjective, cubical, homotopy equivalence $f^1:\tilde Y^1\to Y$,
        \item a mapping cylinder $W^1:=M_{f^1}$ (which can be viewed as a simplicial complex) with
        \[
            Y=\partial_0 W^1, \quad\tilde Y^1= \partial_1 W^1\,,
        \] and the associated map $F^1:=F_{W^1}:[0,1]\x W^1\to W^1$ (see \eqref{eq:Fw}),
        \item a Simon-Smith family  of genus $\leq 1$
        \[
            \tilde\Psi^1:\tilde Y^1\to\cS^*(S^3)\,,
        \]
        \item a Simon-Smith family of genus $\leq 1$ \[
            H^1: W^1 \to \GS^*(S^3)\,,
        \]
        with $H^1|_{\partial_0W^1}=\Psi$, $H^1|_{\partial_1W^1}=\tilde\Psi^1$,
        \item a constant $\delta_1>0$ (in place of $\eta$),
    \end{itemize} 
    satisfying the properties listed in the theorem. 
    
    Then by refining $\tilde Y^1$, we can obtain a cubical subcomplex $C^1$ of $\tilde Y^1$ such that 
    \[
        \cH^2(\tilde\Psi^1(x))\geq \bL(\Lambda(\Psi))-\delta_1/2 \implies x\in C^1\,,
    \]
    and $\tilde\Psi^1|_{C^1}$ is a Simon-Smith family of genus 0. Finally, we let 
    \[
        \Psi^1:=\tilde\Psi^1|_{\overline{\tilde Y^1\backslash C^1}}\,.
    \]
    As in Case 1, a key property of $\Psi^1$ is that 
    \[
        \sup\cH^2\circ\Psi^1<\bL(\Lambda(\Psi)).
    \]
    Let us denote $Y^1 := \dmn(\Psi^1)$.

    In summary, in both cases, we obtain a Simon-Smith family $\Psi^1$ of genus $\leq 1$ with parameter space $\overline{\tilde Y^1\backslash C^1}$, and with  $\sup\cH^2\circ\Psi^1$  strictly less than the width $\bL(\Lambda(\Psi))$. Thus, if we apply the Simon-Smith min-max construction again to $\Psi^1$, all min-max minimal surfaces will have smaller area than those obtained previously. We will continue repeating the min-max process.

\subsubsection{$k$-th stage}
    In this subsubsection, we explain in detail what is obtained in the $k$-th stage of min-max for $k\geq 1$. Before the $k$-th stage, we have a Simon-Smith family of genus $\leq 1$ 
    \[
        \Psi^{k-1}:Y^{k-1}\to \GS^*(S^3)\,,
    \] 
    where $Y^{k-1}$ is some cubical complex. If $k=1$, we take $\Psi^0:=\Psi$ and $Y^0:=Y$. 
    
    Applying the min-max theorem, Theorem~\ref{thm:minMax}, to $\Psi^{k-1}$ with $r=d_0$, we obtain a varifold $V^k$ in $\cW^k:=\cW_{\bL(\Lambda(\Psi^{k-1})),\leq 1}$, where $V^k$ is induced by a multiplicity one, smooth, embedded, minimal surface of genus $0$ or $1$. Again, there are two cases: 
    \begin{itemize}
        \item[Case 1.] each element of $\cW^{k}$ is associated with a multiplicity one torus;
        \item[Case 2.] $V^{k}$ is associated with a smooth, embedded, minimal sphere of multiplicity one and  $\cW^{k}=\{V^{k}\}$.
    \end{itemize}

    \begin{rmk}
        In the following, the superscript $^k$ indicates that the object concerned is constructed from running the min-max process for the $k$-th stage.
    \end{rmk}

    \medskip
    
    \paragraph*{\bf Case 1.}Arguing as before, we obtain the following objects:
    \begin{itemize}
        \item a Simon-Smith family of genus $\leq 1$ 
        \[
            \tilde\Psi^k: Y^{k - 1} \to \GS^*(S^3)
        \] 
        in $\Lambda(\Psi^{k-1})$, 
        \item a ``cap" $C^k$, which is a subcomplex of $Y^{k-1}$ and can be decomposed into a disjoint union of cubical subcomplexes $C^k_1, \cdots ,C^k_{n_k}$,
        \item the restriction 
            \[
                \Psi^k:=\tilde\Psi^{k}|_{\overline{Y^{k-1}\backslash C^k}}\,,
            \]
        \item a new domain $Y^k:=\dmn(\Psi^k)=\overline{Y^{k-1}\backslash C^k}$, 
    \end{itemize}
    satisfying the following properties:
    \begin{itemize}
        \item For each $j=1, \cdots, n_k$, there is a distinct embedded minimal torus $T^k_j\in \GS^*(S^3)$, with $|T^k_j|\in\cW^k$, such that the image 
        \[
            [\tilde\Psi^{k}](C^{k-1}_j)\subset \bB^\bF_{d_0}([T^k_j])\,.
        \]
        \item $\sup_{Y^k} \cH^2\circ\Psi^k<\bL(\Lambda(\Psi^{k-1}))$.
    \end{itemize}
    Since $\tilde\Psi^k$ and $\Psi^{k-1}$ are homotopic in the sense of Simon-Smith min-max theory, there exists a homotopy 
    \[
        H^k:[0,1]\x Y^{k-1} \to\GS^*(S^3)\,,
    \]
    such that 
    \[
        H(0,\cdot)=\Psi^{k-1}, \quad H^k(1,\cdot)=\tilde \Psi^k\,,
    \]
    and for each $(t,x)$, $H(t,x)$ is obtained from $\Psi^{k-1}(x)$ via an isotopy of $S^3$, according to Definition~\ref{def:homotopyClass}. 

    For consistency in both cases, we introduce the following notation, which mirrors the setup in Case 2. Define
    \[
        W^k:=[0,1]\x Y^{k-1},\quad\partial_0 W^k=\{0\}\x Y^{k-1},\quad\partial_1 W^k=\{1\}\x Y^{k-1}\,,
    \] 
    and a map 
    \[
        F^k:[0,1]\x W^k\to W^k\,, \quad (s,(t,x)) \mapsto (st,x)\,.
    \]
    Note that the map $(s,w)\mapsto F^k(1-s,w)$, with $(s,w)\in [0,1]\x W^k$, is a strong deformation retraction of $W^k$ onto $\{0\}\x Y^{k-1}$. For convenience, we will identify $Y^{k-1}$ with the subset $\{0\}\x Y^{k-1}$ in $W^k$, and denote $\{1\}\x Y^{k-1}$ by $\tilde Y^k$.
    \begin{rmk}\label{rmk:case1cylinder}
        For example, in Case 1, if we have a subset $C\subset Y^{k-1}$, then $(F^k(0,\cdot))^{-1}(C)$  is the subset $[0,1]\x C$ of $W^k$.
    \end{rmk}

    \medskip

    \paragraph*{\bf Case 2.}
    In this case, by applying Theorem~\ref{thm:mapping_cylinder} to $\Psi^{k-1}$ with $r=d_0$,  we obtain:
    \begin{itemize}
        \item a cubical complex $\tilde Y^k$,
        \item a surjective, cubical, homotopy equivalence $f^k:\tilde Y^k\to Y^{k-1}$,
        \item a mapping cylinder $W^k:=M_{f^k}$ (viewed as a simplicial complex) with
        \[
            Y^{k-1}=\partial_0 W^k\,, \quad\tilde Y^k= \partial_1 W^k\,,
        \]
        and the associated map $F^k:=F_{W^k}:[0,1]\x W^k\to W^k$ (see \eqref{eq:Fw}),
        \item a Simon-Smith family of genus $\leq 1$
        \[
            \tilde\Psi^k:\tilde Y^k\to\cS^*(S^3)\,,
        \]
        \item a Simon-Smith family of genus $\leq 1$ 
        \[
            H^k: W^k \to \GS^*(S^3),
        \] 
        with $H^k|_{\partial_0W^k}=\Psi^{k-1}$, $H^k|_{\partial_1W^k}=\tilde\Psi^k$,
        \item a ``cap" $C^k$, which is a cubical subcomplex of $\tilde Y^k$, 
        \item the restriction 
            \[
                \Psi^k:=\tilde\Psi^{k}|_{\overline{Y^{k-1}\backslash C^k}}\,,
            \]
        \item a new domain $Y^k:=\dmn(\Psi^k)=\overline{Y^{k-1}\backslash C^k}$,
    \end{itemize}
    satisfying the following:
    \begin{itemize}
        \item The map $(t,w)\mapsto F^k(1-t,w)$, where $(t,w)\in [0,1]\x W^k$, is a strong deformation retraction of $W^k$ onto $Y^{k-1}$.
        \item For each $x\in \tilde Y^k$, the family $t\mapsto H^k(F^k(t,x))$, $t\in [0,1]$, is a pinch-off process.
        \item $\tilde\Psi^k|_{C^k}$ is a Simon-Smith family of genus 0.
        \item $\sup\cH^2\circ\Psi^k<\bL(\Lambda(\Psi^{k-1}))$.
    \end{itemize}

    As before, in both cases, the mass bound 
    \begin{equation}\label{eq:massBoundPsiK}
        \sup\cH^2\circ\Psi^k<\bL(\Lambda(\Psi^{k-1}))
    \end{equation}
    holds. Thus, the width $\bL(\Lambda(\Psi^k))$ is strictly decreasing with respect to $k$. Combined with the assumption that $(S^3,g')$ admits only finitely many embedded minimal spheres and tori, our repetitive min-max process must terminate in finitely many steps. Specifically, for some $K \in \mathbb{N}^+$, when we apply the min-max process for the $K$-th time to the family $\Psi^{K-1}$, the width $\bL(\Lambda(\Psi^{K-1}))$ reaches zero. 

\subsubsection{Last stage}
    We have  that the width $\bL(\Lambda(\Psi^{K-1}))$ is zero. Thus, there exists some Simon-Smith family $\Psi^K\in\Lambda(\Psi^{K-1})$ and a homotopy in the sense of Simon-Smith min-max,
    \[
        H^{K}:[0,1]\x Y^{K-1}\to \GS^*(S^3),
    \]
    such that:
    \begin{itemize}
        \item $H^K(0,\cdot)=\Psi^{K-1}$, $H^K(1,\cdot)=\Psi^K$.
        \item For each $(t,x)$,  $H^K(t,x)$ is obtained from $\Psi^{K-1}(x)$ via some diffeomorphism of $S^3$ according to Definition~\ref{def:homotopyClass}.
        \item  
        $\sup_{Y^{K - 1}} \cH^2_{\bar g}\circ \Psi^K<1$, where $\bar g$ denotes the standard round metric on $S^3$.
    \end{itemize}
    Crucially, note that in the last bullet point, the Hausdorff measure is taken with respect to {\em the unit 3-sphere $(S^3,\bar g)$}.

    We denote 
    \[
        W^K:=[0,1]\x Y^{K-1}\,,\quad \partial_0 W^K=\{0\}\x Y^{K-1}\,,\quad \partial_1 W^K=\{1\}\x Y^{K-1}\,,
    \] and define a map 
    \[
        F^K:[0,1]\x W^K\to W^K\,, \quad (s, (t, x)) \mapsto (st,x)\,.
    \]
    Note that the map $(s,w)\mapsto F^K(1-s,w)$, with $(s,w)\in [0,1]\x W^K$, is a strong deformation retraction of $W^K$ onto $\{0\}\x Y^{K-1}$. As before, we identify $Y^{K-1}$ with the subset $\{0\}\x Y^{K-1}$ in $W^K$.

\subsection{A new family $\Xi$}\label{subsect:new_family_Xi}
    We have already run the min-max process $K$ times, generating multiple families  from the original family $\Psi$, each with a different parameter space. In this subsection, we will use these families to reconstruct a new Simon-Smith family $\Xi$, whose parameter space is homotopy equivalent to the original parameter space $Y$ (which is an $\RP^5$-bundle over $\RP^2\x\RP^2$). This new family remains a Simon-Smith family of genus $\leq 1$.

    Consider the following list of $2K-1$ Simon-Smith families of genus $\leq 1$. For visualization purposes, readers may find it helpful to refer to Figure ~\ref{fig:gluingScheme}. When interpreting the notations below, we should keep in mind that we always identify $\partial_0W^k$ with $Y^{k-1}=\dmn(\Psi^{k-1})$ (and also thus their respective subsets), and we identify  $\partial_1W^k$ with $  \dmn(\tilde \Psi^{k})$.
    \begin{description} 
        \item[{\makebox[4em][r]{(1)}}] $\tilde \Psi^1|_{C^1}$.
        \item[{\makebox[4em][r]{\color{red}(2)}}] {\color{red} $H^2|_{(F^2(1,\cdot))^{-1}(A_1)}$}, where $A_1:= C^1\cap Y^1\;( \subset \partial_0 W^2\subset W^2)$. Note that $A_1$ is simply $\partial C^1$ if $\tilde Y^1$ is a topological manifold and $C^1$ is a $\dim(\tilde Y^1)$-chain in it. Also,  $H^2|_{(F^2(1,\cdot))^{-1}(A_1)}$ is the same  as {$H^2|_{[0,1]\x A_1}$} if Case 1 occurred at the second stage of the min-max process; see Remark~\ref{rmk:case1cylinder}.
        \item[{\makebox[4em][r]{(3)}}] $\tilde \Psi^2|_{C^2}$.
        \item[{\makebox[4em][r]{\color{RoyalBlue}(4)}}] {\color{RoyalBlue}$H^3|_{(F^3(1,\cdot))^{-1}(A_2)}$}, where $A_2\subset \partial_0 W^3$ is  defined as the union of the following two subsets of $\partial_0 W^3\;(\subset W^3)$:
        \begin{itemize}
            \item $(f^2)^{-1}(A_1)\cap Y^2 \;(\subset Y^2\cong\partial_0 W^3)$.
            \item $C^2\cap Y^2 \;(\subset\partial_0 W_3)$.
        \end{itemize}
        In other words, 
            \[
                A_2:=\left((f^2 )^{-1}(A_1)\cap Y^2\right)\cup \left(C^2 \cap Y^2\right)\,.
            \] 
        (In Figure  ~\ref{fig:gluingScheme}, 
        we see a  blue bridge, and a boundary part of it (on the right). This boundary part consists of two curves: The ``top curve" corresponds to the set described in the second bullet point, while the ``side curve" (which  coincides with part of the left boundary of the red bridge) corresponds to the set in the first bullet point.)
        Note that $H^3|_{(F^3(1,\cdot) )^{-1}(A_2)}$ coincides with { $H^3|_{[0,1]\x A_2}$} if Case 1 occurred at the third stage of min-max.
        \item[{\makebox[4em][r]{(5)}}] $\tilde \Psi^3|_{C^3}$.
        \item[{\makebox[4em][r]{(6)}}]$H^4|_{(F^4(1,\cdot))^{-1}(A_3)}$, where $A_{3}\subset \partial_0 W^4$ is defined by 
            \[
                A_3:=\left((f^3 )^{-1}(A_2)\cap  Y^3\right)\cup \left( C^3\cap Y^3\right)\,.
            \]
        \item[{\makebox[4em][r]{...}}]
        \item[{\makebox[4em][r]{\color{ForestGreen}{($2K-4$)}}}] {\color{ForestGreen}...}
        \item[{\makebox[4em][r]{($2K-3$)}}] $\tilde \Psi^{K-1}|_{C^{K-1}}$.
        \item[{\makebox[4em][r]{\color{brown}($2K-2$)}}] {\color{brown}$H^K|_{(F^K (1,\cdot))^{-1}(A_{K-1})}$}, where $A_{K-1}\subset \partial_0 W^K$ is defined by 
            \[
                A_{K-1}:=\left((f^{K-1} )^{-1}(A_{K-2})\cap   Y^{K-1}\right)\cup \left( C^{K-1}\cap   Y^{K-1}\right)\,.
            \]
        \item[{\makebox[4em][r]{($2K-1$)}}] $\Psi^K$.
    \end{description}

    \begin{figure}[h]
        \centering
        \makebox[\textwidth][c]{\includegraphics[width=5.5in]{gluingScheme2.pdf}}
        \caption{Constructing $\Xi$.}
        \label{fig:gluingScheme}
    \end{figure}

    By the definition of these $2K-1$ families, there is a natural way to glue them together. Namely, for each $k=1,\cdots,K-2$, the following pairs of families share a common subfamily:
    \begin{itemize}
        \item ($2k$) and ($2k-1$)
        \item ($2k$) and ($2k+1$)
        \item ($2k$) and ($2k+2$)
    \end{itemize}
    Additionally, ($2K-2$) and ($2K-1$) share a common subfamily as well. The overall gluing scheme is described in Figure~\ref{fig:gluingScheme}. 

    Let the newly obtained family be called $\Xi$. It is also a Simon-Smith family of genus $\leq 1$.
    \begin{prop}\label{prop:XiPsiHomotopic}
        There exist:
        \begin{itemize}
            \item a simplicial complex $\tilde W$, containing $Y$ and $\dmn(\Xi)$ as subcomplexes,
            \item a map $\tilde F: [0,1]\x \tilde W\to \tilde W$ such that the map $(t,w)\mapsto\tilde F(1-t,w)$, with $(t,w)\in [0,1]\x\tilde W$, is a strong deformation retraction of $\tilde W$ onto $Y$, and $\tilde F(0,\cdot)|_{\dmn(\Xi)}$ is a surjective homotopy equivalence from $\dmn(\Xi)$ onto $Y$,
            \item a Simon-Smith family of genus $\leq 1$,
                \[
                    \tilde \Xi:\tilde W\to \GS^*(S^3),
                \]
        \end{itemize}
        such that:  
        \begin{enumerate}[label=\normalfont(\arabic*)]
            \item $\tilde\Xi|_{Y}=\Psi,\;\;\tilde\Xi|_{\dmn(\Xi)}=\Xi.$
            \item For each $x\in \dmn(\Xi)$, the family $t\mapsto \tilde\Xi(\tilde F(t,x))$, with $t\in [0,1]$, is a pinch-off process.
        \end{enumerate}
    \end{prop}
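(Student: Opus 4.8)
All the ingredients are already in hand from \S\ref{sect:8sweepout}--\S\ref{sect:getBack}: for each $k=1,\dots,K$ there is a cobordism $W^k$ (a mapping cylinder $M_{f^k}$ in Case 2, a product $[0,1]\times Y^{k-1}$ in Case 1) with $\partial_0W^k\cong Y^{k-1}$, $\partial_1W^k=\tilde Y^k=C^k\cup Y^k$; a Simon--Smith family $H^k:W^k\to\GS^*(S^3)$ of genus $\le 1$ with $H^k|_{\partial_0W^k}=\Psi^{k-1}$ and $H^k|_{\partial_1W^k}=\tilde\Psi^k$ (and $H^1|_{\partial_0W^1}=\Psi$); and a strong deformation retraction $F^k$ of $W^k$ onto $\partial_0W^k$ fixing $\partial_0W^k$ pointwise. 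Since $\partial_0W^{k+1}=Y^k\subset\partial_1W^k$, I would set $\tilde W:=W^1\cup_{Y^1}W^2\cup_{Y^2}\cdots\cup_{Y^{K-1}}W^K$; after passing to a common cubical refinement this is a (simplicial) complex containing $Y=\partial_0W^1$, and it contains $\dmn(\Xi)$ as a subcomplex because the gluing recipe for $\Xi$ in \S\ref{sect:getBack} coincides with the natural intersections inside $\tilde W$. On the overlaps one has $H^{k+1}|_{Y^k}=\Psi^k=\tilde\Psi^k|_{Y^k}=H^k|_{Y^k}$, so the $H^k$ glue to a single map $\tilde\Xi:\tilde W\to\GS^*(S^3)$ with $\tilde\Xi|_{W^k}=H^k$; since the conditions of Definition~\ref{def:genusGFamily} are local, $\tilde\Xi$ is again a Simon--Smith family of genus $\le 1$, and by construction $\tilde\Xi|_Y=\Psi$, $\tilde\Xi|_{\dmn(\Xi)}=\Xi$, which is property (1).

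\paragraph*{The retraction and the pinch-off property.} I would build $\tilde F$ by concatenating the $F^k$ from the top down: first deform $W^K$ onto $Y^{K-1}$, then $W^{K-1}$ onto $Y^{K-2}$, and so on down to $W^1$ onto $Y$, each stage acting as the identity on the cobordisms not currently in play. Because $F^k$ fixes $\partial_0W^k=W^{k-1}\cap W^k$, the concatenation is globally well defined, and after reparametrising the $K$ stages into $[0,1]$ the map $(t,w)\mapsto\tilde F(1-t,w)$ is a strong deformation retraction of $\tilde W$ onto $Y$; in particular $\tilde F(0,\cdot):\tilde W\to Y$ is a homotopy equivalence. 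For $x\in\dmn(\Xi)$, the path $t\mapsto\tilde F(t,x)$ runs through at most $K$ of the $W^k$, and on each it is $F^k$ applied to the point reached so far, so $\tilde\Xi\circ\tilde F(\cdot,x)$ restricts on that portion to $H^k\circ F^k$, which the stage-$k$ construction guarantees is a pinch-off process (a deformation through ambient diffeomorphisms, hence trivially a pinch-off process, in Case 1). Since Definition~\ref{defn:pinch_off} is local in spacetime, a finite concatenation of pinch-off processes is again a pinch-off process, giving property (2).

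\paragraph*{Surjectivity and homotopy equivalence of $\tilde F(0,\cdot)|_{\dmn(\Xi)}$.} Write $g^k:=F^k(0,\cdot)|_{\partial_1W^k}:\tilde Y^k\to Y^{k-1}$ (equal to $f^k$ in Case 2, the obvious surjection in Case 1). Tracing the retraction, the image of $\tilde F(0,\cdot)|_{\dmn(\Xi)}$ contains $(g^1\cdots g^k)(C^k)$ for each $k\le K-1$ and $(g^1\cdots g^{K-1})(\dmn\Psi^K)$; using $C^k\cup Y^k=\tilde Y^k$ and surjectivity of each $g^k$, a telescoping computation from the top collapses these to all of $Y$, so $\tilde F(0,\cdot)|_{\dmn(\Xi)}$ is onto. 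For the homotopy equivalence it then suffices, by the $2$-out-of-$3$ property and the previous paragraph, to prove that the inclusion $\dmn(\Xi)\hookrightarrow\tilde W$ is a homotopy equivalence. I would do this by induction on the number of stages, comparing at the $k$-th step the two enlargements: $\tilde W$ grows by gluing on the mapping cylinder $W^{k+1}$ along $Y^k\subset\partial_1W^k$, while $\dmn(\Xi)$ grows by gluing on $(F^{k+1})^{-1}(A_k)\cup C^{k+1}$ along $A_k\subset Y^k$; as $W^{k+1}$ deformation retracts onto its base $Y^k$ and $(F^{k+1})^{-1}(A_k)$ onto its base $A_k$, the inductive step reduces to the standard gluing/homotopy-pushout fact that ``stretching a collar'' on a cofibred subcomplex, carried along the homotopy equivalence $g^{k+1}$, preserves the homotopy type. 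Taking $k=K$ gives $\dmn(\Xi)\hookrightarrow\tilde W$ a homotopy equivalence, hence so is $\tilde F(0,\cdot)|_{\dmn(\Xi)}:\dmn(\Xi)\to Y$.

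\paragraph*{Where the difficulty lies.} The crux is the last homotopy-equivalence claim. One is \emph{not} free to simply deformation retract $\tilde W$ onto $\dmn(\Xi)$: the mapping cylinders $W^k$ contract toward $\partial_0W^k$, which lies away from the caps $C^k$, so $\tilde W$ in general does not retract onto $\dmn(\Xi)$, and the equivalence must instead be extracted from stage-by-stage bookkeeping of how the caps and the tube pieces $(F^k)^{-1}(A_{k-1})$ sit inside the glued cobordisms, together with the appropriate cofibration/gluing lemmas. The remaining care points are routine but genuine: checking that the finitely many surgery and shrinking spacetime points coming from the different stages really assemble into a single pinch-off process in the sense of Definition~\ref{defn:pinch_off}, and arranging all the cubical refinements simultaneously so that $Y$ and $\dmn(\Xi)$ are honest subcomplexes of $\tilde W$ and $\tilde\Xi$ is an honest Simon--Smith family.
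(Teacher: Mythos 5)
Your construction is the same one the paper has in mind: glue the cobordisms $W^1\cup_{Y^1}W^2\cup_{Y^2}\cdots\cup_{Y^{K-1}}W^K$ to get $\tilde W$, glue the $H^k$ to get $\tilde\Xi$, and concatenate the $F^k$ from the top down to get $\tilde F$. The paper's proof describes exactly this as a ``chain of $\bF$-homotopies'' and then simply asserts that $\tilde W,\tilde F,\tilde\Xi$ satisfying the three conditions can be constructed, so you are matching, and in fact elaborating, the intended argument. Your surjectivity telescoping and your pinch-off concatenation arguments are correct and fill in details the paper leaves implicit.

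The one place where your write-up has a real gap is the inductive proof that $\dmn(\Xi)\hookrightarrow\tilde W$ is a homotopy equivalence. You phrase the inductive step as ``the standard gluing/homotopy-pushout fact,'' but the gluing lemma for homotopy equivalences requires the \emph{piece-wise} inclusions to be equivalences, and here they are not: at stage $k$ the new piece of $\dmn(\Xi)$ is $E_{k}=(F^{k})^{-1}(A_{k-1})\cup C^{k}$ sitting inside the new piece $W^{k}$ of $\tilde W$, and $E_{k}\hookrightarrow W^{k}$ is not a homotopy equivalence (nor is $A_{k-1}\hookrightarrow Y^{k-1}$); $W^{k}$ retracts onto $Y^{k-1}$, which lies away from the cap, so the cap contributes ``new'' topology to $D_k$ that is only cancelled at later stages. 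Likewise the base case fails for the bottom-up indexing you state, since $C^1\hookrightarrow W^1$ is certainly not an equivalence. So the inductive step does not ``reduce to'' a standard pushout fact in the way the sentence suggests; you identify this as the crux, and you are right that it is, but the argument as written does not close it. A route that does work is to observe that $\tilde F(0,\cdot)|_{\dmn(\Xi)}$ factors as a composition of $K$ ``collapse-one-collar'' maps (each collapsing the part of $\dmn(\Xi)$ inside $W^{k}$ onto its $\partial_1 W^{k}$-face), each of which is a homotopy equivalence, followed by $f^1:\tilde Y^1\to Y$; each factor is then a genuine homotopy equivalence, and the composite is the desired one. In fairness, the paper does not supply these details either -- its proof ends with ``one easily can construct ... satisfying the desired conditions'' -- so your proposal is no less rigorous than the source on this point, but the inductive mechanism you sketch would need to be replaced or repaired to actually deliver the equivalence.

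One minor technical point worth flagging: when you concatenate the stage-$k$ pinch-off processes, Definition~\ref{defn:pinch_off}(1) asks for a single one-parameter family of diffeomorphisms on an open time interval around each regular time, including the seam times between stages, so one should reparametrise so that each $H^k\circ F^k(\cdot,x)$ is constant near the endpoints of its time interval before concatenating; this is routine but should be said.
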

    Thus, heuristically, the map $(t,w)\mapsto \tilde\Xi(\tilde F(1-t,w))$, where $(t,w)\in[0,1]\x \dmn(\Xi)$, can be viewed as a \emph{generalized homotopy} that deforms $\Xi$ back to $\Psi$.

    \begin{proof} By examining the construction of $\Xi$, there is a natural, canonical way to define the objects mentioned in the proposition. Specifically, we introduce a sequence of ``$\bF$-homotopies" that, in a certain sense, homotope $\Xi$ back to $\Psi$. Before proceeding, we introduce the following notations:
        \begin{itemize}
            \item The $\bF$-metric in $\cZ_2(S^3;\Z_2)$ does not induce a metric on the space $\GS^*(S^3)$. However, the $\bF$-metric would induce a pseudometric, and therefore a topology, on $\GS^*(S^3)$. With this in mind, the list in the next paragraph would describe a sequence of homotopies, called {\em $\bF$-homotopies} under this topology.
            \item We use ``$+$'' to define the addition of chains with $\Z_2$-coefficients in $\GS^*(S^3)$, where overlapping portions cancel out each other (see, for example,  Figure~\ref{fig:gluingScheme}). 
            \item We use the numbering $(1), (2),\cdots,(2K-1)$ for the subfamilies of $\Xi$ as stated at the beginning of \S \ref{subsect:new_family_Xi}.
        \end{itemize}
        
        Let us now consider the following sequence of $\bF$-homotopies. For visualization purposes, readers may refer to Figure ~\ref{fig:gluingScheme}. 
        \begin{itemize}
            \item The family $\Xi$ can be written as the sum of three subfamilies:
            \[
                \Xi=(2K-1)+(2K-2)+\textrm{ remaining part}.
            \]
            We homotope the subfamily $(2K-1)+(2K-2)$ back to $\Psi^{K-1}$  (using the family $H^K$ and the strong deformation retraction of $W^K$ onto $Y^{K-1}$ induced by $F^K$), while fixing the remaining part. Call this new family $\Xi^{K-1}$.
            \item The family $\Xi^{K-1}$ can be written as the sum of three subfamilies:
            \[
                \Xi^{K-1}=\tilde \Psi^{K-1}+(2K-4) + \textrm{ remaining part.}
            \]
            We homotope the subfamily $\tilde \Psi^{K-1}+(2K-4)$ to $\Psi^{K-2}$ (using the family $H^{K-1}$ and  the strong deformation retraction of $W^{K-1}$ onto $Y^{K-2}$ induced by $F^{K-1}$),  while fixing the remaining part. Call this new family $\Xi^{K-2}$.
            \item ...
            \item The family $\Xi^{2}$ can be written as the sum of three subfamilies:
            \[
                \Xi^2=\tilde \Psi^{2}+(2) + \textrm{ remaining part}.
            \]
            We homotope the subfamily $\tilde \Psi^{2}+(2)$ to $\Psi^{1}$ (using the family $H^{2}$ and  the strong deformation retraction of $W^{2}$ onto $Y^{1}$ induced by $F^{2}$),  while fixing the remaining part. Call this new family $\Xi^{1}$, which is actually $\tilde \Psi^1$.
            \item Finally, we homotope  $\Xi^1=\tilde \Psi^1$ to $\Psi$ using the family $H^{1}$ and  the strong deformation retraction of $W^{1}$ onto $Y^{0}=Y$ induced by $F^1$.
        \end{itemize}
        
        From the above chain of $\bF$-homotopies, one can easily construct $\tilde W$, $\tilde F$, and $\tilde\Xi$ satisfying the desired conditions.
    \end{proof}

\subsection{Decomposing $\Xi$} 
    Recall that for each $k=1, \cdots,K-1$, during the $k$-th min-max process, one of two cases occurred: (1) Some multiplicity one tori $T^k_1, \cdots,T^k_{n_k}$ were detected, and (2) a multiplicity one sphere $V^k$ was detected. For each $k$,  if Case 1 occurred, then $C^k\subset\dmn(\tilde\Psi^k)$ can be decomposed into a union of $C^k_1, \cdots,C^k_{n_k}$, and we will call each $C^k_j$ a {\em genus $1$ cap}. Recall that $[\tilde \Psi^k|_{C^k_j}]$ is $\bF$-close to $[T^k_j]$. If instead Case 2 occurred, we call $C^k$ a {\em genus $0$ cap}. Recall that $\tilde \Psi^k|_{C^k}$ is a Simon-Smith family of genus 0.

    If $C\subset\dmn(\tilde\Psi^k)$ is any such cap obtained at the $k$-th stage of min-max, we define the {\em trace of $C$}, which is a subset of $\dmn(\Xi)$, as follows. We first define the following sets (see Figure~\ref{fig:trace2}):
    \begin{itemize}
        \item {\color{red}$B_k$}$\;:=C$, a subcomplex of $\dmn(\tilde\Psi^k)=\tilde Y^k$.
        \item ${\color{RoyalBlue}B_{k+1}}:=(F^{k+1}(1,\cdot))^{-1}(B_k\cap Y^k)\subset W^{k+1}$. Here $B_k\cap Y^k$ can also be viewed  as a subset of $\dmn(\Psi^k)=\partial_0 W^{k+1}$.
        \item ${\color{ForestGreen}B_{k+2}}\;:=(F^{k+2}(1,\cdot) )^{-1}(B_{k+1}\cap Y^{k+1} )
        \subset W^{k+2}$.
        \item ...
    \end{itemize}
    Note that each  of $B_k, B_{k+1},\cdots,B_K$ is regarded as a subset of $\dmn(\Xi)$. 
    Then the trace of $C$ is defined as their union
    \[
        T(C):=B_k\cup B_{k+1}\cup\cdots\cup B_K\subset\dmn(\Xi)\,.
    \]
    
    \begin{figure}[h]
        \centering
        \makebox[\textwidth][c]{\includegraphics[width=2.7in]{trace2.pdf}}
        \caption{The trace $T(C)$ in $\dmn(\Xi)$.}
        \label{fig:trace2}
    \end{figure}

    Now, let $N$ be the number of all the genus 1 caps we obtained throughout the first $K-1$ stages of min-max. To prove Theorem~\ref{thm:main}, we just need to show that $N\geq 5$.

    We decompose $\dmn(\Xi)$ as the union of the following $N+1$ subcomplexes: 
    \begin{itemize}
        \item Let $D_0$ be the union of $\dmn(\Psi^K)$ and $\bigcup_C T(C)$, where $C$ ranges over the genus $0$ caps. 
        \item For each genus $1$ cap $C$, we consider its trace $T(C)$. There are in total $N$ such traces, which are denoted by $D_1, \cdots, D_N$.
    \end{itemize}
    It follows directly from the definition of $\Xi$ that
    \[
        \dmn(\Xi)=D_0\cup D_1\cup\cdots\cup D_N\,.
    \]
    
    \begin{prop}\label{prop:XiProperty}
        The decompositions above satisfy the following two properties:
        \begin{enumerate}[label=\normalfont(\arabic*)]
            \item\label{item:XiD0}   $\Xi|_{\bigcup_C T(C)}$,  where $C$ ranges over the genus $0$ caps,  is a Simon-Smith family of genus $0$. 
            \item\label{item:traceDeformRetract} For each genus $0$ or genus $1$ cap $C$, there exists a strong deformation retraction of the trace $T(C)$  back to $C$ in $\dmn(\Xi)$. 
        \end{enumerate}    
    \end{prop}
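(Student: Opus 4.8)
The plan is to verify both claims by unwinding the definitions of the families in the list (1)--($2K-1$) and the definition of the traces $T(C)$. For (1), I would argue as follows. Recall that $\cup_C T(C)$, where $C$ ranges over the genus $0$ caps, is built from pieces of the form $B_j = (F^j)^{-1}(\,\cdots\,)$ sitting inside the various $W^j$. On each such piece, $\Xi$ agrees with one of the building-block families: either $\tilde\Psi^k|_{C^k}$ for a genus $0$ cap $C^k$, or $H^j|_{(F^j)^{-1}(A)}$ for suitable $A$, or $\Psi^K$. By Theorem \ref{thm:mapping_cylinder}, whenever Case 2 occurred at stage $k$, the family $\tilde\Psi^k|_{C^k}$ is a Simon-Smith family of genus $0$. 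Moreover, the ``homotopies'' $H^{k+1}, H^{k+2}, \dots$ restricted to the relevant subcomplexes are, for each fixed parameter, pinch-off processes (or honest homotopies by diffeomorphisms, which preserve genus) issuing from generalized surfaces of genus $0$; since a pinch-off process is genus non-increasing (this is the point of Definition \ref{defn:pinch_off}, to be exploited in \S\ref{sect:pinchOffAndMCF}), every time slice remains of genus $\leq 0$, hence genus $0$. Finally $\Psi^K$ has area arbitrarily small in the round metric, so every member is either empty or, being in $\GS^*(S^3)$, can be taken of genus $0$. Combining these observations over all the pieces of $\cup_C T(C)$ and checking compatibility on the overlaps (where two pieces meet, the genus-$0$ condition from either side agrees) gives that $\Xi|_{\cup_C T(C)}$ is a Simon-Smith family of genus $0$.

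For (2), I would construct the strong deformation retraction of $T(C)$ onto $C$ by concatenating the retractions $F^j(1-t,\cdot)$ of the mapping cylinders $W^j$ onto $\partial_0 W^j = Y^{j-1}$, for $j = k+1, \dots, K$. Concretely, $T(C) = B_k \cup B_{k+1} \cup \cdots \cup B_K$ with $B_k = C \subset \tilde Y^k$ and $B_{j} = (F^{j})^{-1}(B_{j-1}\cap Y^{j-1}) \subset W^{j}$ for $j > k$. The map $F^{K}(1-t,\cdot)$ retracts $W^{K} \supset B_K$ onto $Y^{K-1}$, carrying $B_K$ into $B_{K-1}\cap Y^{K-1}$; performing this first, then $F^{K-1}(1-t,\cdot)$, and so on down to $F^{k+1}(1-t,\cdot)$, produces a deformation of $T(C)$ that lands in $B_k = C$ and fixes $C$ pointwise throughout (because each $F^{j}(1-t,\cdot)$ fixes $\partial_0 W^{j} = Y^{j-1}$, hence fixes the part of $T(C)$ already retracted into it). One reparametrizes the time variable so the $K-k$ successive retractions occupy disjoint subintervals of $[0,1]$ and glue continuously at the endpoints; the result is a single strong deformation retraction of $T(C)$ onto $C$ inside $\dmn(\Xi)$.

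The main obstacle is the bookkeeping on overlaps: one must check that the pieces $B_j$ of $T(C)$ (and, for part (1), of $\cup_C T(C)$) are assembled consistently as subcomplexes of $\dmn(\Xi)$, so that the genus-$0$ condition and the partial retractions agree where pieces meet, and that the composite map is genuinely continuous after reparametrization. This is where the precise identifications made in the gluing scheme of \S\ref{sect:getBack} (e.g. $Y^{j-1}$ sitting inside $W^j$ as $\partial_0 W^j$, and $\tilde Y^j = \partial_1 W^j$) must be invoked carefully, together with Remark \ref{rmk:case1cylinder} to handle the Case 1 stages uniformly with the Case 2 stages. Once the combinatorics of the decomposition is set up correctly, both statements follow formally from Theorem \ref{thm:mapping_cylinder} and the elementary properties of mapping cylinders.
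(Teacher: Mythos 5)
Your proposal takes essentially the same route as the paper: for (1), invoke Theorem \ref{thm:mapping_cylinder} to get that $\tilde\Psi^k|_{C^k}$ is genus $0$ for a genus-$0$ cap, then use that the $H^j$'s are (for each fixed parameter) pinch-off processes or diffeomorphism-homotopies, hence genus non-increasing; for (2), concatenate the strong deformation retractions $F^j(1-t,\cdot)$ of the mapping cylinders onto $\partial_0W^j$. This is exactly what the paper does, just written out more fully, and your bookkeeping of the pieces $B_j$ and the compatibility along $Y^j = \partial_0 W^{j+1} \subset \partial_1 W^j$ is correct.

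One remark is off, though it is not actually needed for the statement: you write that $\Psi^K$ has area arbitrarily small, ``so every member is either empty or ... can be taken of genus $0$.'' Small area does \emph{not} imply genus $0$ for a generalized surface -- a torus of tiny area still has genus $1$ -- and the paper deliberately does not claim that $\Xi|_{\dmn(\Psi^K)}$ has genus $0$ (compare the proof of Proposition \ref{prop:not5sweepout}, where only the area bound on $\Psi^K$ is used, not a genus bound). This is harmless here because the proposition concerns $\Xi|_{\cup_C T(C)}$, and $\dmn(\Psi^K)$ is not part of $\cup_C T(C)$: the trace $T(C)$ terminates in $B_K\subset W^K$, which meets $\tilde Y^K=\dmn(\Psi^K)$ only along $\{1\}\times(B_{K-1}\cap Y^{K-1})$, and on that overlap the genus-$0$ conclusion already follows from the pinch-off-process argument applied along $B_K$. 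So: drop the claim about $\Psi^K$; the rest of your argument is correct and agrees with the paper.
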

    \begin{proof}
        For \ref{item:XiD0}, if $C$ is a genus 0 cap obtained at the $k$-stage of min-max, then $\tilde \Psi^k|_{C^k}$ is a Simon-Smith family of genus $0$.
        Then it follows from the definition of the homotopies $H^k$, namely Theorem~\ref{thm:mapping_cylinder}~\ref{item:mapping_cylinder}~\ref{item_item:pinch-off}, that $\Xi|_{T(C)}$ is also a Simon-Smith family of genus $0$. This proves \ref{item:XiD0}. Item \ref{item:traceDeformRetract} follows directly from the definition of $T(C)$, using the maps $F^{k+1},\cdots,F^K$.
    \end{proof}

\subsection{Topological arguments}\label{subsect:LSArgument} 

    In this subsection, we will show that $N\geq 5$. 
    
    First, we define three elements $\lambda,\alpha,\beta$ in $H^1( Y;\Z_2)$:
    \begin{itemize}
        \item Let $\bar\lambda$ be the non-trivial element of $H^1(\cZ_2(S^3;\Z_2);\Z_2)$. Define $\lambda$ as the pullback of $\bar \lambda$ under $[\Psi]$.
        \item Let $a= \RP^1\x\RP^2\subset\RP^2\x \RP^2$, and  $A\subset Y$ be the $\RP^5$-subbundle over $a$. Define $\alpha$ to be the Poincar\'e dual $PD(A)$ of $A$.
        \item Let $b= \RP^2\x\RP^1\subset\RP^2\x \RP^2$, and  $B\subset Y$ be the $\RP^5$-subbundle over $b$. Define $\beta$ to be the Poincar\'e dual $PD(B)$ of $B$.
    \end{itemize}
    Here we assumed $\RP^2$ is built from attaching a $2$-disk to the non-trivial loop $\RP^1\subset\RP^2$.

    The following topological fact about $Y$ is crucial and is the main reason we used the family $\Psi$ in the first place.
    \begin{thm}\label{thm:cupLength9}
        In the cohomology ring $H^*(Y;\Z_2)$,
        \[
            \lambda^5\cup\alpha^2\cup\beta^2\ne 0\,.
        \]
    \end{thm}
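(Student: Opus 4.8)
The plan is to reduce this to a formal computation in the cohomology of the $\RP^5$-bundle $\pi\colon Y\to\RP^2\times\RP^2$ by means of the Leray--Hirsch theorem: once we know that $H^*(Y;\Z_2)$ is a free $H^*(\RP^2\times\RP^2;\Z_2)$-module on $1,\lambda,\dots,\lambda^5$, and that $\alpha^2\cup\beta^2$ is the pullback of the top class of the base, the class $\lambda^5\cup\alpha^2\cup\beta^2$ becomes one of the module basis elements, hence nonzero. (It lives in top degree, since $Y$, being an $\RP^5$-bundle over the closed $4$-manifold $\RP^2\times\RP^2$, is a closed connected $9$-manifold with $H^9(Y;\Z_2)\cong\Z_2$.)

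\emph{Step 1: $\lambda$ restricts to the fiber generator.} Fix $b\in\RP^2\times\RP^2$ and an oriented Clifford torus $\Sigma$ with $b=\{\Sigma,-\Sigma\}$. By Theorem~\ref{thm:PsiDef} there is a homeomorphism $f\colon Y_b\to\RP^5$ with $\Phi^\Sigma_5\circ f=[\Psi|_{Y_b}]$, so $\lambda|_{Y_b}=f^*\big((\Phi^\Sigma_5)^*\bar\lambda\big)$. Since $\Phi^\Sigma_5$ is a $5$-sweepout (it is produced by the same construction as $\Phi_5$ in \S\ref{sect:8sweepout}), $\big((\Phi^\Sigma_5)^*\bar\lambda\big)^5=(\Phi^\Sigma_5)^*(\bar\lambda^5)\ne 0$ in $H^5(\RP^5;\Z_2)$; as $H^*(\RP^5;\Z_2)=\Z_2[t]/(t^6)$ with $|t|=1$, this forces $(\Phi^\Sigma_5)^*\bar\lambda=t$. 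Hence $\lambda|_{Y_b}$ generates $H^1(Y_b;\Z_2)$, so $1,\lambda,\dots,\lambda^5$ restrict to a $\Z_2$-basis of $H^*(Y_b;\Z_2)$ on every fiber, and the Leray--Hirsch theorem gives that $H^*(Y;\Z_2)$ is free over $H^*(\RP^2\times\RP^2;\Z_2)=\Z_2[x_1,x_2]/(x_1^3,x_2^3)$ with basis $1,\lambda,\dots,\lambda^5$.

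\emph{Step 2: identifying $\alpha,\beta$.} The subbundle $A$ equals $\pi^{-1}(a)$ for $a=\RP^1\times\RP^2$, and $\RP^1\subset\RP^2$ is Poincar\'e dual to the generator of $H^1(\RP^2;\Z_2)$, so $PD_{\RP^2\times\RP^2}(a)=x_1$. Invoking the standard fact that the Poincar\'e dual of the preimage of a closed submanifold under a fiber bundle projection is the pullback of its Poincar\'e dual, $\alpha=PD_Y(A)=\pi^*x_1$, and symmetrically $\beta=\pi^*x_2$. Therefore $\alpha^2\cup\beta^2=\pi^*(x_1^2)\cup\pi^*(x_2^2)=\pi^*(x_1^2x_2^2)$, the pullback of the generator of $H^4(\RP^2\times\RP^2;\Z_2)$.

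\emph{Step 3: conclusion.} Putting the two steps together,
\[
\lambda^5\cup\alpha^2\cup\beta^2=\pi^*(x_1^2x_2^2)\cup\lambda^5,
\]
which is the product of the pullback of the top class of the base with the basis element $\lambda^5$; by the freeness in Step 1 this is a nonzero element of $H^*(Y;\Z_2)$ (indeed the generator of $H^9(Y;\Z_2)\cong\Z_2$), which is the assertion. The main obstacle is Step 1 --- establishing that $\lambda$ is a globally defined class whose restriction to each fiber generates $H^1(\RP^5;\Z_2)$ --- which is precisely what the explicit bundle description of Theorem~\ref{thm:PsiDef} together with Nurser's $5$-sweepout property supply; the remaining ingredients (Leray--Hirsch, the ``Poincar\'e dual of a preimage'' identity, and the computation in $H^*(\RP^2\times\RP^2;\Z_2)$) are routine.
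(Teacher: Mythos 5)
Your proof is correct, and it takes a genuinely different route from the paper's. The paper introduces an auxiliary subcomplex $C\subset Y$ (the $\RP^4\times\RP^2\times\RP^2$ family of \S\ref{sect:7sweepout}), checks $(\lambda_C)^7\neq 0$ there, sets $\gamma:=PD(C)$, applies Leray--Hirsch using $\gamma$ as the fiber generator, deduces from the $7$-sweepout property that $\lambda=\gamma+c_1\alpha+c_2\beta$, and then computes $\lambda^5\cup\alpha^2\cup\beta^2=\gamma^5\cup\alpha^2\cup\beta^2$ by expanding and killing cross terms with $\alpha^3=\beta^3=0$, finishing with a transversal intersection argument. You instead apply Leray--Hirsch directly with $\lambda$ as the fiber generator: Theorem~\ref{thm:PsiDef} identifies $\lambda|_{Y_b}$ with $f^*\big((\Phi_5^\Sigma)^*\bar\lambda\big)$, and Nurser's $5$-sweepout property (which holds for every oriented Clifford torus $\Sigma$ by $SO(4)$-equivariance of the construction) forces this to be the generator of $H^1(\RP^5;\Z_2)$. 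This makes $1,\lambda,\dots,\lambda^5$ a free module basis over $H^*(\RP^2\times\RP^2;\Z_2)$, and since $\alpha=\pi^*x_1$, $\beta=\pi^*x_2$ (a fact the paper also uses, inside Claim~\ref{claim:cohoRingY}), the class $\lambda^5\cup\alpha^2\cup\beta^2=\pi^*(x_1^2x_2^2)\cup\lambda^5$ is one of the basis elements and hence nonzero. Your route is shorter and more conceptual: it bypasses the $7$-sweepout subcomplex, the auxiliary class $\gamma$, the $\lambda=\gamma+\dots$ decomposition, and the transversality computation entirely, at the price of invoking the $5$-sweepout property of $\Phi_5^\Sigma$ for arbitrary $\Sigma$, which you correctly note is supplied by Nurser's construction together with Theorem~\ref{thm:PsiDef}. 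What the paper's longer detour buys is that it never needs $\lambda$ to restrict to the exact fiber generator -- the argument works as soon as $\lambda$ has a nonzero $\gamma$-component, which is the weaker statement extracted from the $7$-sweepout calculation; your version requires the sharper statement but gets it for free. Both arguments are sound.
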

    We will prove this theorem in \S\ref{sect:family_Psi}.

    Now, to prove $N\geq 5$, let us suppose by contradiction that $N\leq 4$.  

    By Proposition~\ref{prop:XiPsiHomotopic}, $\dmn(\Xi)$ and $Y$ are homotopy equivalent, so, by abuse of notation, we will also treat view $\lambda,\alpha,\beta$ as elements of $H^1(\dmn(\Xi);\Z_2)$ as well. 
    For each $j=0,1,\cdots,N$, let $i_j$ be the inclusion map $D_j\hookrightarrow \dmn(\Xi)$. Moreover, given a cohomology class $\gamma$ of $\dmn(\Xi)$, we denote its pullback under  
    \[
        (i_j)^*: H^*(\dmn(\Xi);\Z_2)\to H^*(D_j;\Z_2)
    \]
    by $\gamma|_{D_j}$. Then by Theorem~\ref{thm:cupLength9} and a Lyusternik-Schnirelmann argument, we obtain:

    \begin{lem}\label{lem:LS}
        \begin{itemize}
            \item []
            \item If $N=4$, then one of $\lambda^5|_{D_0}, \alpha|_{D_1}, \alpha|_{D_2}, \beta|_{D_3}, \beta|_{D_4}$ is non-zero.
            \item If $N=3$, then one of $\lambda^5|_{D_0}, \alpha|_{D_1}, \alpha|_{D_2}, \beta^2|_{D_3}$ is non-zero.
            \item If $N=2$, then one of $\lambda^5|_{D_0}, \alpha|_{D_1}, (\alpha \cup \beta^2)|_{D_2}$ is non-zero.
            \item If $N=1$, then one of $\lambda^5|_{D_0}, (\alpha^2\cup \beta^2)|_{D_1}$ is non-zero.
            \item If $N=0$, i.e. $\dmn(\Xi)=D_0$, then $\lambda^5|_{D_0}$ is non-zero.
        \end{itemize}
    \end{lem}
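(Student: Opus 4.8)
The plan is to run the standard Lyusternik–Schnirelmann cup-length argument using the covering $\dmn(\Xi) = D_0 \cup D_1 \cup \cdots \cup D_N$ and the non-vanishing product $\lambda^5 \cup \alpha^2 \cup \beta^2 \neq 0$ in $H^*(\dmn(\Xi);\Z_2)$ guaranteed by Theorem \ref{thm:cupLength9} (transported from $Y$ via the homotopy equivalence of Proposition \ref{prop:XiPsiHomotopic}). The key abstract principle I would invoke is the following: if $X = A_0 \cup A_1 \cup \cdots \cup A_m$ is a union of subcomplexes (or, more precisely, a cover by sets each of whose inclusion induces the zero map on the relevant cohomology class), and $u_0, u_1, \ldots, u_m$ are cohomology classes with $u_j|_{A_j} = 0$ for each $j$, then $u_0 \cup u_1 \cup \cdots \cup u_m = 0$ in $H^*(X)$. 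This is proved by the usual excision/long-exact-sequence manipulation: each $u_j$ with $u_j|_{A_j}=0$ lifts to a relative class $\bar u_j \in H^*(X, A_j)$, and the cup product $\bar u_0 \cup \cdots \cup \bar u_m$ lands in $H^*(X, A_0 \cup \cdots \cup A_m) = H^*(X, X) = 0$, hence maps to $0$ in $H^*(X)$. I would state this as a short preliminary lemma (or cite the standard reference, e.g. Cornea–Lupton–Oprea–Tanré, \emph{Lusternik–Schnirelmann Category}) and then apply the contrapositive in each case.

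The argument is then a direct case analysis on $N$. For $N = 4$: the non-zero product $\lambda^5 \cup \alpha^2 \cup \beta^2$ can be regrouped as a $5$-fold product $(\lambda^5) \cup \alpha \cup \alpha \cup \beta \cup \beta$ of classes of positive degree, matching the $5$ pieces $D_0, D_1, D_2, D_3, D_4$. If all of $\lambda^5|_{D_0}, \alpha|_{D_1}, \alpha|_{D_2}, \beta|_{D_3}, \beta|_{D_4}$ were zero, the abstract lemma would force $\lambda^5 \cup \alpha^2 \cup \beta^2 = 0$, contradicting Theorem \ref{thm:cupLength9}; so one of them is non-zero. For $N = 3$ one regroups as $(\lambda^5) \cup \alpha \cup \alpha \cup (\beta^2)$, a $4$-fold product matching $D_0, D_1, D_2, D_3$; for $N = 2$ as $(\lambda^5) \cup \alpha \cup (\alpha \cup \beta^2)$, a $3$-fold product matching $D_0, D_1, D_2$; for $N = 1$ as $(\lambda^5) \cup (\alpha^2 \cup \beta^2)$, a $2$-fold product matching $D_0, D_1$; and for $N = 0$ the claim is simply that $\lambda^5|_{D_0} = \lambda^5|_{\dmn(\Xi)} \neq 0$, which holds because $\lambda^5 \cup \alpha^2 \cup \beta^2 \neq 0$ forces $\lambda^5 \neq 0$ in $H^*(\dmn(\Xi);\Z_2)$ and $D_0 = \dmn(\Xi)$. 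In each case the specific partition of the total degree-$9$ product into as many positive-degree factors as there are pieces is the only real bookkeeping.

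The one point that needs a little care — and which I would expect to be the main (minor) obstacle — is verifying that the $D_j$ are genuinely subcomplexes of $\dmn(\Xi)$ (so that the pairs $(\dmn(\Xi), D_j)$ are "good" pairs and the relative cohomology / excision machinery applies cleanly). This is already arranged: each $D_j$ is built from the $B$-pieces defining the traces $T(C)$, which by construction are cubical (equivalently simplicial) subcomplexes of $\dmn(\Xi)$, and $D_0$ is a union of such traces together with $\dmn(\Psi^K)$, again a subcomplex. One should also note that the abstract LS lemma only requires $u_j|_{D_j}=0$ as a hypothesis, and that is exactly the negation we are assuming toward a contradiction in each bullet — no transversality or general-position input is needed here. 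Thus the proof reduces to: (i) cite/prove the cup-length LS lemma, (ii) transport $\lambda,\alpha,\beta$ to $H^1(\dmn(\Xi);\Z_2)$ via Proposition \ref{prop:XiPsiHomotopic} and invoke Theorem \ref{thm:cupLength9}, (iii) in each of the five cases write the total product as the matching-length product of positive-degree classes and apply the contrapositive of the LS lemma.
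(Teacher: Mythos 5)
Your proposal is correct and fills in exactly the standard Lyusternik--Schnirelmann cup-length argument that the paper invokes in one line ("by Theorem \ref{thm:cupLength9} and a Lyusternik--Schnirelmann argument") without spelling out. The regrouping of $\lambda^5\cup\alpha^2\cup\beta^2$ into $N+1$ positive-degree factors matching the cover $D_0\cup\cdots\cup D_N$, together with the relative-cohomology LS lemma, is precisely the intended reasoning, and your remark that the $D_j$ are (cubical/simplicial) subcomplexes so the relative machinery applies cleanly addresses the only technical point that needs noting.
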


    By Proposition~\ref{prop:XiPsiHomotopic}, regarding the induced maps $[\Psi]$ and $[\Xi]$ into $\cZ_2(S^3;\Z_2)$, we have
    \[
        [\Xi]^*(\bar\lambda)=[\Psi]^*(\bar\lambda)=\lambda\,.
    \]
    Hence,
    \[
        \lambda^5|_{D_0}=i_0^*([\Xi]^*(\bar \lambda)^5)=([\Xi\circ i_0]^*(\bar \lambda))^5=([\Xi|_{D_0}]^*(\bar \lambda ))^5\,,
    \]
    which is non-zero if and only if $[\Xi|_{D_0}]$ is a $5$-sweepout.
    Consequently, it suffices to prove the following two facts:
    \begin{prop}\label{prop:not5sweepout}
        $\Xi|_{D_0}$ is not a $5$-sweepout.
    \end{prop}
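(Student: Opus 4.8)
The plan is to argue by contradiction, combining the fact that any $5$-sweepout of the round sphere $(S^3,\bar g)$ has width at least $\omega_5(S^3,\bar g)=2\pi^2$ (Nurser \cite{Nur16}) with the fact that the large-area part of $D_0$ consists of genus $0$ surfaces. Suppose $\Xi|_{D_0}$ were a $5$-sweepout, i.e. $[\Xi|_{D_0}]^*(\bar\lambda^5)\ne 0$; since this is a purely topological condition on the induced map $[\Xi|_{D_0}]\colon D_0\to\cZ_2(S^3;\Z_2)$, from now on I would regard every family built from $\Xi|_{D_0}$ as living in the \emph{round} unit sphere $(S^3,\bar g)$. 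Write $Z:=\dmn(\Psi^K)\subset D_0$; by construction $\Xi|_Z=\Psi^K$ satisfies $\sup_{z\in Z}\cH^2_{\bar g}(\Xi|_{D_0}(z))<1$, while $D_0\setminus Z\subset\bigcup_C T(C)$, the union of traces of the genus $0$ caps, on which $\Xi$ is a Simon-Smith family of genus $0$ by Proposition \ref{prop:XiProperty}(\ref{item:XiD0}).

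First I would bound the relative width from below. For any $\Phi'\in\Lambda_Z(\Xi|_{D_0})$ the family $\Phi'$ is homotopic to $\Xi|_{D_0}$ through Simon-Smith families, so by Proposition \ref{prop:SS_AP}(\ref{item:prop:SS_AP_2}) the induced map $[\Phi']$ is $\bF$-homotopic to $[\Xi|_{D_0}]$ and hence is itself a $5$-sweepout; therefore $\sup_x\cH^2_{\bar g}(\Phi'(x))=\sup_x\Mb([\Phi'](x))\ge\omega_5(S^3,\bar g)=2\pi^2$, and taking the infimum over $\Phi'$ gives $L:=\bL(\Lambda_Z(\Xi|_{D_0}))\ge 2\pi^2$. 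Since $L\ge 2\pi^2>1>\sup_{z\in Z}\cH^2_{\bar g}(\Xi|_{D_0}(z))$ and $\Xi|_{D_0}$ has genus $\le 1$, the relative min-max theorem (Theorem \ref{thm:relative_minMax}, with $\fg_0=1$) produces a pulled-tight minimizing sequence $\{\Phi_i\}\subset\Lambda_Z(\Xi|_{D_0})$ with $\bC(\{\Phi_i\})\cap\cW_{L,\le1}(S^3,\bar g)\ne\emptyset$ and a multiplicity-one $W\in\cW_{L,\le1}$.

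Next I would identify $\cW_{L,\le 1}(S^3,\bar g)$. Because $(S^3,\bar g)$ has positive Ricci curvature the Frankel property holds, so $W=m|\Gamma|$ for a single connected embedded minimal surface $\Gamma$; positivity of the Ricci curvature excludes stable $\Gamma$, and $S^3$ carries no embedded closed one-sided surface, so $\Gamma$ is two-sided and unstable, whence $m=1$. The genus bound defining $\cW_{L,\le1}$ forces $\fg(\Gamma)\le 1$, so by Almgren's classification of minimal $2$-spheres in $S^3$ and Brendle's resolution of Lawson's conjecture \cite{Bre13}, $\Gamma$ is either an equatorial $2$-sphere (so $L=4\pi$) or the Clifford torus (so $L=2\pi^2$); in particular $\cW_{2\pi^2,\le1}=\{[\text{Clifford torus}]\}$. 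If $L=4\pi$ this contradicts $L\ge 2\pi^2$ since $2\pi^2>4\pi$, so $L=2\pi^2$ and the detected surface is the genus $1$ Clifford torus. Finally, pick a min-max sequence $\{\Phi_i(x_i)\}$ with $\cH^2_{\bar g}(\Phi_i(x_i))\to 2\pi^2$; for large $i$ its area exceeds $1$, and since the relative homotopy fixes $Z$ pointwise we have $\Phi_i|_Z=\Psi^K$ with area $<1$, forcing $x_i\notin Z$, i.e. $x_i\in D_0\setminus Z\subset\bigcup_C T(C)$. As each $\Phi_i$ is obtained from $\Xi|_{D_0}$ by a family of diffeomorphisms of $S^3$, which preserve genus, $\fg(\Phi_i(x_i))=\fg(\Xi(x_i))=0$. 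Thus a genus $0$ min-max sequence converges to the genus $1$ Clifford torus, contradicting the strengthened genus bound of Ketover \cite{Ket19} (refining \cite{DP10}), which forces the min-max limit to have genus at most $\liminf_i\fg(\Phi_i(x_i))=0$. Hence $\Xi|_{D_0}$ is not a $5$-sweepout.

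The hard part will be precisely this last step: Theorems \ref{thm:minMax} and \ref{thm:relative_minMax} as stated only record the \emph{global} genus bound through membership in $\cW_{L,\le1}$, whereas the contradiction needs the \emph{localized} strengthened genus bound, so I would have to quote Ketover's result in the form that controls the genus of the limit by the genus of the min-max sequence itself, while checking that the nearby (possibly genus $1$) members of $\Psi^K$ have area $<1$ and therefore cannot transfer genus into the limit. The remaining ingredients — relating the Simon-Smith width to $\omega_5$, Nurser's width computation, the Frankel property, and the Almgren/Brendle classifications — are either already available in the excerpt or standard.
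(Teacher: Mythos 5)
Your proof is correct and uses the same toolkit as the paper (the decomposition of $D_0$ into the genus-$0$ region $E=\cup_C T(C)$ and the low-area region $\dmn(\Psi^K)$, relative Simon--Smith min-max, Wang--Zhou multiplicity one, Nurser's computation $\omega_5(\mathbb S^3,\bar g)=2\pi^2$, and the Almgren/Brendle classification), but you run the contradiction in the opposite direction. The paper does not assume the conclusion: it observes directly that the relative min-max in the round sphere must produce a \emph{genus-zero} multiplicity-one minimal surface (since the high-area region $E$ is genus $0$ and diffeomorphisms in the relative homotopy class preserve genus), i.e.\ the equator of area $4\pi$; this gives $\bL(\Lambda(\Xi|_{D_0}))\le 4\pi<2\pi^2=\omega_5$, which via the Claim rules out the $5$-sweepout property. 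You instead suppose $\Xi|_{D_0}$ is a $5$-sweepout, deduce $\bL(\Lambda_Z(\Xi|_{D_0}))\ge 2\pi^2$, force the detected surface to be the Clifford torus, and contradict that with the genus-$0$ min-max sequence. Both are valid; the paper's route is a bit shorter since it never needs to identify the torus, while yours has the pedagogical advantage of not having to justify separately that the relative width exceeds $\sup_Z\cH^2$ (this is automatic once $L\ge 2\pi^2$, whereas the paper silently passes over the hypothesis $L>\sup_Z\cH^2(\Phi(z))$ of Theorem~\ref{thm:relative_minMax}). You also correctly flag the subtlety that Theorems~\ref{thm:minMax} and~\ref{thm:relative_minMax} as stated only record $\cW_{L,\le\fg_0}$, whereas what is actually needed --- and what the paper implicitly invokes when asserting the relative min-max ``detects the multiplicity one equatorial $2$-sphere'' --- is the sharper bound in Proposition~\ref{prop:am_genus_bound}, which controls the genus of the min-max limit by the genus of the min-max \emph{subsequence} itself (genus $0$ here, since large-area slices land in $E$ and genus is a diffeomorphism invariant). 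That localization is exactly what both arguments require, and your proposal is the more explicit about it.
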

    \begin{thm}\label{thm:trivialInFirsthomo} Suppose $1\leq N\leq 4$.
        Then for each $j=1,\cdots,N$, the map 
        \[
            (i_j)_*:H_1(D_j;\Z_2)\to H_1(\dmn(\Xi);\Z_2)
        \]
        is the zero map. 
    \end{thm}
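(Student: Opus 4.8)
The plan is to prove Theorem \ref{thm:trivialInFirsthomo} by showing that every loop sitting inside a genus~$1$ cap $C$ becomes homologically trivial once included into $\dmn(\Xi)$, and then transporting this to $D_j$ via the strong deformation retraction of Proposition \ref{prop:XiProperty}(\ref{item:traceDeformRetract}). Since $D_j = T(C)$ for a genus~$1$ cap $C$, and $T(C)$ deformation retracts onto $C$, the map $(i_j)_*$ factors through $H_1(C;\Z_2)\to H_1(\dmn(\Xi);\Z_2)$; so it suffices to show that for every $1$-cycle $c\subset C$ one has $[c]=0$ in $H_1(\dmn(\Xi);\Z_2)$. I would first record that $\Xi|_C$ may be assumed $\bF$-close to the minimal torus $T^k_j$ associated with $C$ (this uses Theorem \ref{thm:currentsCloseInBoldF} in the construction of the cap), so $\Xi|_c$ is a loop of generalized surfaces all of which have genus exactly $1$ (genus~$0$ would force the varifold to be far from a torus by Ketover's genus bound together with the lower area; and higher genus is excluded since $\Xi$ has genus $\leq 1$).

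Next, I would use the ``homotopy'' $\tilde F$ from Proposition \ref{prop:XiPsiHomotopic}: the map $(t,w)\mapsto \tilde\Xi(\tilde F(1-t,w))$ deforms $\Xi|_c$ back to a subfamily $\Psi|_{c_0}$ of the original family $\Psi$, where $c_0\subset Y=\dmn(\Psi)$ is the image of $c$ under the homotopy equivalence $\tilde F(0,\cdot)|_{\dmn(\Xi)}$. The crucial point is that each $t\mapsto\tilde\Xi(\tilde F(t,x))$ is a pinch-off process, which is genus non-increasing; combined with the fact that $\Xi|_c$ already consists only of genus~$1$ surfaces, no genus can actually be dropped along the homotopy, so \emph{every} generalized surface appearing in the homotopy, and in $\Psi|_{c_0}$ in particular, has genus exactly $1$. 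Since $[c]$ and $[c_0]$ correspond under the homotopy equivalence between $\dmn(\Xi)$ and $Y$, it suffices to show $[c_0]=0$ in $H_1(Y;\Z_2)$.

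To analyze $c_0$, let $Z_0\subset Y$ be the parameters mapped by $\Psi$ to (unoriented) Clifford tori; from the construction of $\Psi$ (Theorem \ref{thm:PsiDef}) the only members of $\Psi$ with genus~$1$ are the Clifford tori and their nearby level surfaces, and using the signed-distance structure of the $\RP^5$-fibers one can push $c_0$ into $Z_0\cong\RP^2\times\RP^2$ without leaving the genus~$1$ locus. Thus it remains to show $[c_0]=0$ in $H_1(\RP^2\times\RP^2;\Z_2)\cong(\Z_2)^2$, i.e.\ to rule out the three nonzero classes $(1,0),(0,1),(1,1)$. The classes $(1,0)$ and $(0,1)$ are ruled out because a loop in those classes sweeps out a nonzero element of $H^1(\cZ_2(S^3;\Z_2);\Z_2)$ — it is a $1$-sweepout — whereas $\Xi|_c$, being $\bF$-close to a single torus $T^k_j$, cannot be a $1$-sweepout (its image in $\cZ_2(S^3;\Z_2)$ lies in a small $\bF$-ball, hence is null-homotopic there). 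The class $(1,1)$ is the delicate case: it produces a loop $\{\Sigma(\theta)\}_{\theta\in[0,2\pi]}$ of \emph{oriented} Clifford tori along which one can continuously choose interior regions $\ins(\Sigma(\theta))$ with $\ins(\Sigma(0))=\ins(\Sigma(2\pi))$, but such that the induced automorphism of $\pi_1(\ins(\Sigma(0)))\cong\Z$ is $-1$ rather than the identity.

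The main obstacle is excluding this $(1,1)$ case. Here the strategy is to exploit that $\Psi|_{c_0}$ was obtained from the almost-constant loop $\Xi|_c$ through a homotopy consisting entirely of genus~$1$ generalized surfaces: tracking an embedded non-separating loop on each $\Sigma(\theta)$ (say a meridian of the torus) continuously through the homotopy back toward the nearly-constant family, one obtains a continuous choice of non-separating curve, hence a consistent trivialization of the ``interior $\pi_1$,'' which contradicts the orientation-reversal built into the $(1,1)$ class. Concretely, I would set up a monodromy invariant: the homotopy through genus~$1$ surfaces gives a path in (a bundle over) the space of genus~$1$ generalized surfaces equipped with a marked generator of $H_1$ of the interior, and the endpoint being $\bF$-close to a fixed torus forces the monodromy along $c_0$ to be trivial, contradicting the $-1$ computed for $(1,1)$. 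Once all three nonzero classes are excluded, $[c_0]=(0,0)$, hence $[c]=0$ in $H_1(\dmn(\Xi);\Z_2)$, and therefore $(i_j)_*=0$; the cases $N=1,2,3$ are identical, and the cases $i=2,3,4$ for $N=4$ are identical up to relabeling $\alpha\leftrightarrow\beta$ and $a\leftrightarrow b$.
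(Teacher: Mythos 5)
Your overall strategy matches the paper's: factor through $H_1(C;\Z_2)$ using the strong deformation retraction of $T(C)$ onto $C$; use $\bF$-closeness of $\Xi|_C$ to a single minimal torus $T$; use the pinch-off homotopy $\tilde F$ to deform $\Xi|_c$ back to a loop $\Psi|_{c_0}$ with $c_0\subset Z_0\cong\RP^2\times\RP^2$; rule out $(1,0)$ and $(0,1)$ via the $1$-sweepout argument; and rule out $(1,1)$ via a monodromy/sign-flip argument. This is the paper's proof.

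There is, however, a localized but genuine error in your justification that every generalized surface in $\Xi|_c$ ``has genus exactly $1$.'' You claim that ``genus~$0$ would force the varifold to be far from a torus by Ketover's genus bound together with the lower area.'' This is false: a torus with a small neck pinched off is genus~$0$ but can be as close to the torus as you like in both $\bF$-metric for currents and for varifolds, and has nearly the same area. Ketover's genus bound concerns limits of min-max sequences, not individual surfaces $\bF$-close to a minimal surface, and area alone does not constrain genus. The correct argument — Proposition~\ref{prop:tauSigma} in the paper — is a direct coarea/linking-number argument: if $[S]$ is $\bF$-close as a \emph{current} to $[\Sigma]$ (a torus), and if one \emph{already knows} that $H_1(\Omega;\Z)$ and $H_1(\Omega';\Z)$ are each either $0$ or $\Z$, one produces loops $\gamma_\ins\subset\ins(S)\cap\ins(\Sigma)$ and $\gamma_\out\subset\out(S)\cap\out(\Sigma)$ with $\link(\gamma_\ins,\gamma_\out)=1$, whence $H_1(\ins(S))\cong H_1(\out(S))\cong\Z$. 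The a priori constraint on $H_1(\Omega),H_1(\Omega')$ is itself supplied by Proposition~\ref{prop:analogousToMCF}(3): the pinch-off process $t\mapsto\tilde\Xi(\tilde F(t,x))$ goes from a member of $\Psi$ (which has rank $\leq 1$ first homology on each side) to $\Xi(x)$, and rank is non-increasing along a pinch-off. You already invoke the rank-non-increase for the backward transport, so this is a repair of the argument rather than a missing ingredient, but as written the step would fail.

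Finally, your monodromy sketch for excluding $(1,1)$ is the right idea but needs the cellular construction of Proposition~\ref{prop:identifyHomoGroups} to make ``continuous choice of generator of $H_1(\ins(\cdot))$'' precise: the interior regions and their generators do not vary continuously in any naive pointwise sense across a two-parameter family of singular generalized surfaces, so one must pass to a fine enough grid and use the uniqueness of the generator furnished by Proposition~\ref{prop:tauSigma}(3) to propagate the choice. Once that is in place, the sign flip for the $(1,1)$ class follows from the identification of oriented Clifford tori with $G_2^+(\R^4)\cong S^2\times S^2$ exactly as you indicate.
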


    Indeed, Proposition~\ref{prop:not5sweepout} implies $\lambda^5|_{D_0}= 0$, and Theorem~\ref{thm:trivialInFirsthomo}, via the universal coefficients theorem, implies the map 
    \[
        (i_j)^*: H^1(\dmn(\Xi);\Z_2)\to H^1(D_j;\Z_2)
    \] is trivial for each $j=1,\cdots,N$. This implies that all the cohomology classes in Lemma~\ref{lem:LS} must be zero, contradicting the assumption that $N \leq 4$. Thus, it remains to prove Proposition~\ref{prop:not5sweepout} and Theorem~\ref{thm:trivialInFirsthomo}.

    \begin{proof}[Proof of Proposition~\ref{prop:not5sweepout}] 
        In this proof, we will treat $\Xi|_{D_0}$ as a Simon-Smith family of genus $\leq 1$ in the unit 3-sphere $(S^3,\bar g)$ instead of $(S^3,g')$. Recall that $D_0$ is the union of $E:= \bigcup_CT(C)$, where $C$ runs over the genus $0$ caps, and $\dmn(\Psi^K)$.
        
        By Proposition~\ref{prop:XiProperty}, $\Xi|_E$ is a Simon-Smith family of genus 0, while 
        \[
            \sup  \cH^2_{\bar g}\circ\Xi|_{\dmn(\Psi^K)}= \sup \cH^2_{\bar g}\circ\Psi^K<1\;\;(<4\pi)
        \] by definition. Thus, if we apply the {\it relative} Simon-Smith min-max, Theorem~\ref{thm:relative_minMax}, on the unit round 3-sphere to $\Xi|_{D_0}$, relative to $\dmn(\Psi^K)\subset D_0$, we will detect the multiplicity one equatorial 2-sphere. This immediately implies that the width  $\bL(\Lambda(\Xi|_{D_0}))$, in the {\it non-relative} Simon-Smith min-max setting, is at most $4\pi$. 
        
        Note the following fact:
        \begin{claim}
            In a Riemannian $3$-manifold $M$, if $\Phi$ is a Simon-Smith family  such that $[\Phi]$ is a $p$-sweepout, then the Simon-Smith min-max width $\bL(\Lambda(\Phi))$ satisfies
            \[
                \bL(\Lambda(\Phi))\geq \omega_p(M)\,.
            \]
        \end{claim}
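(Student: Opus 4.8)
The plan is to deduce the Claim directly from the definition of the Almgren--Pitts $p$-width together with Proposition~\ref{prop:SS_AP}. Write $X = \dmn(\Phi)$; since every $\Phi' \in \Lambda(\Phi)$ has the same parameter space $X$, it suffices to show that $\sup_{x\in X}\cH^2(\Phi'(x)) \geq \omega_p(M)$ for an arbitrary $\Phi' \in \Lambda(\Phi)$, because taking the infimum over $\Lambda(\Phi)$ then gives $\bL(\Lambda(\Phi)) \geq \omega_p(M)$. I would establish this by exhibiting the induced cycle map $[\Phi']$ as an admissible competitor in the definition of $\omega_p$, i.e. as an element of $\Pc_p$, and by checking that pointwise $\Mb([\Phi'(x)]) = \cH^2(\Phi'(x))$.

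In order, the steps would be as follows. First, by Remark~\ref{rmk:homotopy}, $\Phi'$ is again a Simon--Smith family, so Proposition~\ref{prop:SS_AP}(\ref{item:prop:SS_AP_2}) gives that $[\Phi']\colon X \to \cZ_2(M;\bF;\Z_2)$ is $\bF$-continuous, hence in particular $\Fc$-continuous, and $X$ is a finite complex. Second, I would check that $[\Phi']$ inherits the $p$-sweepout property from $[\Phi]$: let $\varphi\colon [0,1]\times X \to \operatorname{Diff}^\infty(M)$ be the homotopy witnessing $\Phi'\in\Lambda(\Phi)$; by Remark~\ref{rmk:homotopy} the map $H(t,x) := \varphi(t,x)(\Phi(x))$ is a Simon--Smith family on $[0,1]\times X$, so by Proposition~\ref{prop:SS_AP}(\ref{item:prop:SS_AP_2}) the map $[H]$ is $\bF$-continuous and therefore a homotopy, in $\cZ_2(M;\Z_2)$, from $[\Phi]$ to $[\Phi']$; homotopy invariance of cohomology then yields $[\Phi']^*(\bar\lambda^p) = [\Phi]^*(\bar\lambda^p) \neq 0$. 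Hence $[\Phi'] \in \Pc_p$, and by definition of the $p$-width $\sup_{x\in X}\Mb([\Phi'(x)]) \geq \omega_p(M)$. Third, for each $x$ with $\Phi'(x)\in\GS(M)$ one has $[\Phi'(x)] = [\Phi'(x)\setminus P(x)]$ for a finite set $P(x)$, so $\Mb([\Phi'(x)]) = \cH^2(\Phi'(x)\setminus P(x)) = \cH^2(\Phi'(x))$ since $\cH^2(P(x))=0$, while if $\cH^2(\Phi'(x))=0$ then $[\Phi'(x)]=0$ and both sides vanish. Combining the last two points gives $\sup_{x\in X}\cH^2(\Phi'(x)) = \sup_{x\in X}\Mb([\Phi'(x)]) \geq \omega_p(M)$, which completes the argument.

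The only place that requires a little care, and which I regard as the main (though modest) obstacle, is the preservation of the $p$-sweepout property in the second step: it is essential that the \emph{entire} homotopy $H$, and not merely its two endpoints, be a Simon--Smith family, so that $[H]$ is a bona fide $\bF$-continuous — hence flat-continuous — path of cycles and the pullback $[\Phi']^*(\bar\lambda^p)$ can be compared with $[\Phi]^*(\bar\lambda^p)$. This is precisely what Proposition~\ref{prop:SS_AP} supplies. No issue of concentration of mass enters, since $\Pc_p$ as defined in \S\ref{sect:prelim} only demands $\bF$-continuity together with the nonvanishing of $\Phi^*(\bar\lambda^p)$, and the identity $\Mb([\Phi'(x)]) = \cH^2(\Phi'(x))$ is immediate.
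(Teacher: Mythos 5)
Your proposal is correct and follows the same route as the paper, which dispatches the claim in a single sentence (``a homotopy in the sense of Simon-Smith min-max is continuous in the flat topology''); you have simply expanded that observation into a careful argument, correctly invoking Proposition~\ref{prop:SS_AP} to see that $[H]$ is $\bF$-continuous, using homotopy invariance to transfer the $p$-sweepout property from $[\Phi]$ to $[\Phi']$, and noting the identity $\Mb([\Phi'(x)]) = \cH^2(\Phi'(x))$ before taking the infimum over $\Lambda(\Phi)$.
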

        \begin{proof}
            This is true because it follows  from definition that a homotopy in the  sense of Simon-Smith min-max is continuous in the flat topology.
        \end{proof}
        
        Thus, whenever $[\Xi |_{D_0}]$ is a $p$-sweepout,
        \[
            4\pi=\bL(\Lambda(\Xi|_{D_0}))\geq \omega_p(S^3,\bar g)\,.
        \]
        But by C. Nurser~\cite{Nur16}, the first four widths of the unit $3$-sphere are $4\pi$, while the fifth is $2\pi^2$. Thus, $[\Xi|_{D_0}]$ is not a $5$-sweepout. This finishes the proof of Proposition~\ref{prop:not5sweepout}
    \end{proof}

    We postpone the detailed proof of Theorem~\ref{thm:trivialInFirsthomo} to \S \ref{sect:genus1caps}. For now, let us outlined the idea here. Let $1\leq j\leq N$. By definition, $D_j$ is the trace $T(C)$ of some genus 1 cap $C\subset\dmn(\Xi)$ obtained at the $k$-th stage of min-max, for some $1\leq k\leq K-1$. By Proposition~\ref{prop:XiProperty}, there exists a strong deformation retraction of $D_j$ onto $C$, so it suffices to show that for the inclusion map $i:C\hookrightarrow\dmn(\Xi)$, the induced map $i_*$ between the first homology groups of $C$ and $\dmn(\Xi)$ is zero. By definition, the entire image of $[\Xi|_{C}]$ is $d_0$-close to a single minimal torus  in the $\bF$-metric {\em for currents}. We will show in \S \ref{sect:genus1caps} that this means $i_*$ indeed has to be the zero map, thereby proving Theorem~\ref{thm:trivialInFirsthomo}.

    Finally, as previously explained, Proposition~\ref{prop:not5sweepout} and Theorem~\ref{thm:trivialInFirsthomo} together contradict Lemma~\ref{lem:LS}, thereby proving that the assumption $N\leq 4$ is false. Consequently, at least $5$ multiplicity one, embedded, minimal tori are detected through the repetitive min-max process. This completes the proof of Theorem~\ref{thm:main}.

\part{Technical ingredients}

\section{Min-max results I: The Simon-Smith min-max theorem}\label{sect:min-max_i}
    The goal of this section is to prove Theorem~\ref{thm:minMax} and subsequently Theorem~\ref{thm:relative_minMax}.

\subsection{Deformations in annuli}

    One of the innovative ingredients in the Almgren-Pitts min-max theory is the notion of almost-minimizing varifolds in annuli, which was established using combinatorial arguments for deformations in annuli. In this subsection, we adapt these tools to our setting.

    For an open subset $U \subset M$, we use $\Is(U)$ to denote the set of isotopies $\{\varphi(t)\}_{t \in [0, 1]}$ in $U$.

    \begin{defn}[$(\varepsilon, \delta)$-deformation]\label{def:epsilon-delta-def}
        Given $\varepsilon, \delta > 0$, an open set $U \subset M$, and a punctate surface $\Sigma \in \GS(M)$, we call an isotopy $\psi \in \Is(U)$ {\it an $(\varepsilon, \delta)$-deformation of $\Sigma$ in $U$} provided that: 
        \begin{enumerate}[label=\normalfont(\arabic*)]
            \item $\cH^2(\psi(t, \Sigma)) \leq \cH^2(\Sigma) + \delta$ for all $t \in [0, 1]$.
            \item $\cH^2(\psi(1, \Sigma)) \leq \cH^2(\Sigma) - \varepsilon$.
        \end{enumerate}
    
        We define $\mathfrak{a}(U; \varepsilon, \delta)$ to be the set of all punctate surfaces that do not admit $(\varepsilon, \delta)$-deformations in $U$.
    \end{defn}

    \begin{defn}[Admissible annuli]
        Given $K \in \mathbb{N}$ and $p \in M$, a collection of annuli centered in $p$
        \[
            A(p; s_1, r_1), \cdots, A(p; s_K, r_K),
        \]
        is called {\it $K$-admissible} if $2r_{i + 1} < s_i$ for all $i = 1, \cdots, K - 1$.

        For $R \in (0, \infty]$, if $\sup_i r_i \leq R$, we will say that these annuli are {\em of outer radius at most $R$}.
    \end{defn}

    The following proposition is an adaptation of Pitts' combinatorial result~\cite[Theorem~4.10]{Pit81}.

    \begin{prop}\label{prop:an_deform}
        Given $X$ a finite cubical subcomplex of some cube $I(m, k)$, $D \subset X$ a compact subset (not necessarily a subcomplex) of $X$, $\Phi: X \to \GS^*(M)$ a Simon-Smith family, $R \in (0, \infty]$ and a pair positive numbers $(\varepsilon, \delta)$, we set $K = K(m) := 3^{m3^m}$. Suppose that for every $x \in D$, there exists a $K$-admissible collection of annuli of outer radius at most $R$, denoted by $\{A_{x, i}\}^K_{i = 1}$, so that
        \[
            \Phi(x) \notin \bigcup^K_{i = 1}\mathfrak{a}(A_{x, i}; 2\varepsilon, \delta / 2)\,.
        \]
        Then there exists $\Phi^* \in \Lambda(\Phi)$ in the homotopy class of $\Phi$ such that:
        \begin{enumerate}[label=\normalfont(\arabic*)]
            \item For every $x \in D$, 
                \[
                    \cH^2(\Phi^*(x)) < \cH^2(\Phi(x)) - \varepsilon + (3^m - 1)\delta\,.
                \]
            \item For every $x \in X \setminus D$, 
                \[
                    \cH^2(\Phi^*(x)) < \cH^2(\Phi(x)) + 3^m\delta\,.
                \]
            \item For every $x \in X$, there exist $t$ many points $p_1, \cdots, p_t \in M$ such that $t \leq K(m)$ and
            \[
                \Phi^*(x) \setminus(\overline{B}_{2R}(p_1) \cup \cdots \cup \overline{B}_{2R}(p_t)) = \Phi(x) \setminus(\overline{B}_{2R}(p_1) \cup \cdots \cup \overline{B}_{2R}(p_t))\,.
            \]
            \item Moreover, for another compact set $D' \subset X$ with $D' \cap D = \emptyset$, we can choose $\Phi^*$ such that $\Phi^*(x) = \Phi(x)$ holds for all $x \in D'$.
        \end{enumerate}
    \end{prop}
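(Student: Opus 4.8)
The plan is to adapt Pitts' combinatorial deformation theorem \cite[Theorem~4.10]{Pit81} to the Simon--Smith setting, in three stages — localization, combinatorial selection of annuli, and interpolation — while tracking areas carefully so as to extract the precise constants. \emph{Localization.} For $x\in D$ the hypothesis $\Phi(x)\notin\bigcup_i\mathfrak{a}(A_{x,i};2\varepsilon,\delta/2)$ furnishes, for each $i=1,\dots,K$, an isotopy $\psi_{x,i}\in\Is(A_{x,i})$ which is a $(2\varepsilon,\delta/2)$-deformation of $\Phi(x)$; since $\{A_{x,i}\}$ is $K$-admissible the closures of the annuli are pairwise disjoint, so the $\psi_{x,i}$ may be composed in any order. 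Using conditions (2) and (4) of Definition~\ref{def:genusGFamily} (continuity of $\cH^2\circ\Phi$ together with smooth convergence away from the uniformly bounded singular sets), the quantities $y\mapsto\sup_t\cH^2(\psi_{x,i}(t,\Phi(y)))-\cH^2(\Phi(y))$ and $y\mapsto\cH^2(\psi_{x,i}(1,\Phi(y)))-\cH^2(\Phi(y))$ depend continuously on $y$, so there is an open neighbourhood $O_x\subset X$ of $x$ on which every $\psi_{x,i}$ is an $(\varepsilon,\delta)$-deformation of $\Phi(y)$ for all $y\in O_x$ — the slack between $(2\varepsilon,\delta/2)$ and $(\varepsilon,\delta)$ being reserved to produce strict inequalities later. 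By compactness of $D$, finitely many $O_{x^{(1)}},\dots,O_{x^{(L)}}$ cover $D$; shrinking them (possible since $D$ and $D'$ are disjoint and compact) we may also arrange $\overline{\bigcup_\ell O_{x^{(\ell)}}}\cap D'=\emptyset$.

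\emph{Combinatorial selection and interpolation.} Refine the cubical structure of $X$ to $X(j')$ with $j'$ so large that every cell meeting $D$ lies in some $O_{x^{(\ell)}}$ and no cell meeting $D'$ meets $\overline{\bigcup_\ell O_{x^{(\ell)}}}$. Following Pitts \cite[Theorem~4.10]{Pit81} — this is exactly where the value $K(m)=3^{m3^m}$ is used, since the admissible collections based at different points of $D$ are centred at different points of $M$, so distinctness of indices does not by itself yield disjointness — one assigns to each cell $\sigma$ of $X(j')$ meeting $D$ a base point $x(\sigma)$ with $\sigma\subset O_{x(\sigma)}$ and an index $\iota(\sigma)\in\{1,\dots,K\}$ so that whenever $\tau$ is a face of $\sigma$ the annuli $A_{x(\sigma),\iota(\sigma)}$ and $A_{x(\tau),\iota(\tau)}$ have disjoint closures; thus at every parameter $x$ the isotopies attached to the carrier cell of $x$ and its faces — at most $3^m$ of them — act with pairwise disjoint supports. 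One then interpolates these finitely many isotopies by the scheme of \cite[Theorem~4.10]{Pit81} (equivalently: discretize $\Phi$ over the vertices of $X(j')$ with fineness $<\delta$, apply the full $\psi_{x(v),\iota(v)}$ at vertices near $D$, and interpolate back over cells), obtaining a continuous $\varphi:[0,1]\times|X(j')|\to\operatorname{Diff}^\infty(M)$ with $\varphi(0,\cdot)=\operatorname{Id}$; set $\Phi^*(x):=\varphi(1,x)(\Phi(x))$.

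\emph{Verification of the conclusions.} Since each $\varphi(t,x)$ is a diffeomorphism, $\Phi^*$ is again a Simon--Smith family (the genus and the number of singular points are preserved, and $\cH^2$ stays continuous), of genus $\le\fg_0$ if $\Phi$ is, and $\Phi^*\in\Lambda(\Phi)$ via $\varphi$. For the area estimates, the supports being disjoint, $\cH^2(\Phi^*(x))-\cH^2(\Phi(x))$ splits at each $x$ as a sum of at most $3^m$ independent contributions, one per relevant cell, each bounded by $\delta$ via the $(\varepsilon,\delta)$-deformation bound of the localization step; for $x\in D$ the contribution of the cell carrying the full deformation is moreover $\le-\varepsilon$, which yields (1), while for $x\in X\setminus D$ every contribution is $\le\delta$, which yields (2) (strictness coming from the reserved slack). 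Item (3) holds because $\Phi^*(x)$ differs from $\Phi(x)$ only inside the $\le K(m)$ annuli $A_{x(\sigma),\iota(\sigma)}$ active at $x$, each contained in a ball of radius $\le R<2R$; item (4) holds because no cell meeting $D'$ carries a nontrivial isotopy, so $\Phi^*\equiv\Phi$ on $D'$.

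\emph{Main difficulty.} The substantive point is the combinatorial selection step — Pitts' Theorem~4.10 — which is what forces the tower-type bound $K(m)=3^{m3^m}$ rather than the naive $3^m$, precisely because admissible annuli attached to different base points are not concentric, so one must ``spend'' many indices to guarantee disjoint supports across adjacent cells. Everything else is essentially bookkeeping: the only genuinely new checks relative to \cite{Pit81,DP10} are that conditions (2)--(4) of Definition~\ref{def:genusGFamily} suffice for the localization, and that applying isotopies keeps the family inside $\GS^*(M)$ and a Simon--Smith family — both routine since isotopies are smooth diffeomorphisms.
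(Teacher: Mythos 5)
Your proposal follows essentially the same route as the paper: obtain the isotopies $\psi_{x,i}$ from the failure of $(2\varepsilon,\delta/2)$-almost-minimality, localize to neighborhoods $O_x$ on which they remain $(\varepsilon,\delta)$-deformations, refine the complex so that cells meeting $D$ sit inside $O_x$'s while staying away from $D'$, select one annulus per cell via Pitts' combinatorial lemma so that adjacent cells carry annuli with disjoint closures, interpolate with parameter-space cut-off functions to produce a single $\varphi:[0,1]\times X\to\operatorname{Diff}^\infty(M)$, and count at most $3^m$ active cells at any given parameter (with one cell fully active when $x\in D$) to get the area estimates.

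One caveat: the parenthetical alternative you offer in the interpolation step --- ``discretize $\Phi$ over the vertices of $X(j')$ with fineness $<\delta$, apply the full $\psi_{x(v),\iota(v)}$ at vertices near $D$, and interpolate back over cells'' --- is \emph{not} equivalent in this setting. The Simon--Smith homotopy class (Definition~\ref{def:homotopyClass}) requires a continuous family of diffeomorphisms $\varphi(t,x)$, and the discretize-and-interpolate scheme does not obviously land back inside $\Lambda(\Phi)$; indeed the paper explicitly avoids the Almgren interpolation machinery for precisely this reason. Your main thread (cut-off functions on the parameter space producing a genuine continuous map into $\operatorname{Diff}^\infty(M)$) is the correct one, matching the paper's $\hat\psi_\sigma(t,x)=\psi_\sigma(t\,\eta_\sigma(x))$ construction, so the parenthetical should simply be dropped. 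Also, the paper invokes Pitts' Proposition~4.9 (the abstract combinatorial selection lemma) rather than Theorem~4.10 (its application in the discrete min-max setting), which is the more precise citation for what you actually use.
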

    \begin{proof}
        First, by our assumption, for each $x \in D$ and $i \in \{1, 2, \cdots, K\}$, there exists an isotopy $\psi_{x, i} \in \Is(A_{x, i})$ such that:
        \begin{itemize}
            \item $\cH^2(\psi_{x, i}(t, \Phi(x))) \leq \cH^2(\Phi(x)) + \delta / 2$ for all $t \in [0, 1]$.
            \item $\cH^2(\psi_{x, i}(1, \Phi(x))) \leq \cH^2(\Phi(x)) - 2\varepsilon$.
        \end{itemize}
        By Proposition~\ref{prop:SS_AP}, $[\Phi]$ is continuous in the $\bF$-metric, and thus, for each $x \in D$, there exists an open neighborhood $O_x \subset X$ of $x$ such that for every $y \in O_x$ and every $i \in \{1, 2, \cdots, K\}$, we have:
        \begin{itemize}
            \item $\cH^2(\psi_{x, i}(t, \Phi(y))) \leq \cH^2(\Phi(y)) + \delta$ for all $t \in [0, 1]$.
            \item $\cH^2(\psi_{x, i}(1, \Phi(y))) \leq \cH^2(\Phi(y)) - \varepsilon$.
        \end{itemize}

        Next, since the set $D$ is compact, we can refine $X$ so that it is a cubical subcomplex of $I(m, k')$ and every cell $\sigma$ of $X$ with $\sigma \cap D \neq \emptyset$ is contained within some $O_{x_\sigma}$ where $x_\sigma \in D$. We denote by $X_b$ the smallest cubical subcomplex of $X$ containing
        \[
            \{\sigma \in X : \sigma \cap D \neq \emptyset\}\,.
        \]
        Moreover, to ensure that conclusion (4) holds, we also require that $X'_b$ the smallest cubical subcomplex of $X$ containing
        \[
            \{\sigma \in X : \sigma \cap D' \neq \emptyset\}\,,
        \]
        satisfies the condition that $X_b \cap X'_b = \emptyset$.
        
        It follows from Proposition 4.9 of~\cite{Pit81} that for every cell $\sigma$ of $X_b$ we can choose an annulus $A_\sigma \in \{A_{x_\sigma, 1}, \cdots, A_{x_\sigma, K}\}$ and the corresponding isotopy $\psi_\sigma \in \Is(A_\sigma)$ with the following property. For every pair of distinct cells $\sigma, \tau$ of $X_b$, if they are faces of a common cell $\gamma$ of $X_b$, then 
        \[
            \dist(A_\sigma, A_\tau) > 0\,.
        \]

        Additionally, for every cell $\sigma$ of $X_b$, we can choose a cut-off function 
        \[
            \eta_\sigma: X \to [0, 1]\,,
        \] such that:
        \begin{itemize}
            \item $\eta_\sigma(x) \equiv 0$ or every $x \in X$ with $\|x - c_\sigma\|_{\ell_\infty} \geq 3^{-k'} / 2$,
            \item $\eta_\sigma(x) \equiv 1$ for every $x \in X$ with $\|x - c_\sigma\|_{\ell_\infty} \leq 3^{-k'} / 3$,
        \end{itemize}
        where $c_\sigma$ is the center of $\sigma$.
        Consequently, we obtain a continuous map
        \[
            \hat\psi_\sigma: [0, 1] \times X \to \operatorname{Diff}^\infty(M), \quad (t, x) \mapsto \psi_\sigma(t \eta_\sigma(x))\,.
        \]
        Note that $\hat\psi_\sigma(t, x) = \operatorname{Id}$ outside $A_\sigma$.

        Finally, we define a continuous map
        \[
            \varphi: [0, 1] \times X \to \operatorname{Diff}^\infty(M)
        \]
        by composing all the $\hat \psi_\sigma$
        \[
            \varphi(t, x) := \circ_{\sigma \text{ is a cell of } X_b} \left(\hat\psi_\sigma(t, x)\right)\,.
        \]
        Note that no two annuli $A_\sigma$ and $A_\tau$ intersect unless either $\hat \psi_\sigma(t, x)$ or $\hat \psi_\tau(t, x)$ is an identity map. Consequently, the order of $\sigma$'s does not affect the definition of $\varphi$. And we can define 
        \[
            \Phi^* := \varphi(1, \cdot)(\Phi_i(\cdot)) \in \Lambda(\Phi)\,.
        \]

        Consequently, by the definition of $\eta_\sigma$, for every $x \in X$, we have:
        \begin{itemize}
            \item There are at most $3^m$ cells $\sigma$ of $X_b$ such that $\eta_\sigma(x) > 0$.
            \item If two cells $\sigma, \tau$ of $X_b$ has $\eta_\sigma(x), \eta_\tau(x) > 0$, then 
            \[
                \operatorname{dist}(A_\sigma, A_\tau) > 0\,.
            \]
            \item If $x \in X_b$, then there exists a cell $\sigma_0$ of $X_b$ such that $\eta_{\sigma_0}(x) = 1$.
        \end{itemize}
        Therefore, if $x \in X \setminus D$, we have
        \begin{align*}
            \cH^2(\Phi^*(x)) &= \sum_{\sigma:\eta_\sigma(x) > 0} \cH^2(\Phi^*(x) \cap A_\sigma) + \cH^2\left(\Phi(x) \setminus \bigcup_{\sigma: \eta_\sigma(x) > 0} A_\sigma\right)\\
                &\leq \sum_{\sigma: \eta_\sigma(x) > 0} (\cH^2(\Phi(x) \cap A_\sigma) + \delta) + \cH^2\left(\Phi(x) \setminus \bigcup_{\sigma: \eta_\sigma(x) > 0} A_\sigma\right)\\
                &\leq \cH^2(\Phi(x)) + 3^m \delta\,.
        \end{align*}
        On the other hand, if $x \in D \subset X_b$, let $\sigma_0$ of $X_b$ be a cell such that $\eta_{\sigma_0}(x) = 1$, and then we have
        \begin{align*}
            \cH^2(\Phi^*(x)) &= \sum_{\sigma:\eta_\sigma(x) > 0} \cH^2(\Phi^*(x) \cap A_\sigma) + \cH^2\left(\Phi(x) \setminus \bigcup_{\sigma:\eta_\sigma(x) > 0} A_\sigma\right)\\
                &\leq \sum_{\sigma : \sigma \neq \sigma_0,\ \eta_\sigma(x) > 0} (\cH^2(\Phi(x) \cap A_\sigma) + \delta) + (\cH^2(\Phi(x) \cap A_{\sigma_0}) - \varepsilon) \\
                & \quad + \cH^2\left(\Phi(x) \setminus \bigcup_{\sigma:\eta_\sigma(x) > 0} A_\sigma\right)\\
                &\leq \cH^2(\Phi(x)) + (3^m - 1) \delta - \varepsilon\,.        \end{align*}

        In addition, for $x \in D' \subset X'_b$, we have that $\eta_{\sigma}(x) = 0$ holds for any cell $\sigma$ of $X_b$, and thus,
        \[
            \Phi^*(x) = \Phi(x)\,,
        \]
        which confirms the statement (4).

        The statement (3) follows from the construction of the map $\varphi$.
    \end{proof}

\subsection{Existence and regularity of a smooth minimal surface}

    From the definition of width, it is clear that one can choose a minimizing sequence $\{\Phi_i\}$ in $\Lambda(\Phi)$.

    Recall that any minimizing sequence can be pulled-tight via a family of diffeomorphisms.

    \begin{prop}[Existence of pulled-tight sequence, {\cite[Proposition~3.1]{CD03}\cite[Theorem~4.3]{Pit81}}]\label{prop:Pulltight}
        Given any minimizing sequence $\{\Phi_i\}$ in a homotopy class $\Lambda$, there exists a pulled-tight minimizing sequence $\{\Phi^*_i\}$ in $\Lambda$ such that 
        \[
            \bC(\{\Phi^*_i\}) \subset \bC(\{\Phi_i\})\,.
        \]
    \end{prop}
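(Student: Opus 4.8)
The plan is to run the classical tightening (pull-tight) argument of Pitts \cite[Theorem~4.3]{Pit81} and Colding--De Lellis \cite[Proposition~3.1]{CD03}; the only adaptation needed for the Simon--Smith setting is the trivial observation that the deformation is carried out entirely by ambient diffeomorphisms of $M$, so it automatically preserves membership in $\GS^*(M)$, the genus bound, and (by Remark \ref{rmk:homotopy}) the homotopy class $\Lambda$. Write $L := \bL(\Lambda)$. The first and main step is to construct a continuous \emph{tightening map}
\[
    \mathbf{H} : \{V \in \cV_2(M) : \|V\|(M) \le L + 1\} \to C^0\big([0,1], \operatorname{Diff}^\infty(M)\big), \qquad V \mapsto \mathbf{H}_V,
\]
enjoying: (a) $\mathbf{H}_V(0) = \operatorname{Id}$ for all $V$; (b) $\mathbf{H}_V \equiv \operatorname{Id}$ whenever $V$ is stationary; (c) $\big\|(\mathbf{H}_V(1))_\# V\big\|(M) \le \|V\|(M)$ for all $V$; and (d) there is a function $\rho \ge 0$ with $\rho(V) > 0$ exactly when $V$ is not stationary, such that $\big\|(\mathbf{H}_V(1))_\# V\big\|(M) \le \|V\|(M) - \rho(V)$. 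This is the standard construction: a non-stationary $V$ admits a vector field $\chi$ with $\delta V(\chi) < 0$, which by continuity of the first variation and lower semicontinuity of mass under push-forward strictly decreases mass on an $\bF$-neighborhood of $V$; one covers the non-stationary varifolds by countably many such neighborhoods and patches the resulting isotopies together with a partition of unity and the ordering/combinatorial device of the cited references (a harmless cutoff making $\mathbf{H}_V = \operatorname{Id}$ for $\|V\|(M)$ small may be included).

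Given $\mathbf{H}$, I would set $\Phi_i^*(x) := \big(\mathbf{H}_{|\Phi_i(x)|}(1)\big)(\Phi_i(x))$ for each $i$ (for $i$ large, $|\Phi_i(x)|$ lies in the domain of $\mathbf{H}$). Since $x \mapsto |\Phi_i(x)|$ is continuous in the varifold topology --- which follows from the continuity conditions in Definition \ref{def:genusGFamily}, as the total $\cH^2$-mass converges and the surfaces converge smoothly away from the uniformly bounded singular set, so no mass concentrates --- and $\mathbf{H}$ is continuous, the map $(t,x) \mapsto \mathbf{H}_{|\Phi_i(x)|}(t)$ is a homotopy in the sense of Definition \ref{def:homotopyClass} from $\Phi_i$ to $\Phi_i^*$; hence $\Phi_i^* \in \Lambda$ (and of genus $\le \fg_0$ if $\Phi_i$ is). By (c), $\sup_x \cH^2(\Phi_i^*(x)) \le \sup_x \cH^2(\Phi_i(x)) \to L$, while $\sup_x \cH^2(\Phi_i^*(x)) \ge L$ because $\Phi_i^* \in \Lambda$; so $\{\Phi_i^*\}$ is again minimizing.

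Finally I would check the two conclusions simultaneously. Let $\{\Phi_{i_j}^*(x_j)\}$ be a min-max sequence with $|\Phi_{i_j}^*(x_j)| \to V$ and $\|V\|(M) = L$. Since $M$ is compact, weak-$*$ convergence gives $\cH^2(\Phi_{i_j}^*(x_j)) \to L$, and squeezing $\cH^2(\Phi_{i_j}^*(x_j)) \le \cH^2(\Phi_{i_j}(x_j)) \le \sup_x \cH^2(\Phi_{i_j}(x)) \to L$ yields $\cH^2(\Phi_{i_j}(x_j)) \to L$. Passing to a further subsequence, $|\Phi_{i_j}(x_j)| \to W$ with $\|W\|(M) = L$, so $W \in \bC(\{\Phi_i\})$. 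By continuity of $\mathbf{H}$ and of push-forward, $V = (\mathbf{H}_W(1))_\# W$; since $\|V\|(M) = \|W\|(M) = L$, property (d) forces $\rho(W) = 0$, i.e. $W$ is stationary, and then (b) gives $\mathbf{H}_W \equiv \operatorname{Id}$ and $V = W$. Hence $V \in \bC(\{\Phi_i\})$ and $V$ is stationary, i.e. $\bC(\{\Phi^*_i\}) \subset \bC(\{\Phi_i\})$ and $\{\Phi_i^*\}$ is pulled-tight.

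The main obstacle is the construction of $\mathbf{H}$ in the first step: producing a single globally continuous map on the infinite-dimensional, non-locally-compact varifold space which is simultaneously the identity on stationary varifolds, mass-non-increasing everywhere, and quantitatively mass-decreasing off the stationary set. This is precisely the content of the cited works of Pitts and Colding--De Lellis, and nothing in it is special to the Simon--Smith class; the remaining bookkeeping above is routine.
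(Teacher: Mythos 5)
Your proposal follows essentially the same route as the paper: construct Pitts' continuous tightening map (the paper's $f$, your $\mathbf{H}$) on a mass-bounded ball of varifolds, apply it pointwise to get $\Phi_i^*(x) = f(1,|\Phi_i(x)|)(\Phi_i(x))$, and observe that since the deformation is by ambient diffeomorphisms it preserves $\Lambda$, minimality, and the critical set. The paper is terser on the final containment/stationarity verification (which you spell out via the squeezing and pushforward-continuity argument), but the content and cited sources are identical.
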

    \begin{proof}
        Following Pitts~\cite[p. 153]{Pit81} (See also~\cite{CD03, MN14}), there exists a continuous map
        \[
            f: [0, 1] \x \{V \in \cV_2(M): \|V\|(M) \leq 2L\} \to \operatorname{Diff}^\infty(M)
        \]
        such that 
        \begin{enumerate}
            \item $f(0, \cdot) = \id$;
            \item $f(t, V) = \id$ for all $t \in [0, 1]$ if $V$ is stationary;
            \item $\|f(1, V)_\# V\|(M) < \|V\|(M)$ if $V$ is not stationary.
        \end{enumerate}

        By Proposition~\ref{prop:SS_AP} (\ref{item:prop:SS_AP_2}), each $[\Phi_i]: X \to \cZ_2(M; \bF; \Z_2)$ is continuous, and thus, $|\Phi_i|: X \to \cV_2(M)$ is continuous. Hence, for each $i$, we have a continuous map $\varphi_i: [0, 1] \x X \to \operatorname{Diff}^\infty(M)$ defined by 
        \[
            \varphi_i(t, x) := f(t, |\Phi_i(x)|)\,.
        \]
        Let us define $\Phi^*_i$ by
        \[
            \Phi^*_i(x) := \varphi_i(1, \|\Phi_i\|(x))(\Phi_i(x))\,.
        \]

        It follows immediately from the construction that each $\Phi^*_i$ is homotopic to $\Phi_i$, $\bC(\{\Phi^*_i\}) \subset \bC(\{\Phi_i\})$ and every varifold in $\bC(\{\Phi^*_i\})$ is stationary.
    \end{proof}

    For simplicity, let us assume that the minimizing sequence $\{\Phi_i\}$ is already pulled-tight. Now we aim to show that 
    \[
        \bC(\{\Phi_i\}) \cap \cW_L \neq \emptyset.
    \]
    Indeed, by Proposition~\ref{prop:reg_am_aa}, it suffices to show that at least one varifold in $\bC(\{\Phi_i\})$ is {\em almost minimizing in admissible annuli} as defined in Definition~\ref{def:am_aa}.
    
    \begin{defn}[Almost minimizing]
        For an open subset $U \subset M$ and a sequence of punctate surfaces $\{\Sigma_j\} \subset \GS^*(M)$, a varifold $V \in \mathcal{V}_2(M)$ is called \textit{almost minimizing with respect to $\{\Sigma_j\}$} if there exist two sequences of positive real numbers $\varepsilon_j \to 0$, $\delta_j \to 0$ such that:
        \begin{enumerate}[label=\normalfont(\arabic*)]
            \item $\bF(|\Sigma_j|, V) < \varepsilon_j$.
            \item $\Sigma_j \in \mathfrak{a}(U; \varepsilon_i, \delta_i)$.
        \end{enumerate}
    \end{defn}
    
    \begin{defn}[Almost minimizing in admissible annuli]\label{def:am_aa}
        Given $R \in (0, \infty]$, a sequence of punctate surfaces $\{\Sigma_j\} \subset \GS^*(M)$ and a varifold $V \in \mathcal{V}_2(M)$, we say that a varifold $V$ is \textit{almost minimizing in every $K$-admissible collection of annuli of outer radius at most $R$ with respect to $\{\Sigma_j\}$}, if there exists $\varepsilon_j \to 0$, $\delta_j \to 0$ such that:
        \begin{enumerate}[label=\normalfont(\arabic*)]
            \item $\bF(|\Sigma_j|, V) < \varepsilon_j$.
            \item for any $K$-admissible collection of annuli $\{A_i\}^K_{i = 1}$ each of outer radius at most $R$, and any $\Sigma_j$, 
            \[
                \Sigma_j \in \bigcup^K_{i = 1}\mathfrak{a}(A_i; \varepsilon_j, \delta_j)\,.
            \]
        \end{enumerate}
    \end{defn}
    \begin{rmk}\label{rmk:stable_in_annuli}
        Suppose that a varifold $V$ is almost minimizing in every $K$-admissible collection of annuli of outer radius at most $R$. Then from the definition, it is clear that for any $K$-admissible collection of annuli $\{A_i\}^K_{i = 1}$ each of radius at most $R$, $V$ is stable in at least one $A_i$. 
        
        In particular, if $\Sigma = \spt \|V\|$ is a smooth surface, we can say that $\Sigma$ is {\em stable in every $K$-admissible collection of annuli of outer radius at most $R$}.
    \end{rmk}

    \begin{prop}[Regularity of varifolds almost minimizing in admissible annuli]\label{prop:reg_am_aa}
        Given a closed Riemannian manifold $(M, g)$, $K \in \N^+$, $R \in (0, +\infty]$, a sequence $\{\Sigma_j\} \subset \GS(M)$, and a stationary varifold $0 \neq V \in \cV_2(M)$, suppose that:
        \begin{enumerate}[label=\normalfont(\roman*)]
            \item We can choose for each $j \in \N^+$ a finite set $P_j$ such that $\Sigma_j \setminus P_j$ is a smooth surface and 
            \[
                P = \lim_{j \to \infty} P_j
            \]
            in the Hausdorff sense where $P$ is a finite set in $M$.
            \item $V$ is almost minimizing in every $K$-admissible collection of annuli of radius at most $R$ with respect to $\{\Sigma_j\}$.
        \end{enumerate}
        Then $V \in \cW_L$, where $L = \|V\|(M)$.
    \end{prop}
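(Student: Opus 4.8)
The plan is to run the min-max regularity scheme of Pitts, Smith, Colding--De Lellis and De Lellis--Pellandini, adapted to the Simon--Smith setting: first pin down the structure of $V$ on $M\setminus P$, then show the finite set $P$ is a removable singular set, and finally read off $V\in\cW_L$. The first step is to \emph{localize} the almost-minimizing hypothesis. Fix $p\in M\setminus P$. Since $P_j\to P$ in the Hausdorff sense and $p\notin P$, for small $r$ we have $B_r(p)\cap P_j=\emptyset$ for all large $j$, so each $\Sigma_j$ is a smooth surface in $B_r(p)$. Fixing a $K$-admissible collection $\{A_i\}_{i=1}^{K}$ of annuli centered at $p$ and contained in $B_r(p)$ (possible since $K=K(m)$ is a fixed number), Definition \ref{def:am_aa} hands us, for each $j$, an index $i(j)\in\{1,\dots,K\}$ with $\Sigma_j\in\mathfrak{a}(A_{i(j)};\varepsilon_j,\delta_j)$; a pigeonhole over $j$ then produces a single $i^{\ast}$ and a subsequence along which $\Sigma_j\in\mathfrak{a}(A_{i^{\ast}};\varepsilon_j,\delta_j)$, so $V$ is almost minimizing (with smooth approximating surfaces) in the fixed annulus $A_{i^{\ast}}\subset B_r(p)$. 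Shrinking $r$ along a sequence and diagonalizing, $V$ is almost minimizing in a sequence of annuli shrinking to $p$.

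Next I would invoke the replacement machinery. With the previous step, at each $p\in M\setminus P$ one is in precisely the situation treated by the classical regularity theory: a stationary integral $2$-varifold $V$ in a $3$-manifold, equal to the $\bF$-limit of smooth embedded surfaces $\Sigma_j$ (using Proposition \ref{prop:SS_AP}) and almost minimizing in a sequence of annuli about $p$. One builds a smooth minimal replacement of $V$ in an annulus about $p$ by area-minimizing in the isotopy class of the smooth surfaces $\Sigma_j$ and passing to the varifold limit; the replacement is stable, hence a smooth embedded minimal surface, and coincides with $V$ outside the annulus. Comparing two overlapping replacements and using unique continuation together with the maximum principle, one gets that for some $\rho>0$ the restriction $V\,\llcorner\,B_\rho(p)$ is a smooth embedded minimal surface with constant positive integer multiplicity (or is $0$). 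As $p$ was arbitrary, $V\,\llcorner\,(M\setminus P)$ is a smooth embedded minimal surface with locally constant multiplicity.

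The hard part is removing $P$. Near $p\in P$ the varifold $V$ is stationary and agrees with a smooth minimal surface on the punctured ball, with locally bounded area (monotonicity) and controlled total curvature (Gauss--Bonnet together with the genus bound inherited by the $\Sigma_j$); hence every tangent cone of $V$ at $p$ is a stationary $2$-cone in $\R^3$ that is smooth away from the origin, i.e.\ a plane with integer multiplicity, so $\Theta^2(\|V\|,p)\in\N$. To upgrade this to genuine smoothness one wants the almost-minimizing property at $p$ itself; this should still follow from the hypothesis once one observes that each $\Sigma_j$ has at most $N_P$ singular points, so only boundedly many of the $K$ admissible shells centered at $p$ can meet $P_j$, and the pigeonhole can be arranged to land on a ``clean'' shell where $\Sigma_j$ is smooth. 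This rules out transverse sheet crossings and branch points at $p$ exactly as in \cite{DP10, Ket19}. I expect controlling $V$ near the accumulation points of the singular sets $P_j$ — precisely this removal step — to be the main obstacle.

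Finally, combining the steps, $\spt\|V\|$ is a smooth, closed, embedded minimal surface in $M$ carrying a constant positive integer multiplicity on each of its connected components $\Gamma_1,\dots,\Gamma_l$ (finitely many, by the area bound $L<\infty$ and the uniform lower area bound for closed minimal surfaces in the compact $M$). Writing $V=\sum_{i=1}^{l}m_i|\Gamma_i|$ with $\|V\|(M)=L$ gives $V\in\cW_L$, as claimed.
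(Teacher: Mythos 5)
Your overall scheme (localize the almost-minimizing property, build replacements, then remove the exceptional set) is the right one, and your treatment of $p\in M\setminus P$ is essentially what the paper does. The difficulty is in the last step, where the proposal has a genuine gap.

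First, the proposition's hypotheses do not furnish a bound ``$N_P$'' on $\#P_j$: hypothesis (1) only asserts Hausdorff convergence $P_j\to P$ with $P$ finite, and Hausdorff convergence of finite sets to a finite limit does not control cardinality (e.g.\ $P_j=\{k\eta_j:0\le k\le j\}$ with $j\eta_j\to 0$ converges to $\{0\}$). So ``only boundedly many of the $K$ admissible shells can meet $P_j$'' is not available here. Second, even granting a uniform bound $\#P_j\le N$, the pigeonhole you describe does not close: the almost-minimizing hypothesis hands you, for each $j$, \emph{one} shell $A_{i(j)}$ where $\Sigma_j$ is almost minimizing, and you separately know that some shell is missed by $P_j$ — but nothing forces these two shells to coincide, and the bad shell may vary with $j$. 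You would need to show a single shell that is simultaneously clean and almost-minimizing along a subsequence, which the pigeonhole as stated does not deliver.

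The paper sidesteps this entirely by using the Hausdorff convergence in a different way: for \emph{any} $p\in M$ (whether or not $p\in P$) one chooses $R(p)>0$ so that $B(p,R(p))\cap(P\setminus\{p\})=\emptyset$. Then for any fixed annulus $A(p;s,r)$ with $0<s<r<R(p)$, the set $\overline{A(p;s,r)}$ is disjoint from $P$, so Hausdorff convergence forces $P_j\cap A(p;s,r)=\emptyset$ for all large $j$ — \emph{every} shell in $B(p,R(p))\setminus B_s(p)$ is eventually clean, not merely one of them. Hence the pigeonhole giving almost-minimizing in a fixed small annulus and the smoothness of $\Sigma_j$ in that annulus hold along a common subsequence with no tension. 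This treats $p\in P$ and $p\notin P$ uniformly and reduces the statement directly to the replacement and regularity scheme of Colding--De Lellis (their Theorem~7.1), with $1/j$ and $1/(8j)$ replaced by $\varepsilon_j$ and $\delta_j$. You should replace your case split and $N_P$-based pigeonhole with this observation; the rest of your outline then goes through.
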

    \begin{proof}
        Let $\{\varepsilon_j\}$ and $\{\delta_j\}$ be the sequences of positive real numbers for $V$ and $\{\Sigma_j\}$ as in Definition~\ref{def:am_aa}.

        For every fixed $p \in M$, we can choose $R(p) \in (0, R)$ such that 
        \[
            B(p, R(p)) \cap (P \setminus \{p\}) = \emptyset\,.
        \] 
        Then by the definition of almost minimizing varifold in admissible annuli, for every $p \in M$ and every subsequence $\{\Sigma_{j_k}\}^\infty_{k = 1}$, there exists a radius $r(p) \in (0, R(p))$ such that for every pair $(s, r) \in \R^2$ with $0 < s < r < r(p)$ and a further subsequence $\{\Sigma_{j_{k_l}}\}^\infty_{l = 1}$, such that: 
        \begin{enumerate}
            \item $V$ is almost minimizing in $A(p; s, r)$ with respect to $\{\Sigma_{j_{k_l}}\}$.
            \item For every $l \in \N^+$, $\Sigma_{j_{k_l}} \cap A(p; s, r)$ is a smooth surface.
        \end{enumerate}

        Then, following the proof of~\cite[Theorem~7.1]{CD03}, one can show that $V$ is associated with a disjoint union of minimal surfaces with multiplicities, i.e., $V \in \cW_L$. Note that $1/j$ should be replaced by $\varepsilon_j$ and $1/(8j)$ by $\delta_j$ therein. It is straightforward to verify that all the replacement arguments hold immediately.
    \end{proof}

    The following proposition generalizes~\cite[Proposition 5.3]{CD03} to a multi-parameter Simon-Smith family, using Pitts's combinatorial argument~\cite[Proposition~4.9]{Pit81}. It is worth noting that De Lellis-Ramic has proved a similar multi-parameter generalization in~\cite{DR18}. However, their homotopy class includes families not derived from isotopies, leading to a slight different definition of almost-minimizing compared to~\cite{CD03} and ours. We provide a proof here for the sake of completeness.
    
    \begin{prop}[Existence of varifolds almost minimizing in admissible annuli]\label{prop:exist_am_aa}
        Suppose that $X$ is a finite cubical subcomplex of some cube $I(m, k)$, $\Phi: X \to \GS^*(M)$ is a Simon-Smith family and $\{\Phi_i\}$ is a pulled-tight sequence in $\Lambda(\Phi)$. Then for $K = K(m) := 3^{m3^m}$ and any $R \in (0, \infty]$, there exists an integer and a min-max subsequence $\Sigma_j := \Phi_{i_j}(x_j)$ such that $\Sigma_j$ converges to some $W \in \bC(\{\Phi_i\})$ in the varifold sense. Moreover, $W$ is almost minimizing in every $K$-admissible collection of annuli of outer radius at most $R$ with respect to $\{\Sigma_j\}$.
    \end{prop}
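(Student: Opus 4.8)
The plan is to argue by contradiction, following Pitts' combinatorial scheme \cite[Theorem~4.10]{Pit81} (see also \cite[Proposition~5.3]{CD03}), with the multi-parameter tightening provided by Proposition~\ref{prop:an_deform}. Write $L:=\bL(\Lambda(\Phi))>0$. Since $\{\Phi_i\}$ is minimizing, $\sup_{x\in X}\cH^2(\Phi_i(x))\to L$; consequently any sequence $\Sigma_k=\Phi_{i_k}(x_k)$ with $\cH^2(\Sigma_k)\to L$ is a min-max sequence, and by the uniform mass bound it has a subsequence converging in the varifold sense to some $W$ with $\|W\|(M)=L$, so that $W\in\bC(\{\Phi_i\})$. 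I would \emph{assume for contradiction} that no min-max subsequence $\Sigma_j=\Phi_{i_j}(x_j)$ has a varifold limit $W$ that is almost minimizing in every $K$-admissible collection of annuli of outer radius at most $R$ with respect to $\{\Sigma_j\}$.

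The first, and main, step would be to promote this assumption to a \emph{uniform} statement: there exist $\varepsilon>0$, $\delta\in(0,3^{-m}\varepsilon)$ and $i_0\in\N$ such that for every $i\geq i_0$ and every $x\in X$ with $\cH^2(\Phi_i(x))\geq L-2\varepsilon$ there is a $K$-admissible collection $\{A_{x,\ell}\}_{\ell=1}^{K}$ of annuli of outer radius at most $R$ with $\Phi_i(x)\notin\bigcup_{\ell=1}^{K}\mathfrak{a}(A_{x,\ell};2\varepsilon,\delta/2)$. I would prove this by contradiction as well: if it failed, then picking $\varepsilon_k\downarrow 0$ and $\delta_k:=3^{-m}\varepsilon_k/2\downarrow 0$ one obtains indices $i_k\nearrow\infty$ and points $x_k$ with $\cH^2(\Phi_{i_k}(x_k))\geq L-2\varepsilon_k$ for which $\Phi_{i_k}(x_k)\in\bigcup_{\ell}\mathfrak{a}(A_\ell;2\varepsilon_k,\delta_k/2)$ for \emph{every} $K$-admissible collection $\{A_\ell\}$ of outer radius at most $R$. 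Since $\cH^2(\Phi_{i_k}(x_k))\to L$, the $\Sigma_k:=\Phi_{i_k}(x_k)$ form a min-max sequence; passing to a subsequence, $|\Sigma_k|\to W\in\bC(\{\Phi_i\})$. Setting $\varepsilon'_k:=2\varepsilon_k+\bF(|\Sigma_k|,W)\downarrow 0$ and $\delta'_k:=\delta_k/2\downarrow 0$, and using the elementary monotonicity $\mathfrak{a}(U;a,b)\subseteq\mathfrak{a}(U;a',b')$ whenever $a'\geq a$ and $b'\leq b$, one gets $\bF(|\Sigma_k|,W)<\varepsilon'_k$ and $\Sigma_k\in\bigcup_{\ell}\mathfrak{a}(A_\ell;\varepsilon'_k,\delta'_k)$ for every admissible collection; that is, $W$ \emph{is} almost minimizing in every $K$-admissible collection of outer radius at most $R$ with respect to $\{\Sigma_k\}$, contradicting the standing assumption.

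With the uniform estimate in hand, fix the corresponding $\varepsilon,\delta,i_0$. For $i\geq i_0$ set $D_i:=\{x\in X:\cH^2(\Phi_i(x))\geq L-2\varepsilon\}$, which is compact since $\cH^2\circ\Phi_i$ is continuous (Definition~\ref{def:genusGFamily}). The uniform estimate is precisely the hypothesis of Proposition~\ref{prop:an_deform} for the family $\Phi_i$, the compact set $D_i$, and the data $(\varepsilon,\delta,R)$, so that proposition yields $\Phi_i^{*}\in\Lambda(\Phi_i)=\Lambda(\Phi)$ with $\cH^2(\Phi_i^{*}(x))<\cH^2(\Phi_i(x))-\varepsilon+(3^m-1)\delta$ for $x\in D_i$ and $\cH^2(\Phi_i^{*}(x))<\cH^2(\Phi_i(x))+3^m\delta$ for $x\notin D_i$. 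Since $3^m\delta<\varepsilon$ and $\sup_{x\in X}\cH^2(\Phi_i(x))<L+\delta$ for $i$ large, on $D_i$ one gets $\cH^2(\Phi_i^{*}(x))<L-\varepsilon+3^m\delta<L$, and on $X\setminus D_i$ one gets $\cH^2(\Phi_i^{*}(x))<L-2\varepsilon+3^m\delta<L$; hence $\sup_{x\in X}\cH^2(\Phi_i^{*}(x))<L$ for all large $i$. This contradicts $\Phi_i^{*}\in\Lambda(\Phi)$ together with $L=\bL(\Lambda(\Phi))$, which completes the proof.

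I expect the genuine difficulty to be concentrated in Proposition~\ref{prop:an_deform}, the multi-parameter analogue of Pitts' combinatorial tightening, which we are free to invoke here; what remains above is essentially careful bookkeeping — choosing the threshold $L-2\varepsilon$, arranging $3^m\delta<\varepsilon$, and organizing the quantifiers so that the negation of the uniform estimate actually produces a bona fide min-max subsequence with parameters $\varepsilon_k,\delta_k\to 0$ witnessing the almost-minimizing property.
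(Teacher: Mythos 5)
Your proposal is correct and follows essentially the same route as the paper's proof: a Pitts-style contradiction argument whose crux is Proposition~\ref{prop:an_deform}, with the negation of the conclusion used to produce area-decreasing deformations that drive $\sup_x\cH^2(\Phi_i^*(x))$ strictly below $L$. The only difference is cosmetic — you first extract fixed thresholds $\varepsilon,\delta$ via an auxiliary contradiction (your Step~1), whereas the paper works directly with $\varepsilon_i=1/i$, $\delta_i=\varepsilon_i/(2K(m))$ decaying with $i$ and dispenses with the dichotomy in one stroke. One small point worth making explicit in Step~1: restrict to $\varepsilon_k<L/2$ (hence the resulting $\varepsilon<L/2$) so that $x\in D_i$ forces $\cH^2(\Phi_i(x))\geq L-2\varepsilon>0$ and therefore $\Phi_i(x)\in\GS(M)$; this is implicitly needed for the hypotheses of Proposition~\ref{prop:an_deform} to make sense and is handled automatically in the paper since $\varepsilon_i\to 0$.
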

    \begin{proof}[Proof]
        We set
        \[
            \varepsilon_i = \frac{1}{i}\,, \quad \delta_i = \frac{\varepsilon_i}{2 K(m)}\,.
        \]
        Then up to a subsequence, we may assume that
        \[
            \sup_{x \in X} \mathcal{H}^2(\Phi_i(x)) < \bL(\Lambda(\Phi)) + \delta_i\,.
        \]

        Suppose for the sake of contradiction that for some large $i$ and for every $\Phi_{i}(x)$ with $\cH^2(\Phi_i(x)) \geq \bL(\Lambda(\Phi)) - 2\varepsilon_i$ (and thus, $\Phi_i(x) \in \GS(M)$), there exists a $K(m)$-admissible collection of annuli $\{A_{x, j}\}^K_{j = 1}$ of outer radius at most $R$ such that
        \[
            \Phi_i(x) \notin \bigcup^K_{i = 1} \mathfrak{a}(A_{x, j}; 2\varepsilon_i, \delta_i / 2)\,.
        \]

        It follows from Proposition~\ref{prop:an_deform} that there exists $\Phi^*_i \in \Lambda(\Phi)$ such that if $x \in X \setminus D$, 
        \begin{align*}
            \cH^2(\Phi^*_i(x)) &\leq \cH^2(\Phi_i(x)) + 3^m \delta_i\\
                &\leq (\bL(\Lambda(\Phi)) - 2\varepsilon_i) + \varepsilon_i / 2\\
                &< \bL(\Lambda(\Phi)) - \delta_i\,.
        \end{align*}
        On the other hand, if $x \in D$, 
        \begin{align*}
            \cH^2(\Phi^*_i(x)) &\leq \cH^2(\Phi_i(x)) + (3^m - 1) \delta_i - \varepsilon_i\\
                &< \bL(\Lambda(\Phi)) - \delta_i\,.
        \end{align*}
        
        In conclusion, we have $\sup_{x \in X} \cH^2(\Phi^*_i(x)) < \bL(\Lambda(\Phi)) - \delta_i$, a contradiction. This completes the proof.
    \end{proof}

    \begin{proof}[Proof of Theorem~\ref{thm:minMax} \ref{item:minMaxPulltight} without genus bound]
        By Proposition~\ref{prop:Pulltight}, there exists a pulled-tight sequence $\{\Phi_i\}$ in $\Lambda(\Phi)$.

        Then it follows from Proposition~\ref{prop:exist_am_aa}, there exists an integer $K \in \N^+$ and a varifold $W \in \bC(\{\Phi_i\})$ which is almost minimizing in every $K$-admissible collection of annuli with respect to some $\{\Sigma_j = \Phi_{i_j}(x_j)\}$.

        Finally, as discussed in the second paragraph of Remark~\ref{rmk:homotopy}, we can choose $P_j$ for each $\Sigma_j$ such that 
        \[
            \sup_j \# P_j < \infty\,,
        \] and thus, up to a subsequence, $P_j$ converges to a finite set $P$ in the Hausdorff sense. By Proposition~\ref{prop:reg_am_aa}, we can conlude that
        \[
            W \in \bC(\{\Phi_i\}) \cap \cW_L\,.
        \]
    \end{proof}
    
\subsection{Genus bound}
    In the previous subsection, we have proved that the existence of a varifold $W \in \bC(\{\Phi_i\}) \cap \cW_L$ almost minimizing in every $K$-admissible collection of annuli of outer radius at most $R$ with respect to $\{\Sigma_j\}$. In this and the next subsection, we take $R = +\infty$.
    
    To justify Theorem~\ref{thm:minMax}~\ref{item:minMaxPulltight}, it suffices to show that this $W$ satisfies the genus estimates as follows.

    \begin{prop}[Genus bound of varifolds almost minimizing in admissible annuli]\label{prop:am_genus_bound}
        Given a closed Riemannian manifold $(M, g)$, $K \in \N^+$, $\mathfrak{g}_0 \in \N$, $R \in (0, \infty]$, a sequence $\{\Sigma_j\} \subset \GS(M)$ and a stationary varifold $V \in \cV_2(M)$, suppose that:
        \begin{enumerate}[label=\normalfont(\roman*)]
            \item We can choose for each $j \in \N^+$ a finite set $P_j$ such that $\Sigma_j \setminus P_j$ is a smooth surface and 
            \[
                P = \lim_{j \to \infty} P_j
            \]
            in the Hausdorff sense where $P$ is a finite set in $M$;
            \item $\sup_j \mathfrak{g}(\Sigma_j) \leq \mathfrak{g}_0$;
            \item $V$ is almost minimizing in every $K$-admissible collection of annuli of outer radius at most $R$ with respect to $\{\Sigma_j\}$.
        \end{enumerate}
        Then $V \in \cW_{L, \leq \mathfrak{g}_0}$ for $L = \|V\|(M)$.
    \end{prop}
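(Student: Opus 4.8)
The plan is to deduce the statement from the regularity already established and then run a genus-counting argument in the tradition of Simon--Smith, De Lellis--Pellandini \cite{DP10} and Ketover \cite{Ket19}, with only cosmetic changes to accommodate generalized surfaces.

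\textbf{Step 1: reduce to the weighted genus of $V$.} Hypotheses (1) and (3) are exactly those of Proposition \ref{prop:reg_am_aa}, so that proposition already yields
\[
    V = m_1 |\Gamma_1| + \cdots + m_N |\Gamma_N| \in \cW_L,
\]
with the $\Gamma_i$ pairwise disjoint, connected, smooth, embedded, minimal, and $L = \|V\|(M)$. Writing $\bar\fg(V)$ for the left-hand side of \eqref{eq:genus_bound}, it remains to prove $\bar\fg(V) \le \fg_0$.

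\textbf{Step 2: smooth multi-sheeted convergence off a finite set.} Built into the proof of Proposition \ref{prop:reg_am_aa} (cf.\ \cite[\S7]{CD03} and \cite{Ket19}) is a finite ``bad set'' $\mathcal Y \subset M$ with $P = \lim_j P_j \subset \mathcal Y$, such that, after passing to a subsequence, $\Sigma_j \to \Gamma := \bigcup_i \Gamma_i$ in $C^\infty_{\mathrm{loc}}(M \setminus \mathcal Y)$, and near each $p \in \Gamma_i \setminus \mathcal Y$ the surface $\Sigma_j$ is, for $j$ large, a union of $m_i$ disjoint normal graphs over $\Gamma_i$. Finiteness of $\mathcal Y$ uses the almost-minimizing hypothesis through Remark \ref{rmk:stable_in_annuli}: $\Sigma_j$ is stable in at least one annulus of every $K$-admissible collection of small outer radius, so Schoen's curvature estimates control $\{\Sigma_j \setminus P_j\}$ away from the finitely many points of density concentration, which together with $P$ constitute $\mathcal Y$. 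The only new point here is that the isolated singular points $P_j$ of the generalized surfaces are harmlessly absorbed into $\mathcal Y$.

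\textbf{Step 3: the genus count.} Fix pairwise disjoint balls $B(y,r)$, $y \in \mathcal Y$, with $r$ small and $\partial B(y,r)$ transverse to all $\Sigma_j$ and to $\Gamma$, and set $\Omega_r := M \setminus \bigcup_y B(y,r)$. By monotonicity of genus under excising balls (\cite[Ch.~1,~\S2.1,~Lemma~1.5]{CM12}, recalled after Definition \ref{def:generalized_surf}) together with the definition of $\fg(\Sigma_j)$, for $r$ small and $j$ large one has $\fg_0 \ge \fg(\Sigma_j) \ge \fg(\Sigma_j \cap \Omega_r)$. For large $j$, $\Sigma_j \cap \Omega_r$ contains, for each $i$, a distinguished component that is an $m_i$-sheeted normal graph over $\Gamma_i \cap \Omega_r$, and these components are pairwise disjoint, so $\fg(\Sigma_j \cap \Omega_r)$ is at least their total genus. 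The topological heart is then the per-component bound
\[
    \liminf_{j\to\infty} \fg\bigl(\text{component over } \Gamma_i \cap \Omega_r\bigr) \ge
    \begin{cases}
        m_i\, \fg(\Gamma_i \cap \Omega_r), & \Gamma_i \text{ two-sided},\\[2pt]
        \tfrac12\, m_i\bigl(\fg(\Gamma_i \cap \Omega_r) - 1\bigr), & \Gamma_i \text{ one-sided},
    \end{cases}
\]
which I would establish as in \cite{Ket19}: for two-sided $\Gamma_i$, gluing the $m_i$ sheets along the boundary circles on $\bigcup_y \partial B(y,r)$ produces a surface of Euler characteristic at most $m_i\,\chi(\Gamma_i \cap \Omega_r)$, since $\chi$ is additive under gluing along circles; for one-sided $\Gamma_i$, one passes to a twisted $I$-bundle neighbourhood of $\Gamma_i$, whose boundary is the connected orientable double cover $\widetilde{\Gamma_i}$ with $\fg(\widetilde{\Gamma_i}) = \fg(\Gamma_i) - 1$, and notes that the orientable surface $\Sigma_j$ (orientable by Definition \ref{def:generalized_surf}) must wrap an even number $2k \le m_i$ of times around $\Gamma_i$ there, contributing at least $k\,(\fg(\Gamma_i) - 1)$ to the genus. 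Letting $j \to \infty$ and then $r \to 0$, and using $\fg(\Gamma_i \cap \Omega_r) \to \fg(\Gamma_i)$, gives $\fg_0 \ge \bar\fg(V)$, which is \eqref{eq:genus_bound}.

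\textbf{Expected main obstacle.} The substance is Step 3 --- in particular the one-sided case of the displayed bound, which requires carefully setting up the twisted $I$-bundle picture, controlling the parity of the sheet number, and checking that the genus carried by the cut-off regions $B(y,r)$ and by any stray small components of $\Sigma_j \cap \Omega_r$ is non-negative, so that genus is genuinely super-additive under the gluings. This is precisely the content of Ketover's strengthened genus bound \cite{Ket19} (refining \cite{DP10}); in the present framework the additional work is only the bookkeeping that the singular sets $P_j$ sit inside $\mathcal Y$ and that $\fg(\Sigma_j)$, defined by removing small balls about $P_j$, is consistent with the smooth count carried out on $\Omega_r$.
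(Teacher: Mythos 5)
Your overall strategy is sound and runs parallel to the paper's, but the route you take through the genus count differs in structure from what the paper actually does, and there is one place where your bound is weaker than what is needed.

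The paper's proof does not argue directly via smooth multi-graphical convergence off a finite bad set $\mathcal{Y}$ and a ball-excision genus count. Instead, it first isolates and proves Proposition \ref{prop:improv_lifting}, an adaptation of Ketover's improved lifting lemma to generalized surfaces: one selects core curves $\{\alpha_k\}\subset\Gamma\setminus P$, applies the lifting lemma to produce neck-pinch-surgered approximants $\tilde\Sigma_j$ on which the $\alpha_k$ lift with the correct sheet count, then performs one further round of surgeries in shrinking balls $B_{s_j}(P_j)$ so as to replace the generalized surfaces $\tilde\Sigma_j$ by genuinely smooth surfaces $\tilde\Sigma'_j$ with $\fg(\tilde\Sigma'_j)\le\fg(\Sigma_j)$, and only then runs Ketover's Theorem 1.2 argument verbatim. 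Your proposal instead absorbs the singular points into $\mathcal{Y}$ at the outset and never produces a smooth surrogate sequence; this is a legitimate alternative way to deal with $P_j$, and for two-sided $\Gamma_i$ your Step 3, once tidied up, does give the bound (the $m_i$ sheets over $\Gamma_i\cap\Omega_r$ are globally ordered by height, hence disjoint copies of $\Gamma_i\cap\Omega_r$, so their total genus is $m_i\,\fg(\Gamma_i)$; your Euler-characteristic phrasing via gluing is a detour and not quite the cleanest way to say this).

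The genuine issue is in the one-sided case, where you write that $\Sigma_j$ wraps ``an even number $2k\le m_i$ of times'' and contributes ``at least $k(\fg(\Gamma_i)-1)$.'' For the target bound $\tfrac{1}{2}m_i(\fg(\Gamma_i)-1)$ you need the equality $2k=m_i$, i.e.\ that the approximating surfaces lift the core curves of $\Gamma_i$ with the full sheet count dictated by the varifold density $m_i$. This is exactly the content of Proposition \ref{prop:improv_lifting} (3)(b), which asserts that $\pi^{-1}(\tilde\gamma_k)\cap T_\varepsilon(\Gamma_i)\cap\tilde\Sigma_j$ consists of precisely $m_i/2$ curves of degree two. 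Stating only the inequality $2k\le m_i$ and appealing to monotonicity would give a strictly weaker conclusion, so what you list as a bookkeeping ``obstacle'' is in fact the substantive step the paper isolates as a separate proposition; you should either quote that lifting statement directly or justify the exact sheet count from the lamination convergence before the genus count will close.
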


    The genus bound result essentially relies on the improved lifting lemma with multiplicity proved by Ketover~\cite[Proposition~2.2]{Ket19}, which we have tailored to our setting in the following proposition. More precisely, we do not assume that the limiting sequence consists of smooth surfaces.

    \begin{prop}[Improved lifting lemma with multiplicity]\label{prop:improv_lifting}
        Given a closed Riemannian manifold $(M, g)$, $K \in \N^+$, a sequence $\{\Sigma_j\} \subset \GS(M)$ and a stationary varifold $V \in \cV_2(M)$, suppose that:
        \begin{enumerate}[label=\normalfont(\roman*)]
            \item We can choose for each $j \in \N^+$ a finite set $P_j$ such that $\Sigma_j \setminus P_j$ is a smooth surface and 
            \[
                P = \lim_{j \to \infty} P_j
            \]
            in the Hausdorff sense where $P$ is a finite set in $M$;
            \item $V$ is almost minimizing in every $K$-admissible collection of annuli with respect to $\{\Sigma_j\}$.
        \end{enumerate}
        Thus, by Proposition~\ref{prop:reg_am_aa}, there exists a disjoint set of smooth, connected, embedded minimal surface $\{\Gamma_i\}^l_{i = 1}$ and a set of positive integers $\{m_i\}^l_{i = 1}$ such that
        \[
            V = m_1|\Gamma_1| + \cdots m_l |\Gamma_l|\,.
        \]
        We denote $\Gamma := \bigcup^l_{i = 1} \Gamma_i$.

        Assume that there exists a collection of points $\{q_i \in \Gamma_i\}^l_{i = 1}$ and a collection of simple closed curves $\{\gamma_k\}^t_{k = 1}$ contained in $\Gamma \setminus P$ so that for all $k_1 \neq k_2 \in \{1, 2, \cdots, t\}$, if $\gamma_{k_2}, \gamma_{k_2} \subset \Gamma_i$ then $\gamma_{k_1} \cap \gamma_{k_2} = \{q_i\}$. Then there exists $\varepsilon_0 > 0$, so that for any $\varepsilon < \varepsilon_0$, there exists curves $\{\tilde\gamma_k\}^t_{k = 1}$, a subsequence of $\{\Sigma_j\}$ (still labeled as $\{\Sigma_j\}$), and punctate surfaces $\{\tilde \Sigma_j\}$ each of which is obtained from $\Sigma_j$ by finitely many {\em neck-pinch surgeries} outside $P_j$ with the following properties.
        \begin{enumerate}
            \item Each $\tilde \gamma_k$ is homotopic to $\gamma_k$ in $\Gamma$ and $\tilde \gamma_k \subset T_\varepsilon(\gamma_k)$;
            \item $|\tilde \Sigma_j| \to V$ as varifolds;
            \item For each $k \in \{1, 2, \cdots, t\}$, if $\tilde \gamma_k \subset \Gamma_i$, then exactly one of the following holds:
            \begin{enumerate}[label=\normalfont(\arabic*)]
                \item $\pi^{-1}(\tilde \gamma_k) \cap T_\varepsilon(\Gamma_i) \cap \tilde \Sigma_j$ is a union of $m_i$ closed curves, each of which projects via $p$ onto $\tilde \gamma_i$ with degree one.
                \item $\pi^{-1}(\tilde \gamma_k) \cap T_\varepsilon(\Gamma_i) \cap \tilde \Sigma_j$ is a union of $m_i / 2$ closed curves, each of which projects via the closest-point projection $p$ onto $\tilde \gamma_k$ with degree two. In this case, $\Gamma_i$ is non-orientable and $\pi^{-1}(\tilde \gamma_k)$ is a M\:obius band.
            \end{enumerate}
            Here, $\pi: T_\varepsilon(\Gamma_i) \to \Gamma_i$ is the closest-point projection onto $\Gamma_i$.
        \end{enumerate}
    \end{prop}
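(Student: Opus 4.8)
The plan is to follow Ketover's proof of the improved lifting lemma \cite[Proposition~2.2]{Ket19} and to check the adjustments needed when the approximating surfaces $\Sigma_j$ are only generalized surfaces, smooth away from the finite sets $P_j$ with $P_j\to P$. The first step is to upgrade the varifold convergence $|\Sigma_j|\to V=m_1|\Gamma_1|+\cdots+m_l|\Gamma_l|$ to smooth graphical convergence away from a controlled finite set. By Remark \ref{rmk:stable_in_annuli}, the hypothesis that $V$ is almost minimizing in every $K$-admissible collection of annuli forces, along a subsequence, each $\Sigma_j$ to be stable in a small fixed annulus around every point of $M$; Schoen's curvature estimate for stable minimal surfaces then yields uniform local curvature bounds there, so, using also $\sup_j\#P_j<\infty$, there is a finite set $\cY\subset M$ containing $P$ and all points where these estimates degenerate such that $|\Sigma_j|\to V$ smoothly and graphically, with local multiplicity $m_i$ over $\Gamma_i$, on compact subsets of $M\setminus\cY$. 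For $j$ large, $\cY\setminus P$ together with the surgical balls introduced below can be taken disjoint from $P_j$, so all surgeries will automatically take place outside $P_j$.

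The second step is to perturb the curves. I would pick $\varepsilon_0>0$ small enough that the tubular neighbourhoods $T_{\varepsilon_0}(\gamma_k)$ are embedded and satisfy $T_{\varepsilon_0}(\gamma_k)\cap\cY\subset\{q_1,\dots,q_l\}$, and then move each $\gamma_k$ by a homotopy supported in $T_\varepsilon(\gamma_k)$ to a simple closed curve $\tilde\gamma_k$ which still passes through $q_i$ (when $\gamma_k\subset\Gamma_i$) but otherwise avoids $\cY$, while preserving the incidence pattern $\tilde\gamma_{k_1}\cap\tilde\gamma_{k_2}=\{q_i\}$. This establishes conclusion~(1) and confines the bad behaviour of $\Sigma_j$ along $\tilde\gamma_k$ to a short arc around $q_i$.

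The core of the argument — and the step I expect to be the main obstacle — is the surgery construction producing $\tilde\Sigma_j$ with the exact sheet count in~(3) while keeping $|\tilde\Sigma_j|\to V$ for~(2). Away from a short arc of $\tilde\gamma_k$ around $q_i$, Step~1 shows that for $j$ large the intersection $\Sigma_j\cap\pi^{-1}(T_\varepsilon(\Gamma_i))$ over that arc is a disjoint union of graphs, which already has the right number of sheets; the difficulty is confined to a small ball $B_j\ni q_i$ (with $q_i\in\cY$ possibly a topology-concentration point), where $\Sigma_j$ may carry extra, topologically nontrivial sheets. There one performs finitely many neck-pinch surgeries inside $B_j$, disjoint from $P_j$ and with radius of $B_j$ tending to $0$, excising the extra sheets so that the resulting $\tilde\Sigma_j$, restricted to $\pi^{-1}(T_\varepsilon(\Gamma_i))$, is a union of graphs over $\Gamma_i$ with boundary; since $\cH^2(\Sigma_j\cap B_j)\to 0$ and neck-pinch surgery creates no area in the limit, the varifold limit is unchanged. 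Performing this simultaneously for all curves lying on a common component $\Gamma_i$ requires the surgical and orientation choices to be compatible, and this is exactly why the hypotheses provide a single basepoint $q_i$ through which all such curves pass; the resulting combinatorial bookkeeping is the content of \cite[Proposition~2.2]{Ket19} and transfers once Step~1 is available.

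Finally, the dichotomy (a)/(b) in~(3) is forced by orientability. Over $\tilde\gamma_k\subset\Gamma_i$ the restricted normal line bundle of $\Gamma_i$ is either trivial, so $\pi^{-1}(\tilde\gamma_k)$ is an annulus, or the M\"obius band, according to whether $\tilde\gamma_k$ preserves or reverses orientation along $\Gamma_i$; the latter occurs only when $\Gamma_i$ is non-orientable. Since each $\Sigma_j$, and hence each $\tilde\Sigma_j$, is orientable away from $P_j$, in the M\"obius case no component of $\tilde\Sigma_j\cap\pi^{-1}(\tilde\gamma_k)$ can close up after a single turn, so the $m_i$ local sheets pair into $m_i/2$ curves each projecting to $\tilde\gamma_k$ with degree two (forcing $m_i$ even), which is case~(b); otherwise the $m_i$ sheets yield $m_i$ degree-one curves, which is case~(a). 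Together with~(1) and~(2) this completes the proof.
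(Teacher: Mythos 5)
Your proposal tries to re-derive Ketover's lifting lemma from scratch, whereas the paper makes a much smaller observation: since the curves $\gamma_k$ are assumed to lie in $\Gamma\setminus P$, one can choose a small $s<\dist(P,\bigcup_k\gamma_k)$ with $B_s(p)\cap\Gamma$ a disk for each $p\in P$, set $\tilde M:=M\setminus B_s(P)$, and, after passing to a subsequence, have each $\Sigma_j\cap\tilde M$ smooth (because $P_j\to P$). All of Ketover's arguments are local near $\bigcup_k\gamma_k\subset\tilde M$, and the only place smoothness of $\Sigma_j$ is used in \cite[Section 4]{Ket19} is in the Genus Collapse Lemma (Lemma~\ref{lem:genus_collapse}), which the paper restates for possibly non-minimal smooth surfaces. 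With that, Ketover's proof runs verbatim on $\tilde M$. Your proposal never identifies the Genus Collapse Lemma as the only obstruction, and hence does much more work than necessary.

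There is also a genuine gap in your Step~1. You invoke Remark~\ref{rmk:stable_in_annuli} to claim that, along a subsequence, \emph{each $\Sigma_j$ is stable in a small fixed annulus around every point}, and you then apply Schoen's estimate to $\Sigma_j$ directly. But Remark~\ref{rmk:stable_in_annuli} concerns the \emph{limiting varifold $V$} (or the replacements constructed from the almost-minimizing property), not the approximating surfaces $\Sigma_j$, and $\Sigma_j$ is in general neither minimal nor stable. Almost-minimizing alone does not give curvature bounds on $\Sigma_j$, and the claimed smooth graphical convergence of $\Sigma_j$ away from a finite set is not a hypothesis you can start from — it is, after surgery, essentially what conclusion~(3) asserts. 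In Ketover's argument the graphical structure comes from the stable \emph{replacements}, and the surgeries serve precisely to replace the parts of $\Sigma_j$ that fail to be graphical by the graphical replacements; your outline inverts this logic. Fixing Step~1 would require reintroducing the replacement machinery and the Genus Collapse Lemma, at which point you would be reconstructing Ketover; the paper's route — cut out $B_s(P)$ and cite Ketover, with the generalized Genus Collapse Lemma — avoids all of this.
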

    
    Indeed, in~\cite[Section 4]{Ket19}, the proof of lifting lemma, the only point where Ketover essentially relies on the smoothness of the sequence $\{\Sigma_i\}$ is in the application of the following Genus Collapse Lemma by Colding-Minicozzi.
    
    \begin{lem}[Genus Collapse, {\cite[Lemma I.0.14]{CM04}}]\label{lem:genus_collapse}
        Given a $3$-manifold $(M, g)$, a sequence of smooth surfaces $\{\Sigma_j\}$ of genus at most $\mathfrak{g}_0$ in $(M, g)$, there exists finitely many points in the manifold $\{x_i\}^{m}_{i = 1}$ with $m \leq \mathfrak{g}_0$ and a subsequence of the surfaces, still denoted $\{\Sigma_j\}$, such that for all $x \notin \{x_i\}^{m}_{i=1}$, there is a radius $r_x > 0$ such that $\Sigma_j \cap B_{r_x}(x)$ is a union of planar domains, i.e.,
        \[
            \mathfrak{g}(\Sigma_j \cap B_{r_x}(x)) = 0\,.
        \]
    \end{lem}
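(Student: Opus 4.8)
The plan is to pin down the finitely many points where the genus of the $\Sigma_j$ can concentrate and to bound their number by $\fg_0$ using the additivity of genus over disjoint embedded subsurfaces — this is essentially the argument of Colding--Minicozzi, and I expect the only genuinely nontrivial input to be that additivity fact.

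\emph{Step 1: extract a good subsequence and define the concentration function.} Fix a countable dense set $D \subset M$. For $x \in M$ and $r > 0$ with $\partial B_r(x)$ transverse to $\Sigma_j$, the slice $\Sigma_j \cap B_r(x)$ is a compact smooth surface with boundary, and $\fg(\Sigma_j \cap B_r(x))$ is a nonnegative integer, monotone nondecreasing in $r$ (the monotonicity recorded in the remark after Definition~\ref{def:generalized_surf}, from \cite{CM12}); for non-transverse $r$ we set the value to be the supremum over smaller transverse radii. Since this genus is bounded above by $\fg(\Sigma_j) \le \fg_0$, a diagonal extraction over the countable collection $\{B_r(x): x \in D,\ r \in \mathbb{Q}^+\}$ yields a subsequence, still denoted $\{\Sigma_j\}$, along which $\lim_j \fg(\Sigma_j \cap B_r(x))$ exists for all $x \in D$, rational $r$. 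Using monotonicity in $r$ and density of $D$, one defines for every $x \in M$
\[
  g(x) \;:=\; \lim_{r \to 0}\ \limsup_{j}\ \fg(\Sigma_j \cap B_r(x)) \;\in\; \N ,
\]
the outer limit existing because the inner $\limsup$ is monotone in $r$.

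\emph{Step 2: bound the concentration set, then conclude.} I would next show $S := \{x \in M : g(x) \ge 1\}$ satisfies $\#S \le \fg_0$. If $x_1,\dots,x_k$ were distinct points of $S$ with $k > \fg_0$, choose $r>0$ so small that $B_r(x_1),\dots,B_r(x_k)$ are pairwise disjoint and $\limsup_j \fg(\Sigma_j \cap B_r(x_i)) \ge 1$ for each $i$; after passing to a further subsequence, $\fg(\Sigma_j \cap B_r(x_i)) \ge 1$ for all $i$ and all $j$. As these are disjoint smooth subsurfaces-with-boundary of $\Sigma_j$, additivity of genus (\cite[Chapter~1, \S~2.1, Lemma~1.5]{CM12}, via capping off the boundary circles) gives $k \le \sum_{i=1}^k \fg(\Sigma_j \cap B_r(x_i)) \le \fg(\Sigma_j) \le \fg_0$, a contradiction; so $S = \{x_1,\dots,x_m\}$ with $m \le \fg_0$. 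Finally, for $x \notin S$ we have $g(x) = 0$, hence there are $r>0$ and $j_x$ with $\fg(\Sigma_j \cap B_r(x)) = 0$ for all $j \ge j_x$; shrinking $r$ once more to absorb the finitely many indices $j < j_x$ (a fixed smooth closed surface meets any sufficiently small geodesic ball in a disjoint union of disks) produces the required $r_x$.

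The main obstacle, if any, is Step 2's use of additivity of genus over disjoint embedded subsurfaces with boundary — precisely the Colding--Minicozzi capping-off lemma already invoked in this section — while the rest is a diagonal subsequence argument plus a pigeonhole against the fixed bound $\fg_0$, so I do not anticipate further difficulty.
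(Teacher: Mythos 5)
The paper does not prove this lemma; it is cited verbatim from Colding--Minicozzi \cite[Lemma I.0.14]{CM04}, with the paper only remarking (following Ketover) that the minimality and closedness hypotheses can be dropped. So there is no in-paper proof to compare against, and your proposal is a reconstruction of the Colding--Minicozzi argument. The overall strategy — diagonal extraction, a concentration function $g(x)=\lim_{r\to 0}\limsup_j \fg(\Sigma_j\cap B_r(x))$, and a pigeonhole against $\fg_0$ via additivity of genus over disjoint subsurfaces — is the right one.

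There is one step in your Step 2 that is imprecise as written: ``after passing to a further subsequence, $\fg(\Sigma_j\cap B_r(x_i))\geq 1$ for all $i$ and all $j$.'' For finitely many points, $\limsup_j\geq 1$ at each $x_i$ does \emph{not} by itself allow you to pass to a subsequence where the genus is simultaneously $\geq 1$ at all $x_i$: a subsequence chosen to make $\fg(\Sigma_j\cap B_r(x_1))\geq 1$ can drive $\limsup_j\fg(\Sigma_j\cap B_r(x_2))$ down to $0$ (alternating concentration is the standard counterexample). The fix is exactly the tool you set up in Step 1 but did not explicitly invoke here: enlarge each $B_r(x_i)$ to a slightly larger ball $B_{\rho_i}(x_i')$ with $x_i'\in D$ and $\rho_i\in\mathbb{Q}^+$, still pairwise disjoint. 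Monotonicity gives $\limsup_j\fg(\Sigma_j\cap B_{\rho_i}(x_i'))\geq 1$, and along the Step~1 subsequence the $\lim_j$ at these $(x_i',\rho_i)$ \emph{exists}, so it equals the $\limsup$ and is $\geq 1$; hence for all large $j$ the genus is $\geq 1$ in every $B_{\rho_i}(x_i')$ simultaneously, and additivity then gives $k\leq\fg_0$. With that repair the argument is correct; the remaining steps (monotonicity in $r$, additivity of genus over disjoint subsurfaces-with-boundary from \cite[Lemma 1.5]{CM12}, and that a fixed smooth closed surface meets small enough balls in planar domains) are all standard and used properly.
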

    \begin{rmk}
        In the original statement in Colding-Minicozzi~\cite{CM04}, it appears they require each $\Sigma_j$ is minimal; However, following their proof, as in Ketover’s cited statement~\cite[Lemma~3.4]{Ket19}, this minimality condition is unnecessary.

        In addition, the requirement by Colding-Minicozzi that the ambient manifold $(M, g)$ be closed is also unnecessary.
    \end{rmk}

    \begin{proof}[Proof of Proposition~\ref{prop:improv_lifting}]
        We choose $\varepsilon_0$ as in~\cite[Set up 4.2]{Ket19} and fix $\varepsilon < \varepsilon_0$.

        We choose $s \in (0, \dist(P, \bigcup_k \gamma_k))$ so that for every $p \in P$, $B_s(p) \cap \Gamma$ is either a disk or an empty set. Then we set
        \[
            \tilde M := M \setminus B_s(P)\,.
        \]

        By possibly choosing a subsequence, without loss of generality, we may assume that every $\Sigma_j \cap \tilde M$ is a smooth surface. 
        
        Note that all the arguments in~\cite[Section 4]{Ket19} are confined to a neighborhood of $\bigcup_k \gamma_k$. With Lemma~\ref{lem:genus_collapse}, we can follow these arguments verbatim to prove our proposition.
    \end{proof}

    \begin{proof}[Proof of Proposition~\ref{prop:am_genus_bound}]
        In the previous subsection, we prove $V \in \cW_L$, i.e.,
        \[
            V = m_1|\Gamma_1| + \cdots + m_l |\Gamma_l|
        \]
        for a disjoint set of smooth, connected, embedded minimal surface $\{\Gamma_i\}^l_{i = 1}$ and a set of positive integers $\{m_i\}^l_{i = 1}$.

        We follow the proof of~\cite[Theorem~1.2]{Ket19} to select a set of curves $\{\alpha_k\}^t_{k = 1} \subset \Gamma$. Since $\# P < \infty$, we may choose $\{\alpha_k\}^t_{k = 1} \subset \Gamma \setminus P$. This allows us to apply our Proposition~\ref{prop:improv_lifting} to obtain a sequence of punctate surfaces $\{\tilde \Sigma_j\}$. However, the only issue to continue the arguments from~\cite{Ket19} is that our $\tilde \Sigma_j$ is not necessarily smooth everywhere.

        To address this issue, for each $j$, we choose $s_j \to 0$ such that we can replace $\tilde{\Sigma}_j \cap B_{s_j}(P_j) = \Sigma_j \cap B_{s_j}(P_j)$ in $\tilde{\Sigma}_j$ with finitely many disks through surgeries and the removal of connected components, thereby obtaining a sequence of smooth surfaces $\{\tilde\Sigma'_j\}$. For sufficiently large $j$, we have: 
        \begin{enumerate}
            \item $B_{s_j}(P_j) \cap \pi^{-1}(\bigcup^t_{k = 1} \alpha_k) = \emptyset$;
            \item $\tilde\Sigma'_j \setminus B_{s_j}(P_j) = \tilde\Sigma_j \setminus B_{s_j}(P_j)$;
            \item $|\tilde\Sigma'_j| \to W$ as varifolds;
            \item $\mathfrak{g}(\tilde\Sigma'_j) \leq \mathfrak{g}(\tilde\Sigma_j) \leq \mathfrak{g}(\Sigma_j) \leq \mathfrak{g}_0$.
        \end{enumerate}

        Consequently, with the sequence of smooth surfaces $\{\tilde\Sigma'_j\}$, we can follow the proof of~\cite[Theorem~1.2]{Ket19} verbatim to obtain \begin{equation}
            \sum^l_{i = 1} m_i \mathfrak{g}(\Gamma_i) \leq \mathfrak{g}_0\,.
        \end{equation}
    \end{proof}

\subsection{Multiplicity one theorem}

    The Multiplicity one result is an adaption of Theorem B in~\cite{WZ23}, which essentially follows from the PMC min-max theorem, Theorem 2.4 therein.

\subsubsection{PMC Simon-Smith min-max theory}

    Let us first recall the concepts of $\cA^h$-functional, $\VC$-space, $C^{1,1}$ boundary and strong $\cA^h$-stationarity introduced in~\cite{WZ23}.

    \begin{defn}[Prescribed mean curvature functional]
        For a function $h \in C^\infty(M)$, we define the {\em prescribed mean curvature functional associated with $h$} to be
        \[
            \cA^h: \cV_2(M) \times \cC(M) \to \R\,, \quad (V, \Omega) \mapsto \|V\|(M) - \int_\Omega h d\cH^3\,.
        \]
    \end{defn}

    \begin{defn}[$\cA^h$-stationary pairs]
        Given an open subset $U \subset M$, a pair $(V, \Omega) \in \cV_2(M) \times \cC(M)$ is {\em $\cA^h$-stationary in $U$}, if for any vector field $X \in \cX(U)$ with associated flow $\varphi^t$,
        \[
            \delta \cA^h(V, \Omega)(X) := \frac{d}{dt}\vert_{t = 0} \cA^h(\varphi^t_\#(V, \Omega)) = 0\,.
        \]

        An $\cA^h$-stationary pair $(V, \Omega)$ is {\em $\cA^h$-stable in $U$} if if for any vector field $X \in \cX(U)$ with associated flow $\varphi^t$,
        \[
            \delta^2 \cA^h(V, \Omega)(X, X) := \frac{d^2}{dt^2}\vert_{t = 0} \cA^h(\varphi^t_\#(V, \Omega)) \geq 0\,.
        \]
    \end{defn}

    \begin{defn}[$\VC$-space]
        The {\em $\VC$-space on $M$}, denoted by $\VC(M)$, is the space of all pairs $(V, \Omega) \in \cV_2(M) \times \cC(M)$ such that their is a sequence $\{\Omega_k\} \subset \cC(M)$ satisfying
        \[
            |\partial^* \Omega_k| \to V\,, \quad \Omega_k \to \Omega\,.
        \]

        Given two pairs $(V, \Omega)$ and $(V', \Omega')$ in $\VC(M)$, the $\mathscr{F}$-distance between them is
        \[
            \mathscr{F}((V, \Omega), (V', \Omega')) := \bF(V, V') + \cF(\Omega, \Omega')\,.
        \]
    \end{defn}

    \begin{defn}[$C^{1,1}$ almost embedding]
        Given an open subset $U \subset M$, a $C^{1,1}$ immersed surface $\phi: \Sigma \to U$ with $\phi(\partial \Sigma) \cap U = \emptyset$ is called a {\em $C^{1,1}$ almost embedded surface} in $U$ provided that at every point $p \in \phi(\Sigma)$ where $\phi$ is not an embedding, there exists a neighborhood $W \subset U$ of $p$, such that:
        \begin{enumerate}
            \item $\Sigma \cap \phi^{-1}(W)$ is a disjoint union of connected components $\bigsqcup^\ell_{i = 1} \Gamma^i$.
            \item $\phi: \Gamma^i \to W$ is a $C^{1,1}$ embedding for each $i$.
            \item For each pair $i \neq j$ , $\phi(\Gamma^j)$ lies on one-side of $\phi(\Gamma_i)$ in $W$.
        \end{enumerate}
        We call the set of all such $p$ as the {\em touching set} of $\Sigma$. For simplicity, we will denote $\phi(\Sigma)$ by $\Sigma$ and $\phi(\Gamma^i)$ by $\Gamma^i$.
    \end{defn}

    \begin{defn}[$C^{1,1}$ boundary]
        Given an open subset $U$ of $M$, a $C^{1,1}$ almost embedded surface $\phi:\Sigma \to U$ and $\Omega \in \cC(U)$, $(\Sigma, \Omega)$ is called a {\em $C^{1,1}$ boundary} in $U$, provided that $\Sigma$ is orientable and
        \[
            \phi_\#([\Sigma]) = \partial^*\Omega\,.
        \]

        For a function $h \in C^\infty(M)$, a $C^{1,1}$ boundary $(\Sigma, \Omega)$ in $U$ is called a {\em $C^{1,1}$ $h$-boundary} if $(|\Sigma|, \Omega)$ is $\cA^h$-stationary in $U$.
    \end{defn}

    \begin{defn}[Strong $\cA^h$-stationarity]
        Given an open subset $U$ of $M$, a $C^{1,1}$ $h$-boundary $(\Sigma, \Omega)$ is said to be {\em strongly $\cA^h$-stationary in $U$} provided the following holds.

        For every $p \in U$ in the touching set of $\Sigma$, there exists a neighborhood $W \subset U$ of $p$, and a decomposition $\Sigma \cap W = \bigcup^\ell_{i = 1} \Gamma^i$ into $\ell := \Theta^2(\Sigma, p) \geq 2$ connected disks with a natural ordering
        \[
            \Gamma^1 \leq \Gamma^2 \leq \cdots \leq \Gamma^\ell\,.
        \]
        Let $W^1$ and $W^\ell$ be the bottom and the top components of $W \setminus \Sigma$. For $i = 1$ or $\ell$ and all $X \in \cX(W)$ pointing into $W^i$ along $\Gamma^i$, if $W^i \subset \Omega$, then
        \[
            \delta \cA^h(\Gamma^i, W^i)(X) \geq 0\,;
        \]
        otherwise, $W^i \cap \Omega = \emptyset$ and
        \[
            \delta \cA^h(\Gamma^i, W \setminus W^i)(X) \geq 0\,.
        \]
    \end{defn}
    
    To adapt their PMC Simon-Smith min-max theory to our setting, note that for every  $\Sigma \in \GS^*(M)$, by definitions, $[\Sigma]$ can be viewed as the reduced boundary of a Caccioppoli set $\Omega \in \cC(M)$ and its complement $M \setminus \Omega \in\cC(M)$. Therefore, we can extend the definition of $\mathscr{E}$ in~\cite[Section~2]{WZ23} to
    \[
        \tilde{\mathscr{E}} := \{(\Sigma, \Omega) : \Sigma \in \GS^*(M),\ [\Sigma] = \partial^* \Omega\}\,.
    \]

    \begin{defn}[PMC Simon-Smith family]
        Let $X$ be a cubical subcomplex of some $I(m, j)$.
        A map $(\Phi, \Omega): X \to \tilde{\mathscr{E}}$ is called a {\em PMC Simon-Smith family}, provided that:
        \begin{enumerate}
            \item Its first component $\Phi: X \to \GS^*(M)$ is a Simon-Smith family.
            \item Its second component $\Omega: X \to \cC(M)$ is continuous.
        \end{enumerate}
    \end{defn}

    \begin{defn}[Relative PMC homotopy class]
        Let $Z \subset X$ be cubical subcomplexes of some $I(m, j)$. Two PMC Simon-Smith families $(\Phi_0, \Omega_0)$ and $(\Phi_1, \Omega_1)$ parametrized by the same parameter space $X$ with $(\Phi_0, \Omega_0)\vert_Z = (\Phi_1, \Omega_1)\vert_Z$ are said to be {\em homotopic relative to $(\Phi_0, \Omega_0)\vert_Z$} to each other if there exists a continuous map
        \[
           \varphi: [0, 1] \times X \to \operatorname{Diff}^\infty(M)
        \] such that 
        \begin{enumerate}[label=\normalfont(\arabic*)]
            \item $\varphi(0, x) = \operatorname{Id}$ for all $x \in X$.
            \item $\varphi(t, z) = \operatorname{Id}$ for all $t \in [0, 1]$ and $z \in Z$.
            \item $(\varphi(1, x)(\Phi_0(x)), \varphi(1, x)(\Omega_0(x))) = (\Phi_1(x), \Omega_1(x))$ for all $x \in X$.
        \end{enumerate} 
        
        The set of all families homotopic relative to $(\Phi, \Omega)\vert_Z$ to a PMC Simon-Smith family $(\Phi, \Omega)$ is called the {\em relative $(X, Z)$-homotopy class of $(\Phi, \Omega)$}, and is denoted by $\Lambda_Z(\Phi, \Omega)$.
    \end{defn}

    \begin{defn}
        Given a $(X, Z)$-relative homotopy class $\Lambda_Z$ of a PMC Simon-Smith family and a function $h \in C^\infty(M)$, its {\em $h$-width} is defined by
        \[
            \mathbf{L}^h(\Lambda_Z):=\inf_{(\Phi, \Omega)\in \Lambda_Z}\sup_{x\in X}\cA^h(|\Phi|(x), \Omega(x))\,.
        \]
    \end{defn}
    \begin{defn}
        A sequence $\{(\Phi_i, \Omega_i)\}$ in $\Lambda_Z$ is said to be {\em minimizing associated with $h \in C^\infty(M)$} if 
        \[
            \lim_{i\to\infty} \sup_{x\in X}\cA^h(|\Phi|(x), \Omega(x)) =\mathbf{L}^h(\Lambda_Z)\,.
        \]
        If $\{(\Phi_i \Omega_i)\}$ is a minimizing sequence associated with $h$ and $\{x_i\} \subset X$ satisfies 
        \[
            \lim_{i\to\infty} \cA^h(|\Phi_i(x_i)|, \Omega_i(x_i))=\mathbf{L}^h(\Lambda_Z)\,,
        \]
        then $\{\Phi_i(x_i)\}$ is called a {\em min-max sequence associated with $h$}. 
        
        For a minimizing sequence $\{(\Phi_i, \Omega_i)\}$, we define its \textit{critical set associated with $h$} to be the set of all subsequential varifold-limit of its min-max sequences:
        \begin{align*}
            \bC^h(\{(\Phi_i, \Omega_i)\}):=\{(V, \Omega) = \lim_j (|\Phi_{i_j}(x_j)|, \Omega_{i_j}(x_j)) \in &\VC(M) : x_j\in X, \, \\ &\cA^h(V, \Omega)=\bL^h(\Lambda_Z)\}\,.
        \end{align*}
        %And $\{\Phi_i\}$ is called \textit{pulled-tight associated with $h$} if every varifold in $\bC^h(\{\Phi_i\})$ is $\cA^h$-stationary.
    \end{defn}

    \begin{defn}[$(\cA^h, \varepsilon, \delta)$-deformation]
        Given $\varepsilon, \delta > 0$, an open set $U \subset M$, a function $h \in C^\infty(M)$ and a pair $(\Sigma, \Omega) \in \tilde{\mathscr{E}}$, we call an isotopy $\psi \in \Is(U)$ {\it an $(\cA^h, \varepsilon, \delta)$-deformation of $\Sigma$ in $U$ associated with $h$} provided that:
        \begin{enumerate}
            \item $\cA^h(\psi(t)_\# (|\Sigma|, \Omega)) \leq \cA^h(|\Sigma|, \Omega) + \delta$ for all $t \in [0, 1]$.
            \item $\cA^h(\psi(t)_\# (|\Sigma|, \Omega)) \leq \cA^h(|\Sigma|, \Omega) - \varepsilon$.
        \end{enumerate}
    
        We define $\mathfrak{a}^h(U; \varepsilon, \delta)$ to be the set of all pairs in $\tilde{\mathscr{E}}$ that do not admit $(\cA^h, \varepsilon, \delta)$-deformations in $U$.
    \end{defn}
    
    \begin{defn}[Almost minimizing in admissible annuli]
        Given a sequence of pairs $\{(\Sigma_j, \Omega_j)\} \subset \tilde{\mathscr{E}}$ and a pair $(V, \Omega) \in \VC(M)$, we say that $(V, \Omega)$ is \textit{$\cA^h$-almost minimizing in every $K$-admissible collection of annuli of outer radius at most $R$ with respect to $\{(\Sigma_j, \Omega_j)\}$}, if there exists $\varepsilon_j \to 0$, $\delta_j \to 0$ such that: 
        \begin{enumerate}[label=\normalfont(\arabic*)]
            \item $\mathscr{F}((|\Sigma_j|, \Omega_j), (V, \Omega)) < \varepsilon_j$.
            \item for any $K$-admissible collection of annuli $\{A_i\}^K_{i = 1}$, and any pair $(\Sigma_j, \Omega_j)$, 
            \[
                (\Sigma_j, \Omega_j) \in \bigcup^K_{i = 1}\mathfrak{a}^h(A_i; \varepsilon_j, \delta_j)\,.
            \]
        \end{enumerate}
    \end{defn}
    \begin{rmk}\label{rmk:Ah_stable_in_annuli}
        Similar to Remark~\ref{rmk:stable_in_annuli}, if $\Sigma = \spt \|V\|$ is a minimal surface, then for every $K$-admissible collection of annuli, $(|\Sigma|, \Omega)$ is $\cA^h$-stable in at least one annulus.
    \end{rmk}
    
    \begin{thm}[Relative PMC min-max theorem, {\cite[Theorem~2.4]{WZ23}}]\label{thm:rel_PMC_min_max}
        Given $Z \subset X$ cubical subcomplexes of some $I(m, j)$, a function $h \in C^\infty(M)$ and a PMC Simon-Smith family $(\Phi_0, \Omega_0)$, suppose that 
        \[
            \bL^h(\Lambda_Z(\Phi_0, \Omega_0)) > \max \{\sup_{z \in Z} \cA^h(|\Phi_0(z)|, \Omega_0(z)), 0\}\,.
        \]
        Then there exists a minimizing sequence $\{(\Phi_i, \Omega_i)\} \subset \Lambda_Z(\Phi_0, \Omega_0)$, and a strongly $\cA^h$-stationary, $C^{1,1}$ $h$-boundary $(\Sigma, \Omega)$ with $(|\Sigma|, \Omega) \in \bC^h(\{(\Phi_i, \Omega_i)\})$, such that
        \[
            \cA^h(\Sigma, \Omega) = \bL^h(\Lambda_Z(\Phi_0, \Omega_0))\,.
        \]
        Moreover, there exists an integer $K = K(m)$ and a min-max subsequence $(\Sigma_j, \Omega_j) := (\Phi_{i_j}(x_j), \Omega_{i_j}(x_j)$ such that $(|\Sigma|, \Omega)$ is $\cA^h$-almost minimizing in every $K$-admissible collection of annuli with respect to $\{(\Sigma_j, \Omega_j)\}$.
    \end{thm}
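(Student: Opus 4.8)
The plan is to adapt the proof of [WZ23, Theorem~2.4] to the present Simon-Smith framework, the only difference being that our sweepouts take values in $\tilde{\mathscr E}=\{(\Sigma,\Omega):\Sigma\in\GS^*(M),\ [\Sigma]=\partial^*\Omega\}$ rather than in the space of smooth PMC boundaries used there. The three-step scheme is unchanged: pull-tight, combinatorial $\cA^h$-almost minimizing, and regularity.

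\emph{Pull-tight.} As in Proposition~\ref{prop:Pulltight}, given a minimizing sequence $\{(\Phi_i,\Omega_i)\}$ in $\Lambda_Z(\Phi_0,\Omega_0)$ one composes with a continuous family of diffeomorphisms generated by the negative $\cA^h$-gradient on the compact set of pairs of bounded mass, which fixes $\cA^h$-stationary pairs and is the identity over $Z$ (so the relative homotopy constraint is preserved). The one new point is continuity of the composite family of pairs as a map into $\VC(M)$ with the $\mathscr F$-metric: this follows because Proposition~\ref{prop:SS_AP}(2) gives that $[\Phi_i]$ is $\bF$-continuous, hence $|\Phi_i|$ is varifold-continuous, while $\Omega_i$ is $\cF$-continuous by the definition of a PMC Simon-Smith family. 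After this step every varifold in the critical set is $\cA^h$-stationary.

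\emph{Combinatorial step.} One runs the $\cA^h$-analogue of Proposition~\ref{prop:an_deform}: if no min-max pair were $\cA^h$-almost minimizing in every $K$-admissible collection of annuli with $K=K(m)=3^{m3^m}$, then over a sufficiently fine subcomplex one could choose, for each cell meeting the ``bad'' set, an $(\cA^h,\varepsilon,\delta)$-deformation supported in an annulus, with the Pitts ordering/cut-off combinatorics ensuring the annuli over faces of a common cell are disjoint; summing these isotopies (which keep membership in $\tilde{\mathscr E}$ and are the identity over $Z$) strictly lowers the max of $\cA^h$, a contradiction. This yields a min-max subsequence $(\Sigma_j,\Omega_j)=(\Phi_{i_j}(x_j),\Omega_{i_j}(x_j))$ converging in $\mathscr F$ to an $\cA^h$-stationary pair $(V,\Omega)$ that is $\cA^h$-almost minimizing in every $K$-admissible collection of annuli.

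\emph{Regularity.} Here one invokes the $C^{1,1}$ regularity theory for $\cA^h$-stationary, $\cA^h$-almost minimizing pairs from [WZ23] (building on Zhou--Zhu and the Colding--De~Lellis/Simon--Smith regularity for varifolds almost minimizing in annuli): $\cA^h$-stationarity together with one-sided stability in at least one annulus of each admissible family (Remark~\ref{rmk:Ah_stable_in_annuli}) upgrades $V$ to a $C^{1,1}$ almost embedded surface $\Sigma$ with $\phi_\#[\Sigma]=\partial^*\Omega$, and the min-max structure forces strong $\cA^h$-stationarity at the touching set. The one subtlety our setting introduces is that the $\Sigma_j$ are only generalized surfaces, smooth away from a uniformly bounded finite set $P_j$; passing to a subsequence with $P_j\to P$ in the Hausdorff sense (possible since $N_P(\Phi_0)<\infty$, by Remark~\ref{rmk:homotopy}), one notes $\cH^2(P)=0$, chooses all annular deformations to avoid a neighborhood of $P$, and checks that the blow-up and replacement arguments of [WZ23] localize to annuli disjoint from $P$; the finite set $P$ is then removable for the resulting $\cA^h$-stationary pair by the standard removable-singularity lemma. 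The main obstacle is exactly this bookkeeping — verifying that the singular sets $P_j$ interfere with neither the combinatorial deformations nor the regularity/blow-up analysis — and it is handled as in the genus-bound argument of \S\ref{sect:minMax}, where Proposition~\ref{prop:improv_lifting} is likewise confined to neighborhoods disjoint from the $P_j$. With this, the conclusions of [WZ23, Theorem~2.4] carry over verbatim.
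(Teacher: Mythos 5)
Your proposal follows essentially the same route as the paper's proof: adapt the combinatorial argument (the paper cites its own Proposition~\ref{prop:exist_am_aa} adapting \cite[Theorem~3.8]{WZ23}) to produce an $\cA^h$-almost minimizing pair, pass to a subsequence so the singular sets $P_j$ Hausdorff-converge to a finite $P$ (via Remark~\ref{rmk:homotopy}), follow \cite[Theorem~4.7]{WZ23} to obtain strong $\cA^h$-stationarity and $C^{1,1}$ regularity away from a finite set $P'\supset P$, and finish with the $C^{1,1}$ removable-singularity argument from \cite[Theorem~2.4]{WZ23}. One small inaccuracy: you say one "chooses all annular deformations to avoid a neighborhood of $P$"; in fact the combinatorial deformations act directly on the generalized surfaces $\Phi_i(x)$ regardless of where $P_j$ sits — it is the replacement/blow-up analysis in the regularity step that is localized to annuli disjoint from $P\setminus\{p\}$, exactly as in Proposition~\ref{prop:reg_am_aa}. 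With that correction the proposal matches the paper's proof.
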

    \begin{proof}
        Firstly, as in the proof of our Proposition~\ref{prop:exist_am_aa}, one can adapt the proof of~\cite[Theorem~3.8]{WZ23} to obtain a pair $(V_0, \Omega_0) \in \VC(M)$ which is $\cA^h$-almost minimizing in small annuli with respect to $\{(\Sigma_j, \Omega_j) = (\Phi_{i_j}(x_j), (\Omega_{i_j}(x_j))\}$.

        Then, as discussed in the second paragraph of Remark~\ref{rmk:homotopy}, we can choose $P_j$ for each $\Sigma_j$ such that 
        \[
            \sup_j \# P_j < \infty
        \] and thus, up to a subsequence, $P_j$ converges to a finite set $P$ in the Hausdorff sense. Since $\Sigma_j \setminus P_j$ is a smooth surface, one can follow verbatim the proof of~\cite[Theorem~4.7]{WZ23} and conclude that $(V_0, \Omega_0)$ is $C^{1,1}$ and strongly $\mathcal{A}^h$-stationary boundary in $M \setminus P'$ for some finite set $P' \supset P$.

        Finally, one can follow their $C^{1,1}$-version of removal singularity as in the proof of~\cite[Theorem~2.4]{WZ23} to conclude the theorem.
    \end{proof}
    
\subsubsection{Proof of Theorem~\ref{thm:minMax}~\ref{item:minMaxMultiplicityone}}
    Let $\mathcal{W}^{m}_{L, \leq \mathfrak{g}_0}$ be the set of all varifolds $W \in \mathcal{W}_{L, \leq \mathfrak{g}_0}$, whose support is a smooth embedded minimal surface $\Sigma$, such that for every $K(m) := 3^{m3^m}$-admissible collection of annuli, $\Sigma$ is stable in at least one annulus. Note that this is just a reformulation of {\em Property (R')} in~\cite{WZ23}. Proposition~\ref{prop:reg_am_aa} with $R = +\infty$ implies that 
    \[
        W \in \mathbf{C}(\{\Phi_i\}) \cap \cW^m_{L, \leq \mathfrak{g}_0}\,.
    \]
    
    By Sharp's compactness theorem~\cite{Sha17} (See also Theorem~\cite[Theorem~7.1]{WZ23}), the set $\cW^m_{L, \leq\mathfrak{g}_0}$ is compact. In particular, if $g$ is a bumpy metric on $M$, then $\cW^m_{L, \leq\mathfrak{g}_0}$ is a finite set.

    As in the proof of~\cite[Theorem~7.3]{WZ23}, it suffices to work on bumpy metrics $g$ and the general results follow from a approximation by bumpy metrics. 

    By Proposition~\ref{prop:SS_AP}, for any Simon-Smith family $\Phi: X \to \GS^*(M)$, the map $[\Phi]$ is continuous in the $\bF$-metric, and thus, it can be lifted to a PMC Simon-Smith family
    \[
        (\tilde \Phi, \tilde \Omega): \tilde X \to \tilde{\mathscr{E}}\,,
    \]
    where $\tilde X$ is a double-cover of $X$.

    Hence, one can follow verbatim the proofs of Theorem 7.2 and Theorem 7.3 of~\cite{WZ23} through replacing Theorem 2.4 therein by our Theorem~\ref{thm:rel_PMC_min_max}. Note that by Remark~\ref{rmk:Ah_stable_in_annuli}, Theorem~\ref{thm:rel_PMC_min_max} also implies the required {\em Property (R')} as defined in~\cite[Corollary~3.11]{WZ23}. This completes the proof of Theorem~\ref{thm:minMax}~\ref{item:minMaxMultiplicityone}.

\subsection{Refinement of the critical set by Marques-Neves}\label{subsect:Refinement_MN}
	
    Conclusion~\ref{item:minMaxW} is adapted from~\cite[Theorem~4.7]{MN21}, which essentially follows from the combinatorial argument of~\cite[Theorem~4.10]{Pit81}. Note that our Definition~\ref{def:epsilon-delta-def} of $(\varepsilon, \delta)$ deformation is continuous, so we do not need to appeal to the intricate interpolation lemmas as in~\cite{MN21}.
    
    Let us first recall a useful lemma proved in Marques-Neves~\cite[Lemma~4.5]{MN21}.
    
    \begin{lem}\label{lem:close_replace_balls}
        For every $r, L > 0$ and $m \in \N$, there exists $\tilde \eta = \tilde \eta(r, L, m) > 0$ so that for every $V, T \in \cV_2(M)$ with $\bM(V) \leq 2L, \bM(T) \leq 2L$, $V$ stationary,
        \[
            V \llcorner (M \setminus (\overline{B}_{2\tilde \eta}(p_1) \cup \cdots \cup \overline{B}_{2\tilde \eta}(p_t))) = T\llcorner (M \setminus (\overline{B}_{2 \tilde \eta}(p_1) \cup \cdots \cup \overline{B}_{2\tilde \eta}(p_t)))
        \]
        for some collection $\{p_1, \cdots, p_t\} \subset M$, $t \leq K(m) := 3^{m3^m}$, and
        \[
            \|V\|(M) - \tilde \eta \leq \|T\|(M) \leq \|V\|(M) + \tilde \eta\,,
        \]
        then $\mathbf{F}(V, T) < r/4$.
    \end{lem}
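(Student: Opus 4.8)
The plan is a contradiction-and-compactness argument. Suppose the statement fails for some fixed $r, L > 0$ and $m \in \N$. Then one can find a sequence $\tilde\eta_j \to 0^+$, varifolds $V_j, T_j \in \cV_2(M)$ with $\bM(V_j), \bM(T_j) \le 2L$ and $V_j$ stationary, and collections $\{p_{j,1},\dots,p_{j,t_j}\} \subset M$ with $t_j \le K(m) = 3^{m3^m}$, such that
\[
V_j \llcorner \big(M \setminus {\textstyle\bigcup_{i}} \overline{B}_{2\tilde\eta_j}(p_{j,i})\big) = T_j \llcorner \big(M \setminus {\textstyle\bigcup_{i}} \overline{B}_{2\tilde\eta_j}(p_{j,i})\big), \qquad \big|\,\|T_j\|(M) - \|V_j\|(M)\,\big| \le \tilde\eta_j,
\]
yet $\bF(V_j, T_j) \ge r/4$ for every $j$. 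Passing to a subsequence, I would assume $t_j \equiv t$ is constant, $p_{j,i} \to p_i \in M$ for each $i$ (the $p_i$ taken distinct after discarding repetitions), and $V_j \to V$, $T_j \to T$ in the weak-$*$ varifold topology, which on $\{W \in \cV_2(M): \bM(W) \le 2L\}$ is metrized by Pitts' $\bF$-metric. Since a weak-$*$ limit of stationary varifolds is stationary, $V$ is stationary, and $\bM(V), \bM(T) \le 2L$.

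The heart of the argument is to show $V = T$; this yields the contradiction $r/4 \le \lim_j \bF(V_j, T_j) \le \lim_j\big(\bF(V_j, V) + \bF(T, T_j)\big) = 0$. The monotonicity formula for stationary varifolds in the closed manifold $(M, g)$ provides constants $C, \rho_0 > 0$, depending only on $(M,g)$, with $\|V_j\|(\overline{B}_\rho(q)) \le 2LC\rho_0^{-2}\rho^2$ for all $q \in M$ and $\rho \le \rho_0$; hence
\[
\|V_j\|\big({\textstyle\bigcup_i} \overline{B}_{2\tilde\eta_j}(p_{j,i})\big) \le K(m)\cdot 2LC\rho_0^{-2}(2\tilde\eta_j)^2 \longrightarrow 0,
\]
and, since $V_j = T_j$ off these balls while $\big|\,\|T_j\|(M) - \|V_j\|(M)\,\big| \le \tilde\eta_j$, also $\|T_j\|\big(\bigcup_i \overline{B}_{2\tilde\eta_j}(p_{j,i})\big) \to 0$. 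In particular $\|V\|(\{p_i\}) = 0$, since monotonicity forbids atoms; and for generic small $\epsilon > 0$, writing $\|T_j\|(B_\epsilon(p_i)) = \|T_j\|(\overline{B}_{2\tilde\eta_j}(p_{j,i})) + \|V_j\|(B_\epsilon(p_i) \setminus \overline{B}_{2\tilde\eta_j}(p_{j,i}))$ and passing to the limit gives $\|T\|(B_\epsilon(p_i)) \le \|V\|(B_\epsilon(p_i))$, so $\|T\|(\{p_i\}) = 0$ as well. On the other hand, for any open $U$ with $\overline{U} \subset M \setminus \{p_1,\dots,p_t\}$ one has $U \cap \bigcup_i \overline{B}_{2\tilde\eta_j}(p_{j,i}) = \emptyset$ for $j$ large, whence $V_j \llcorner U = T_j \llcorner U$, and therefore $V \llcorner U = T \llcorner U$. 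Since $M \setminus \{p_1,\dots,p_t\}$ is exhausted by such $U$ and neither $\|V\|$ nor $\|T\|$ charges the finite set $\{p_i\}$, I conclude $V = T$, which gives the contradiction and establishes the existence of $\tilde\eta(r, L, m)$.

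I expect the only delicate point to be exactly the step where one rules out concentration of mass in the shrinking balls: it is here that the uniform bound $t \le K(m) = 3^{m3^m}$ on the number of balls enters, together with the quadratic upper bound $\|V_j\|(\overline{B}_\rho(q)) \lesssim \rho^2$ from the monotonicity formula, which prevents atoms of $V$ (and then of $T$) from forming at the limit points $p_i$. The remaining ingredients — weak-$*$ compactness of mass-bounded subsets of $\cV_2(M)$, closedness of stationarity under weak-$*$ limits, and the fact that the $\bF$-metric metrizes weak-$*$ convergence there — are standard, and I would simply invoke them.
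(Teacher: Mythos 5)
The paper does not prove this lemma; it cites it directly from Marques--Neves \cite[Lemma~4.5]{MN21}, so there is no internal proof to compare against. Your contradiction-and-compactness argument is an independent proof and it is correct. The essential structure is exactly right: after passing to a subsequence, $\bF$-compactness on mass-bounded sets of varifolds gives limits $V_j\to V$, $T_j\to T$; the monotonicity formula for the stationary $V_j$ gives the uniform quadratic bound $\|V_j\|(B_\rho(q))\lesssim_{L,M}\rho^2$, which combined with the cap $t\le K(m)$ forces $\|V_j\|$ of the union of shrinking balls to vanish; the mass-closeness hypothesis then forces the same for $\|T_j\|$; agreement off the balls passes to the limit and the no-atom estimates force $V=T$, contradicting $\bF(V_j,T_j)\ge r/4$.

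Two small points worth tightening. First, in the step estimating $\|T_j\|(B_\epsilon(p_i))$, the balls $\overline{B}_{2\tilde\eta_j}(p_{j,k})$ with $k\ne i$ may also intersect $B_\epsilon(p_i)$; the clean bound is $\|T_j\|(B_\epsilon(p_i))\le\|T_j\|(\bigcup_k\overline{B}_{2\tilde\eta_j}(p_{j,k}))+\|V_j\|(B_\epsilon(p_i))$, which gives the same conclusion. Second, when you invoke portmanteau in the form $\|T\|(B_\epsilon(p_i))\le\liminf_j\|T_j\|(B_\epsilon(p_i))$, this is the direction that holds for any weak-$*$ convergent sequence of Radon measures on a compact manifold (the ``open set'' inequality), so no extra hypothesis is needed, but it is worth stating explicitly since the reverse inequality for open sets is false in general. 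With these small clarifications the argument is complete.
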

    
    With this lemma, we can fix $\tilde \eta$ from Lemma~\ref{lem:close_replace_balls} using the parameters $r, L, m$ from Theorem~\ref{thm:minMax}.
    
    As discussed in Remark~\ref{rmk:stable_in_annuli}, the support of the varifold generated from the min-max theory is also stable in admissible annuli. Let $\mathcal{W}^{m, \tilde \eta}_{L, \leq \mathfrak{g}_0}$ be the set of all varifolds $W \in \mathcal{W}_{L, \leq \mathfrak{g}_0}$, whose support is a smooth embedded minimal surface $\Sigma$, such that for every $K(m) := 3^{m3^m}$-admissible collection of annuli of outer radius at most $\tilde \eta$, $\Sigma$ is stable in at least one annulus.

    Let $\{\Phi_i\}$ be a pulled-tight sequence from Theorem~\ref{thm:minMax}~\ref{item:minMaxPulltight}. In the following, we will deform $\{\Phi_i\}$ and obtain a new pulled-tight sequence $\{\tilde \Phi_i\}$ such that there exists $\eta > 0$ such that for all sufficiently large $i$,
    \begin{equation}\label{eqn:minMaxhatW}
        \cH^2(\Phi^*_i(x)) \geq L - \eta \implies |\Phi^*_i(x)| \in \bB^\bF_r(\cW^{m, \tilde \eta}_{L, \leq \mathfrak{g}_0})\,.
    \end{equation}
    Note that 
    \[
        \cW^{m}_{L, \leq \mathfrak{g}_0} \subset \cW^{m, \tilde \eta}_{L, \leq \mathfrak{g}_0} \subset \cW_{L, \leq \mathfrak{g}_0}\,.
    \]
    
    It follows from Remark~\ref{rmk:stable_in_annuli} and Proposition~\ref{prop:am_genus_bound} (with $R = \tilde \eta$) that for each $V \in \bC(\{\Phi_i\}) \setminus \cW^{m, \tilde \eta}_{L, \leq \mathfrak{g}_0}$, there exists a $K(m) := 3^{m3^m}$-admissible collection of annuli of outer radius at most $\tilde \eta$, denoted by $\{A_{V, j}\}^{K(m)}_{j = 1}$, such that $V$ is not almost-minimizing in any $A_{V, j}$ with respect to any min-max subsequence. In particular, for every $V \in \bC(\{\Phi_i\}) \setminus \cW^{m, \tilde \eta}_{L, \leq \mathfrak{g}_0}$, there exists $\varepsilon_V > 0$ and $N_V \in \N^+$ with the following property: For every $\delta > 0$ and every $\Phi_i(x)$ with $i \geq N_V$, if $\mathbf{F}(V, |\Phi_i(x)|) < \varepsilon_V$, then
    \[
        \Phi_i(x) \notin \bigcup^K_{j = 1} \mathfrak{a}(A_{V, j}; \varepsilon_V, \delta)\,.
    \]
    
    Let $\cK := \bC(\{\Phi_i\}) \setminus \bB^\bF_{r/2}(\cW^{m, \tilde \eta}_{L, \leq \mathfrak{g}_0})$. Since $\mathcal{K}$ is compact, we can find a finite set $\{V_k\}^\nu_{k = 1} \subset \mathcal{K}$ such that
    \[
        \mathcal{K} \subset \bigcup^\nu_{k = 1} \bB^\bF_{\frac{\varepsilon_k}{4}} (V_k)\,,
    \]
    where $\varepsilon_k := \varepsilon_{V_k}$. We set 
    \[
        \varepsilon^{(1)} := \frac{\min_k \varepsilon_k}{10}\,, \quad N^{(1)} := \max_k N_{V_k}\,.
    \]

    Since $\{\Phi_i\}$ is pulled-tight, we can choose an integer $N^{(2)} \geq N^{(1)}$ and $\varepsilon^{(2)} \in (0, \varepsilon^{(1)})$ such that for each $i \geq N^{(2)}$, and any $x \in X$, if
    \[
      \cH^2(\Phi_i(x)) \geq \sup_{y \in X} \cH^2(\Phi_i(y)) - 2\varepsilon^{(2)} \,, \quad \mathbf{F}(|\Phi_i(x)|, \cW^{m, \tilde \eta}_{L, \leq \mathfrak{g}_0}) \geq \frac{r}{2}\,,
    \]
    then $\bF(|\Phi_i(x)|, V_{k_{i, x}}) < \frac{\varepsilon_{k_{i, x}}}{2}$ for some $k_{i, x} \in \{1, 2, \cdots, \nu\}$, and thus, for every $\delta > 0$, we have
    \begin{equation}\label{eqn:Phi_i_ann_def}
        \Phi_i(x) \notin \bigcup^K_{j = 1} \mathfrak{a}(A_{V_{k_{i,x}}, j}; \varepsilon_{V_{k_{i,x}}}, \delta)\,.
    \end{equation}
    We denote by $D_i$ the set of all such $x$ (``bad points'') for each $i \geq N^{(2)}$. 
    
    We set 
    \[
        \varepsilon := \frac{\min\left\{\varepsilon^{(1)}, \varepsilon^{(2)}, r, \tilde \eta\right\}}{10}\,.
    \]
    For each $i \geq N^{(2)}$, we also choose $\delta_i > 0$ such that 
    \[
        \lim_i \delta_i = 0\,.
    \]

    By \eqref{eqn:Phi_i_ann_def}, for every $i \geq N^{(2)}$, if $x \in D_i$, then
    \begin{equation}
        \Phi_i(x) \notin \bigcup^K_{j = 1} \mathfrak{a}(A_{V_{k_{i,x}}, j}; 2 \varepsilon^{(1)}, \delta_i / 2)\,.
    \end{equation}
    Hence, by Proposition~\ref{prop:an_deform} with the parameters $R = \tilde \eta$, $\varepsilon = \varepsilon^{(1)}$, $\delta = \delta_i$ and $D = D_i$, we obtain $\Phi^*_i \in \Lambda(\Phi)$. Moreover, for every $x \in X$,
    \[
        \cH^2(\Phi^*_i(x)) \leq \cH^2(\Phi^*_i(x)) + 3^m \cdot \delta_i\,.
    \]
    Since $\delta_i \to 0$, $\{\Phi^*_i\}$ is also a minimizing sequence.
    
    To show that $\bC(\{\Phi^*_i\}) \subset \bB^\bF_{\frac{3}{4}r}(\cW^{m, \tilde \eta}_{L, \leq \mathfrak{g}_0})$, we need to prove the following claim.
    
    \begin{claim}
        For sufficiently large $i$, if $\cH^2(\Phi^*_i(x)) \geq \sup_{y \in X} \cH^2(\Phi_i(y)) - \varepsilon / 4$, then 
        \[
            \bF(|\Phi_i(x)|, \cW^{m, \tilde \eta}_{L, \leq \mathfrak{g}_0}) < r / 2\,.
        \]
    \end{claim}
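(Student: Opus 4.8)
The plan is to deduce the Claim entirely from Proposition \ref{prop:an_deform}, applied (exactly as in the construction of $\Phi^*_i$) with parameters $R=\tilde\eta$, $\varepsilon=\varepsilon^{(1)}$, $\delta=\delta_i$ and $D=D_i$, together with the fact that $\delta_i\to 0$. The mechanism is simple: the deformation strictly lowers the area on the ``bad set'' $D_i$ by at least $\varepsilon^{(1)}-(3^m-1)\delta_i$, while it raises the area by at most $3^m\delta_i$ everywhere; hence a point $x$ whose deformed area $\cH^2(\Phi^*_i(x))$ is within $\varepsilon/4$ of $\sup_{y\in X}\cH^2(\Phi_i(y))$ cannot lie in $D_i$, and since it still has nearly maximal area it must fail the other defining condition of $D_i$, that is, it must be $\bF$-close to $\cW^{m,\tilde\eta}_{L,\le\mathfrak{g}_0}$.

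Concretely, I would first fix $i$ large enough that $3^m\delta_i<\varepsilon^{(2)}$ and $(3^m-1)\delta_i<\varepsilon^{(1)}-\varepsilon/4$; both are possible since $\delta_i\to 0$ and $\varepsilon\le\tfrac{1}{10}\min\{\varepsilon^{(1)},\varepsilon^{(2)}\}$. Assuming $\cH^2(\Phi^*_i(x))\ge\sup_{y\in X}\cH^2(\Phi_i(y))-\varepsilon/4$, and using the global bound $\cH^2(\Phi^*_i(x))\le\cH^2(\Phi_i(x))+3^m\delta_i$ for all $x$ (Proposition \ref{prop:an_deform}, conclusions (1) and (2)), I would obtain
\[
  \cH^2(\Phi_i(x))\ \ge\ \cH^2(\Phi^*_i(x))-3^m\delta_i\ \ge\ \sup_{y\in X}\cH^2(\Phi_i(y))-\tfrac{\varepsilon}{4}-3^m\delta_i\ >\ \sup_{y\in X}\cH^2(\Phi_i(y))-2\varepsilon^{(2)}\,,
\]
so $x$ satisfies the first of the two conditions defining $D_i$.

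Next I would rule out $x\in D_i$ by contradiction. If $x\in D_i$, Proposition \ref{prop:an_deform}(1) gives
\[
  \cH^2(\Phi^*_i(x))\ <\ \cH^2(\Phi_i(x))-\varepsilon^{(1)}+(3^m-1)\delta_i\ \le\ \sup_{y\in X}\cH^2(\Phi_i(y))-\varepsilon^{(1)}+(3^m-1)\delta_i\ <\ \sup_{y\in X}\cH^2(\Phi_i(y))-\tfrac{\varepsilon}{4}\,,
\]
contradicting the hypothesis. Hence $x\notin D_i$. Since $D_i$ is by definition the set of $x$ satisfying \emph{both} $\cH^2(\Phi_i(x))\ge\sup_{y\in X}\cH^2(\Phi_i(y))-2\varepsilon^{(2)}$ and $\bF(|\Phi_i(x)|,\cW^{m,\tilde\eta}_{L,\le\mathfrak{g}_0})\ge r/2$, and the first holds while $x\notin D_i$, the second must fail: $\bF(|\Phi_i(x)|,\cW^{m,\tilde\eta}_{L,\le\mathfrak{g}_0})<r/2$, which is the Claim.

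There is no genuine analytic difficulty in this step; the only thing to be careful about is the arithmetic of the small constants and the threshold for ``sufficiently large $i$'', arranged so that the definite area drop $\varepsilon^{(1)}$ on $D_i$ beats both the error term $(3^m-1)\delta_i$ and the slack $\varepsilon/4$ allowed in the hypothesis. Once the Claim is established, the remaining work (outside the scope here) is to combine it with Proposition \ref{prop:an_deform}(3) — which says $\Phi^*_i(x)$ and $\Phi_i(x)$ agree outside finitely many balls of radius $2\tilde\eta$ — and Lemma \ref{lem:close_replace_balls} to conclude $\bF(|\Phi^*_i(x)|,|\Phi_i(x)|)<r/4$ and hence \eqref{eqn:minMaxhatW}.
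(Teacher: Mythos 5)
Your argument is correct and is essentially the paper's proof in contrapositive form: where the paper assumes $\bF(|\Phi_i(x)|,\cW^{m,\tilde\eta}_{L,\le\fg_0})\ge r/2$ and splits into the two cases ``$x\in D_i$'' versus ``$\cH^2(\Phi_i(x))<\sup_y\cH^2(\Phi_i(y))-2\varepsilon^{(2)}$'' to deduce $\cH^2(\Phi_i^*(x))<\sup_y\cH^2(\Phi_i(y))-\varepsilon/4$, you assume the hypothesis on $\Phi_i^*$ and deduce in the same two steps, via Proposition~\ref{prop:an_deform}~(1)--(2), that $x$ satisfies the high-area defining condition of $D_i$ yet cannot lie in $D_i$, so the $\bF$-distance condition must fail. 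The arithmetic with $\delta_i\to 0$, $\varepsilon^{(1)},\varepsilon^{(2)}$ and $\varepsilon$ is handled correctly, matching the paper's smallness requirement on $\delta_i$.
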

    \begin{proof}
        Since $\lim_i \delta_i = 0$, for sufficiently large $i$, we have
        \[
            \delta_i < \frac{\varepsilon}{1000 K(m)}\,. 
        \]
        
        By the definition of $D_i$, for $x \in X$, if $\bF(|\Phi_i(x)|, \cW^{m, \tilde \eta}_{L, \leq \mathfrak{g}_0}) \geq r / 2$, we have either $\Phi_i(x) \in D_i$ or $\cH^2(\Phi_i(x)) < \sup_{y \in X} \cH^2(\Phi_i(y)) - 2 \varepsilon^{(2)}$.
        
        If $\Phi_i(x) \in D_i$, i.e.,
        \[
            \cH^2(\Phi_i(x)) \geq \sup_{y \in X} \cH^2(\Phi_i(y)) - 2\varepsilon^{(2)}\,, \quad \mathbf{F}(|\Phi_i(x)|, \cW^{m, \tilde \eta}_{L, \leq \mathfrak{g}_0}) \geq r/2\,,
        \]
        by Proposition~\ref{prop:an_deform}, we have
        \[
            \cH^2(\Phi^*_i(x)) \leq \cH^2(\Phi_i(x)) - \varepsilon^{(2)} + (3^m - 1) \delta_i < \sup_y \cH^2(\Phi_i(y)) - \varepsilon / 4\,.
        \]

        If $\cH^2(\Phi_i(x)) < \sup_{y \in X} \cH^2(\Phi_i(y)) - 2 \varepsilon^{(2)}$, then by Proposition~\ref{prop:an_deform} again, we have
        \[
            \cH^2(\Phi^*_i(x)) \leq \cH^2(\Phi_i(x)) + 3^m \cdot \delta_i < \sup_{y \in X} \cH^2(\Phi_i(y)) - 2 \varepsilon^{(2)} + 3^m \cdot \delta_i < \cH^2(\Phi_i(x)) - \varepsilon / 4\,.
        \]
        
        This finishes the proof of the claim.
    \end{proof}
    
    Now, every $W \in \bC(\{\Phi^*_i\})$ is a varifold limit of some subsequence $\Phi^*_{i_j}(x_{i_j})$ for $j \to \infty$. After passing to a subsequence, $\Phi_{i_j}(x_{i_j})$ converges to a varifold $V$. By Proposition~\ref{prop:an_deform}, we know that there exist some points $p_1, \cdots, p_t \in M$ ($t \leq K(m)$) such that
    \[
        V\llcorner(M \setminus (\overline{B}_{2\tilde \eta}(p_1) \cup \cdots \cup \overline{B}_{2\tilde \eta}(p_t))) = W\llcorner(M \setminus (\overline{B}_{2\tilde \eta}(p_1) \cup \cdots \cup \overline{B}_{2\tilde \eta}(p_t)))\,.
    \]
    
    Furthermore, since
    \begin{align*}
        L \geq \|V\|(M) = \lim_{j \to \infty} \cH^2(\Phi_{i_j}(x_{i_j})) &\geq \lim_{j \to \infty} \cH^2(\Phi^*_{i_j}(x_{i_j})) - K(m) \cdot \delta_{i_j}\\
            &= \|W\|(M) = L\,,
    \end{align*}
    by Lemma~\ref{lem:close_replace_balls}, we can conlude that
    \[
        \mathbf{F}(V, W) < r / 4\,.
    \]
    The previous claim implies that $\mathbf{F}(V, \cW^{m, \tilde \eta}_{L, \leq \mathfrak{g}_0}) < r / 2$, so we have 
    \[
        \mathbf{F}(W, \cW^{m, \tilde \eta}_{L, \leq \mathfrak{g}_0}) < \frac{3}{4} r\,.
    \]
    
    Note that the new minimizing sequence $\{\Phi^*_i\}$ is not necessarily pulled-tight. However, by Proposition~\ref{prop:Pulltight}, we can obtain a pulled-tight minimizing sequence $\{\tilde \Phi_i\}$ with
    \[
        \bC(\{\tilde \Phi_i\}) \subset \bC(\{\Phi^*_i\}) \subset \bB^\bF_{\frac{3}{4}r}(\cW^{m, \tilde \eta}_{L, \leq \mathfrak{g}_0})\,.
    \]
    The conclusion follows immediately from a suitable choice of $\eta$.

\subsection{Proof of Theorem~\ref{thm:relative_minMax}}
    Since $\bL(\Lambda_Z(\Phi)) > \sup_{z \in Z} \cH^2(\Phi(z))$, for each $\Phi' \in \Lambda_Z(\Phi)$, one can restrict all the deformations in the previous subsections to occur away from the compact set
    \[
        X' := \left\{x \in X : \cH^2(\Phi(x)) \leq \frac{\bL(\Lambda_Z(\Phi)) + \sup_{z \in Z} \cH^2(\Phi(z))}{2}\right\}\,.
    \]
    For example, all deformations can be composed with a cut-off function on $X$ that vanishes on $X'$.

    Consequently, all the results follow immediately.
    
\section{The family $\Psi$}\label{sect:family_Psi}
 
\subsection{The space of Clifford Tori}\label{subsect:spaceClifford}
    Let us recall the parametrization of the space $\cC$ of unoriented Clifford tori. 
    A Clifford torus is uniquely determined by an unordered pair of orthogonal unoriented $2$-subspaces in $\mathbb{R}^4$. Namely, given two orthogonal $2$-subspaces $P$ and $Q$, the Clifford torus they define consists of the points on the unit sphere $\mathbb S^3$ that are equidistant  from the equators $P\cap \mathbb S^3$ and $Q\cap \mathbb S^3$. Note that for every $2$-subspace of $\mathbb{R}^4$, there exists exactly one orthogonal $2$-subspace $Q$. Hence, to choose an inside direction for a Clifford torus, i.e. to assign an orientation, one simply needs to specify one $2$-plane $P$, so the space of oriented Clifford tori is $G_2(\R^4)$, the space of unoriented 2-planes in $\R^4$.
    
    By~\cite[\S 1]{HO80}, the set $G_2^+(\mathbb{R}^4)$ of {\it oriented} 2-planes in $\mathbb{R}^4$ can be identified with the set 
    \[
        Q_2 := \{[z_1:z_2:z_3:z_4]: z_1^2 + z_2^2 + z_3^2 + z_4^2 = 0\}\subset \mathbb{CP}^3\,.
    \]
    Namely, for an oriented orthonormal basis $(u,v)$ of a 2-plane in $\mathbb{R}^4$, we can define 
    \[
        (z_1,z_2,z_3,z_4) := u + iv\,.
    \] 
    Moreover, there is a biholomorphism 
    \[
        F : \mathbb{CP}^1 \times \mathbb{CP}^1 \rightarrow Q_2\,,
    \]
    defined in~\cite[\S 2]{HO80}:
    \begin{align*}
        F([w_1 : 1], [w_2 : 1]) &= (1 + w_1 w_2, i(1 - w_1 w_2), w_1 - w_2, -i(w_1 + w_2))\,,  \\
        F([w_1 : 0], [w_2 : 1]) &= (w_2, -iw_2, 1, -i)\,, \\
        F([w_1 : 1], [w_2 : 0]) &= (w_1, -iw_1, -1, -i)\,,  \\
        F([w_1 : 0], [w_2 : 0]) &= (1, -i, 0, 0)\,. 
    \end{align*}
    This gives a homeomorphism between $S^2\x S^2$ and $G_2^+(\R^4)$. It follows from a straightforward calculation (see~\cite[\S 3.4.3]{Nur16}) that $G_2(\R^4)$, or the space of oriented Clifford tori, is homeomorphic to $S^2\x S^2/\sim$ with $(x,y)\sim (-x,-y)$. Moreover, one can verify that $(x,y)$ and $(x,-y)$ correspond to two {\it orthogonal} $2$-subspaces (meaning every vector in one plane is perpendicular to every vector in the other plane), so the set $\cC$ of unoriented Clifford tori is given by $S^2\x S^2/\sim$ with 
    \[
        (x,y)\sim (-x,-y)\sim (x,-y)\sim (-x,y)\,,
    \]
    which is an $\RP^2\x\RP^2$ (see also~\cite[\S 4]{Whi91}). 

\subsection{Definition of $\Psi$} 
    In this subsection, we define the 9-parameter family $\Psi$. We will reuse many of the notations and presentations in~\cite{MN14} by Marques-Neves,~\cite{Nur16} by C. Nurser, and an unpublished manuscript~\cite{Mar23} by F. C. Marques, in which he constructed a 9-parameter family (in a slightly different way than us) that is an 8-sweepout to prove that the $8$-width of the unit $3$-sphere is $2\pi^2$.

    Let $\mathbb S^3$ be the unit 3-sphere. Let $Z=\RP^2\x\RP^2$ parametrize the space of unoriented Clifford tori, and $\tilde  Z$ denote the space of oriented Clifford tori. Then $\tilde{ Z}$ is connected, and there is a natural double-cover map $q : \tilde{ Z} \to  Z$ given by forgetting the orientation. Let $\sigma : \tilde{ Z} \to \tilde{ Z}$ be the nontrivial deck transformation. Note that $\sigma(\sigma(z)) = z$. 
    
    For each $z \in \tilde{ Z}$, let $\Sigma(z)$ be the corresponding oriented Clifford torus, and let $A(z)$ and $A^*(z)$ denote the two connected components of  $\mathbb S^3\backslash \Sigma(z)$ such that $A(z)$ and $A^*(z)$ vary continuously in $z$. Let $N(z)$ denote the unit normal vector field on $\Sigma(z)$ that points into $A^*(z)$. Note that for every $z \in \tilde Z$, and $p \in \Sigma(z)$,
    \[
        A(z)=A^*(\sigma(z))\,, \quad N(z)_{-p}=-N(z)_p\,, \quad N(z)=-N(\sigma(z))\,.
    \]

    In ~\cite[\S 3.4.2]{Nur16}, C. Nurser defined for each oriented Clifford torus $\Sigma(z)$ a 5-sweepout $\Phi^{\Sigma(z)}_5:\RP^5\to\cZ_2(\mathbb S^3;\Z_2)$. Note that as we will see below, each element of $\Phi^{\Sigma(z)}_5$ can actually be represented as a closed set in $\mathbb S^3$. Here, we view $\RP^5$ as the space $\overline{{\mathbb B^4}}\x[-\pi,\pi]$ with boundary points $(v,t)$ identified with $(-v,-t)$. Then, we can construct the corresponding family of closed surfaces. 

    \begin{prop}\label{prop:TildePsiSym}
        For each oriented Clifford torus $\Sigma(z)$, with $z\in \tilde Z$, there exists a family $\Psi^{\Sigma(z)}_5$ of closed subsets of $\mathbb S^3$, parametrized by $\RP^5$, such that:
        \begin{enumerate}[label=\normalfont(\arabic*)]
            \item The maps  $[\Psi^{\Sigma(z)}_5]$ and $\Phi^{\Sigma(z)}_5$ (defined by C. Nurser) are the same.
            \item For every $[(v,t)]\in\RP^5$ and $z\in\tilde Z$,
            \[
                \Psi^{\Sigma(z)}_5([(v,t)])=\Psi^{\Sigma(\sigma(z))}_5([(v,-t)])\,.
            \]
        \end{enumerate}
    \end{prop}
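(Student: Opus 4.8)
The plan is to go back to the construction of $\Phi^{\Sigma(z)}_5$ and carry along the closed-surface representatives explicitly. Recall from \eqref{eq_conf_map}–\eqref{5family} that, after fixing an orientation of $\Sigma(z)$ (equivalently, fixing the inward normal $N(z)$ pointing into $A^*(z)$), one forms the conformal images $F_v(\Sigma(z))$ for $v\in\mathbb B^4$, and for each such image the signed distance function $d_v^z:\mathbb S^3\to\R$ (positive on $F_v(A^*(z))$, negative on $F_v(A(z))$). For $(v,t)\in\mathbb B^4\times[-\pi,\pi]$ I would set
\[
    \Psi^{\Sigma(z)}_5(v,t):=\partial\{x\in\mathbb S^3: d_v^z(x)<t\},
\]
viewed as a closed subset of $\mathbb S^3$ — which for generic $(v,t)$ is a smooth surface (a level set of $d_v^z$), and in general lies in $\GS^*(\mathbb S^3)$. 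The first step is then to verify that the Marques–Neves reparametrization near the boundary of $\mathbb B^4\times[-\pi,\pi]$ (\cite[\S 5]{MN14}, as executed by Nurser in \cite[\S 3.4.2]{Nur16}) can be performed at the level of these closed sets, not just at the level of flat cycles: this is where one checks continuity in the $\bF$-pseudometric on $\GS^*(\mathbb S^3)$ rather than merely in the flat topology, and that the antipodal identification $(v,t)\sim(-v,-t)$ on the boundary is compatible with passing to unoriented surfaces. Once this is done, $[\Psi^{\Sigma(z)}_5]=\Phi^{\Sigma(z)}_5$ holds by construction, giving the first bullet.

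For the second bullet, the key observation is the relation between the geometry associated to $z$ and to $\sigma(z)$. Since $\sigma$ only swaps the two complementary regions, $\Sigma(\sigma(z))=\Sigma(z)$ as unoriented surfaces, $A(\sigma(z))=A^*(z)$, $A^*(\sigma(z))=A(z)$, and $N(\sigma(z))=-N(z)$. The conformal maps $F_v$ do not depend on $z$ at all. Therefore the signed distance functions satisfy
\[
    d_v^{\sigma(z)} = -\,d_v^{z}
\]
on all of $\mathbb S^3$, because reversing the normal reverses the sign convention. Consequently
\[
    \{x: d_v^{\sigma(z)}(x)<-t\}=\{x:-d_v^{z}(x)<-t\}=\{x:d_v^{z}(x)>t\}
\]
and taking topological boundaries,
\[
    \partial\{x:d_v^{\sigma(z)}(x)<-t\}=\partial\{x:d_v^{z}(x)>t\}=\partial\{x:d_v^{z}(x)<t\},
\]
since a set and its complement have the same topological boundary. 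Hence $\Psi^{\Sigma(\sigma(z))}_5(v,-t)=\Psi^{\Sigma(z)}_5(v,t)$ at the level of the closed-set representatives on the open part of the parameter space.

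The remaining step is to propagate this identity through the boundary reparametrization and the antipodal quotient, so that it descends to the claimed equality $\Psi^{\Sigma(z)}_5([(v,t)])=\Psi^{\Sigma(\sigma(z))}_5([(v,-t)])$ on $\RP^5$. For this I would note that the Marques–Neves reparametrization is itself canonical — it depends only on the conformal parameter $v$ and the signed-distance parameter $t$, and on $\Sigma(z)$ only through its (unoriented) image — so the substitution $(z,t)\mapsto(\sigma(z),-t)$, which fixes the unoriented surface and the conformal data while flipping the sign of $t$, commutes with the reparametrization. The antipodal identification $(v,t)\sim(-v,-t)$ is likewise symmetric under this substitution. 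I expect the main obstacle to be precisely this last bookkeeping: one must check that the ``ingenious reparametrization near the boundary'' of \cite[\S 5]{MN14} is genuinely equivariant with respect to the involution $(z,v,t)\mapsto(\sigma(z),v,-t)$, and that no auxiliary choices (e.g. of cutoff scales or of the continuous extension to $\partial\mathbb B^4$) break this symmetry — if necessary one symmetrizes those choices by hand, averaging over the $\Z_2$-action. Everything else is a direct unwinding of the sign conventions above.
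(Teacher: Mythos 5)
Your proposal follows essentially the same route as the paper: represent the family by the level-set boundaries $\partial\{d_v^z<t\}$, observe that flipping the orientation flips the signed distance so that $d_v^{\sigma(z)}=-d_v^z$, and reduce the whole statement to equivariance of the Marques--Neves/Nurser boundary reparametrization under $(z,v,t)\mapsto(\sigma(z),v,-t)$. That reduction is exactly right, and the paper's proof (Section 5.2) is an explicit verification of it.

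Two points where your sketch is off or incomplete. First, your claimed reason for equivariance --- that the reparametrization ``depends on $\Sigma(z)$ only through its (unoriented) image'' --- is not correct. The ingredients are genuinely orientation-dependent: the collapsing map satisfies $T(\sigma(z))=T(z)$, but the generalized Gauss map flips sign, $\overline{Q}(\sigma(z))=-\overline{Q}(z)$, and the radius function shifts, $\overline{r}(\sigma(z))=\pi-\overline{r}(z)$. The equivariance holds not because these objects are orientation-blind, but because they transform in a way that exactly compensates the flip $t\mapsto -t$, producing the identity $U'(v,t,z)=\mathbb S^3\setminus\overline{U'(v,-t,\sigma(z))}$ (and likewise $U'(-v,-t,z)=\mathbb S^3\setminus\overline{U'(v,t,z)}$, which makes the antipodal quotient well-defined). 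Second, the fallback you propose --- ``symmetrize by averaging over the $\Z_2$-action'' --- does not make sense here, since the two candidate closed sets are sets, not numbers, and there is no averaging operation available; fortunately it is also unnecessary, because the construction as written is already equivariant once one tracks the transformation laws above. In short, the plan is right and matches the paper, but the verification of equivariance is the substance of the proof and needs to be carried out via those three transformation laws rather than asserted from orientation-independence.
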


    We will prove this proposition in the following subsections.
    
    Let us assume this proposition for now, and consider the $\RP^5\x\tilde Z$-family $\tilde \Psi$ of closed subsets of $\mathbb S^3$ defined by 
    \[
        \tilde\Psi([(v,t)],z):=\Psi^{\Sigma(z)}([(v,t)])\,.
    \]
    Moreover, consider the $\Z_2$-action on $\RP^5\x\tilde Z$ where the non-trivial element of $\Z_2$ acts by 
    \begin{equation}\label{eq:Z2action}
        ([(v,t)],z)\mapsto ([(v,-t)],\sigma(z))\,,
    \end{equation}
    and let $Y$ be the quotient space. Using Proposition~\ref{prop:TildePsiSym}, the family $\tilde \Psi$  immediately induces  a family $ \Psi$ of closed subsets of $\mathbb S^3$, with parameter space $Y$. Furthermore, $Y$ is an $\RP^5$-bundle over $Z=\RP^2\x\RP^2$.

\subsubsection{Conformal maps and level surfaces}

    For each $v \in {\mathbb B^4}$, we consider the conformal map
    \[
        F_v : \mathbb S^3 \to \mathbb S^3\,, \quad x \mapsto \frac{(1 - |v|^2)}{|x-v|^2} (x - v) - v\,.
    \]
    For each $(v,z)\in {\mathbb B^4}\x\tilde Z$,  we define $d(v,z):\mathbb S^3\to\R$ to be the signed distance to $F_v(\Sigma(z))$, which is positive in $F_v(A^*(z))$ and negative in $F_v(A(z))$. 
    Then for each $v \in {\mathbb B^4}$ and $t \in \R$, we define the open set
    \[
        A(v,t,z)=\{x\in \mathbb S^3:d(v,z)(x)<t\}\,,
    \]
    and the closed set
    \[
        \Sigma(v, t, z) = \partial A(v, t, z)\,.
    \]
    Note that  $\Sigma(v,t,z)$ has a natural  orientation and induces an element in $\cZ(\mathbb S^3;\Z)$, and
    \[
        A(v, t, \sigma(z)) = \mathbb S^3 \setminus(\Sigma(z)\cup  A(v, -t, z))\,.
    \]

    In order to define an $\RP^5$-family, we need to reparametrize $A$ by performing blow-up suitably, because $A$ does not give a  continuous family near $v\in \Sigma(z)$. 

\subsubsection{Reparametrizing $A$ via blow-up}

    Define the right-half disk on $\mathbb{R}^2$ as
    \[
        D^2_+(r) := \{s = (s_1, s_2) \in \mathbb{R}^2 : |s| < r, s_1 \geq 0\}\,.
    \]
    For $\varepsilon > 0$ sufficiently small, we define the map $\Lambda(z) : \Sigma(z) \times D^2_+(3\varepsilon) \to \overline{{\mathbb B^4}}$  by
    \[
        \Lambda(z)(p, s) = (1 - s_1)(\cos(s_2)p + \sin(s_2)N(z)_p)\,,
    \]
    which is a diffeomorphism onto a neighborhood of $\Sigma(z)$ in $\overline{{\mathbb B^4}}$. Notice $\Lambda(z)$ maps into $\mathbb S^3$ if $s_1=0$. 
    
    Let $\Omega_r(z)$ be  the image $\Lambda(z)(\Sigma(z) \times D^2_+(r))$ for all $r \leq 3\varepsilon$. Then as $s$ approaches $(0,3\varepsilon)$ (resp. $(0,-3\varepsilon)$), $\Lambda(z)(p,s)$ approaches $A^*(z)\backslash \Omega_{3\varepsilon}(z)$ (resp. $A(z)\backslash \Omega_{3\varepsilon}(z)$).
    
    Now, we define a continuous map $T(z) : \overline{{\mathbb B^4}} \to \overline{{\mathbb B^4}}$ which collapses the tubular neighborhood $\Omega_\varepsilon(z)$ of $\Sigma(z)$ onto $\Sigma(z)$:
    \begin{itemize}
        \item $T(z)$ is the identity on $\overline{{\mathbb B^4}} \setminus \Omega_{3\varepsilon}(z)$.
        \item On $\Omega_{3\varepsilon}(z)$, 
        \[
            T(z)(\Lambda(z)(p,s))=\Lambda(z)(p,\psi(|s|)s)\,,
        \]
        where $\psi$ is smooth, 0 on $[0,\varepsilon]$, strictly increasing on $[\varepsilon,2\varepsilon]$, and 1 on $[2\varepsilon,3\varepsilon]$. 
    \end{itemize}
    Note that $T(z)=T(\sigma(z))$.

    For every $p\in\Sigma(z)$ and $k\in [-\infty,+\infty]$, define
    \begin{equation}
        \overline{Q}_{p,k,z} = -\frac{k}{\sqrt{1+k^2}}p - \frac{1}{\sqrt{1+k^2}}N(z)_p \in \mathbb S^3\,.
    \end{equation}
    (Note we use the convention that $\frac{k}{\sqrt{1+k^2}}=\pm 1$ and $\frac{1}{\sqrt{1+k^2}}=0$ for $k=\pm \infty$.)
    We define the generalized Gauss map $\overline{Q}(z) : \mathbb S^3 \cup \overline{\Omega}_\varepsilon(z) \to \mathbb S^3$ by:
    \begin{equation}
        \overline{Q}(z)(v) = 
        \begin{cases} 
            -T(v) & v \in A^*(z) \setminus \overline{\Omega}_\varepsilon(z) \\ 
            T(v) & v \in A(z) \setminus \overline{\Omega}_\varepsilon(z) \\ 
            \overline{Q}_{p,k(s),z} & v = \Lambda(z)(p, s) \in \overline{\Omega}_\varepsilon(z)\,,
        \end{cases} 
    \end{equation}
    where $s=(s_1,s_2)\in D^2_+(\varepsilon)$ and
    \begin{equation}
        k(s) = \frac{s_2}{\sqrt{\varepsilon^2 - s_2^2}}\in[-\infty,+\infty]\,.
    \end{equation}
    The key property of $\overline Q$ is that, as $v\in \mathbb S^3$ moves from $A(z)\backslash {\Omega}_{3\varepsilon}(z)$, crosses $\Sigma(z)$, and arrives in $A^*(z)\backslash {\Omega}_{3\varepsilon}(z)$ (so that $v\in \mathbb S^3$ crosses from one side of $\Sigma(z)$ into another), $\overline Q (z)$ flips from being the identity map to the antipodal map, continuously. And the continuous function $k(s)$ is such that $k(s)$ goes to $+\infty$ (resp. $-\infty$) if $\Lambda(z)(p,s)$ goes towards  $A^*(z)\backslash \Omega_\varepsilon(z)$ (resp. $A(z)\backslash \Omega_\varepsilon(z)$).

    We define $\overline{r}(z) : \mathbb S^3 \cup \overline{\Omega}_\varepsilon(z) \to [0, \pi]$ by:
    \begin{equation}
        \overline{r}(z)(v) = 
        \begin{cases} 
            0 & v \in A^* (z)\setminus \overline{\Omega}_\varepsilon(z) \\ 
            \pi & v \in A (z)\setminus \overline{\Omega}_\varepsilon(z) \\ 
            \overline{r}_{k(s)} & v = \Lambda(z)(p, s) \in \overline{\Omega}_\varepsilon(z)\,,
        \end{cases}
    \end{equation}
    where $\bar r_k=\pi/2-\arctan k$. The key point is that $\overline r(z)$ changes from 0 to $\pi$ continuously across the intermediate region $\overline\Omega_\varepsilon (z)$.

    With the above preparation, we can start defining  $\Psi^{\Sigma(z)}_5$. 

    We know the map $\overline Q(z)$ satisfies $\overline Q(\sigma(z)) = -\overline Q(z)$ from $N(z)=-N(\sigma(z))$, and the function $\overline r(z)$ satisfies $r(\sigma(z)) = \pi - \overline r(z)$. Notice that the subset $\overline\Omega_\varepsilon(z)$ is invariant under the antipodal map, and $\overline\Omega_\varepsilon(\sigma(z)) = \overline\Omega_\varepsilon(z)$. The function $\overline r$ can initially be considered as a continuous function defined on
    \[
        \{(z, v) : z \in \tilde{ Z}, v \in \mathbb S^3 \cup \overline\Omega_\varepsilon(z)\}\,,
    \]
    which is a closed subset of $\tilde{ Z} \times {\mathbb B^4}$. We extend it continuously to a function
    \[
        \overline r : \tilde{ Z} \times \overline {\mathbb B^4} \to [0, \pi]
    \]
    that still satisfies $\overline r(\sigma(z)) = \pi - \overline r(z)$. For each $(v,t,z)\in \overline {\mathbb B^4} \times \R \times \tilde{ Z}$, we define the set
    \[
        U(v, t, z) =
        \begin{cases}
            A(T(z)(v), t, z) & \text{if } v \in {\mathbb B^4} \setminus \overline\Omega_\varepsilon(z)\,, \\
            B_{\overline r(z)(v) + t}(\overline Q(z)(v)) & \text{if } v \in \mathbb S^3 \cup \overline\Omega_\varepsilon(z)\,.
        \end{cases}
    \] 
    Notice that $U(v, \pi, z) = \mathbb S^3$ and $U(v, -\pi, z)$ is empty for every $v \in {\mathbb B^4}$. And for $v\in \mathbb S^3\cup \overline\Omega_\varepsilon(z)$, $U$ gives geodesic spheres of varying radius, and the radius always covers the full range $[0,\pi]$ as $t$ ranges over $[-\pi,\pi]$. 
    
    Marques-Neves~\cite[Theorem 5.1]{MN14} showed that the family $\partial U$ is continuous in the flat topology. 

\subsubsection{Closing up the parameter space}
    Let $\gamma : [0,1] \to [0,1]$ be the continuous function satisfying $\gamma(t) = 0$ if $t \leq \frac{1}{2}$ and $\gamma(t) = 2t - 1$ if $t \geq \frac{1}{2}$. For  each 
    \[
        (v,t,z)\in \overline {\mathbb B^4} \times [-1, 1] \times \tilde{ Z} 
    \]
    we define the open set 
    \[
        U'(v, t, z) = U\left(v, 2\pi t + \gamma(|v|)\left(\frac{\pi}{2} - \overline r(z)(v)\right), z\right)\,.
    \]
    Reparametrizing $t$ as above allows us to close up the boundary of $\overline {\mathbb B^4}\x [-1,1]\x\{z\}$ via the antipodal map, as follows. 

    For 
    \[
        (v,t)\in\partial(\overline {\mathbb B^4}\x [-1,1])=(\mathbb S^3\x(-1,1))\cup (\overline {\mathbb B^4}\x\{-1,1\})\,,
    \]
    it is straightforward to check that
    \[
        U'(v, t, z) =
        \begin{cases}
            B_{\frac{\pi}{2} + 2\pi t}(\overline Q(z)(v)) & \text{if } (v,t) \in \mathbb S^3\x(-1,1)\,, \\
            0& \text{if } (v,t)\in \overline {\mathbb B^4}\x\{-1\}\,,\\
            \mathbb S^3& \text{if } (v,t)\in \overline {\mathbb B^4}\x\{1\}\,.
        \end{cases}
    \]
    Note that the smallness of $\varepsilon$ is used here for the case where $v\in \overline \Omega_\varepsilon(z)$.
    Now, using the fact that $B_{\pi-r}(-p)= \mathbb S^3\backslash \overline{B_r(p)}$, $T(z)(-v) = -T(z)(v)$,  and $\overline Q(z)(-v) = -\overline Q(z)(v)$ by~\cite[\S 3.4.3]{Nur16}, it follows immediately that 
    for every $(v, t) \in \partial(\overline {\mathbb B^4} \times [-\pi, \pi])$,
    \begin{equation}\label{eq:U'boundary}
        U'(-v, -t, z) = \mathbb S^3 \setminus \overline{U'(v, t, z)}\,,
    \end{equation}
    This allows us to define an $\RP^5$-family  $\Psi^{\Sigma(z)}_5$ of closed subsets  by 
    \[
        \Psi^{\Sigma(z)}_5 ([(v,t)])=\partial U'(v,t,z)\,.
    \]
    Note that we have disregarded orientation, and a geodesic sphere of radius $0$ or $\pi$ should be both treated as a single point, ensuring $\Psi_5^{\Sigma(z)}$ satisfies Definition~\ref{def:Simon_Smith_family}~\ref{item:closedFamily}. 

\subsubsection{Symmetry of $\Psi^{\Sigma(z)}_5$ in $z$}
    Furthermore, it follows from a straightforward computation that the map $U'$ satisfies the identity
    \begin{equation*}
        U'(v, t, z) = \mathbb S^3 \setminus \overline{U'(v, -t, \sigma(z))}\,,
    \end{equation*}
    for every $(v, t, z) \in {\mathbb B^4} \times [-1, 1] \times \tilde{ Z}$.
    Thus, we have
    \[
        \Psi^{\Sigma(z)}_5([(v,t)])=\Psi^{\Sigma(\sigma(z))}_5([(v,-t)])\,.
    \]
    This finishes the proof of Proposition~\ref{prop:TildePsiSym}.

\subsection{Proof of Theorem~\ref{thm:PsiDef}}
    From the above construction of $\Psi$, it is clear that for each oriented Clifford torus $\Sigma$,  there exists a homeomorphism $f:Y_{[\Sigma]}\to\RP^5$ such that $\Phi^\Sigma_5\circ f=[\Psi|_{Y_{[\Sigma]}}]$. So to prove Theorem~\ref{thm:PsiDef},  it suffices to show that $\Psi$ is a Simon-Smith family of genus $\leq 1$.
    We start with studying the topology of the surfaces in $\Psi$.

\subsubsection{Topology of $\Sigma(v,t,z)$}
    Fix a $z_0\in\tilde Z$ which corresponds to the Clifford torus $\Sigma_0$ given by
    \[
        X(\alpha,\beta)=\frac1{\sqrt 2}(\cos\alpha,\sin\alpha,\cos\beta,\sin\beta)\,,
    \]
    with $\alpha,\beta\in S^1$ (where $S^1$ is viewed as $[0,2\pi]$ with endpoints identified), oriented such that the distance function $d(0,z_0)$ is negative on the ``core circle"
    \[
        w(\beta)=\frac1{\sqrt 2} (0,0,\cos\beta,\sin\beta),\quad \beta\in S^1\,.
    \]

    \begin{prop}\label{lem:behaviourSigmavtz}
        Let $(v,t)\in {\mathbb B^4}\x[-\pi,\pi]$. We have the following descriptions regarding the closed set
        \[
            \Sigma(v,t):=\partial\{x\in \mathbb S^3:d(v,z_0)(x)<t\}\,.
        \]
        
        If $v=0$, then $\Sigma(v,t)$ is a smooth torus for $t\in (-\pi/4,\pi/4)$,  a great circle for $t=\pm\pi/4$, and empty for other $t$.
             
        If $v\ne 0$ and $v/|v|$ lies on the circle 
        \begin{equation}\label{eq:circle}
            \frac 1{\sqrt 2}(\cos\alpha,\sin\alpha,0,0)\,,\quad \alpha\in S^1\,,
        \end{equation}
        then there exist
        $-\pi<t_1<0<t_2<\pi,$ depending on $v$,
        such that:
        \begin{enumerate}[label=\normalfont(\arabic*)]
            \item For $t_1<t<t_2$, $\Sigma(v,t)$ is a smooth torus.
            \item For $t=t_1$ and $t=t_2$, $\Sigma(v,t)$ is a great circle.
            \item For $t< t_1$ and $t> t_2$, $\Sigma(v,t)$ is empty.
        \end{enumerate}
    
        If $v$ does not fall into the above two cases, then there exist 
        \[
            -\pi<t_1<t_2<0<t_3<t_4<\pi\,,
        \] depending on $v$, such that:
        \begin{enumerate}[label=\normalfont(\arabic*)]
            \item For $t_2<t<t_3$, $\Sigma(v,t)$ is a smooth torus.
            \item For $t=t_2$ and $t=t_3$, $\Sigma(v,t)$ is topologically a sphere with two points identified such  that it is smooth except at a point.  
            \item For $t_1<t<t_2$ and $t_3<t<t_4$, $\Sigma(v,t)$ is topologically a sphere, smooth except at two points.  
            \item For $t= t_1$ and $t= t_4$, $\Sigma(v,t)$ is a point.
            \item For $t< t_1$ and $t> t_4$, $\Sigma(v,t)$ is empty.
        \end{enumerate}
    \end{prop}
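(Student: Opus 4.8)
The plan is to reduce everything to the critical-point analysis of the signed distance function $d:=d(v,z_0):\mathbb{S}^3\to\R$ to the torus $F_v(\Sigma_0)$, together with a description of the cut locus $\operatorname{Cut}(F_v(\Sigma_0))$, which is exactly the set on which the level sets $\Sigma(v,t)=\partial\{x:d(x)<t\}$ fail to be smooth. The torus $F_v(\Sigma_0)$ separates $\mathbb{S}^3$ into the two solid tori $F_v(A(z_0))$ (where $d<0$) and $F_v(A^*(z_0))$ (where $d>0$), and $\operatorname{Cut}(F_v(\Sigma_0))=\Gamma_-\sqcup\Gamma_+$ with $\Gamma_\pm$ contained in the corresponding solid torus. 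Away from $\operatorname{Cut}(F_v(\Sigma_0))$, $d$ is smooth with $|\nabla d|\equiv 1$, so on any interval of values $t$ that are regular values of $d$ and for which $\Sigma(v,t)$ avoids $\operatorname{Cut}(F_v(\Sigma_0))$, the level sets are diffeomorphic via the flow of $\nabla d$. Hence the type of $\Sigma(v,t)$ changes only at the finitely many $t$ where $t$ is a critical value of $d$ or $\Sigma(v,t)$ meets the cut locus, and the singular points of $\Sigma(v,t)$ are precisely the points of $\Sigma(v,t)\cap\operatorname{Cut}(F_v(\Sigma_0))$ together with the pinch points created at critical levels. So it suffices to (i) list all critical points of $d$ and the handle attached at each, and (ii) track $\Sigma(v,t)\cap\operatorname{Cut}(F_v(\Sigma_0))$.

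\textbf{The case $v=0$.}
Here $F_0=\id$ and $\Sigma_0$ is minimal with principal curvatures $\pm1$, hence its focal distance is $\pi/4$; the function $d(0,z_0)$ is $T^2$-invariant and equals a monotone function of $x_1^2+x_2^2$ (explicitly $\arcsin\sqrt{x_1^2+x_2^2}-\pi/4$), with image $[-\pi/4,\pi/4]$ and critical set exactly $C_2=\{x_1=x_2=0\}$ (minimum, value $-\pi/4$) and $C_1=\{x_3=x_4=0\}$ (maximum, value $\pi/4$); these two great circles are also $\operatorname{Cut}(F_0(\Sigma_0))=\Gamma_-\cup\Gamma_+$. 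Thus $\Sigma(0,t)$ is a smooth (flat) torus for $t\in(-\pi/4,\pi/4)$, a great circle for $t=\pm\pi/4$, and empty otherwise. For $v\neq0$ with $v/|v|$ on the core circle, one uses the residual symmetry: the rotations $R_\theta$ in the $x_3x_4$-factor commute with $F_v$ and preserve $\Sigma_0$, so $F_v(\Sigma_0)$ and $d$ are $R_\theta$-invariant, each of $F_v(A(z_0)),F_v(A^*(z_0))$ is a solid torus of revolution, each $\Gamma_\pm$ is a single $R_\theta$-orbit circle on which $d$ is constant, and these two values $t_1<0<t_2$ are the only critical values. The level-set picture is then identical to that of $v=0$: a smooth torus for $t_1<t<t_2$, a circle for $t=t_1,t_2$, and empty otherwise (the two circles being the core circles of the two solid tori, identifiable from the explicit formula for $F_v$).

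\textbf{The general case.}
For $v$ outside the previous two cases the symmetry is broken to a finite group. The heuristic is that the two Morse--Bott circles $C_2$ (of minima) and $C_1$ (of maxima) of the $v=0$ distance function each break, under the perturbation $F_v$, into two nondegenerate critical points of $d$: $C_2$ into a minimum $p_1$ (value $t_1<0$, index $0$) and a saddle (value $t_2<0$, index $1$), and $C_1$ into a saddle (value $t_3>0$, index $2$) and a maximum $p_2$ (value $t_4>0$, index $3$); moreover each component $\Gamma_\pm$ becomes a topological circle on which $d$ restricts to a function with exactly two critical points (the min and the index-$1$ point on $\Gamma_-$, the index-$2$ point and the max on $\Gamma_+$). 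Granting this, Morse handle attachments combined with (ii) give the claimed movie: empty for $t<t_1$; a point at $t_1$; for $t_1<t<t_2$ the sublevel set is a ball but $\Sigma(v,t)$ meets $\Gamma_-$ in two points, so it is a topological sphere with two conical singularities; at $t=t_2$ the index-$1$ handle pinches it into a torus with one identification, i.e. a sphere with two points identified, smooth away from the pinch; for $t_2<t<t_3$ it avoids both $\Gamma_\pm$ (since $d|_{\Gamma_-}\le t_2$ and $d|_{\Gamma_+}\ge t_3$) and is a smooth torus; the range $t\ge t_3$ is symmetric, with an index-$2$ pinch at $t_3$, a two-point-singular sphere for $t_3<t<t_4$, a point at $t_4$, and empty beyond.

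\textbf{Main obstacle.}
The hard part is making the ``breaking of the Morse--Bott circles'' rigorous for \emph{all} admissible $v$, not merely $v$ near $0$: one must show that for every $v$ outside the excluded circles, $d$ has exactly these four critical points and each cut-locus component is a circle carrying exactly two critical points of $d$. I would write $F_v(\Sigma_0)$ as an explicit flat torus (its Gauss curvature vanishes, so its principal curvatures are $\lambda,-1/\lambda$) and compute its shape operator via the conformal-change formula under $F_v$ to control the focal set, then run a continuity/connectedness argument in $v$: the set of $v$ for which the stated structure holds is open, and one rules out the possible degenerations (extra critical points, or a cut-locus component that is not a single circle) away from $v=0$ and the two core circles. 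A likely useful reduction is the conformal identity $d\circ F_v=\pm\dist_{F_v^*g}(\,\cdot\,,\Sigma_0)$, which transports the analysis to the fixed, $T^2$-symmetric surface $\Sigma_0$ equipped with the varying conformal metric $F_v^*g$.
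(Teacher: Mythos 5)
Your overall Morse--theoretic framework (analyze critical points of the signed distance $d(v,z_0)$ and track where the level sets meet the cut locus $\operatorname{Cut}(F_v(\Sigma_0))$) is sound in spirit, but the crucial structural input is left unproved, and you yourself flag it as the main obstacle: you need that for \emph{every} admissible $v$ the function $d(v,z_0)$ has exactly four nondegenerate critical points and that each cut-locus component is a single topological circle carrying exactly two of them. The openness/continuity argument you sketch does not close this: openness of the good set of $v$ does not preclude a degenerate wall separating a neighborhood of $v=0$ from the rest of ${\mathbb B^4}$, and you give no mechanism for ruling out extra critical points or a more complicated cut-locus topology. As written this is a genuine gap, not a routine verification.

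The paper's proof avoids this difficulty entirely by exploiting that the Clifford torus is a \emph{channel surface}: $\Sigma_0$ is the envelope of the one-parameter family of $2$-spheres $S_\beta\subset\mathbb S^3$ with centers $w(\beta)$ and radius $\pi/4$, and conformal diffeomorphisms take spheres to spheres and preserve envelopes. Hence $F_v(\Sigma_0)$ is the envelope of the spheres $F_v(S_\beta)$, with centers $\tilde w(\beta)$ and radii $\tilde r(\beta)$, and the signed parallel surfaces $\Sigma(v,t)$ are the envelopes of the concentric spheres of radius $\tilde r(\beta)+t$. The entire topological classification then reduces to showing that $\tilde r:S^1\to\R$ has exactly one local maximum and one local minimum for $v$ outside the excluded cases. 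The paper does this by a closed-form computation, $\cot\tilde r(\beta)=\frac{2(1-\sqrt 2\,w(\beta)\cdot v)}{1-|v|^2}-1$, which is a strictly monotone affine function of $w(\beta)\cdot v$; a linear function restricted to a circle in $\mathbb S^3$ has one maximum and one minimum unless $v\perp\operatorname{span}\{w(\beta)\}$, which is exactly the excluded degenerate case. This is both more elementary and uniform in $v$, and it directly produces the stated $t_1<t_2<0<t_3<t_4$ as $-\max\tilde r$, $-\min\tilde r$, and their counterparts on the opposite side. You could likely salvage your approach by proving the same one-max-one-min statement for $\tilde r$ and then reading off the critical-point/cut-locus structure from it, but at that point you are essentially reproducing the paper's computation.
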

    \begin{proof}
        First, the case $v=0$ is clear. 

        Second, in the case where $v\ne 0$ and $v/|v|$ lies on the circle \eqref{eq:circle}, note that both the Clifford torus $\Sigma_0$ and the map $F_v$, are symmetric about the antipodal points $\pm v/|v|$. Therefore, the image $F_v(\Sigma_0)$, as well as the level surfaces is parallel to $F_v(\Sigma_0)$, share this symmetry, and the desired result follows immediately.

        Finally, consider the case where $v\ne 0$ with $v/|v|$ not lying on the circle \eqref{eq:circle}. 
        The Clifford torus $\Sigma_0$ is the envelope of the 2-spheres $S_\beta\subset \mathbb S^3$ with center $w(\beta)$ and radius $\pi/4$ under the standard metric on $\mathbb S^3$, where $\beta\in S^1$. Thus, $\Sigma_0$ is a {\em channel surface}. The sphere $S_\beta$ is given by the following equations on $x\in\R^4$:
        \[
            |x|=1\,, \quad x\cdot w(\beta)=\cos(\pi/4)\,.
        \]
    
        Now, when we apply the conformal diffeomorphism $F_v$ to $S_\beta$, we obtain another $2$-sphere $F_v(S_\beta)$ in $\mathbb S^3$, with a new center $\tilde w(\beta)$ and a new radius $\tilde r(\beta)\in (0,\pi)$ under the standard metric on $\mathbb S^3$. 
        
        We see that the function $\tilde r: S^1 \to \mathbb{R}$ has the following property.
    
        \begin{claim}\label{claim:tildeROneMinOneMax}
            If $v\in {\mathbb B^4}$ is non-zero and $v/|v|$ does not lie on the circle (\ref{eq:circle}), then the function $\tilde r:S^1\to \R$ has exactly one local maximum and one local minimum.
        \end{claim}
    
        We will postpone the proof to the end, but first, let us note that this lemma is sufficient to justify the final case of our proposition.
     
        Without loss of generality, we can focus on the case $t < 0$, as the case $t>0$ follows from an identical analysis on the Clifford torus $-\Sigma_0$, i.e. the Clifford torus with the opposite orientation to $\Sigma_0$.
        
        Let us consider the closed envelop given by the collection of spheres 
        \[
            \{\partial B_{\tilde r(\beta)+t}(\tilde w(\beta))\}_{\beta\in S^1}\,,
        \]
        where we use the convention that $\partial B_0(p) = \{p\}$ and $\partial B_r = \emptyset$ for $r < 0$. Given that conformal diffeomorphisms preserve channel surfaces, it is straightforward to verify that this envelop is exactly the set $\Sigma(v,t)$.
        
        Now, suppose the function $\tilde r:S^1\to \R$ has exactly one local maximum and one local minimum. Let us decrease $t$ starting from $0$. When $|t|$ is small, the envelop described above forms a torus. And there would be a $t_2 < 0$ such that, when $t=t_2$, the torus neck pinches at exactly one point, corresponding to the $\beta$ where $\tilde r$ achieves a minimum. As we continue decreasing $t$, the envelop becomes a topological sphere with two singular points, until it eventually vanishes at a point, corresponding to the $\beta$ where $\tilde r$ achieves a maximum. It happens at some time $t_1<0$. This finishes the proof of the third case of Proposition~\ref{lem:behaviourSigmavtz}.
    
        \begin{proof}[Proof of Claim~\ref{claim:tildeROneMinOneMax}]
            Recalling that $F_v$ has a symmetry about the points $\pm v/|v|$, and $F_v$ pushes everything away from $\frac{v}{|v|}$ and towards $-\frac{v}{|v|}$, we know the  center
            $\tilde w(\beta)$ of the sphere 
            $F_v(S_\beta)$ takes the form
            \[
                \tilde w(\beta)=\frac{-\cos(\theta(\beta))\frac v{|v|}+\sin(\theta(\beta))w(\beta)}{\left|-\cos(\theta(\beta))\frac v{|v|}+\sin(\theta(\beta))w(\beta)\right|}
            \]
            for some function $\theta(\beta)\in (0,\pi/2)$. For convenience, let us denote the above denominator by $l(\beta)$. Additionally, the new sphere $F_v(S_\beta)$ is characterized by the following equations for $y\in\R^4$:
            \[
                |y|=1\quad \textrm{ and }\quad y\cdot \tilde w(\beta)=\textrm{constant}\,,
            \]
            where the constant equals to $\cos$ of the radius of the new sphere.
    
            Thus, let us compute $F_v(x) \cdot \tilde w(\beta)$ for $x\in S_\beta$. For simplicity, we omit writing the dependence of $l, \theta$ and $w$ on $\beta$.
            \begin{align}
                &\quad \nonumber F_v(x)\cdot \tilde w(\beta)\\\nonumber
                &= \left(\frac{1-|v|^2}{|x-v|^2}(x-v)-v\right)\cdot \frac{-\cos\theta\frac v{|v|}+\sin\theta \;w}{l}\\
                \label{eq:tildeFv}&=\frac 1{l}\frac{1-|v|^2}{1+|v|^2-2x\cdot v}\left(-\cos\theta\frac{x\cdot v}{|v|}+\cos\theta \;|v|+\sin\theta\frac 1{\sqrt 2}-\sin\theta \;w\cdot v\right)\\
                \nonumber&\;\;+\frac 1{l}(\cos\theta \;|v|-\sin\theta\;w\cdot v)\,.
            \end{align}
            Note that in the last equality, we use $w\cdot x=1/\sqrt 2$, which holds because $x\in S_\beta$. Hence, for some constant $C=C(v,\beta)$, independent of $x\in S_\beta$, we have 
            \[
                -\cos\theta\frac{x\cdot v}{|v|}+\cos\theta \;|v|+\sin\theta\frac 1{\sqrt 2}-\sin\theta \;w\cdot v=C\left({1+|v|^2-2x\cdot v}\right)\,.
            \]
            Now, by setting both the zeroth-order term and the first-order term in $x$ to zero, and canceling $C$ from these two equations, we obtain
            \begin{equation}\label{eq:cosThetaSinTheta}
                \frac{1-|v|^2}{2|v|}\cos\theta=\left(\frac 1{\sqrt 2}-w\cdot v\right)\sin\theta\,,
            \end{equation}
            which leads to 
            \begin{equation}\label{eq:cotTheta}
                \cot\theta=\frac{2|v|}{1-|v|^2}\left(\frac 1{\sqrt 2}-w\cdot v\right)\,.
            \end{equation}
    
            Hence, substituting (\ref{eq:cosThetaSinTheta}) into (\ref{eq:tildeFv}), the radius $\tilde r(\beta)\in (0,\pi)$ satisfies:
            \begin{align*}
                \cos\tilde r(\beta) &= F_v(x)\cdot \tilde w(\beta)\\
                &= \frac 1{l}\frac{1-|v|^2}{|x-v|^2}(x-v)\cos\theta\left(-\frac{x\cdot v}{|v|}+|v|+\frac{1-|v|^2}{2|v|}\right)\\
                &\;\;+\frac 1{l}(\cos\theta \;|v|-\sin\theta\;w\cdot v)\\
                &=\frac 1{l}\left(\frac{1-|v|^2}{2|v|}\cos\theta+(\cos\theta \;|v|-\sin\theta\;w\cdot v)\right)\\
                &=\frac 1{l}\left(\frac 1{|v|}\cos\theta-\frac 1{\sqrt 2}\sin\theta\right)\,.
            \end{align*}
            Now, by a straightforward computation of $l$ using (\ref{eq:cosThetaSinTheta}), we find that
            \[
                l=\sqrt{\frac 1 2\sin^2\theta+\left(\frac1{|v|}\cos\theta-\frac 1{\sqrt 2}\sin\theta\right)^2}\,.
            \]
            It follows straightforward that
            \[
                \cos^2\tilde r= \left(1+\frac 1{(\frac {\sqrt{2}}{|v|} \cot\theta-1)^2}\right)\,.
            \]
            Since $\tilde r$ depends smoothly on $\theta$, the equation can be extended to the case where the denominator is zero. Together with (\ref{eq:cotTheta}) one can derive that 
            \begin{equation}\label{eq:cot2tildeR}
                \cot\tilde r(\beta)= \frac{\sqrt{2}}{|v|}\cot\theta - 1 = \frac{2(1-\sqrt 2 w\cdot v)}{1-|v|^2}-1\,.
            \end{equation}
            Note that the sign of the right-hand side is positive sign due to the monotonicity of $\cot$ and the geometric behavior of the conformal map $F_v$:  when $w \cdot v$ is larger, $\tilde r$ is larger.
        
            Since $\cot $ is a strictly decreasing function with non-zero slope on $(0, \pi)$, by (\ref{eq:cot2tildeR}) and the fact that $\tilde r$ depends smoothly on $w\cdot v$, it suffices to show that $w\cdot v(\beta)$, which is a function of $\beta \in S^1$, has exactly one local minimum and one local maximum. This holds true because $w(\beta)$ describe a circles in $\mathbb S^3$ that is not perpendicular to $v$. This completes the proof of the claim.
        \end{proof}

        This finishes the proof of Proposition~\ref{lem:behaviourSigmavtz}.
    \end{proof}

\subsubsection{$\Psi$ is a Simon-Smith family of genus $\leq 1$}\label{subsubsect:PsiIsSimonSmith}

    Let us finish the proof of Theorem~\ref{thm:PsiDef}, by showing that $\Psi$ is a Simon-Smith family of genus $\leq 1$.

    By the definition of $\Psi$ in the previous section and Proposition~\ref{lem:behaviourSigmavtz}, we can conclude that $\Psi$ consists of the following types of closed subsets:
    \begin{enumerate}
        \item A smooth torus: This arises as the nearby level surface of the signed distance function to $F_v(\Sigma(z))$, where $v \notin \Sigma(z)$.
        \item A smooth sphere:  They are geodesic spheres on $\mathbb S^3$ with radius strictly between $0$ and $\pi$.
        \item A great circle: This occurs in the second case of $v$ in Proposition~\ref{lem:behaviourSigmavtz}.
        \item A topological sphere with two points identified, so that it is smooth except at a point: This occurs in the third case of $v$ in Proposition~\ref{lem:behaviourSigmavtz}.
        \item A topological sphere, smooth except at two points: This occurs in the third case of $v$ in Proposition~\ref{lem:behaviourSigmavtz}.
        \item A single point: It has  several forms. (1) A geodesic sphere on $\mathbb S^3$ with radius 0. (2) A geodesic sphere on $\mathbb S^3$ with radius $\pi$. (3) It also occurs in the third case of $v$ in Proposition~\ref{lem:behaviourSigmavtz}. 
        \item The empty set.
    \end{enumerate}
    Then, it follows easily that each member of $\Psi$ is an element of $\cS(S^3)$,  and $\Psi$ satisfies Definition~\ref{def:Simon_Smith_family}, \ref{item:Hausdorff_cts}, \ref{item:closedFamily}, \ref{item:SingPointsUpperBound}, and also the genus bound~\ref{item:SimonSmithFamilyGenus}.

    As for Definition~\ref{def:Simon_Smith_family}~\ref{item:SimonSmithFamilyLocalSmooth}, regarding the local $C^\infty$-convergence, we know from the definition of $\Psi$ that it suffices to fix a $z = z_0$ and show that $\Psi^{\Sigma(z_0)}_5$ satisfies Definition~\ref{def:Simon_Smith_family}~\ref{item:SimonSmithFamilyLocalSmooth}. In fact, if we define the family
    \[
        \Psi'_5 (v,t):=\partial U(v,t,z_0),\;\; (v,t)\in \overline {\mathbb B^4}\x [-2\pi,2\pi]
    \]
    (with $\Psi'_5(v,t)$ treated as a single point if it is a geodesic sphere of radius 0 or $\pi$),
    and notice that $\Psi^{\Sigma(z_0)}_5$ was defined as a subfamily of $\Psi'_5$ with suitable closing-up of the boundary of the parameter space, it becomes clear that it suffices to show that $\Psi'_5$ satisfies Definition~\ref{def:Simon_Smith_family}~\ref{item:SimonSmithFamilyLocalSmooth}. 

    Let us first show that $\Psi'_5(\cdot,0)$ satisfies Definition~\ref{def:Simon_Smith_family}~\ref{item:SimonSmithFamilyLocalSmooth}. We now explicitly describe, for each $\Psi'_5(v,0)$, what the associated set $P(v,0)$ of points (according to Definition~\ref{def:Simon_Smith_family}) should be. By definition,  $\Psi'_5(v,0)$ takes on one of the following forms, for $v\in \overline{B}^4$.
    \begin{enumerate}
        \item If $v\in {\mathbb B^4}\backslash\overline{\Omega_\varepsilon(z_0)}$, $\Psi'_5(v,0)$ is a smooth torus.
        \item If $v\in \mathbb S^3\backslash \overline{\Omega_\varepsilon(z_0)}$, $\Psi'_5(v,0)$ is empty.
        \item If $v\in \mathbb S^3\cap \overline{\Omega_\varepsilon(z_0)}$, $\Psi'_5(v,0)$ is a single point.
        \item If $v\in\overline{ \Omega_\varepsilon(z_0)}\backslash \mathbb S^3$, $\Psi'_5(v,0)$ is a smooth geodesic sphere.
    \end{enumerate}
    For the first two case, we set $P(v,0)$ to be empty. For the third case, we take $P(v,0)= \Psi'_5(v,0)$. For the last case, we need to be a bit more careful. If $v$ lies in the interior of $\Omega_\varepsilon(z_0)$, we can set $P(v,0)$ to be empty. However, if $v\in {\mathbb B^4}\cap \partial \Omega_\varepsilon(z_0)$, we should let $P(v,0)$ consist of the point $-v/|v|$. Indeed, from~\cite[Theorem 5.1]{MN14}, we already know $\Psi'_5(\cdot,0)$ is continuous in the flat topology near each $v$. However, the convergence is not smooth when approaching such $v$ from outside $\overline{\Omega_\varepsilon(z_0)}$, as we would see a handle of shrinking size near the point $-v/|v|$. Wth such definition of $P(v,0)$ for each $v$, it is straightforward to verify from the definition that $\Psi'_5(\cdot,0)$ satisfies  Definition~\ref{def:Simon_Smith_family}~\ref{item:SimonSmithFamilyLocalSmooth}.

    Now, consider the case $t\ne 0$. By the definition of $U$, each $\Psi'_5(v,t)$ is a level surface of the signed distance function to $\Psi'_5(v,0)$. Thus, using Proposition~\ref{lem:behaviourSigmavtz} and the fact that  $\Psi'_5(\cdot,0)$ satisfies  Definition~\ref{def:Simon_Smith_family}~\ref{item:SimonSmithFamilyLocalSmooth}, it is easy to check that $\Psi'_5$ also satisfies Definition~\ref{def:Simon_Smith_family}~\ref{item:SimonSmithFamilyLocalSmooth}, with $P(v, t)$ be the union of projection of $P(v, 0)$ to $\Psi'_5(v, t)$ and finite singular points as described in Proposition~\ref{lem:behaviourSigmavtz}. This finishes the proof that  $\Psi'_5$ satisfies Definition~\ref{def:Simon_Smith_family}~\ref{item:SimonSmithFamilyLocalSmooth}.

    Finally, to show that the closing-up $\Psi^{\Sigma(z_0)}_5$ satisfies Definition~\ref{def:Simon_Smith_family}~\ref{item:SimonSmithFamilyLocalSmooth}, one simply need to merge the associated set $P(v, t)$ and $P(v', t')$ if $(v, t) \sim (v', t')$. Since $\Psi$ is induced from $\Psi^{\Sigma(z_0)}_5$ via $SO(4)$-action and another closing-up, it follows that $\Psi$ also satisfies Definition~\ref{def:Simon_Smith_family}~\ref{item:SimonSmithFamilyLocalSmooth}.
    
    As a result, $\Psi$ is a Simon-Smith family of genus $\leq 1$.

\subsection{A 7-sweepout in $\Psi$}\label{subsect:7sweepout}
    As part of the setup for the proof of Theorem~\ref{thm:cupLength9}, we consider a subset $C\subset Y=\dmn(\Psi)$, where $C\cong \RP^4\x\RP^2\x \RP^2$, such that $[\Psi|_C]$ is a 7-sweepout. 

    First, we define the submanifold
    \[
        \tilde C:=\{([(v,t)],z)\in \RP^5\x\tilde Z:t=0\}\,.
    \]
    Evidently, $\tilde C\cong\RP^4\x\tilde Z$.  Note that $\tilde C$ is invariant under the $\Z_2$-action on $\RP^5\x\tilde Z$  described in (\ref{eq:Z2action}).  Let $C$ be the quotient of   $\tilde C$ by this $\Z$-action. Then $ C\cong\RP^4\x Z$, and $C$ can be viewed as a subset of $Y$. Geometrically, $\Xi|_C$ consists of images of Clifford tori under conformal maps, and geodesic spheres. The fact that $C$ is a product, instead of a non-trivial $\RP^4$-bundle over $Z$, reflects the irrelevance of Clifford tori's orientations in defining $\Xi|_C$, as opposed to the situation over $Y\backslash C$.

    Now, 
    \[
        H_1(C;\Z_2)\cong H_1(\RP^4;\Z_2)\oplus H_1(\RP^2;\Z_2)\oplus H_1(\RP^2;\Z_2)\cong \Z_2\oplus\Z_2\oplus\Z_2\,.
    \]
    Let $\omega_C,\alpha_C,\beta_C\in H^1(C;\Z_2)$ be respectively the Hom-duals of 
    \begin{equation}\label{eq:3homologyClasses}
        (1,0,0),(0,1,0),(0,0,1)\in H_1(C;\Z_2).
    \end{equation}
    By the K\"unneth formula and the fact that the cohomology ring $H^*(\RP^k;\Z_2)$ is $\Z_2[x]/(x^{k+1})$, we have
    \[
        H^*(C;\Z_2)\cong \Z_2[\omega_C,\alpha_C,\beta_C]/(\omega_C^5,\alpha_C^3,\beta_C^3)\,.
    \]
    As verified in~\cite[\S 3.4.3]{Nur16}, the three homology classes in \eqref{eq:3homologyClasses} all correspond to $1$-sweepouts under $[\Psi|_C]$, and thus the element 
    \[
        \lambda_C:=([\Psi|_C])^*(\bar\lambda)\in H^1(C;\Z_2)
    \]
    is equal to $\omega_C+\alpha_C+\beta_C$. Hence, by the K\"unneth formula,
    \[
        (\lambda_C)^7=\omega_C^4\cup\alpha_C^2\cup\beta_C+\omega_C^4\cup\alpha_C\cup\beta_C^2\ne 0
    \]
    in $\Z_2$-coefficients. So $[\Psi|_C]$ is a 7-sweepout, as $[\Psi]$ is continuous in the $\cF$-norm by Proposition~\ref{prop:SS_AP}.

\subsection{Proof of Theorem~\ref{thm:cupLength9}} 
    Recall the three elements $\lambda,\alpha,\beta$ in $H^1( Y;\Z_2)$ defined by:
    \begin{itemize}
        \item Let $\bar\lambda$ be the non-trivial element of $H^1(\cZ_2(S^3;\Z_2);\Z_2)$; define $\lambda=[\Psi]^*(\bar \lambda)$.
        \item Let $a= \RP^1\x\RP^2\subset\RP^2\x \RP^2$, and  $A\subset Y$ be the $\RP^5$-subbundle over $a$; define $\alpha$ to be the Poincar\'e dual $PD(A)$ of $A$.
        \item Let $b= \RP^2\x\RP^1\subset\RP^2\x \RP^2$, and  $B\subset Y$ be the $\RP^5$-subbundle over $b$; define $\beta$ to be the Poincar\'e dual $PD(B)$ of $B$.
    \end{itemize}
    Our goal is to prove $\lambda^5\cup\alpha^2\cup\beta^2\ne 0$.

    Let us first write down the generators of $H^*(Y;\Z_2)$. Let $C\subset Y$ be the 8-dimensional subset obtained in the previous subsection such that $[\Psi|_C]$ is a $7$-sweepout, and $i_C:C\to Y$ be the inclusion map. Recall that the element
    \[
        \lambda_C:=i_C^*(\lambda)=([\Psi]\circ i_C)^*(\bar \lambda)\in H^1(C;\Z_2)
    \]
    satisfies 
    \begin{equation}\label{eq:7powerNonZero}
        (\lambda_C)^7\neq 0\;\;{\textrm{in} }\; H^*(C; \Z_2)\,.
    \end{equation} Now, set $\gamma:=PD(C)\in H^1(Y;\Z_2)$.

    Recall that $Y$ is an $\RP^5$-bundle over $Z=\RP^2\times \RP^2$:
    \[
        \RP^5\to Y=\dmn(\Psi) \to Z\,.
    \] 
    Let us denote the projection map by $\pi: Y \to Z$.

    \begin{claim}\label{claim:cohoRingY}
        The cohomology ring $H^*(Y;\Z_2)$ is generated by $\alpha, \beta, \gamma.$
    \end{claim}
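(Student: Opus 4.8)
The plan is to apply the Leray–Hirsch theorem to the fiber bundle $\RP^5 \to Y \xrightarrow{\pi} Z$, with $Z = \RP^2 \times \RP^2$. To do this I need to produce a class in $H^1(Y;\Z_2)$ whose restriction to each fiber $\RP^5 \cong Y_b$ generates $H^1(\RP^5;\Z_2)$; then, since $H^*(\RP^5;\Z_2) = \Z_2[u]/(u^6)$ is free over $\Z_2$ with basis $1, u, \dots, u^5$ and all pulled back from such a class, Leray–Hirsch gives that $H^*(Y;\Z_2)$ is a free module over $H^*(Z;\Z_2)$ with basis $1, \omega, \dots, \omega^5$ for that chosen class $\omega$. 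Combined with the Künneth computation $H^*(Z;\Z_2) = \Z_2[\alpha,\beta]/(\alpha^3,\beta^3)$ — where $\alpha,\beta$ are pulled back from $H^1$ of the two $\RP^2$ factors, matching (up to the identification $\pi^*$) the classes $\alpha,\beta$ defined via the subbundles $A, B$ over $a, b$ — this shows $H^*(Y;\Z_2)$ is generated as a ring by $\omega, \pi^*\alpha, \pi^*\beta$, i.e. by three $1$-dimensional classes.

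The natural candidate for $\omega$ is $\lambda = [\Psi]^*(\bar\lambda) \in H^1(Y;\Z_2)$. By Theorem \ref{thm:PsiDef}, for each $b \in Z$ corresponding to an oriented Clifford torus $\Sigma$ (up to orientation) there is a homeomorphism $f \colon Y_b \to \RP^5$ with $\Phi^\Sigma_5 \circ f = [\Psi|_{Y_b}]$; since $\Phi^\Sigma_5$ is a $5$-sweepout, $(\Phi^\Sigma_5)^*(\bar\lambda)$ is the generator of $H^1(\RP^5;\Z_2)$, hence $\lambda|_{Y_b} = f^*(\Phi^\Sigma_5)^*(\bar\lambda)$ is the generator of $H^1(Y_b;\Z_2)$. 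This is exactly the Leray–Hirsch hypothesis. So $H^*(Y;\Z_2)$ is free over $H^*(Z;\Z_2)$ on $1, \lambda, \dots, \lambda^5$, and is generated as a ring by $\lambda, \pi^*\alpha, \pi^*\beta$. It then remains only to relate the ring generator $\gamma := PD(C)$ and the pullbacks $\pi^*\alpha, \pi^*\beta$ to the three classes $\alpha,\beta,\gamma$ named in the claim — indeed $\alpha = PD(A) = \pi^*(PD_Z(a))$ and $\beta = PD(B) = \pi^*(PD_Z(b))$ by naturality of Poincaré duality under the bundle projection (the subbundles $A, B$ are preimages of $a, b$), so $\alpha,\beta$ are precisely the pulled-back generators of $H^*(Z;\Z_2)$. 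Finally, to conclude the ring is generated by $\alpha,\beta,\gamma$ rather than by $\alpha,\beta,\lambda$, I would use the relation $\gamma = PD(C)$ together with $i_C^*\lambda = \lambda_C$ and $(\lambda_C)^7 \neq 0$ from \eqref{eq:7powerNonZero}: expanding $\lambda$ in the Leray–Hirsch basis as $\lambda = \lambda + (\text{lower }\lambda\text{-degree terms with }\alpha,\beta\text{ coefficients})$ is trivial, so instead one checks that $\gamma$ has a nonzero $\lambda$-component in the Leray–Hirsch expansion $\gamma = c_0 \lambda + (\text{terms in }\pi^*H^*(Z))$ with $c_0 \neq 0$, which lets us solve for $\lambda$ modulo $\pi^*\alpha, \pi^*\beta$ and their products, showing $\{\alpha,\beta,\gamma\}$ generates. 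The nonvanishing of $c_0$ follows because $\gamma|_C$ must pair nontrivially in a way incompatible with lying in $\pi^*H^*(Z)$ alone — concretely, $[\Psi|_C]$ being a $7$-sweepout forces $\lambda_C^7 \neq 0$, which is impossible if $\gamma$ (hence $\lambda$, after solving) were a pullback from the $8$-dimensional base through a restriction that kills high powers.

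The main obstacle I expect is the last bookkeeping step: verifying that $\gamma = PD(C)$ genuinely contributes a $\lambda$-term, i.e. that one can eliminate $\lambda$ in favor of $\gamma$ within the ring. The cleanest route is probably to avoid $\gamma$ entirely at the Leray–Hirsch stage — conclude first that $H^*(Y;\Z_2) = \Z_2[\lambda,\alpha,\beta]/(\alpha^3,\beta^3,\,\lambda^6 + (\text{lower-order correction}))$ as a ring, reading off the single relation expressing $\lambda^6$ in the basis $1,\lambda,\dots,\lambda^5$ over $H^*(Z;\Z_2)$ — and then separately observe, via $\gamma = PD(C)$ and the fact that $C$ is (homologically) cut out so that restriction to it detects $\lambda^7$, that $\gamma$ and $\lambda$ differ by an element of the subring generated by $\alpha,\beta$. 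I would double-check the precise form of the Leray–Hirsch relation (whether $\lambda^6 = 0$ or $\lambda^6 = \alpha^2\beta^2 + \cdots$, which is dictated by the Stiefel–Whitney-type characteristic classes of the $\RP^5$-bundle $Y \to Z$, equivalently of the underlying rank-$6$ bundle); this is the one genuinely bundle-specific computation, and everything else is formal. The rest — Künneth for $Z$, freeness via Leray–Hirsch, identifying $\alpha,\beta$ as base pullbacks — is routine.
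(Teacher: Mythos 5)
Your overall plan — Leray–Hirsch over the $\RP^5$-bundle $Y \to Z$, with $\alpha, \beta$ as pullbacks of the base generators — is the same as the paper's. The difference is your choice of ``vertical'' class: you use $\lambda = [\Psi]^*\bar\lambda$ (which does indeed restrict to the generator of each fiber $Y_b \cong \RP^5$, by Theorem \ref{thm:PsiDef} and the $5$-sweepout property), whereas the paper uses $\gamma = PD(C)$ directly, observing that $C \cap Y_z$ is an $\RP^4$ in each fiber $Y_z$, so $\gamma|_{Y_z} = PD_{Y_z}(\RP^4)$ generates $H^1(Y_z;\Z_2)$. That one-line geometric observation is what you are missing.

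Because of that gap, the last step of your argument — passing from the generating set $\{\lambda, \alpha, \beta\}$ to $\{\gamma, \alpha, \beta\}$ — does not go through as you wrote it. The $7$-sweepout argument you invoke to show $\gamma$ ``has a nonzero $\lambda$-component'' is both circuitous and, as stated, logically garbled (``which is impossible if $\gamma$ (hence $\lambda$, after solving) were a pullback'' presupposes what you are trying to prove). What you actually need is that in the Leray–Hirsch basis $\gamma = c_0\lambda + c_1\alpha + c_2\beta$ one has $c_0 = 1$; this is equivalent to $\gamma$ restricting nontrivially to a fiber (the $\alpha$- and $\beta$-terms die on a fiber), which is precisely the paper's observation that $C \cap Y_z \cong \RP^4$. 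Once you have that, the entire detour through $\lambda$ is unnecessary: you can apply Leray–Hirsch with $\gamma$ itself, exactly as the paper does, and be done. So your approach is correct in outline and $\lambda$ is a legitimate alternative vertical class, but the bridge from $\lambda$-generation to $\gamma$-generation needs the fiberwise restriction of $\gamma$, which you should verify geometrically rather than via the $7$-sweepout fact.
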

    \begin{proof}
        First, by definition, $\alpha,\beta$ are the pullbacks of the two generators of 
        $H^*(Z;\Z_2)$ under the projection  map $\pi:Y\to Z$. Moreover, note that for each $z\in Z$, the intersection of $C$ with every fiber $Y_z\cong\RP^5$ of $Y$ is an $\RP^4$, whose Poincar\'e dual in $H^1(Y_z;\Z_2)$ generates 
        \[
            H^*(Y_z;\Z_2)\cong H^*(\RP^5;\Z_2)\,.
        \] 
        Thus, if we pullback $\gamma$ to $H^1(Y_z;\Z_2)$ via the inclusion map $Y_z\hookrightarrow Y$, we obtain the generator of $H^*(Y_z;\Z_2)$.
        Then the claim follows immediately from the Leray–Hirsch Theorem (see~\cite[Theorem 4D.1]{Hat02}). 
    \end{proof}

    Let $\pi_1, \pi_2: Z\to \RP^2$ be the projection onto the first and second factor respectively. Let $k_1, k_2\subset \RP^2$ be two transverse isotopic loops in $\RP^2$ with a single intersection point $p$. Let $Y_{(p,p)}\subset Y$ be the fiber over $(p,p)$. 

    \begin{claim}\label{claim:alphaBetaCupProduct}
        $\alpha^2\cup\beta^2=PD(Y_{(p,p)})$, and $\alpha^r\cup \beta^s=0$ whenever $r\geq 3$ or $s\geq 3$. 
    \end{claim}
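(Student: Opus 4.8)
The claim is a statement about the cohomology ring of the $\RP^5$-bundle $Y \to Z = \RP^2 \times \RP^2$, so the plan is to compute everything via the Leray--Hirsch description of $H^*(Y;\Z_2)$ already recorded in Claim \ref{claim:cohoRingY}, together with the standard Poincar\'e duality dictionary between cup products of duals and intersections of submanifolds (everything with $\Z_2$-coefficients, so all manifolds are $\Z_2$-oriented and transversality gives the intersection formula $PD(S_1) \cup PD(S_2) = PD(S_1 \cap S_2)$ for transverse submanifolds $S_1, S_2$ of a closed manifold). First I would recall that $\alpha = \pi^*(\pi_1^* x)$ and $\beta = \pi^*(\pi_2^* y)$, where $x, y \in H^1(\RP^2;\Z_2)$ are the generators (indeed $\alpha = PD(A)$ with $A = \pi^{-1}(a)$, $a = \RP^1 \times \RP^2$, and $PD(\pi^{-1}(a)) = \pi^* PD(a) = \pi^*(\pi_1^* x)$ since preimages of transverse submanifolds are transverse; similarly for $\beta$). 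Since $x^3 = 0$ in $H^*(\RP^2;\Z_2)$, we get $\alpha^3 = \pi^*(\pi_1^*(x^3)) = 0$ and likewise $\beta^3 = 0$; this disposes of the second assertion ($\alpha^r \cup \beta^s = 0$ for $r \geq 3$ or $s \geq 3$, because such a product contains a factor $\alpha^3$ or $\beta^3$).

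\textbf{Main computation: $\alpha^2 \cup \beta^2 = PD(Y_{(p,p)})$.} The strategy is to realize $\alpha^2$ and $\beta^2$ as Poincar\'e duals of explicit subbundles and intersect them transversally. Write $a_1 = k_1 \times \RP^2$ and $a_2 = k_2 \times \RP^2$ for the two transverse translates of $a$ coming from the two isotopic loops $k_1, k_2 \subset \RP^2$ meeting transversally at the single point $p$; since $k_1, k_2$ are each $\Z_2$-homologous to $\RP^1$, the subbundles $A_i := \pi^{-1}(a_i)$ are each Poincar\'e dual to $\alpha$. They are transverse (their images $a_1, a_2$ are transverse in $Z$, and $\pi$ is a submersion), so $\alpha^2 = PD(A_1) \cup PD(A_2) = PD(A_1 \cap A_2) = PD(\pi^{-1}(a_1 \cap a_2)) = PD(\pi^{-1}(\{p\} \times \RP^2))$. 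By the identical argument with $b_1 = \RP^2 \times k_1$ and $b_2 = \RP^2 \times k_2$ one obtains $\beta^2 = PD(\pi^{-1}(\RP^2 \times \{p\}))$. Finally, $\pi^{-1}(\{p\}\times\RP^2)$ and $\pi^{-1}(\RP^2 \times \{p\})$ are transverse in $Y$ (their images in $Z$ are the transverse submanifolds $\{p\}\times\RP^2$ and $\RP^2\times\{p\}$, meeting exactly at $(p,p)$), so
\[
\alpha^2 \cup \beta^2 = PD\bigl(\pi^{-1}(\{p\}\times\RP^2) \cap \pi^{-1}(\RP^2\times\{p\})\bigr) = PD\bigl(\pi^{-1}(p,p)\bigr) = PD(Y_{(p,p)}),
\]
which is exactly the asserted identity.

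\textbf{Anticipated obstacle.} The conceptual content is light; the care required is in (i) making sure the translates $k_1, k_2$ of $\RP^1$ are genuinely homologous to $\RP^1$ with $\Z_2$-coefficients so that $PD(A_i) = \alpha$ — this is automatic since $H_1(\RP^2;\Z_2) = \Z_2$ and any embedded loop representing the nonzero class works, and the hypothesis that $k_1, k_2$ are isotopic to $\RP^1$ is set up precisely for this — and (ii) invoking the naturality $PD_Y(\pi^{-1}(S)) = \pi^* PD_Z(S)$ and the transversal-intersection product formula in the correct (closed, possibly-with-boundary?) setting. Here $Y$ is closed (it is a compact fiber bundle over a closed base with closed fiber), so ordinary Poincar\'e duality applies without complication, and the only genuine bookkeeping is verifying the transversality of the various preimages, which follows mechanically from $\pi$ being a submersion and the base intersections being transverse. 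I do not expect any real difficulty; the step most prone to a slip is keeping the two factors of $\RP^2$ straight so that $\alpha^2$ lands on the first-factor fiber-over-$\{p\}$ and $\beta^2$ on the second.
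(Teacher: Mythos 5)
Your proof is correct and, for the main computation $\alpha^2\cup\beta^2 = PD(Y_{(p,p)})$, it follows exactly the paper's route: represent $\alpha$ and $\beta$ by the preimages of two transverse translates $k_1,k_2$ of $\RP^1$, then take the fourfold transverse intersection to land on the single fiber $Y_{(p,p)}$. For the vanishing $\alpha^r\cup\beta^s = 0$ when $r\geq 3$ or $s\geq 3$, your argument differs slightly in packaging: you observe $\alpha = \pi^*\pi_1^* x$ and invoke the ring identity $x^3 = 0$ in $H^*(\RP^2;\Z_2)$, whereas the paper argues geometrically by taking three perturbed copies of $k_1$ with empty common intersection. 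These are two faces of the same fact — the common triple intersection of three transverse circles in $\RP^2$ is empty precisely because $x^3 = 0$ — so the approaches are equivalent; your algebraic phrasing is arguably cleaner since it sidesteps the (mild) transversality bookkeeping.
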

    \begin{proof}
        By definition, $\alpha$ is the Poincar\'e dual of $A_i:= (\pi_1\circ\pi)^{-1}(k_i)$ while $\beta$ is the Poincar\'e dual of $B_i:= (\pi_2\circ\pi)^{-1}(k_i)$, for $ i=1, 2$. Therefore, $\alpha^2\cup \beta^2$ is the Poincar\'e dual of $A_1\cap A_2\cap B_1\cap B_2$, which is the fiber  $Y_{(p, p)}$. 
        
        If $r\geq 3$ or $s\geq 3$, we just consider three perturbed copies of $k_1\subset\RP^2$ that do not share a common point, and their preimages under $\pi_1\circ\pi$ or $\pi_2\circ\pi$, similarly as above. Then second claim would follow.
    \end{proof}

    \begin{claim}\label{claim:notJustAB}
        In $H^1(Y;\Z_2)$, $\lambda$ is not a linear combination of just $\alpha$ and $\beta$.
    \end{claim}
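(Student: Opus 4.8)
\textbf{Proof plan for Claim \ref{claim:notJustAB}.} The plan is to restrict everything to a single fiber of the bundle $\pi\colon Y\to Z=\RP^2\x\RP^2$ and play off the two features of $\lambda$, $\alpha$, $\beta$ there: $\alpha$ and $\beta$ come from the base and so die on a fiber, whereas $\lambda$ restricts to a fiber as the pullback of a $5$-sweepout and so survives. Concretely, fix an oriented Clifford torus $\Sigma$ and let $b\in Z$ be the point corresponding to $\{\Sigma,-\Sigma\}$; write $j\colon Y_b\hookrightarrow Y$ for the inclusion of the fiber $Y_b\cong\RP^5$. I will show that
\[
    j^*\alpha=j^*\beta=0\qquad\text{while}\qquad j^*\lambda\neq 0 .
\]
Granting this, suppose toward a contradiction that $\lambda=c_1\alpha+c_2\beta$ with $c_1,c_2\in\Z_2$. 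Applying $j^*$ gives $j^*\lambda=c_1 j^*\alpha+c_2 j^*\beta=0$, contradicting $j^*\lambda\neq 0$. Hence $\lambda$ is not a linear combination of $\alpha$ and $\beta$ alone, which is the claim.

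For the first assertion, recall from the proof of Claim \ref{claim:cohoRingY} that $\alpha$ and $\beta$ are the pullbacks under $\pi\colon Y\to Z$ of the two generators of $H^1(Z;\Z_2)$, say $\alpha=\pi^*\bar\alpha$ and $\beta=\pi^*\bar\beta$. Since $\pi\circ j\colon Y_b\to Z$ is the constant map with image $\{b\}$, it factors through a point, so $(\pi\circ j)^*$ vanishes in positive degrees; therefore $j^*\alpha=(\pi\circ j)^*\bar\alpha=0$ and likewise $j^*\beta=0$.

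For the second assertion, Theorem \ref{thm:PsiDef} provides a homeomorphism $f\colon Y_b\to\RP^5$ with $\Phi^\Sigma_5\circ f=[\Psi|_{Y_b}]=[\Psi]\circ j$. Hence
\[
    j^*\lambda = j^*[\Psi]^*(\bar\lambda) = ([\Psi]\circ j)^*(\bar\lambda) = (\Phi^\Sigma_5\circ f)^*(\bar\lambda) = f^*\bigl((\Phi^\Sigma_5)^*(\bar\lambda)\bigr).
\]
Since $\Phi^\Sigma_5$ is a $5$-sweepout, $\bigl((\Phi^\Sigma_5)^*(\bar\lambda)\bigr)^5=(\Phi^\Sigma_5)^*(\bar\lambda^5)\neq 0$ in $H^5(\RP^5;\Z_2)$; in particular $(\Phi^\Sigma_5)^*(\bar\lambda)\neq 0$ in $H^1(\RP^5;\Z_2)\cong\Z_2$, i.e.\ it is the generator. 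As $f$ is a homeomorphism, $f^*$ is an isomorphism on $H^1$, so $j^*\lambda=f^*\bigl((\Phi^\Sigma_5)^*(\bar\lambda)\bigr)\neq 0$, completing the argument.

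I do not expect a real obstacle here: the argument is essentially formal once Theorem \ref{thm:PsiDef} and the description of $\alpha,\beta$ from the proof of Claim \ref{claim:cohoRingY} are in hand. The only point that warrants a line of care is the identification $\alpha=\pi^*\bar\alpha$, $\beta=\pi^*\bar\beta$ (naturality of Poincar\'e duality under the pullback of a submanifold of the base); alternatively one can avoid even naming $\bar\alpha,\bar\beta$ by invoking directly that any class pulled back from $Z$ restricts to zero on a single fiber in positive degree.
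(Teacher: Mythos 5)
Your proof is correct, and it takes a genuinely different route from the paper's. The paper restricts to the $8$-dimensional subspace $C\cong\RP^4\times Z$ and derives a contradiction from cup-length considerations: it has previously shown (in \S\ref{sect:7sweepout}) that $\lambda_C=i_C^*\lambda$ satisfies $\lambda_C^7\neq 0$, whereas by Claim \ref{claim:alphaBetaCupProduct} one has $\alpha^r\cup\beta^s=0$ as soon as $r\geq 3$ or $s\geq 3$, so $(c_1\alpha+c_2\beta)^7=0$ — incompatible with $\lambda=c_1\alpha+c_2\beta$. Your argument instead restricts to a single fiber $Y_b\cong\RP^5$, where the two sides separate cleanly: $\alpha$ and $\beta$ die because they are pulled back from the base, while $\lambda$ survives because $\Phi^\Sigma_5$ is a sweepout. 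This is somewhat more elementary — it needs only that $\Phi^\Sigma_5$ is a $1$-sweepout (not the full $7$-sweepout machinery on $C$), avoids any cup-product bookkeeping, and uses the fiber bundle structure directly rather than the auxiliary Poincar\'e dual $\gamma=PD(C)$. The one small piece of care you flag — that $\alpha$ and $\beta$ are literally $\pi^*$ of base classes — is indeed needed, but the paper itself asserts this at the start of the proof of Claim \ref{claim:cohoRingY}, so you are on safe ground. Both arguments succeed; yours would drop in as a shorter alternative.
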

    \begin{proof}
        Suppose by contradiction that $\lambda=c_1\alpha + c_2 \beta$ for some $c_1, c_2\in \Z_2$.
        Then 
        \[
            \lambda_C=i_C^*(\lambda)=i_C^*(c_1\alpha + c_2 \beta)\,.
        \]
        Recall that $(\lambda_C)^7\ne 0$ by (\ref{eq:7powerNonZero}). But by Claim~\ref{claim:alphaBetaCupProduct}, $(c_1\alpha + c_2 \beta)^7=0$, which leads to a contradiction.
    \end{proof}

    From Claim~\ref{claim:cohoRingY} and~\ref{claim:notJustAB}, we have $\lambda = \gamma + c_1\alpha + c_2\beta$ for some $c_1,c_2\in\Z_2$. Theorem~\ref{thm:cupLength9} will follow from the following claim.
    \begin{claim}
        $\gamma^5\cup \alpha^2\cup\beta^2 \neq 0$.
    \end{claim}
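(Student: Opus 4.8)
The plan is to deduce the claim directly from the Leray--Hirsch module structure of $H^*(Y;\Z_2)$ that was already invoked in the proof of Claim~\ref{claim:cohoRingY}. Recall that $\pi\colon Y\to Z$ is the $\RP^5$-bundle projection with $Z=\RP^2\x\RP^2$, and that the restriction of $\gamma$ to any fiber $Y_z\cong\RP^5$ is the hyperplane generator; consequently $1,\gamma,\gamma^2,\gamma^3,\gamma^4,\gamma^5$ restrict on every fiber to a $\Z_2$-basis of $H^*(\RP^5;\Z_2)$. By the Leray--Hirsch theorem, $H^*(Y;\Z_2)$ is then a \emph{free} module over $H^*(Z;\Z_2)$ (acting through $\pi^*$) with basis $\{1,\gamma,\gamma^2,\gamma^3,\gamma^4,\gamma^5\}$. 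In particular $\pi^*$ is injective, and for any $b\in H^*(Z;\Z_2)$ the class $\pi^*(b)\cup\gamma^5$ vanishes if and only if $b=0$.

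Next I would identify $\alpha^2\cup\beta^2$ inside this picture. Writing $H^*(Z;\Z_2)=\Z_2[\alpha_Z,\beta_Z]/(\alpha_Z^3,\beta_Z^3)$ with $\alpha_Z,\beta_Z$ the degree-one generators of the two $\RP^2$-factors, we have $\alpha=\pi^*\alpha_Z$ and $\beta=\pi^*\beta_Z$ (as recorded above in the proof of Theorem~\ref{thm:cupLength9}), hence $\alpha^2\cup\beta^2=\pi^*(\alpha_Z^2\cup\beta_Z^2)$, and $\alpha_Z^2\cup\beta_Z^2$ is the generator of the top group $H^4(Z;\Z_2)\cong\Z_2$, so it is nonzero. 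This is also consistent with Claim~\ref{claim:alphaBetaCupProduct}, where $\alpha^2\cup\beta^2$ is identified with $PD(Y_{(p,p)})$, the Poincar\'e dual of a single fiber. Combining the two points, $\gamma^5\cup\alpha^2\cup\beta^2=\pi^*(\alpha_Z^2\cup\beta_Z^2)\cup\gamma^5$, which under the Leray--Hirsch isomorphism is the image of the basis vector $(\alpha_Z^2\cup\beta_Z^2)\otimes\gamma^5$; since $\alpha_Z^2\cup\beta_Z^2\ne0$, this class is nonzero. Since $\dim Y=9$ and $Y$ is closed, it is in fact the generator of $H^9(Y;\Z_2)\cong\Z_2$.

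There is no serious obstacle: the only step requiring care is the verification of the Leray--Hirsch hypothesis, which is precisely the fiber-restriction statement already used for Claim~\ref{claim:cohoRingY}, and the translation $\alpha^2\cup\beta^2=\pi^*(\text{top class of }Z)$. (A purely geometric variant would take transverse representatives: $\gamma^5$ meets a fiber $Y_z$ in a single point, while $\alpha^2\cup\beta^2=PD(Y_{(p,p)})$, so the transverse intersection $\gamma^5\cap Y_{(p,p)}$ is one point of the closed $9$-manifold $Y$, whose $\Z_2$-Poincar\'e dual is nonzero.) Finally, to close out Theorem~\ref{thm:cupLength9} one writes $\lambda=\gamma+c_1\alpha+c_2\beta$ with $c_1,c_2\in\Z_2$ (from Claims~\ref{claim:cohoRingY} and~\ref{claim:notJustAB}), expands $\lambda^5\cup\alpha^2\cup\beta^2$, and uses Claim~\ref{claim:alphaBetaCupProduct} to annihilate every monomial containing $\alpha^3$ or $\beta^3$; the only surviving term is $\gamma^5\cup\alpha^2\cup\beta^2\ne0$, which gives $\lambda^5\cup\alpha^2\cup\beta^2\ne0$.
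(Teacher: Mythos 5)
Your argument is correct and takes a genuinely different route from the paper. The paper proves the claim geometrically: it perturbs $C$ into five copies $C_1,\dots,C_5$ that, over a trivialization near $(p,p)$, are of the form $\hat C_i\times U$ with the $\hat C_i\subset\RP^5$ meeting transversally in a single point, so that $Y_{(p,p)}\cap\bigcap_{i=1}^5 C_i$ is one point; combined with $\alpha^2\cup\beta^2=PD(Y_{(p,p)})$ and $\gamma=PD(C)$ this gives the result. You instead exploit the \emph{module} conclusion of Leray--Hirsch (whereas the paper's Claim~\ref{claim:cohoRingY} used only the weaker ring-generation statement): since $1,\gamma,\dots,\gamma^5$ restrict to a fiberwise basis, $H^*(Y;\Z_2)$ is free over $H^*(Z;\Z_2)$ on that basis, so $\pi^*(b)\cup\gamma^5=0$ forces $b=0$; with $\alpha^2\cup\beta^2=\pi^*(\alpha_Z^2\cup\beta_Z^2)$ and $\alpha_Z^2\cup\beta_Z^2\ne 0$ in $H^4(Z;\Z_2)$ the claim follows. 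Your route sidesteps the perturbation argument entirely and identifies the class as the top generator of $H^9(Y;\Z_2)\cong\Z_2$; the paper's route is more concrete and makes the transversality behind the non-vanishing visible. Both rest on the same two inputs (fiberwise generation by $\gamma$, and $\alpha^2\cup\beta^2$ dual to a fiber), so they are logically close, but your derivation from freeness is a genuinely distinct and somewhat cleaner way to finish.
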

    \begin{proof}
        Since $C \to Z$ is a trivial $\mathbb{RP}^4$-bundle, we can perturb it to obtain $ C_1, C_2, C_3, C_4,C_5$ such that
        \begin{itemize}
            \item Within a trivialization of $Y$ over some open neighborhood $U \subset Z$ of $(p, p)$, for each $i$, the restriction of $C_i$ over $U$ is equal to $\hat{C}_i \times U$ for some $\hat{C}_i \subset \mathbb{RP}^5$ homeomorphic to an $\RP^4$;
            \item All $\hat{C}_i$'s transversely intersect at a single point.
        \end{itemize}
        Thus, $Y_{(p,p)}\cap \bigcap_{i=1}^5 C_i$ is just a single point in $Y$. Together with Claim~\ref{claim:alphaBetaCupProduct} and the definition $\gamma=PD(C)$, we know $\gamma^5\cup \alpha^2\cup\beta^2 \neq 0 $. 
    \end{proof}

    Thus, by the above claim and   Claim~\ref{claim:alphaBetaCupProduct}, we have
    \begin{align*}
        \lambda^5\cup\alpha^2\cup\beta^2 = (\gamma+(c_1\alpha+c_2\beta))^5\cup \alpha^2\cup\beta^2 = \gamma^5\cup \alpha^2\cup\beta^2 \neq 0
    \end{align*}
    in $H^*(Y; \mathbb{Z}_2)$. This completes the proof of Theorem~\ref{thm:cupLength9}.

\section{Perturbing the metric}\label{sect:perturbMetric}
    Note that in a $3$-dimensional Riemannian manifold $(S^3, g)$ with positive Ricci curvature, there is no stable minimal surface, and moreover, by~\cite{CS85}, there exists $L > 0$, such that the area of every minimal surface with genus at most $1$ is no greater than $L / 2$. Therefore, Proposition~\ref{prop:perturbMetric} follows from the following general result. 

    \begin{prop}[Generalization of Proposition~\ref{prop:perturbMetric}] \label{Prop_MetricPerturb_Main}
        Let $(S^3, g_0)$ be a Riemannian $3$-sphere with only finitely many embedded minimal tori and no degenerate stable embedded minimal sphere. For any $\varepsilon>0$ and any $L$ greater than the largest area among all minimal tori in $(S^3, g_0)$, there exists a metric $g'$ that is $\varepsilon$-close to $g_0$ in $C^\infty$ such that among all embedded $g'$-minimal surfaces with area less than $L$:
        \begin{enumerate}[label=\normalfont(\arabic*)]
            \item \label{item:sameNumber_inproof} The number of embedded $g'$-minimal tori is the same as that of $g_0$-minimal tori.
            \item There are only finitely many embedded minimal spheres $\{S_1,\cdots,S_q\}$, and each of them is non-degenerate.
            \item \label{item:linearIndep_inproof} The $g'$-areas
            \[
                \area_{g'}(S_1), \dots, \area_{g'}(S_q)
            \]
            are $\mathbb{Z}$-linearly independent. Moreover, their $\mathbb{Z}$-linear combination can never achieve the area of any $g'$-minimal tori with $g'$-area less than $L$.
        \end{enumerate}
    \end{prop}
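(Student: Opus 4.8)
The plan is to obtain $g'$ from $g_0$ by three successive, arbitrarily small $C^\infty$-perturbations, using White's bumpy metric theorem together with the Choi--Schoen compactness theorem \cite{CS85} (and Sharp's compactness theorem \cite{Sha17}) for embedded minimal surfaces of area $\le L$ and genus $\le 1$, plus a first-variation computation for the area of a non-degenerate minimal surface as the metric varies. Throughout, fix $L$ as in the statement, let $\mathcal T = T_1 \cup \cdots \cup T_N$ be the (finite) union of the embedded $g_0$-minimal tori — each has area $<L$ by the choice of $L$ — and note that no $g_0$-minimal sphere equals any $T_i$, so each $g_0$-minimal sphere meets $S^3\setminus\mathcal T$ in a nonempty open (indeed dense) subset of itself, by unique continuation.

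\emph{Step 1: non-degeneracy and finiteness of the minimal spheres.} I would restrict attention to the class $\mathcal M_{\mathcal T}$ of metrics agreeing with $g_0$ \emph{to first order} along $\mathcal T$ (that is, $g-g_0$ and its first covariant derivative vanish on $\mathcal T$). Any $g\in\mathcal M_{\mathcal T}$ has the same second fundamental form along each $T_i$ as $g_0$, so each $T_i$ remains $g$-minimal. Since every $g_0$-minimal sphere $S$ has $S\setminus\mathcal T$ open, there is enough freedom to perturb the metric near such points, so White's transversality argument \cite{Whi91} goes through within $\mathcal M_{\mathcal T}$: a generic, hence arbitrarily $C^\infty$-small, perturbation $g_1\in\mathcal M_{\mathcal T}$ makes every $g_1$-minimal sphere of area $<L$ non-degenerate. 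By \cite{CS85} there are then only finitely many such spheres, and being non-degenerate they persist, with continuously varying areas, under all further small perturbations.

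\emph{Step 2: no new minimal surfaces, and preservation of the torus count.} One argues by contradiction and compactness: if no small $g_1$ as above worked, a sequence $g_n\to g_0$ in $\mathcal M_{\mathcal T}$ would produce embedded minimal surfaces $\Sigma_n$ of genus $\le 1$, area $\le L$, distinct from the $T_i$ and from the spheres of Step~1, subconverging as varifolds to a stationary integral varifold $V$ for $g_0$; the Frankel property, \cite{CS85}, and Ketover's genus bound \cite{Ket19} force $V$ to be one of the $T_i$ with multiplicity one and smooth convergence, so $\Sigma_n$ is a $g_0$-minimal normal graph over $T_i$ for large $n$. \textbf{The main obstacle is precisely this situation when $T_i$ is degenerate}: if $T_i$ were non-degenerate one would conclude $\Sigma_n=T_i$ at once, a contradiction, but a degenerate $T_i$ can a priori be accompanied by genuinely new nearby minimal tori and can ``absorb'' several $g_1$-minimal tori, so neither ``no new tori'' nor equality of counts is automatic. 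I would handle this with White's degree theory \cite{Whi91}: the signed count of embedded minimal tori of area $<L$ is invariant under deformations of the metric through metrics with no degenerate such torus, and in the present setting one shows — after an auxiliary small perturbation making all minimal tori non-degenerate — that every non-degenerate minimal torus contributes the same sign, so the signed and unsigned counts agree and the latter is therefore preserved; equivalently, one chooses the path from $g_0$ to $g'$ so as to avoid all bifurcations of minimal tori below area $L$. Once this is in place, Steps~1 and~2 deliver a metric $g_1$, arbitrarily close to $g_0$, with exactly $N$ embedded minimal tori of area $<L$ (namely $T_1,\dots,T_N$), finitely many embedded minimal spheres $S_1,\dots,S_q$ of area $<L$, all of these surfaces non-degenerate, and no other embedded minimal surface of genus $\le 1$ and area $<L$.

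\emph{Step 3: the linear-independence conditions.} A last arbitrarily small perturbation handles (\ref{item:linearIndep_inproof}). For each $i$ choose a small ball $B_i$ meeting $S_i$ but disjoint from $\mathcal T$ and from all the other minimal surfaces (possible since $S_i$ minus the union of the finitely many other minimal surfaces is open and dense in $S_i$, by the Frankel property and unique continuation), and consider conformal perturbations $\dot g=\varphi_i\,g_1$ with $\varphi_i\ge 0$ supported in $B_i$. Because $S_i$ is $g_1$-minimal the first-order change of its area is the direct metric effect only,
\[
  \frac{d}{dt}\Big|_{t=0}\area_{g_1+t\dot g}\big(S_i(t)\big)\;=\;\tfrac12\int_{S_i}\operatorname{tr}_{S_i}(\dot g)\,d\mathcal H^2\;=\;\int_{S_i\cap B_i}\varphi_i\,d\mathcal H^2\;>\;0,
\]
while the areas of the $S_j$ with $j\ne i$ and of all the $T_j$ are unchanged to first order, the $T_j$ in fact remaining exactly minimal since the metric is untouched near $\mathcal T$. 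Hence the map (small perturbation supported in $B_1\sqcup\cdots\sqcup B_q$) $\mapsto\big(\area(S_1),\dots,\area(S_q)\big)\in\mathbb R^q$ is a submersion at $g_1$ with $\big(\area(T_1),\dots,\area(T_N)\big)$ held fixed. Since the set of $(x_1,\dots,x_q)\in\mathbb R^q$ with $\sum_i m_i x_i=0$ or $\sum_i m_i x_i=\area(T_j)$ for some $(m_i)\in\mathbb Z^q\setminus\{0\}$ and some $j$ is a countable union of affine hyperplanes, hence meager, an arbitrarily small such perturbation lands in its complement, and the resulting $g'$ satisfies all the requirements. In summary, Steps~1 and~3 are routine transversality; the genuinely hard point is the preservation of the torus count in Step~2 in the presence of possibly degenerate minimal tori, which I would address through White's degree theory as indicated.
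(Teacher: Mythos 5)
Your Steps 1 and 3 are in the same spirit as the paper's proof, but Step 2 — the preservation of the torus count — has a genuine gap, and it is precisely where your approach diverges from the paper's in a way that matters.

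The paper avoids the degeneracy problem you identify by restricting to a \emph{strictly smaller} class of competitor metrics than yours: not metrics that agree with $g_0$ to first order along $\mathcal{T}$, but metrics that agree with $g_0$ \emph{identically} on an open neighborhood $U \supset \mathcal{T}$ (see Lemma~\ref{Lem_MetricPerturb_Neighb of Torus} and the definition of $\Gamma^q$). With exact agreement on $U$, any $g$-minimal torus contained in $U$ is automatically $g_0$-minimal, hence one of the $T_i$. Lemma~\ref{Lem_MetricPerturb_Neighb of Torus}(b) then shows, by a compactness argument (Lemma~\ref{lem:min_surf_cpt}): any sequence of $g_j$-minimal tori of area $<L$ with $g_j \to g_0$ and $g_j|_U = g_0|_U$ either converges (multiplicity one, smoothly) to some $T_i$, forcing the tori to lie in $U$ for large $j$ and hence to coincide with $T_i$, or converges with multiplicity $\ge 2$ to a $g_0$-minimal sphere, which is then degenerate stable — contradicting the hypothesis. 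No degree theory is required; the torus count is preserved because the $g'$-minimal tori of area $<L$ are \emph{literally the same surfaces} $T_1,\dots,T_N$.

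Your Step 2 is not a valid substitute. First-order agreement along $\mathcal{T}$ keeps each $T_i$ $g$-minimal but does not prevent new $g$-minimal tori from bifurcating off a degenerate $T_i$, as you correctly observe; the repair you propose via White's degree theory does not close the gap. The degree is a \emph{signed} count of non-degenerate minimal tori and is invariant only along paths of metrics on which no minimal torus of area $<L$ becomes degenerate. Since $g_0$ itself is allowed to have degenerate minimal tori, the degree is not even directly defined at $g_0$, and the assertion that ``every non-degenerate minimal torus contributes the same sign'' — which you need to convert the signed count into the unsigned count the proposition asks for — is unsubstantiated and not true in general (the sign depends on the parity of the Morse index, which can differ between tori). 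The alternative of choosing a path that avoids all bifurcations of minimal tori below area $L$ is likewise not something you can arrange without essentially solving the original problem. Replacing your first-order-agreement class by the paper's exact-agreement-on-$U$ class removes the issue entirely and makes the degree-theoretic machinery unnecessary.

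Two smaller remarks. In Step 1, the paper also restricts to \emph{conformal} perturbations $g = (1+f)g_0$ with $f$ supported off $U$, which is convenient for verifying condition (C) in White's structure theorem (Lemma~\ref{Lem_MetricPerturb_WhiteBumpyMetric}); your unrestricted perturbations supported near $S_i \setminus \mathcal{T}$ would likely also work but require the same unique-continuation input. In Step 3, your first-variation computation and the meager-set argument are essentially the same as the paper's Lemma~\ref{Lem_MetricPerturb_Abstract}; this part is fine.
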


    Let us recall a compactness result for minimal surfaces essentially following from~\cite[Page~390, Claim(*)]{CS85} (cf.~\cite{And85,Whi87},~\cite[Proposition~7.14]{CW11}).

    \begin{lem} \label{lem:min_surf_cpt}
        In $S^3$, given a non-negative integer $\mathfrak{g}_0$, a sequence of Riemannian metrics $\{g_i\}^\infty_{i = 1}$ and a sequence of connected embedded closed surfaces $\{\Sigma_i\}^\infty_{i = 1}$, suppose that there exists another Riemannian metrics $g_\infty$ such that $g_i \to g_\infty$ in $C^3$ and each $\Sigma_i$ is a $g_i$-minimal surface satisfying
        \[
            \sup_{i} \operatorname{area}_{g_i}(\Sigma_i) < + \infty\,, \quad \sup_i \mathfrak{g}(\Sigma_i) \leq \mathfrak{g}_0\,.
        \]
        Then there exists a subsequence of $\{\Sigma_i\}$, still denoted by $\{\Sigma_i\}$, a connected $g_\infty$-minimal surface $\Sigma_\infty$ and a finite number of points $Z = \{x_1, \cdots, x_l\}$ such that in the varifold topology,
        \[
            |\Sigma_i| \to k |\Sigma_\infty|, \quad k \mathfrak{g}(\Sigma_\infty) \leq \mathfrak{g}_0\,,
        \]
        with some $k \in \N^+$, and for any $r > 0$, $\Sigma_i \to \Sigma_\infty$ in $S^3 \setminus B_r(Z)$ (multi-)graphically in the $C^3$ topology.

        Furthermore, if $k = 1$, then $Z = \emptyset$, and thus, for sufficiently large $i$, $\mathfrak{g}(\Sigma_i) = \mathfrak{g}(\Sigma_\infty)$; if $k > 1$, then $\Sigma_\infty$ is degenerate stable.
    \end{lem}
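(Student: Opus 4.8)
The plan is to run the compactness argument of Choi--Schoen \cite[Page~390, Claim(*)]{CS85} (see also Sharp \cite{Sha17}, and \cite{And85,Whi87,CW11}), the only new feature being that the ambient metric varies; since $g_i\to g_\infty$ in $C^3$, all the constants entering that argument --- monotonicity, the $\varepsilon$-regularity estimate, interior Schauder estimates for the minimal surface equation, the Harnack inequality, Allard's theorem --- can be taken uniform in $i$, so the adaptation is routine and I only indicate where this uniformity is used. \emph{Total curvature bound.} For each $i$, the Gauss equation and $g_i$-minimality give $|A_{\Sigma_i}|^2 = 2\,\bar K_{g_i}(T\Sigma_i) - 2K_{\Sigma_i}$ pointwise on $\Sigma_i$, where $\bar K_{g_i}$ is the $g_i$-sectional curvature of the tangent plane and $K_{\Sigma_i}$ the intrinsic curvature; integrating and applying Gauss--Bonnet together with $\chi(\Sigma_i) = 2 - 2\fg(\Sigma_i) \ge 2 - 2\fg_0$, the area bound, and $\sup_i\|\bar K_{g_i}\|_{C^0}<\infty$, one obtains $\sup_i \int_{\Sigma_i}|A_{\Sigma_i}|^2\,d\mu_{g_i} =: C_0 < \infty$. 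The monotonicity formula, with $i$-independent constants, also gives a uniform density upper bound for the varifolds $|\Sigma_i|$.

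\emph{Concentration set and convergence.} After passing to a subsequence so that $|\Sigma_i|\to V$ as varifolds and $|A_{\Sigma_i}|^2\,\mu_{g_i}\llcorner\Sigma_i \rightharpoonup \nu$ as Radon measures, set $Z := \{x\in S^3 : \nu(\{x\})\ge \varepsilon_0\}$, where $\varepsilon_0>0$ is the threshold in the $\varepsilon$-regularity estimate (uniform for metrics $C^2$-close to $g_\infty$); then $\#Z \le C_0/\varepsilon_0 < \infty$, and the density lower bound from monotonicity gives $Z\subset\spt V$. For $x\notin Z$ there is $r_x>0$ with $\nu(B_{r_x}(x)) < \varepsilon_0$, so for large $i$ the total curvature of $\Sigma_i$ in $B_{r_x}(x)$ is below $\varepsilon_0$, hence $\Sigma_i\cap B_{r_x/2}(x)$ is a union of graphs with uniform $C^3$ bounds; Schauder estimates for the $g_i$-minimal surface equation (whose coefficients converge in $C^2$) plus a diagonal argument upgrade this to local $C^3$ convergence, with integer multiplicity, on $S^3\setminus Z$. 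By the removable singularity theorem for $2$-varifolds of bounded density, $\Gamma := \spt V$ is a smooth closed embedded $g_\infty$-minimal surface with $Z\subset\Gamma$ and $V = \sum_j k_j|\Gamma_j|$ over its components, $k_j\in\N^+$. Connectedness of $\Gamma$ follows because, if $d_0 := \dist_{g_\infty}(\Gamma_1,\Gamma_2) > 0$, then for large $i$ the $C^3$-convergence on $S^3\setminus B_{d_0/4}(Z)$ and $Z\subset\Gamma$ force $\Sigma_i$ into $N_{d_0/4}(\Gamma_1)\sqcup N_{d_0/4}(\Gamma_2)$, contradicting connectedness of $\Sigma_i$ and $k_1,k_2\ge 1$; so $\Sigma_\infty := \Gamma$ is connected and $|\Sigma_i|\to k|\Sigma_\infty|$ with $k := k_1$. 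The genus estimate $k\,\fg(\Sigma_\infty)\le\fg_0$ then comes from the lifting argument of \cite[Section~4]{Ket19} (cf. \cite{CS85}): fix $\fg(\Sigma_\infty)$ pairwise disjoint simple closed curves in $\Sigma_\infty\setminus Z$ forming a complete handle system, observe that over a tubular neighbourhood of their union $\Sigma_i$ is a $k$-sheeted graph for large $i$, and count the lifted curves with the one- vs.\ two-sided bookkeeping of \eqref{eq:genus_bound}, using the genus-collapse Lemma \ref{lem:genus_collapse} to control the contribution near $Z$.

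\emph{The dichotomy.} If $k=1$, then $V=|\Sigma_\infty|$ has multiplicity one and $\Sigma_\infty$ is smooth, so by Allard's regularity theorem (with the uniform $g_i$-bounds) for large $i$ and any $q\in S^3$ the surface $\Sigma_i$ is a single small normal graph over $\Sigma_\infty$ near $q$, and interior Schauder estimates bound it in $C^3$; hence $\nu(\{q\})=0$, so $Z=\emptyset$, the convergence is global, smooth and of multiplicity one, each $\Sigma_i$ is diffeomorphic to $\Sigma_\infty$ for large $i$, and in particular $\fg(\Sigma_i)=\fg(\Sigma_\infty)$. If $k\ge 2$, following \cite[Page~390, Claim(*)]{CS85} one considers, over $\Sigma_\infty$ (or its oriented double cover when $\Sigma_\infty$ is one-sided), the difference between two consecutive sheets of $\Sigma_i$, normalized at a fixed basepoint off $Z$; since $\Sigma_i$ is embedded this difference is positive, it solves a linear elliptic equation converging to the Jacobi equation $L_{\Sigma_\infty}w=0$, and by Harnack and elliptic estimates --- together with the control of the behaviour near the finite set $Z$ provided by that same argument --- it converges to a nontrivial nonnegative Jacobi field that extends across $Z$ and is strictly positive by the strong maximum principle. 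A positive Jacobi field on a closed minimal surface is the ground state of $-L_{\Sigma_\infty}$ with eigenvalue $0$, so $\Sigma_\infty$ is stable and its Jacobi operator has nontrivial kernel, i.e.\ degenerate stable. This completes the proof.

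The step I expect to be the main obstacle is not any single estimate but the uniformity bookkeeping along $g_i\to g_\infty$: one must verify that the $\varepsilon$-regularity threshold, the monotonicity and Schauder constants, the Harnack constant, and Allard's regularity all hold with bounds depending only on a $C^3$-neighbourhood of $g_\infty$ --- which is exactly why $C^3$-closeness (rather than $C^0$- or $C^2$-closeness) is assumed. The secondary delicate point is the passage from the smooth-graphical picture on $S^3\setminus Z$ to global statements, namely the removable-singularity step for $\Sigma_\infty$, the genus count at $Z$, and the extension of the limiting Jacobi field across $Z$ in the $k\ge 2$ case; all of these are handled exactly as in \cite{CS85,Ket19,Sha17}.
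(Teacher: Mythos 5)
Your proposal is correct and follows essentially the same compactness scheme as the paper's proof, reaching the same conclusions; the differences are only in packaging. The paper handles the varying-metric issue cleanly by viewing each $\Sigma_i$ as a $\Phi_i$-stationary surface in the \emph{fixed} metric $g_\infty$ for even elliptic integrands $\Phi_i\to 1$ and then invoking White's compactness theorem \cite{Whi87} as a black box, whereas you propagate uniform constants directly through the Choi--Schoen/Sharp argument (total-curvature bound via Gauss--Bonnet, $\varepsilon$-regularity, Allard, removable singularity)---both valid, the paper's route just avoids rechecking each constant. For the genus inequality the paper simply notes that away from $B_{r_0}(Z)$ the surface $\Sigma_i$ splits, for large $i$, into $k$ sheets each diffeomorphic to $\Sigma_\infty\setminus B_{r_0}(Z)$ and that removing balls cannot increase genus, giving $k\,\fg(\Sigma_\infty)\le\fg_0$ directly; your appeal to Ketover's lifting lemma and the one-sided bookkeeping of \eqref{eq:genus_bound} is heavier than needed in $S^3$ (all closed embedded surfaces there are two-sided) but not incorrect. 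Finally, for $k>1$ the paper cites \cite[Claims~5--6]{Sha17} for the positive Jacobi field, while you unpack the same Choi--Schoen normalized-sheet-difference construction; again these agree in substance.
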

    \begin{rmk}
        Note that $S^3$ does not admit non-orientable closed surfaces, so all $\Sigma_i$ and $\Sigma_\infty$ are oriented.
    \end{rmk}
    \begin{proof}
        The first part, except for the genus bound of $\Sigma_\infty$, follows from~\cite[Theorem~3]{Whi87}, since a $g_i$-minimal surface can be viewed as an embedded $\Phi_i$-stationary surface in $(S^3, g_\infty)$ for some even elliptic integrand $\Phi_i$, and $\Phi_i$ converges to the constant function $1$. By the area monotonicity formula for $g_i$-minimal surfaces, $\Sigma_i$ also converges to $\Sigma_\infty$ in the Hausdorff distance sense.

        Then there exists $r_0 > 0$ such that $\mathfrak{g}(\Sigma_\infty \setminus B_{r_0}(Z)) = \mathfrak{g}(\Sigma_\infty)$. Since $\Sigma_i \to \Sigma_\infty$ in $S^3 \setminus B_{r_0}(Z)$ (multi-)graphically in the $C^3$ topology, for sufficiently large $i$, $\Sigma_i \setminus B_{r_0}(Z)$ has exactly $k$ connected components $\{\Gamma^j_i\}^k_{j = 1}$, and
        \[
            \sum^k_{j = 1} \mathfrak{g}(\Gamma^j_i) \leq \mathfrak{g}(\Sigma_j) \leq \mathfrak{g}_0\,.
        \]
        Hence, for sufficiently large $i$ we have
        \[
            \mathfrak{g}(\Sigma_\infty) = \mathfrak{g}(\Sigma_\infty \setminus B_{r_0}(Z)) = \min_j \mathfrak{g}(\Gamma^j_i) \leq \frac{\mathfrak{g}_0}{k}\,.
        \]
        
        When $k = 1$, $Z = \emptyset$ follows immediately from Allard's regularity~\cite{All72}. When $k > 1$, it follows from~\cite[Claim~5~and~Claim~6]{Sha17} that there exists a positive Jacobi fields on $\Sigma_\infty$, and thus, $\Sigma_\infty$ is degenerate stable.
    \end{proof}

    We prove the following useful lemma.

    \begin{lem} \label{Lem_MetricPerturb_Neighb of Torus}
        Suppose $(S^3, g_0)$ contains only finitely many embedded minimal tori, given by $\cT:= \{T_1, T_2, \dots, T_l\}$. Then for any $L> 0$, there exists an open subset $U\supset \bigcup_{1\leq j\leq l}T_j$ in $S^3$ and an open neighborhood $\cO$ of $g_0$ in $C^\infty$ such that for every $g\in \cO$,
        \begin{enumerate}[label=\normalfont(\arabic*)]
            \item Every embedded $g$-minimal $2$-sphere $S$ with $g$-area less than $L$ in $S^3$ is not entirely contained in $U$;
            \item If we further assume that $(S^3, g_0)$ does not admit any degenerate stable embedded minimal sphere with area less than $L$, and $g|_U = g_0|_U$. Then every embedded $g$-minimal torus $T$ with $g$-area less than $L$ in $(S^3, g)$ is inside $\cT$.
        \end{enumerate}
    \end{lem}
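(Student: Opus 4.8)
The plan is to prove both statements by contradiction, the only real input being the compactness result Lemma~\ref{lem:min_surf_cpt} for minimal surfaces of bounded area and genus. The one point that needs care is that the neighborhood $U$ must be produced \emph{before}, and \emph{independently of}, the competing metric $g$; so I would first fix $U$ in terms of $g_0$ alone, and only afterwards shrink the metric neighborhood $\cO$. Writing $\cT=\{T_1,\dots,T_l\}$, I would show there is $\varepsilon_0>0$ such that, with $U:=\{p\in S^3:\dist_{g_0}(p,\bigcup_jT_j)<\varepsilon_0\}$, the closure $\overline U$ contains no embedded $g_0$-minimal $2$-sphere of $g_0$-area at most $L$; I would also take $\varepsilon_0$ below the normal injectivity radius of each $T_j$, so that $U$ contains a genuine tubular neighborhood of each $T_j$. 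If no such $\varepsilon_0$ existed, one would get $\varepsilon_n\downarrow0$ and embedded $g_0$-minimal spheres $S^{(n)}\subset\{\dist_{g_0}(\cdot,\bigcup_jT_j)\le\varepsilon_n\}$ with $\area_{g_0}(S^{(n)})\le L$; applying Lemma~\ref{lem:min_surf_cpt} with $\mathfrak g_0=0$ to the constant sequence $g_i\equiv g_0$ and passing to a subsequence, $|S^{(n)}|\to k|S^\infty|$ for a connected $g_0$-minimal surface $S^\infty$ with $k\,\genus(S^\infty)\le0$, hence a sphere, and $S^\infty\subset\bigcap_n\{\dist_{g_0}(\cdot,\bigcup_jT_j)\le\varepsilon_n\}=\bigcup_jT_j$. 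But a connected embedded minimal sphere contained in the union of the embedded minimal tori $T_j$ must coincide with one of them (it is open and closed in the $T_{j_0}$ through any of its generic points), which is impossible on genus grounds; this produces $\varepsilon_0$.

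\textbf{Part (a).} I claim there is a $C^\infty$-neighborhood $\cO_a$ of $g_0$ such that for every $g\in\cO_a$ no embedded $g$-minimal sphere of $g$-area $<L$ is contained in $U$. If not, there are $g_n\to g_0$ in $C^\infty$ and embedded $g_n$-minimal spheres $S_n\subset U$ with $\area_{g_n}(S_n)<L$. By Lemma~\ref{lem:min_surf_cpt}, after a subsequence $|S_n|\to k|S_\infty|$ with $S_\infty$ a connected $g_0$-minimal surface, $k\,\genus(S_\infty)\le0$ (so $S_\infty$ is a sphere), $S_\infty\subset\overline U$ by convergence of supports, and $k\,\area_{g_0}(S_\infty)=\lim_n\area_{g_n}(S_n)\le L$, hence $\area_{g_0}(S_\infty)\le L$. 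This contradicts the choice of $U$.

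\textbf{Part (b).} Now assume in addition that $(S^3,g_0)$ has no degenerate stable embedded minimal sphere of area $<L$ and that $g|_U=g_0|_U$. If a $g$-minimal torus $T$ of $g$-area $<L$ lies inside $U$, then $g\equiv g_0$ near $T$, so $T$ is $g_0$-minimal, hence $T\in\cT$; therefore it suffices to find a $C^\infty$-neighborhood $\cO_b$ of $g_0$ so that every $g$-minimal torus of $g$-area $<L$ with $g\in\cO_b$ is contained in $U$. If not, there are $g_n\to g_0$ and embedded $g_n$-minimal tori $T_n$ of $g_n$-area $<L$ with $T_n\not\subset U$; by Lemma~\ref{lem:min_surf_cpt} with $\mathfrak g_0=1$, after a subsequence $|T_n|\to k|T_\infty|$ with $T_\infty$ a connected $g_0$-minimal surface and $k\,\genus(T_\infty)\le1$. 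If $k=1$ then the exceptional set $Z$ is empty, $T_n\to T_\infty$ globally and graphically in $C^3$, and $\genus(T_n)=\genus(T_\infty)$ for large $n$, so $T_\infty=T_{j_0}$ for some $j_0$; since $U$ contains a tubular neighborhood of $T_{j_0}$, the $C^3$-convergence forces $T_n\subset U$ for large $n$, contradicting $T_n\not\subset U$. If $k\ge2$ then $\genus(T_\infty)\le1/k<1$, so $T_\infty$ is a $g_0$-minimal sphere, and Lemma~\ref{lem:min_surf_cpt} gives that it is degenerate stable; moreover $\area_{g_0}(T_\infty)=\frac1k\lim_n\area_{g_n}(T_n)\le L/2<L$, contradicting the hypothesis. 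Taking $\cO:=\cO_a\cap\cO_b$ then proves the lemma.

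\textbf{Main obstacle.} The substance is carried entirely by Lemma~\ref{lem:min_surf_cpt} — in particular its genus control and its dichotomy ($k=1$ gives smooth graphical convergence, $k\ge2$ gives a degenerate stable limit). The genuinely delicate structural points are (i) decoupling the choice of $U$ from $g$, which forces the two-stage scheme above of fixing $U$ from $g_0$ and only then shrinking $\cO$, and (ii) the elementary but essential fact that a connected embedded minimal sphere cannot lie inside $\bigcup_jT_j$; transporting the $g$-area bounds to $g_0$-area bounds through the possibly multiplicity-$k$ varifold limit is routine but has to be tracked.
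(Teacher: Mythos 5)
Your proof is correct and takes essentially the same route as the paper's: both rely entirely on the compactness Lemma~\ref{lem:min_surf_cpt} (with its $k=1$ graphical / $k\geq 2$ degenerate-stable dichotomy), derive (a) by a contradiction producing a $g_0$-minimal sphere supported in $\bigcup_j T_j$, and derive (b) by showing that the torus $T_\infty$ case forces $T_n\subset U$ (hence $T_n\in\cT$ since $g|_U=g_0|_U$) while the multiplicity-$\geq 2$ sphere case contradicts non-degeneracy. The only difference is organizational: you fix $U$ from $g_0$ in a preliminary step before shrinking $\cO$, whereas the paper shrinks the $1/j$-tubular neighborhood and the metric neighborhood simultaneously inside a single contradiction argument; both are sound and carry the same content.
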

    \begin{proof}
        We first prove that (1) holds for some $U$ and $\cO$. Suppose for contradiction that there exists an $L>0$ and a sequence of metrics $\{g_j\}$, which is $C^3$-converging to $g_0$, as well as a sequence of $g_j$-minimal spheres $\{S_j\}$ in $S^3$, each of which has $g_j$-area less than $L$ and is contained in $1/j$-tubular neighborhood of $\bigcup_{1\leq j\leq l}T_j$.

        By Lemma~\ref{lem:min_surf_cpt}, up to a subsequence, $S_j$ converges to some minimal sphere $S_\infty$ with multiplicity in $\bigcup_{1\leq j\leq l}T_j$ as varifolds. However, there is no minimal sphere lying in $\bigcup_{1\leq j\leq l}T_j$, a contradiction. 
        
        Hence, we finish the proof of (1).

        Now that we fix $L, U$ as above and argue that for a smaller neighborhood $\cO' \subset \cO$ of $g_0$, (b) is true. To see that, suppose again for contradiction that there exists a sequence of metrics $\{g_j\}_J$ $C^3$-converging to $g_0$ with $g_j|_U = g_0|_U$, as well as a sequence of $g_j$-minimal torus $\{T_j \notin \cT\}$ in $S^3$ with $g_j$-area less than $L$. By Lemma~\ref{lem:min_surf_cpt}, $|T_j|$ varifold subconverges to $k|\Sigma|$ for some smooth minimal surface $\Sigma\subset (S^3, g_0)$.

        Moreover, if $\Sigma$ is also a torus, then $\Sigma\in \cT$ and $T_j\subset U$ for $j>>1$, but since $g_j|_U = g_0|_U$, such $T_j$ is also $g_0|_U$ minimal, which contradicts to the definition of $\cT$. Therefore, $\Sigma$ is a sphere and the multiplicity $k\geq 2$. Then again by Lemma~\ref{lem:min_surf_cpt}, $\Sigma$ is a degenerate stable minimal sphere in $(S^3, g_0)$, which contradicts to the assumption in (2).

        This completes the proof of (2).
    \end{proof}

    Now we adapt White's bumpy metric argument to our setting. In the following, let $3\leq q\leq \infty$ be an integer (or infinity), $L>\sup_j\{\area_{g_0}(T_j)\}$ be fixed, $U, \cO$ be determined from previous Lemma. Let 
    \begin{align*}
        \Gamma^q & := \{C^q\text{-metric }g\in \cO\text{ conformal to }g_0: g|_U = g_0|_U\} \\ %\text{ (endowed with }C^q\text{ topology)}\,; \\
        \cM^q & := \{(g, S): g\in \Gamma^q\,;\; S \text{ is a minimal sphere in }(S^3, g), \area_g(S)<L\}\,;
    \end{align*}
    Here we use $C^q$-topology on Riemannian metrics and $C^{2,\alpha}$-topology on submanifolds, $\alpha\in (0,1)$.

    \begin{lem} \label{Lem_MetricPerturb_WhiteBumpyMetric}
        For every integer $3\leq q< \infty$, $\cM^q$ is a separable $C^{q-2}$ Banach manifold modeled on $C^q(S^3)$ and 
        \[ 
            \Pi: \cM^q \to \Gamma^q \,, \;\;\; (g, S)\mapsto g 
        \] 
        is a $C^{q-2}$ Fredholm map with index $0$.
        
        Moreover, $g\in \Gamma^q$ is a regular value of $\Pi$ if and only if for every $(g, S)\in \cM^q$, $S$ is non-degenerate.
    \end{lem}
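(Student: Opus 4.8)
\textbf{Proof plan for Lemma \ref{Lem_MetricPerturb_WhiteBumpyMetric}.} The plan is to realize this as an instance of White's structure theorem for moduli spaces of minimal submanifolds \cite{Whi87, Whi91} (the ``bumpy metrics'' machinery), adapted to the constrained setting where the metric is required to be conformal to $g_0$ and to agree with $g_0$ on the fixed open set $U$. First I would set up the global functional-analytic framework: regard a minimal $2$-sphere $S$ as a zero of the mean-curvature operator, but phrase it as a section of a Banach bundle over the product $\Gamma^q \times \mathcal{E}$, where $\mathcal{E}$ is a separable $C^{q-2}$ Banach manifold of embedded $C^{2,\alpha}$ spheres in $S^3$ (charts given by normal graphs over a reference embedding, using the $C^{2,\alpha}$ topology). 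The universal minimal-surface equation $\mathcal{F}(g,S) = H_g(S) = 0$, where $H_g(S)$ is the mean curvature of $S$ with respect to $g$, defines a $C^{q-2}$ section; $\mathcal{M}^q = \mathcal{F}^{-1}(0)$. The key point is to verify that $\mathcal{F}$ is transverse to the zero section, so that $\mathcal{M}^q$ is a $C^{q-2}$ Banach manifold; the vertical differential $D_S H_g(S)$ is the Jacobi (stability) operator $L_S$ of $S$ in $(S^3,g)$, which is elliptic self-adjoint, hence Fredholm of index $0$ on the appropriate Hölder spaces, and the horizontal differential $D_g H_g(S)$ must be shown to have image complementing the (finite-dimensional) cokernel of $L_S$.

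The main obstacle — and the place where the \emph{constrained} nature of $\Gamma^q$ matters — is precisely this transversality/surjectivity statement: one must show that perturbing $g$ within the class of $C^q$ metrics conformal to $g_0$ and equal to $g_0$ on $U$ still suffices to hit every element of $\mathrm{coker}(L_S)$. Here is where I would use the hypothesis, built into the definitions via Lemma \ref{Lem_MetricPerturb_Neighb of Torus}(a), that no minimal sphere $S$ occurring in $\mathcal{M}^q$ is entirely contained in $U$: thus $S \setminus \overline{U}$ is a nonempty open subset of $S$ on which $g$ is free to vary. A conformal change $g \mapsto e^{2f} g_0$ with $f$ supported in $S^3 \setminus \overline{U}$ changes the mean curvature of $S$ by a computable first-order expression involving $\partial f/\partial \nu$ along $S$ (the normal derivative of the conformal factor); by Frankel-type unique-continuation for the Jacobi operator — a nonzero Jacobi field on the connected surface $S$ cannot vanish on the open set $S \cap (S^3\setminus\overline U)$ — the pairing between $\mathrm{coker}(L_S)$ and these allowed variations is nondegenerate. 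This is the analogue of White's argument that conformal perturbations alone already give a bumpy metric theorem \cite[\S 2]{Whi91}, and it is the technical heart of the proof; I would carry it out by writing the linearization of $H_{e^{2f}g_0}$ at $f=0$ explicitly and integrating by parts against a putative element $\phi$ of the cokernel to reduce to unique continuation for $L_S \phi = 0$.

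Once transversality is established, the remaining assertions are formal consequences of the infinite-dimensional Sard–Smale theorem and standard Fredholm theory. I would argue: (i) $\mathcal{M}^q$ is a $C^{q-2}$ Banach manifold modeled locally on $C^q(S^3)$ — more precisely, in a chart near $(g,S)$ the equation $\mathcal{F}=0$ is solved for the (finitely many) cokernel directions in terms of the $C^q$ metric parameter, so the local model is $C^q(S^3)$ modulo the finite-dimensional kernel $\leftrightarrow$ cokernel bookkeeping, which exactly reproduces the claimed model; (ii) separability follows from separability of $C^q(S^3)$ and of $C^{2,\alpha}$ spaces of embeddings; (iii) the projection $\Pi(g,S) = g$ has differential whose kernel is $\ker L_S$ and whose cokernel is $\mathrm{coker} L_S$, so $\Pi$ is Fredholm of index $\mathrm{index}(L_S) = 0$, and $C^{q-2}$ because $\mathcal{F}$ is. Finally, $g \in \Gamma^q$ is a regular value of $\Pi$ iff for every $(g,S) \in \mathcal{M}^q$ the differential $D\Pi$ is surjective, which by the computation in (iii) is equivalent to $\ker L_S = 0$, i.e.\ $S$ non-degenerate. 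This last equivalence is immediate and requires no further work. I expect essentially all the difficulty to be concentrated in the conformal-constrained transversality step; the rest is a careful but routine transcription of \cite{Whi87, Whi91} to the present notation, with the compactness input (Lemma \ref{lem:min_surf_cpt}) and the localization input (Lemma \ref{Lem_MetricPerturb_Neighb of Torus}) ensuring the constructions stay within the area bound $L$ and away from $U$.
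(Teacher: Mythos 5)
Your proposal is correct and takes essentially the same route as the paper: the paper directly invokes White's structure theorem \cite[Theorem 1.2]{Whi91}, reducing everything to checking condition (C) via the conformal-perturbation computation $\bigl(\partial^2/\partial s\partial t\bigr)_{s=t=0} A((1+sf)\hat g, t\kappa) = \int_{\hat S}\hat\nu(f)\kappa$, combined with $\hat S\setminus\bar U\neq\emptyset$ and unique continuation for the Jacobi operator, exactly as you outline. (One small terminological note: what you call ``Frankel-type unique continuation'' is just the standard unique continuation property for the elliptic Jacobi operator, unrelated to the Frankel intersection theorem.)
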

    \begin{proof}
        For every $(\hat{g}, \hat{S})\in \cM^q$ with unit normal field $\hat{\nu}$, we apply~\cite[Theorem~1.2]{Whi91} with
        \begin{align*}
            & \Gamma := \Gamma^q\,, \;\;\;\;\;  X := C^{2,\alpha}(\hat{S})\,,  \;\;\;\;\;  Y:= C^\alpha(\hat{S})\,, \;\;\;\;\;  \cH:= L^2(\hat{S})\,, \\
            & A: \Gamma\times X \to \R \,, \;\; (g, w) \mapsto \area_g(\graph_{\hat{S}, \hat{g}}(w))\,, \\
            & H: \Gamma\times X\to Y\,, \; \text{ given by } \langle H(g, w), v\rangle_{L^2} = \left.\frac{d}{dt}\right|_{t=0} A(g, w+tv) \,.
        \end{align*}
        where every norm is measured under metric $\hat{g}$. Note that $\Gamma^q$ can be identified as an open subset of 
        \[
            C^q_U(S^3):= \{f\in C^q(S^3): f|_U=0\}\,.   
        \]
        endowed with $C^q$-topology. This is also a separable Banach space.

        Notice that near $(\hat{g}, \hat{S})$, $\cM^q = \{(g, S)\in \Gamma\times X: H(g, S) = 0\}$. And as is verified in~\cite[Theorem~1.1]{Whi91}, 
        \[
            D_2H(\hat{g}, 0) = -(\Delta_{\hat{S}} + |A_{\hat{S}}|^2 + \Ric_{\hat{g}}(\hat{\nu}, \hat{\nu})) =: L_{\hat{S}, \hat{g}} : X\to Y
        \] 
        is a Fredholm map with index $0$. Hence to prove the first part of the Lemma, it suffices to verify the condition (C) in~\cite[Theorem~1.2]{Whi91}, namely, for every $0\neq \kappa\in \Ker D_2H(\hat{g}, 0)$, there exists a one parameter family $\{\gamma(s)\in \Gamma^q\}_{s\in (-1, 1)}$ such that $\gamma(0) = \hat{g}$ and 
        \begin{align}\label{Equ_MetricPerturb_Condition(C)}
            \left(\frac{\partial^2}{\partial s\partial t}\right)_{s=t=0} A(\gamma(s), t\kappa) \neq 0 \,.    
        \end{align}
        To see this, let $\gamma(s) = (1+sf)\hat{g}$ where $f\in C^q_U(S^3)$. According to the calculation in~\cite[Theorem~2.1]{Whi91}, 
        \[
            \left(\frac{\partial^2}{\partial s\partial t}\right)_{s=t=0} A(\gamma(s), t\kappa) = \int_{\hat{S}} \hat{\nu}(f)\kappa \ dx\,.
        \]
        Recall that by Lemma~\ref{Lem_MetricPerturb_Neighb of Torus}, $\hat{S}\setminus \bar{U}\neq \emptyset$; And by the unique continuation property, $\kappa|_{\hat S\setminus U}$ is not identically $0$. Therefore, (\ref{Equ_MetricPerturb_Condition(C)}) is true by taking $f\in C_c^q(S^3\setminus \bar{U})$ such that $\hat{\nu}(f)$ is an $L^2$-approximation of $\kappa\cdot \chi_{S^3\setminus U}$.

        To study the regular value of $\Pi$, first notice that for every $(g', S')\in \cM^q$, 
        \begin{align*}
            \Tan_{(g', S')} \cM^q & = \{(f, w): f\in C^q_U(S^3),\; L_{S', g'} = \hat{\nu}(f) \text{ on }S'\} \,; \\
            d\Pi|_{(g', S')}& : \Tan_{(g', S')} \cM^q \to C^q_U(S^3)\,, \;\; (f, w)\mapsto f \,.
        \end{align*}
        Thus, if $g'\in \Gamma^q$ is a regular value of $\Pi$, then for every $(g', S')\in \cM^q$ with unit normal field $\nu'$, every $\kappa\in \Ker L_{S', g'}$ and every $\phi\in C^q_c(S'\setminus \bar{U})$, we take $f\in C^q_U(S^3)$ such that $\nu'(f) = \phi$, then there exists $w_\phi\in C^{2, \alpha}(S')$ such that $L_{S',g'}w_\phi = \nu'(f) = \phi$. Then 
        \[
            0 = \int_{S'} w_\phi\cdot L_{S', g'}\kappa = \int_{S'} \kappa \cdot L_{S',g'} w_\phi = \int_{S'} \kappa \phi \,. 
        \]
        In other words, $\kappa\perp C^q_c(S'\setminus \bar{U})$. Therefore, $\kappa|_{S'\setminus \bar{U}} = 0$. Again by Lemma~\ref{Lem_MetricPerturb_Neighb of Torus} and the unique continuation property, $\kappa \equiv 0$ on $S'$. Hence, $\Ker L_{S', g'} = 0$, that is, $S'$ is non-degenerate.
    \end{proof}

    To obtain the third assertion of Proposition~\ref{Prop_MetricPerturb_Main}, we need the following lemma in linear algebra.

    \begin{lem}\label{Lem_MetricPerturb_Abstract}
        Let $k, l\geq 1$. Given $\va\in \R^k, \vb\in \R^l$, consider the linear function 
        \begin{align*}
            f_{\va}: \Z^k\to \R\,, \quad \vu\mapsto \vu\cdot \va \,; \qquad
            h_{\vb}: \Z^l\to \R\,, \quad \vv\mapsto \vv\cdot \vb \,.
        \end{align*}
        Then for every $\vb\in \R^l$, 
        \[
            \cR:= \{\va\in \R^k: \Ker(f_\va) = \{0\}\,, \; f_\va(\Z^k)\cap h_{\vb}(\Z^l) = \{0\}\}
        \]
        is a countable intersection of open dense subset of $\R^k$ (and thus dense in $\R^k$).
    \end{lem}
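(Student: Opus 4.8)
The plan is to exhibit $\cR$ directly as a countable intersection of complements of affine hyperplanes in $\R^k$, and then invoke the Baire category theorem. For each pair $(\vu,\vv)\in(\Z^k\setminus\{0\})\times\Z^l$ define
\[
    U_{\vu,\vv}:=\{\va\in\R^k:\ \vu\cdot\va\neq\vv\cdot\vb\}\,,
\]
which is the complement in $\R^k$ of the affine subspace $H_{\vu,\vv}:=\{\va:\vu\cdot\va=\vv\cdot\vb\}$. The index set $(\Z^k\setminus\{0\})\times\Z^l$ is countable.

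First I would check the set-theoretic identity $\cR=\bigcap_{(\vu,\vv)}U_{\vu,\vv}$. For the inclusion $\supseteq$: if $\va$ lies in every $U_{\vu,\vv}$, then taking $\vv=0$ gives $\vu\cdot\va\neq0$ for all nonzero $\vu\in\Z^k$, so $\Ker(f_\va)=\{0\}$; and if some nonzero $y$ lay in $f_\va(\Z^k)\cap h_\vb(\Z^l)$, writing $y=\vu\cdot\va=\vv\cdot\vb$ we would have $\vu\neq0$ (since $y\neq0$), contradicting $\va\in U_{\vu,\vv}$, so $f_\va(\Z^k)\cap h_\vb(\Z^l)=\{0\}$. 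Conversely, if $\va\in\cR$ and $\vu\cdot\va=\vv\cdot\vb$ for some $\vu\neq0$, then $\vu\cdot\va\neq0$ (as $\Ker f_\va=\{0\}$), so this common value is a nonzero element of $f_\va(\Z^k)\cap h_\vb(\Z^l)$, a contradiction; hence $\va\in\bigcap U_{\vu,\vv}$.

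Next I would observe that each $U_{\vu,\vv}$ is open and dense. Openness is clear since $H_{\vu,\vv}$ is closed; density holds because $\vu\neq0$ forces $H_{\vu,\vv}$ to be a proper affine subspace of $\R^k$ (a translate of the hyperplane $\vu^\perp$), which therefore has empty interior, so its complement is dense. Finally, $\R^k$ is a complete metric space, so by the Baire category theorem the countable intersection $\bigcap_{(\vu,\vv)}U_{\vu,\vv}=\cR$ is dense (indeed comeager) in $\R^k$, which is the assertion of the lemma.

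The argument is entirely elementary; the only point requiring a moment's care is the reduction of the two defining conditions of $\cR$ to the single family $\{U_{\vu,\vv}\}$ (in particular noticing that the case $\vv=0$ already encodes $\Ker(f_\va)=\{0\}$), and the use of $\vu\neq0$ to guarantee $H_{\vu,\vv}$ is a proper subspace rather than all of $\R^k$. I do not expect any genuine obstacle here.
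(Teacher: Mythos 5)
Your proof is correct and follows essentially the same route as the paper: both write $\cR$ as a countable intersection of open dense sets and invoke Baire. The paper filters by $N$ (grouping the finitely many conditions indexed by $\Z^k\times\Z^l$ up to level $N$ into a single open dense $\cR_N$), whereas you index by individual pairs $(\vu,\vv)$ so that each piece is literally the complement of one proper affine hyperplane; this is a mild reorganization of the same argument, and your per-pair version makes the openness and density checks especially transparent.
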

    \begin{proof}
       Let $L_k:\Z^k\to \N$ and $L_l: \Z^l\to \N$ be bijections, $\Lambda_{k,N}\subset \Z^k$, $\Lambda_{l,N}\subset \Z^l$ be the inverse image of $[0, N]$ under $L_k$, $L_l$ correspondingly.  Clearly, $\cR$ is the intersection of \[
         \cR_N:= \left\{\va\in \R^k: \Ker(f_\va)\cap \Lambda_{k,N} \subset\{0\}\,, \; f_\va(\Z^k \cap \Lambda_{k,N})\cap h_{\vb}(\Z^l\cap \Lambda_{l,N}) \subset \{0\} \right\}   
       \]
       over $N\in \N$, where each $\cR_N$ is open and dense in $\R^k$.
    \end{proof}

    \begin{proof}[Proof of Proposition~\ref{Prop_MetricPerturb_Main}]
        For every fixed integer $q\geq 3$, $\varepsilon>0$ and $L$ greater than the largest area among all minimal tori in $(S^3, g_0)$, let $U\subset S^3$ and $\cO\ni g_0$ be determined by Lemma~\ref{Lem_MetricPerturb_Neighb of Torus}. By the Sard-Smale Theorem, the regular values of $\Pi: \cM^q\to \Gamma^q$ forms a residual (and hence dense) subset $\Gamma^q_{bumpy}\subset \Gamma^q$. By Lemma~\ref{lem:min_surf_cpt},~\ref{Lem_MetricPerturb_Neighb of Torus},~\ref{Lem_MetricPerturb_WhiteBumpyMetric} and Sharp's compactness theorem~\cite{Sha17}, the first two assertion of Proposition~\ref{Prop_MetricPerturb_Main} hold for every $g\in \Gamma^q_{bumpy}$. Then by the argument in~\cite{Whi17}(See also the proof of~\cite[Theorem~9]{ACS18}), the first two assertion of Proposition~\ref{Prop_MetricPerturb_Main} also hold for every $g\in \Gamma^q_{bumpy}\cap \Gamma^\infty$, which is dense in $\Gamma^\infty$.

        Moreover, for each $g'_0 \in \Gamma^q_{bumpy}\cap \Gamma^\infty$, there are only finitely many minimal spheres with area at most $L$. Therefore, by Lemma~\ref{lem:min_surf_cpt}, and Sharp's compactness theorem again, there exists a neighborhood $\mathcal{U}$ of $g'_0$ in $\Gamma^\infty$, such that $\mathcal{U} \subset \Gamma^q_{bumpy}\cap \Gamma^\infty$. In other words, $\Gamma^q_{bumpy}\cap \Gamma^\infty$ is both open and dense in $\Gamma^\infty$.
        
        To obtain the third assertion, let $g_0'\in \Gamma^q_{bumpy}\cap \Gamma^\infty$ be $\varepsilon/2$-$C^\infty$ close to $g_0$ and let 
        \[
            \{S_1, S_2, \dots, S_k\}
        \]
        be all the embedded minimal 2-spheres in $(S^3, g_0')$ with area $<L$. Let $r>0$ and $\{p_j\}_{j=1}^k$ be points in $S^3$ such that for every $j$, $p_j\in S_j$ and 
        \[
            B_r(p_j)\cap U\cup \bigcup_{i\neq j} S_i = \emptyset \,.  
        \]
        by deforming in $B_r(p_j)$, one can construct an $\R^k$-parametrized family of metrics $\{g(\va)'\}_{\va\in \R^k}\subset \Gamma^q_{bumpy}\cap \Gamma^\infty$ such that $g(0)' = g_0'$, all the embedded $g(\va)'$-minimal spheres $\{S_j(\va)\}_{j=1}^k$ are continuous deformation of $\{S_j\}_{j=1}^k$ and that 
        \[
            \va \mapsto \left(\area_{g(\va)'}(S_1(\va)), \area_{g(\va)'}(S_2(\va)), \dots, \area_{g(\va)'}(S_k(\va)) \right)
        \]
        is a local diffeomorphism near $0$ onto its image. By Lemma~\ref{Lem_MetricPerturb_Abstract} with 
        \[
            \mathbf{b} = (\area_{g_0}(T_1), \area_{g_0}(T_2), \dots, \area_{g_0}(T_l))\,,
        \] in a small neighborhood of $0$, those $\va\in \R^k$ such that $g(\va)'$ satisfies the third assertion of Proposition~\ref{Prop_MetricPerturb_Main} is residual and hence dense. We choose such a $g' = g(\va)'$ within $\varepsilon/2$-neighborhood of $g_0'$ in $\Gamma^\infty$, which satisfies all the assertion in Proposition~\ref{Prop_MetricPerturb_Main}. 
    \end{proof}

\section{Min-max results II: Convergence as currents}\label{sect:min-max_ii}

    Theorem~\ref{thm:currentsCloseInBoldF} is analogous to Proposition 4.10 and Theorem 4.11 in~\cite{MN21}. However, in our definition of homotopy class (Definition~\ref{def:homotopyClass}, two Simon-Smith families must be connected through an isotopy, which precludes the use of the well-known Almgren interpolation scheme. Therefore, the proofs in~\cite{MN21} cannot be followed verbatim to justify Theorem~\ref{thm:currentsCloseInBoldF}. Instead, we will use results from~\cite{Ket19} and~\cite{DP10} to construct appropriate isotopies.

    Let $\Phi': X \to \GS^*(M)$ be a Simon-Smith family of genus $\leq \fg_0$ in the homotopy class $\Lambda$ from Theorem~\ref{thm:currentsCloseInBoldF}. Suppose that $X$ is a cubical subcomplex of some $I(m, k)$ and $K(m) = 3^{m3^m}$. In addition, by Remark~\ref{rmk:homotopy}, we may assume that $N_P \equiv N \in \N$ within the homotopy class $\Lambda$. For $L = \bL(\Lambda)$, since $\cW_{L, \leq \mathfrak{g}_0}$ consists of finitely many varifolds associated with connected multiplicity-one minimal surfaces, we can enumerate these surfaces as 
    \[
        \Gamma_1, \Gamma_2, \cdots, \Gamma_q\,.
    \]
    
    We fix $r_0 > 0$ such that for each $\Gamma_k$, each point $p \in \Gamma_k$, and any $r \in (0, 2 r_0)$,
    \[
        B_{r}(p) \cap \Gamma_k
    \]
    is a topological disk, and moreover, $\partial B_{r}(p)$ is a mean-convex sphere with transversal intersection with $\Gamma_k$:
    \[
        \partial B_{r}(p) \pitchfork \Gamma_k\,.
    \]
    
    \begin{lem}[{\cite[2.1 (f)]{Pit81}}]\label{lem:Pitts_currents_varifolds}
        Suppose that $T, T_1, T_2, \cdots \in \cZ_2(M; \Z_2)$, $\lim_i T_i = T$ in the flat topology and $\lim_i |T_i| = V \in \cV_2(M)$ in the weak topology. Then 
        \[
            \|T\| \leq \|V\|\,.
        \]
    \end{lem}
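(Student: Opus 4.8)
The plan is to reduce the assertion to a weighted lower–semicontinuity inequality and then combine two standard facts: the weak-$*$ convergence of weight measures coming from varifold convergence, and the lower semicontinuity of mass under flat convergence. Since $M$ is closed and $\|T\|,\|V\|$ are Radon measures, the inequality $\|T\|\le\|V\|$ is equivalent to
\[
    \int_M \varphi\, d\|T\| \;\le\; \int_M \varphi\, d\|V\|
\]
for every $\varphi\in C(M)$ with $\varphi\ge 0$, so it suffices to prove this for all such $\varphi$.

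First I would exploit the hypothesis $|T_i|\to V$ in the varifold weak topology: by definition this forces the weight measures $\|T_i\|$ to converge weakly-$*$ to $\|V\|$ as Radon measures, so $\int_M\varphi\, d\|V\| = \lim_i \int_M\varphi\, d\|T_i\|$, and (taking $\varphi\equiv 1$) $\sup_i\|T_i\|(M)<\infty$. For the current side I would use the dual description of the $\varphi$–weighted mass of a $2$–dimensional flat chain $S$: for every smooth $2$–form $\omega$ on $M$ whose pointwise comass is $\le\varphi$ one has $S(\omega)\le\int_M\varphi\, d\|S\|$, and the supremum of $S(\omega)$ over such $\omega$ equals $\int_M\varphi\, d\|S\|$. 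Given flat convergence $T_i\to T$, write $T_i-T = R_i+\partial Q_i$ with $\mathbf{M}(R_i)+\mathbf{M}(Q_i)\to 0$; then for each fixed $\omega$ as above, $(T_i-T)(\omega)=R_i(\omega)+Q_i(d\omega)\to 0$, hence $T(\omega)=\lim_i T_i(\omega)\le\liminf_i\int_M\varphi\, d\|T_i\|$. Taking the supremum over admissible $\omega$ gives $\int_M\varphi\, d\|T\|\le\liminf_i\int_M\varphi\, d\|T_i\| = \int_M\varphi\, d\|V\|$, which is the desired inequality.

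The main obstacle — essentially the only delicate point — is that all of the objects carry $\Z_2$–coefficients, so the literal pairing of a mod–$2$ current against a real differential form invoked above is not directly available. This is resolved by appealing to the theory of flat chains over a finite coefficient group (Fleming), in which mass and flat norm are defined and mass remains lower semicontinuous under flat convergence; equivalently, one can argue locally in coordinate charts by representing each $T_i$ by an integer–multiplicity rectifiable current, reducing mod $2$, applying the classical duality to the real representatives, and passing back. Either route yields exactly the weighted lower–semicontinuity inequality used in the second paragraph, and the full details are precisely the content of \cite[2.1 (f)]{Pit81}.
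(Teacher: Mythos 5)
The paper gives no proof here; it cites this as a basic fact from Pitts~\cite[2.1 (f)]{Pit81}. Your argument is the standard one and is, in structure, what underlies that reference: varifold convergence of $|T_i|$ pushes forward under the projection $G_2(M)\to M$ to weak-$*$ convergence of the weight measures $\|T_i\|\to\|V\|$, and flat convergence together with the resulting uniform mass bound gives lower semicontinuity of the $\varphi$-weighted mass, which combine to give $\int\varphi\,d\|T\|\le\int\varphi\,d\|V\|$ for every nonnegative $\varphi\in C(M)$, hence $\|T\|\le\|V\|$. You are also right that the only genuinely delicate point is that the duality against smooth forms is unavailable over $\Z_2$, and the Fleming route you indicate is the correct fix: in the theory of flat chains over a finite normed coefficient group, mass is by construction the largest lower semicontinuous extension of polyhedral mass, and the $\varphi$-weighted version follows from the same machinery (e.g.\ by approximating $\varphi$ from below by simple functions and applying lower semicontinuity on each sublevel, or by rerunning the l.s.c.\ argument with the weighted mass as base functional). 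One caution: your second suggested route — lifting locally to integer-multiplicity representatives, applying the classical form duality, and reducing mod~2 — is shakier than it looks, because mod-2 flat convergence of $T_i$ does not by itself give integer flat convergence of chosen integer lifts, so one cannot simply ``pass back.'' Stick with the Fleming argument; with that, the proposal is sound.
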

    
    \begin{cor}\label{cor:BoldFclose_twocomps}
        For any $r > 0$, there exists $r_1 > 0$ and $\delta_1 > 0$ such that for every $V \in \cW_{L, \leq \mathfrak{g}_0}$ associated with some $\Gamma_k$, the set
        \[
            \cD_k := \{T \in \cZ_2(M; \bF; \Z_2) : \bF(|T|, V) < r_1\}
        \]
        has exactly two connected components $\cD^0_k \cup \cD^1_k$ characterized by
        \begin{align*}
            \cD^0_k &= \{T \in \cD_k : \cF(T, 0) < \delta_1\}\,,\\
            \cD^1_k &= \{T \in \cD_k : \cF(T, 0) > \delta_1\} = \{T \in \cD_k : \bF(T, [\Gamma_k]) < r\}\,.
        \end{align*}
    \end{cor}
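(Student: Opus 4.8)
\textbf{Proof proposal for Corollary \ref{cor:BoldFclose_twocomps}.}

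The plan is to exploit the standard dichotomy for mod-2 cycles that are varifold-close to a connected, multiplicity-one, two-sided minimal surface: such a cycle is flat-close either to the zero cycle or to $[\Gamma_k]$, and these two alternatives are separated by a definite gap in the flat norm. Concretely, fix $r>0$ and suppose, for contradiction, that for every choice of $r_1,\delta_1>0$ one can find a cycle $T$ violating the claimed description. Letting $r_1=\delta_1=1/i$ produces a sequence $T_i\in\cZ_2(M;\Z_2)$ with $\bF(|T_i|,V)<1/i$, hence $|T_i|\to V$ as varifolds and $\mathbf{M}(T_i)\to\|V\|(M)=L$. Since the masses are uniformly bounded, after passing to a subsequence $T_i\to T_\infty$ in the flat topology for some $T_\infty\in\cZ_2(M;\Z_2)$. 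By Lemma \ref{lem:Pitts_currents_varifolds}, $\|T_\infty\|\le\|V\|=\mathcal H^2\llcorner\Gamma_k$, so $T_\infty=\Gamma_k\llcorner E$ for some $\mathcal H^2$-measurable $E\subset\Gamma_k$; since $T_\infty$ is a cycle and $\Gamma_k$ is connected, $E$ is (up to $\mathcal H^2$-null sets) either empty or all of $\Gamma_k$, i.e. $T_\infty=0$ or $T_\infty=[\Gamma_k]$. This is the heart of the matter: the cycle structure plus connectedness of $\Gamma_k$ forces exactly two possible flat limits.

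Next I would upgrade this ``limit'' statement to a uniform gap. Because $\Gamma_k$ is a smooth connected two-sided minimal surface, $[\Gamma_k]\neq 0$ in $\cZ_2(M;\Z_2)$, so $\cF([\Gamma_k],0)=:2\delta_1^{(k)}>0$; set $\delta_1:=\tfrac12\min_k\delta_1^{(k)}$. I claim there is $r_1>0$ such that $\bF(|T|,V)<r_1$ forces $\cF(T,0)<\delta_1$ or $\cF(T,0)>\delta_1$, with the two regimes being exactly $\bF(T,0)$ small, resp. $\bF(T,[\Gamma_k])<r$. If not, iterating the compactness argument above yields $T_i$ with $\cF(T_i,0)\to\delta_1$, contradicting that the flat limit $T_\infty$ equals $0$ (flat distance $0$) or $[\Gamma_k]$ (flat distance $2\delta_1^{(k)}\ge 2\delta_1>\delta_1$). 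The same compactness argument, applied to sequences in the ``$T_\infty=[\Gamma_k]$'' regime, shows that after possibly shrinking $r_1$ one has $\bF(T,[\Gamma_k])<r$ whenever $\cF(T,0)>\delta_1$ and $\bF(|T|,V)<r_1$: indeed a sequence with $\bF(|T_i|,V)<1/i$, $\cF(T_i,0)>\delta_1$, $\bF(T_i,[\Gamma_k])\ge r$ would have flat limit $T_\infty\in\{0,[\Gamma_k]\}$, but $\cF(T_i,0)>\delta_1$ rules out $0$ while $\bF(T_i,[\Gamma_k])\ge r$ rules out $[\Gamma_k]$, using that $\bF$ dominates $\cF$ so flat convergence together with the varifold convergence $|T_i|\to V=|\Gamma_k|$ upgrades to $\bF(T_i,[\Gamma_k])\to 0$.

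Finally I would check the two sets $\cD^0_k,\cD^1_k$ are open, disjoint, nonempty, and exhaust $\cD_k$, hence are its connected components: disjointness and exhaustion follow from the trichotomy ``$\cF(T,0)<\delta_1$, $=\delta_1$, or $>\delta_1$'' once we know the middle case is empty on $\cD_k$; nonemptiness is witnessed by $0$ (with $|0|=0$, but one instead takes a small perturbation of $\Gamma_k$ bounding a thin region, or simply observes $\cD_k^0\ni$ any cycle bounding a set of tiny volume with varifold near $V$—here one uses that $V$ has multiplicity one so nearby cycles of the right mass exist) and by $[\Gamma_k]$ itself; openness of $\cD^0_k,\cD^1_k$ is immediate from continuity of $T\mapsto\cF(T,0)$ in the $\bF$-topology and the strict inequalities, together with the fact that each is also cut out by a strict $\bF$-inequality ($\bF(T,0)$ small, resp. $\bF(T,[\Gamma_k])<r$), which shows each is a union of $\bF$-balls and in particular path-connected within $\cD_k$—contract towards $0$, resp. towards $[\Gamma_k]$, through cycles bounding the interpolating regions. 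The main obstacle is this last connectivity assertion: one must produce, for each $T\in\cD^1_k$, an $\bF$-continuous path inside $\cD^1_k$ from $T$ to $[\Gamma_k]$, which I would obtain from the mean-convexity of the small balls $B_r(p)$ and a standard ``graph-straightening'' deformation of $T$ over $\Gamma_k$ (retracting the region between $T$ and $\Gamma_k$), keeping the varifold close to $V$ throughout; the analogous statement for $\cD^0_k$ uses shrinking of the enclosed region to the empty set.
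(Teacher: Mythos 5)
Your argument is essentially the same as the paper's: Lemma \ref{lem:Pitts_currents_varifolds} gives $\|T_\infty\| \le \|V\|$ for flat limits of sequences with $|T_i| \to V$; the constancy theorem (which is exactly your observation that $T_\infty = \Gamma_k \llcorner E$ with $E$ empty or all of the connected $\Gamma_k$, since $\partial T_\infty = 0$) forces $T_\infty \in \{0, [\Gamma_k]\}$; and a sequential-compactness argument converts the limit dichotomy into a uniform flat-norm gap for small $r_1$. The paper compresses the last step to ``by choosing suitable $r_1$ and $\delta_1$, the conclusion follows immediately,'' so your explicit extraction and choice of $\delta_1$ in terms of $\min_k \cF([\Gamma_k],0)$ is a welcome elaboration of what the authors leave implicit.

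One logical slip worth correcting: being open, disjoint, nonempty, and exhaustive makes $\cD^0_k, \cD^1_k$ a clopen partition of $\cD_k$, but does not yet make them \emph{connected} components; and ``a union of $\bF$-balls'' is not in general path-connected. You catch this yourself a few lines later when you call the connectivity ``the main obstacle,'' but the earlier phrases ``hence are its connected components'' and ``in particular path-connected'' are false as written and should be deleted. Your sketched graph-straightening retraction onto $[\Gamma_k]$ (resp.\ shrinking onto $0$) is plausible but would require nontrivial work to keep the varifold distance to $V$ small throughout; note that the paper's one-line proof does not address literal connectedness either, and the downstream application (the decomposition of $Y_i$ into $Y^0_i \sqcup Y^1_i$ in the proof of Theorem \ref{thm:currentsCloseInBoldF}) only uses the clopen partition together with the characterization $\cD^1_k = \{T \in \cD_k : \bF(T,[\Gamma_k]) < r\}$, both of which your compactness argument does establish.
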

    \begin{proof}
        By Lemma~\ref{lem:Pitts_currents_varifolds}, every sequence $\{T_i\}_i \subset \cZ_2(M; \Z_2)$ satisfying $\lim_i T_i = T$ and $\lim_i |T_i| = V$ has 
        \[
            \|T\| \leq \|V\|\,.
        \]
    
        Since $V$ is associated with a connected multiplicity-one minimal surface $\Gamma_k$ and $\partial T = 0$, by the constancy theorem, we have either $T = [\Gamma_k]$ or $T = 0$. Thus, by choosing suitable $r_1$ and $\delta_1$, the conclusion follows immediately.
    \end{proof}
    
    Fix $r$ from Theorem~\ref{thm:currentsCloseInBoldF} and then choose $r_1$ and $\delta_1$ from the previous Corollary. The crucial proposition we need is the following:
    
    \begin{prop}\label{prop:AM_current_close}
        For every $\alpha \in (0, 2r_0)$, there exists $\varepsilon \in (0, r_1)$ such that for any punctate surface $\Sigma \in \GS(M)$ and any $\delta > 0$, if for some $\Gamma_k$ and some $p \in \Gamma_k$,
        \begin{enumerate}[label=\normalfont(\roman*)]
            \item $\bF(|\Sigma|, |\Gamma_k|) < \varepsilon$,
            \item $\Sigma \cap B_\alpha(p)$ is smooth,
            \item $\mathfrak{g}(\Sigma \cap B_\alpha(p)) = 0$,
            \item $[\Sigma] \in \mathfrak{a}(B_\alpha(p); \varepsilon, \delta)$,
        \end{enumerate}
        then $\cF([\Sigma], 0) > \delta_1$.
    \end{prop}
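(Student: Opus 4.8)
The plan is to argue by contradiction, exploiting the interplay between the varifold closeness hypothesis and the condition that $\Sigma$ does \emph{not} admit an $(\varepsilon,\delta)$-deformation inside $B_\alpha(p)$. Suppose no such $\varepsilon$ works: then there is a fixed $\alpha\in(0,2r_0)$ and sequences $\varepsilon_i\to 0$, $\delta_i>0$, generalized surfaces $\Sigma_i\in\GS(M)$, and points $p_i\in\Gamma_{k_i}$ such that $\bF(|\Sigma_i|,|\Gamma_{k_i}|)<\varepsilon_i$, $\Sigma_i\cap B_\alpha(p_i)$ is smooth of genus $0$, $[\Sigma_i]\in\mathfrak{a}(B_\alpha(p_i);\varepsilon_i,\delta_i)$, yet $\cF([\Sigma_i],0)\leq\delta_1$. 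Passing to a subsequence, $k_i\equiv k$ is constant, $p_i\to p\in\Gamma_k$, and $|\Sigma_i|\to|\Gamma_k|$ as varifolds. By the flat-vs-varifold compatibility (Lemma \ref{lem:Pitts_currents_varifolds}) and the constancy theorem, $[\Sigma_i]$ converges in the flat topology to either $[\Gamma_k]$ or $0$; since $\cF([\Sigma_i],0)\leq\delta_1$, the limit must be $0$, so $[\Sigma_i]\to 0$ flatly.

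The heart of the argument is to show that a genus-$0$ piece of $\Sigma_i$ lying in a ball $B_\alpha(p_i)$, which is varifold-close to the minimal disk $\Gamma_k\cap B_\alpha(p_i)$, must admit a definite area-decreasing deformation supported in that ball — contradicting $[\Sigma_i]\in\mathfrak{a}(B_\alpha(p_i);\varepsilon_i,\delta_i)$ for $i$ large. Concretely, I would: (i) choose a slightly smaller radius $\alpha'<\alpha$ with $\partial B_{\alpha'}(p_i)\pitchfork\Sigma_i$ and $\cH^2(\Sigma_i\cap(B_\alpha(p_i)\setminus B_{\alpha'}(p_i)))$ controlled; (ii) observe that $\Sigma_i\cap B_{\alpha'}(p_i)$, being a smooth genus-$0$ surface with boundary on the mean-convex sphere $\partial B_{\alpha'}(p_i)$ and with small flat norm (as a relative cycle, using $[\Sigma_i]\to 0$ together with the monotonicity/density bounds), bounds a Caccioppoli region of small volume inside the ball; (iii) apply a Simon-Smith-type isotopy (à la \cite{DP10}, or the surgery/shrinking mechanism analogous to the pinch-off process) that sweeps $\Sigma_i\cap B_{\alpha'}(p_i)$ down through this small region while keeping area bounded throughout and reducing total area by a fixed amount $\varepsilon$ depending only on $\alpha$ and the local geometry. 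The mean-convexity of $\partial B_{\alpha'}(p_i)$ — guaranteed by our choice of $r_0$ — ensures the isotopy can be taken to push inward without increasing boundary contributions, and the genus-$0$ hypothesis is exactly what allows the interior piece to be contracted by isotopies (no topological obstruction). This produces an $(\varepsilon,\delta_i)$-deformation in $B_\alpha(p_i)$ for all large $i$ once $\delta_i$ is taken small relative to $\varepsilon$, which is the desired contradiction.

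The main obstacle I anticipate is step (iii): producing a \emph{quantitative} area-decreasing isotopy with a lower bound $\varepsilon$ on the area drop that is uniform over the (compact, finite) list of minimal surfaces $\Gamma_k$ and over points $p\in\Gamma_k$, while simultaneously controlling the area along the whole isotopy by $\cH^2(\Sigma)+\delta$. The subtlety is that $\Sigma_i$ is only varifold-close to $\Gamma_k$, so a priori it could have a thin neck or extra sheets inside the ball; one must argue that after the transverse-slicing in step (i) the relevant inner piece is genuinely a small perturbation — in flat norm and in area — of the minimal disk, so that its area exceeds that of a cone (or of the flat disk it bounds) by a definite amount, giving room to decrease area. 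I would handle this by combining the local area bound with the smallness of $\cF([\Sigma_i],0)$ to bound the volume of the enclosed region, then invoke the isoperimetric-type inequality together with the strict instability of minimal surfaces of the relevant area (there are no stable minimal surfaces, since $\Ric>0$ — this is why we are in the positive Ricci setting) to extract the uniform $\varepsilon$. A secondary technical point is ensuring the isotopy genuinely lies in $\Is(B_\alpha(p_i))$ and is continuous, which follows from the standard constructions in \cite{DP10} once the region to be swept out is identified.

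Alternatively, if the direct isotopy construction proves awkward, one can phrase step (iii) via the replacement/comparison lemmas already available: since $\partial B_{\alpha'}(p_i)$ is mean-convex, replacing $\Sigma_i$ inside $B_{\alpha'}(p_i)$ by the solution of a least-area problem with the same boundary (which exists and is a disk by the genus-$0$ and mean-convexity hypotheses) strictly decreases area by a uniform amount, and this replacement can be realized through an isotopy by Meeks–Simon–Yau-type arguments. Either route reduces the proposition to a uniform quantitative gap, which is exactly what the contradiction setup delivers in the limit $i\to\infty$.
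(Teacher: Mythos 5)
Your setup is correct and matches the paper: argue by contradiction, extract a sequence $\Sigma_i$ with $\bF(|\Sigma_i|,|\Gamma_k|)\to 0$, genus $0$ inside the ball, almost-minimizing in the ball, but $\cF([\Sigma_i],0)\leq\delta_1$; then by Lemma \ref{lem:Pitts_currents_varifolds} and Corollary \ref{cor:BoldFclose_twocomps} conclude $\cF([\Sigma_i],0)\to 0$. You also correctly identify the replacement mechanism (Meeks--Simon--Yau / De Lellis--Pellandini) as the relevant tool to exploit the almost-minimality hypothesis inside a slightly shrunken ball with transverse boundary.

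However, both of your proposed routes to the contradiction hinge on producing a \emph{uniform quantitative area drop}, and this is where the argument fails. You assert, for instance, that replacing $\Sigma_i\cap B_{\bar r}(p)$ by the least-area disk with the same boundary ``strictly decreases area by a uniform amount.'' This is not true: as $i\to\infty$, $\Sigma_i$ is varifold-close to $|\Gamma_k|$, so inside the ball it is approaching a minimal disk, and the area decrease afforded by the replacement tends to \emph{zero}, not to a positive constant. Varifold-closeness to a minimal surface is precisely the regime where one cannot extract a definite area decrease by isotopies, and this is in fact consistent with the hypothesis $[\Sigma_i]\in\mathfrak{a}(B_\alpha(p);\varepsilon_i,\delta_i)$ with $\varepsilon_i\to 0$ — those $\Sigma_i$ are \emph{not} supposed to admit a deformation with a fixed area drop. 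You flag this as ``the main obstacle,'' and it is, but the sketch offered (small enclosed volume plus an isoperimetric inequality plus instability from $\mathrm{Ric}>0$) does not actually yield a positive lower bound on the achievable drop, and the statement is also supposed to apply to a general closed $(M,g)$ in this section, without assuming $\mathrm{Ric}>0$.

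The paper avoids this entirely: it never produces a quantitative area-decreasing deformation. Instead it takes the \emph{minimizing sequence} of $\delta^{(i)}$-controlled isotopies permitted by almost-minimality, obtaining a replacement varifold $V_i$ whose restriction to $B_{\bar r}(p)$ is a smooth minimal surface $\Delta_i$ (\cite[Prop.~3.2]{DP10}), of genus $0$ by \cite[Prop.~4.7]{Ket19}, hence a separating disk; $V_i$ is therefore associated to a cycle $T_i$. The contradiction is then purely about currents: on one side $T_i$ agrees with $[\Sigma_i]$ outside $B_{\bar r}(p)$ and hence tends to $0$ there; on the other side $\Delta_i$ is a normal graph over $\Gamma_k$ inside $B_{\alpha/2}(p)$, so $T_i$ encloses a definite volume, giving $\liminf_i\cF(T_i,0)>0$, while $|T_i|\to|\Gamma_k|$ forces $T_i\to[\Gamma_k]$ by Lemma \ref{lem:Pitts_currents_varifolds} and constancy. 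These two conclusions are incompatible since $[\Gamma_k]\neq 0$ outside $B_{2r_0}(p)$. So the missing idea in your proposal is to stop trying to contradict almost-minimality via an area gap and instead use the almost-minimality \emph{constructively}, comparing the flat limit of the resulting replacement current inside and outside the ball.
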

    \begin{proof}
        Suppose for the sake of contradiction that for $\alpha > 0$, $\Gamma_k$ and $p \in \Gamma_k$, there exists a sequence of punctate surfaces $\{\Sigma_i\} \subset \GS(M)$ and a sequence of positive number $\{\delta^{(i)}\}$ such that for all large $i$,
        \begin{enumerate}[label=\normalfont(\roman*)]
            \item $\bF(|\Sigma_i|, |\Gamma_k|) < \frac{1}{i}$,
            \item $\Sigma_i \cap B_\alpha(p)$ is smooth,
            \item $\mathfrak{g}(\Sigma_i \cap B_\alpha(p)) = 0$,
            \item $[\Sigma_i] \in \mathfrak{a}(B_\alpha(p); \frac{1}{i}, \delta^{(i)})$,
        \end{enumerate}
        but $\cF([\Sigma_i], 0) \leq \delta_1$. Hence, by Lemma~\ref{lem:Pitts_currents_varifolds} and  Corollary~\ref{cor:BoldFclose_twocomps}, we have $\cF([\Sigma_i], 0) < \delta_1$ and 
        \[
            \lim_{i \to \infty} \cF([\Sigma_i], 0) = 0\,.
        \]
    
        We can choose $\bar r \in (\alpha / 2, \alpha) \subset (0, 2r_0)$ such that for each $i$, $\partial B_{\bar r}(p) \pitchfork \Sigma_i$. Note that by definition, we also have $|\Sigma_i| \in \mathfrak{a}(B_{\bar r}(p); \frac{1}{i}, \delta^{(i)})$.
    
        Following~\cite[Section~3.2]{Ket19}, let $\mathfrak{Is}_i(\Sigma_i, B_{\bar r}(p))$ denote the set of all isotopies $\phi$ of $M$ supported in $B_{\bar r}(p)$ such that for all $t \in [0, 1]$,
        \[
            \cH^2(\phi(t, \Sigma_i)) \leq \cH^2(\Sigma_i) + \delta^{(i)}\,.
        \]
        For each $i = 1, 2, 3, \dots$, we can take a {\it minimizing sequence $\phi^l(1, \Sigma_i)$} for the Problem$(\Sigma_i, \mathfrak{Is}_i(\Sigma_i, B_{\bar r}(p)))$, i.e., 
        \[
            \lim_{l \to \infty} \cH^2(\phi^l(1, \Sigma_i)) = \inf_{\phi \in \mathfrak{Is}_i(\Sigma_i, B_{\bar r}(p))} \mathbf{M}(\phi(1, \Sigma_i))\,,
        \]
        and let $V_i$ be the varifold limit of $\phi^l(1, \Sigma_i)$. 
        
        It follows from~\cite[Lemma~3.12]{Ket19} that as $i \to \infty$,
        \[
            V_i \to |\Gamma_k|
        \]
        in the varifold topology, and the convergence is smooth in compact subsets of $B_{\bar r}(p)$. By~\cite[Proposition 3.2]{DP10}, we also know that $V_i \cap B_{\bar r}(p)$ is associated with a smooth minimal surface $\Delta_i$ such that $\partial \Delta_i = \partial (\Sigma_i \cap B_{\bar r}(p))$.
    
        By~\cite[Proposition~4.7]{Ket19}, we have
        \[
            \mathfrak{g}(\Delta_i) \leq \mathfrak{g}(\Sigma_i \cap B_{\bar r}(p)) = 0\,,
        \]
        so $\Delta_i$ is a minimal disk. In particular, $\Delta_i$ separates $B_{\bar r}(p)$, and $V_i$ is associated with a cycle $T_i \in \mathcal{Z}_2(M; \Z_2)$.
        
        On one hand, note that
        \[
            T_i \llcorner (M \setminus B_{\bar r}(p)) = [\Sigma_i] \llcorner (M \setminus B_{\bar r}(p))\,.
        \]
        In particular,
        \[
            T_i \to 0
        \]
        outside $B_{2r_0}(p)$.
        
        On the other hand, in $B_{\alpha/2}(p)$, for sufficiently large $i$, $\Delta_i$ is a normal graph over $\Sigma_i$, $B_{\alpha/2}(p) \setminus \Delta_i$ has two connected components $E^1_i$ and $E^2_i$ and $B_{\alpha/2}(p) \setminus \Gamma_k$ has two connected components $E^1$ and $E^2$ satisfying
        \[
            \min\{\operatorname{vol}(E^1_i), \operatorname{vol}(E^2_i)\} \geq \min\{\operatorname{vol}(E^1), \operatorname{vol}(E^2)\} / 2 > 0\,.
        \]
        Therefore, we have 
        \[
            \liminf_i \mathcal{F}(T_i, 0) \geq \min\{\operatorname{vol}(E^1), \operatorname{vol}(E^2)\} / 2 > 0\,.
        \] Since $|T_i|=V_i \to |\Gamma_k|$, by Lemma~\ref{lem:Pitts_currents_varifolds}, we have $\lim T_i = [\Gamma_k]$. This contradicts that $T_i \to 0$ outside $B_{2r_0}(p)$.
    \end{proof}

    \begin{proof}[Proof of Theorem~\ref{thm:currentsCloseInBoldF}]
        As above, for $r > 0$, we choose $r_1$ and $\delta_1$ from Corollary~\ref{cor:BoldFclose_twocomps} and also choose $\varepsilon$ from Proposition~\ref{prop:AM_current_close} with the parameters $r_1$ and $\delta_1$.

        Let $\alpha = r_0 / (16 \cdot 8^{\bar N})$ and choose $\varepsilon$ from Proposition~\ref{prop:AM_current_close}. We set $r_3 := \min(r, r_1, \varepsilon)$.

        For each $\Gamma_k$, choose $p_k \in \Gamma_k$ and for each $i = 1, \cdots, \bar N$, we set $r_i = \frac{7r_0}{8^i}$ and $s_i = \frac{5r_0}{8^i}$, and we obtain a $\bar N$-admissible collection of annuli, $\{A^k_i := A(p_k; s_i, r_i)\}$. We also choose $\{p^1_k, \cdots, p^{\bar N}_k\} \subset \Gamma_k$ such that for each $i = 1, \cdots, \bar N$,
        \[
            \dist(p_k, p^i_k) = \frac{3r_0}{4 \cdot 8^{i - 1}}\,.
        \]
        and thus, $B(p^i_k, \alpha) \subset A^k_i$.
        
        By Theorem~\ref{thm:minMax}, there exists a pulled-tight sequence $\{\Phi_i\}$ in $\Lambda$ and some $\eta_3 \in (0, r_3)$ such that for each $i$ and each $x \in X$,
        \[
            \cH^2(\Phi_i(x)) \geq L - \eta_3 \implies |\Phi_i(x)| \in \bB^\bF_{r_3}(\cW_{L, \leq \mathfrak{g}_0})\,.
        \]
        
        It follows from Corollary~\ref{cor:BoldFclose_twocomps} that the compact set
        \[
            Y_i := \{x \in X : \cH^2(\Phi_i(x)) \geq L - \eta_3\}
        \]
        can be decomposed as two disjoint compact sets $Y^0_i$ and $Y^1_i$:
        \begin{align*}
            Y^0_i &= \{x \in Y_i : \cF([\Phi_i(x)], 0) < \delta_1\}\\
            Y^1_i &= \{x \in Y_i : \cF([\Phi_i(x)], 0) > \delta_1\} = \{x \in Y_i : \bF([\Phi_i(x)], [\cW_{L, \leq \mathfrak{g}_0}]) < r\}\,.
        \end{align*}
        
        Now for each $x \in Y^0_i$, we can choose $k$ such that $\bF(|\Phi_i(x)|, |\Gamma_k|) < r_3 \leq \varepsilon$. Since $\mathfrak{g}(\Phi_i(x)) \leq \mathfrak{g_0}$ and we can choose a set of at most $N$ points $P_{\Phi_i}(x)$ such that $\Phi_i(x) \setminus P_{\Phi_i}(x)$ is a smooth surface, there are at least $K(m)$ points $\{p^{j_l}_{k}\}^{K(m)}_{j_l}$ from $\{p^j_k\}^{\bar N}_{j = 1}$ where for each $l$,
        \begin{enumerate}
            \item $\Phi_i(x) \cap B_{\alpha}(p^{j_l}_k)$ is smooth,
            \item $\mathfrak{g}(\Phi_i(x) \cap B_{\alpha}(p^{j_l}_k)) = 0$,
        \end{enumerate}
        hence, $\Sigma$ admits an $(\varepsilon, \frac{\eta_3}{1000 K(m)})$ deformation in $B_{\alpha}(p^{j_l}_k)$.

        By Proposition~\ref{prop:an_deform}, we can obtain $\Phi^*_i$ for each $\Phi_i$. Moreover, for sufficiently large $i$, since $\eta_3 < \varepsilon$, we have the following properties.

        For every $x \in Y^0_i$, 
        \begin{align*}
            \cH^2(\Phi^*_i(x)) &< \cH^2(\Phi_i(x)) - \varepsilon + (3^m - 1) \frac{\eta_3}{1000 K(m)}\\
                &< \cH^2(\Phi_i(x)) - \eta_3 / 2 \\
                &< L - \eta_3 / 2\,.
        \end{align*}

        For every $x \in X \setminus Y_i$,
        \begin{align*}
            \cH^2(\Phi^*_i(x)) &< \cH^2(\Phi_i(x)) + 3^m \frac{\varepsilon}{1000 K(m)}\\
                &< L - \eta_3 + 3^m \frac{\eta_3}{1000 K(m)}\\
                &< L - \eta_3 / 2\,.
        \end{align*}

        Since $Y^0_i$ and $Y^1_i$ are disjoint compact set, we also have $\Phi^*_i(x) = \Phi_i(x)$ holds for every $x \in Y^1_i$.
        
        Therefore, for every $x \in X$ with $\cH^2(\Phi^*_i(x)) \geq L - \eta_3 / 2$, then $x \in Y^1_i$ and thus,
        \[
            [\Phi_i(x)] \in \bB^\bF_{r_3}([\cW_{L, \leq \mathfrak{g}_0}]) \subset \bB^\bF_r([\cW_{L, \leq \mathfrak{g}_0}])\,,
        \]
        which completes the proof with $\eta = \eta_3 / 2$ and $\Phi = \Phi^*_i$.
    \end{proof}

\section{Min-max results III: Interpolation}\label{sect:min-max_iii}
    The goal of this section is to prove Theorem~\ref{thm:mapping_cylinder}. Let $(M, g)$ be a $3$-dimensional closed Riemannian manifold.

    The key is the following deformation proposition. Intuitively, for a Simon-Smith family of genus $\leq 1$ very close to spheres, we show that we can pinch all the small necks and shrink all small connected components  in a continuous way to obtain a Simon-Smith family of genus $0$. 
    
    \begin{prop}\label{Prop_Technical Deformation}
        Let $X$ be a cubical subcomplex of $I(m, k)$, $\mathscr{S}$ be a compact subset of embeddings of $\mathbb{S}^2$ into $(M, g)$ (endowed with smooth topology), $q \in \N$ and $\varepsilon \in (0,1)$. Then there exists $\delta = \delta(\varepsilon, q, m, \mathscr{S})\in (0, 1)$ with the following property.

        If $\Phi: X\to \GS(M)$ is a Simon-Smith family of genus $\leq 1$, $N_P(\Phi) < q$ and $S: X\to \mathscr{S}$ is a map such that for every $x\in X$, 
        \[
            \bF([\Phi(x)], [S(x)]) \leq \delta
        \]
        then there exists a Simon-Smith family $H: [0, 1] \times X \to \GS(M)$ of genus $\leq 1$ such that:
        \begin{enumerate}[label=\normalfont(\arabic*)]
            \item For every $x\in X$, $H(0, x) = \Phi(x)$, $\mathfrak{g}(H(1, x)) = 0$.
            \item\label{item:area_control} For every $x\in X$ and $0\leq t \leq t'\leq 1$,
                \begin{align*}
                    \cH^2(H(t, x)) \leq \cH^2(\Phi(x)) + \varepsilon\,,\quad \mathfrak{g}(H(t', x)) \leq \mathfrak{g}(H(t, x)) \,.
                \end{align*}
            \item For each $x \in X$, the Simon-Smith family $\{H(x, t)\}_{t \in [0, 1]}$ is a pinch-off process.
        \end{enumerate}
    \end{prop}

    We will prove the proposition using the following technical lemmas.
    
\subsection{Nontrivial loop in a small ball}

    In this subsection, we show that if a (singular) torus is sufficiently close to a sphere, we can find a loop contained within a small ball. In the next subsection, we will explain how to pinch the torus along the boundary of the small ball and then shrink all connected components inside to a point. The small size of the ball helps control the area increase, as required in Proposition~\ref{Prop_Technical Deformation}~\ref{item:area_control}. This existence result is closely related to the well-known systolic inequality.

    \begin{thm}[Loewner torus inequality~{\cite[Section~3]{Pu52}}, Gromov's homological systolic inequality~{\cite[Section~2.C]{Gro92}}]\label{thm:systolic_ineq} %%
        Given a smooth orientable closed surface $\Gamma$ of positive genus, there exists a universal constant $C > 0$ such that
        \[
            \operatorname{sys} H_1(\Gamma; \Z_2) := \inf \left\{\mathcal{H}^1(c) : [c] \neq 0 \in H_1(\Gamma; \Z_2)\right\} \leq C \sqrt{\mathcal{H}^2(\Gamma)}\,.
        \]
        In particular, there exists a simple closed geodesic $c \in \Gamma$ such that $[c] \neq 0 \in H_1(\Gamma; \Z_2)$ and $\mathcal{H}^1(c) \leq C \sqrt{\mathcal{H}^2(\Gamma)}$.
    \end{thm}

    Let us first prove the existence of a nontrivial small loop for embedded tori sufficiently close to an embedded sphere.

    \begin{lem}[Existence of small nontrivial loop: Smooth surfaces] \label{lem:small_loop} %%
        In a closed Riemannian $3$-manifold $(M, g)$, let $S$ be an embedded sphere. For any $\varepsilon\in (0,1)$, there exists $\delta = \delta_0(S, \varepsilon) \in (0, \varepsilon)$ with the following property. %%

        In $(M, g)$, for any embedded torus $T$ with $\mathbf{F}([S], [T]) \leq \delta$, there exists $p\in M$ and a simple loop $\gamma\subset \Sigma\cap B(p, \varepsilon)$ such that the closed surface obtained by neck-pinch along $\gamma$ of $\Sigma$ has genus $0$. 
    \end{lem}
    \begin{proof} 
        Without loss of generality, we may assume that $S$ is a unit round $2$-sphere, and it has a tubular neighborhood $N_{\zeta} (S) \cong (S \times [-\zeta, \zeta], g_S + dt^2)$ with $\zeta < \varepsilon / 10$. We denote by 
        \[
            \pi: N_{\zeta}(S) \to S
        \] the corresponding closest-point projection.
        
        The general case follows by changing the metric $g$ and taking a smaller $\delta$.

        \medskip
        \paragraph*{\textbf{Step 0. Set up}} %%
        
        Fix a triangulation $\mathcal{T}$ of $S$ such that:
        \begin{enumerate}[label = \normalfont(0.\roman*)]
            \item\label{item:Delta_small_area} For any $\Delta \in \mathcal{T}^{(2)}$, $\mathcal{H}^2(\Delta) \in \left(0, \frac{\varepsilon^2}{100}\right)$.
            \item\label{item:Delta_small_length} For any $e \in \mathcal{T}^{(1)}$, $\mathcal{H}^1(e) \in \left(0, \frac{\varepsilon^2}{100}\right)$.
        \end{enumerate} 
        Let $m$ denote the total number of cells in $\mathcal{T}$. Set $\tau := \frac{1}{1000m^4}$. %%

        For convenience, in the following, for any $v \in \mathcal{T}^{(0)}$, $e \in \mathcal{T}^{(1)}$, $\Delta \in \mathcal{T}^{(2)}$ and $\varphi \in \operatorname{Isom}(S) = SO(3)$, we define:
        \begin{itemize}
            \item the vertical line $\mathrm{I}_{v, \varphi} := \pi^{-1}(\varphi(v))$;
            \item the rectangle $\mathrm{R}_{e,\varphi} := \pi^{-1}(\varphi(e))$;
            \item the cylinder $\mathrm{Cyl}_{\Delta, \varphi} := \pi^{-1}(\partial \varphi(\Delta))$;
            \item the prism $\mathrm{Pri}_{\Delta, \varphi} := \pi^{-1}(\varphi(\Delta))$.
        \end{itemize} %%

        \medskip
        \paragraph*{\bf Step 1. Existence of a good triangulation} %%

        We show that there exists $\delta' \in (0, 1)$ with the following property: %%
        
        For any closed embedded  surface $T \subset N_{\zeta}(S)$ and $\mathbf{F}([S], [T]) < \delta'$, there exist $\varphi \in \operatorname{Isom}(S) = SO(3)$ such that: %%
        \begin{enumerate}[label=\normalfont(\arabic*)]
            \item\label{item:v_transverse} For any $v \in \mathcal{T}^{(0)}$, $\mathrm{I}_{v, \varphi} \pitchfork T$ is a single point in $T$. %%
            \item\label{item:Delta_transverse} For any $\Delta \in \mathcal{T}^{(2)}$, $\mathrm{Cyl}_{\Delta, \varphi} \pitchfork T = \bigcup^{n_\Delta}_{k = 1} c_{\Delta, k}$ is a finite union of disjoint simple loops in $T$ satisfying
            \[
                \frac{\mathcal{H}^1(\mathrm{Cyl}_{\Delta, \varphi}\cap T)}{\mathcal{H}^1(\partial \Delta)} \in (1 - \tau, 1 + \tau)\,,
            \]
            and
            \[
                \frac{\mathcal{H}^2(\mathrm{Pri}_{\Delta, \varphi})\cap T)}{\mathcal{H}^2(\Delta)} \in (1 - \tau, 1 + \tau)\,,
            \]
        \end{enumerate} %%
        Moreover, for any $\Delta \in \mathcal{T}^{(2)}$, we can arrange $\{c_{\Delta, k}\}^{n_\Delta}_{k = 1} $ in such an order that: %%
        \begin{enumerate}[label=\normalfont(\arabic*)]
            \setcounter{enumi}{2}
            \item\label{item:long_loop} $c_{\Delta, 1}$ satisfies $[c_{\Delta, 1}] \neq 0 \in H_1(\mathrm{Cyl}_{\Delta, \varphi}; \Z_2)$ and
            \[
                \frac{\mathcal{H}^1(c_{\Delta, 1})}{\mathcal{H}^1(\partial \Delta)} \in (1 - \tau, 1 + \tau)\,;
            \] we call it the \emph{long loop} for $\Delta$. %%
            \item\label{item:short_loop} Every $c_{\Delta, k}$ with $k \in \{2, \dots, n_\Delta\}$ satisfies $[c_{\Delta, k}] = 0 \in H_1(\mathrm{Cyl}_{\Delta, \varphi}; \Z_2)$, $c_{\Delta, k} \cap \bigcup_{v \in \mathcal{T}^{(0)}} \mathrm{I}_{v, \varphi}) = \emptyset$, and
            \[
                \frac{\sum^{n_\Delta}_{k = 2} \mathcal{H}^1(c_{\Delta, k})}{\mathcal{H}^1(\partial \Delta)} \in (0, \tau)\,;
            \] we call them the \emph{short loops} for $\Delta$. %%
        \end{enumerate} 

        Let $\mu$ be the Haar measure on $\operatorname{Isom}(S) = SO(3)$.

        By the isoperimetric inequality, we choose $\delta$ small enough, depending on $S$ and $\zeta$, such that $\mathbf{F}([S], [T]) \leq \delta$ and $T \subset N_\zeta(S)$ imply $T$ and $S$ are homologous to each other in $N_{\zeta}(S)$ with $\Z_2$ coefficients.
        
        This implies that for any $v \in \mathcal{T}^{(0)}$, $e \in \mathcal{T}^{(1)}$, $\Delta \in \mathcal{T}^{(2)}$, and $\varphi \in \operatorname{Isom}(S)$, if $\mathrm{I}_{v, \varphi}$ intersects $T$ transversely, then
        \begin{equation*}
            [\mathrm{I}_{v, \varphi} \cap T] = [\varphi(v)] \neq 0 \in H_0(\mathrm{I}_{v, \varphi}; \Z_2)\,;
        \end{equation*}
        if $\mathrm{R}_{e, \varphi}$ intersects $T$ transversely, then
        \begin{equation*}
            [\mathrm{R}_{e, \varphi} \cap T] = [\varphi(e)] \neq 0 \in H_1(\mathrm{R}_{e, \varphi}; \Z_2)\,;
        \end{equation*}
        if $\mathrm{Cyl}_{\Delta, \varphi}$ intersects $T$ transversely, then
        \begin{equation}\label{eqn:transverse_Delta}
            [\mathrm{Cyl}_{\Delta, \varphi} \cap T] = [\varphi(\partial \Delta)] \neq 0 \in H_1(\mathrm{Cyl}_{\Delta, \varphi}; \Z_2)\,,
        \end{equation}
        and
        \begin{equation*}
            [\mathrm{Pri}_{\Delta, \varphi} \cap T] = [\varphi(\Delta)] \neq 0 \in H_2(\mathrm{Pri}_{\Delta, \varphi}; \Z_2)\,,
        \end{equation*}
        By Sard's theorem, the transversality holds for a full measure subset $E$ of $SO(3)$. Note that these give lower bounds on $\cH^0(\mathrm{I}_{v, \varphi} \cap T)$, $\cH^1(\mathrm{R}_{e, \varphi} \cap T)$ and $\cH^2(\mathrm{Cyl}_{\Delta, \varphi} \cap T)$ for all $\varphi \in E$.

        Therefore, by the coarea formula and possibly taking $\delta$ smaller, there exists a subset $W_T \subset E$ with $\mu(W_T) \geq 1/2$ such that (1) and (2) hold. Meanwhile, (3) and (4) follow from \eqref{eqn:transverse_Delta}.
        
        \medskip
        \paragraph*{\textbf{Step 2. Special case}} 

        By Step 1, for every torus $T \subset N_\zeta(S)$ with $\bF([T], [S]) < \delta'$, we can choose a $\varphi \in W_T$. Then for every $\Delta \in \mathcal{T}^{(2)}$, $v \in \Delta^{(0)}$,
        \[
            \mathrm{I}_{v, \varphi} \cap T = \mathrm{I}_{v, \varphi} \cap c_{\Delta, 1}\,.
        \]
        Therefore, every short loop $c_{\Delta, k}$ with $k \geq 2$ lies in the interior of $\mathrm{R}_{e, \varphi}$ for some $e \in \Delta^{(1)}$. In particular, every short loop for $\Delta \in \mathcal{T}^{(2)}$ coincides with exactly one short loop in another $\Delta' \in \mathcal{T}^{(2)}$. %%
        
        First, we cut $T$ along every loop $\bigcup_{\Delta \in \mathcal{T}^{(2)}}\{c_{\Delta, k}\}^{n_\Delta}_{k = 2}$. Then for any $\Delta \in \mathcal{T}^{(2)}$, 
        \[
            T_{\Delta} := T \cap \mathrm{Pri}_{\Delta, \varphi}
        \] 
        is a surface with boundary; the boundary consists of smooth short loops $\{c_{\Delta, k}\}^{n_\Delta}_{k = 1}$ and a piecewise smooth long loop $c_{\Delta, 1}$. We glue disks $\{D_{\Delta, k}\}^{n_\Delta}_{k = 2}$ of arbitrary small areas to its boundary and obtain a orientable surface $T'_\Delta$ with boundary. If $\fg(T'_\Delta) = 1$, then we find a nontrivial simple loop $\gamma$ of $T$ contained in $\mathrm{Pri}_{\Delta, \varphi} \subset B(p, \varepsilon / 2)$ for some $p \in M$, so we are done. Otherwise, each $T'_\Delta$ is a union of multiple spheres and a disk whose boundary is $c_{\Delta, 1}$. So by Step 1~\ref{item:v_transverse} and~\ref{item:long_loop}, the whole surface $T'$ is a genus $0$ surface. This implies that at least one short loop $\gamma$ is a non-trivial simple loop in $T$, and by Step 2~\ref{item:short_loop}, it is contained in a small ball $B(p, \varepsilon / 2)$.  %%

        \medskip
        \paragraph*{\bf Step 3. General case} %%

        By choosing $\delta$ sufficiently small, for a general torus $T$ with $\bF([T], [S]) < \delta$, by the coarea formula, we can find $d \in (\zeta / 2, \zeta)$ such that we can cut $T \pitchfork \partial N_d(S) = \bigcup^{n_T}_{j = 1} c_j$ and attach disks $\{D_j\}^{n_T}_{j = 1}$ to obtain two disjoint orientable closed surfaces $T^1$ and $T^2$ satisfying:
        \begin{enumerate}[label = (3.\roman*)]
            \item $T^1 \subset N_{\zeta}(S)$.
            \item $\mathbf{F}([T^1], [S]) < \delta'$.
            \item $\mathcal{H}^2(T^2) < \varepsilon^2 / 100C$, where $C$ is the universal constant from Theorem~\ref{thm:systolic_ineq}.
            \item $\sum^{n_T}_{j = 1} \mathcal{H}^1(c_j) < \varepsilon / 100$.
        \end{enumerate} %%

        If $\fg(T^1) = 1$, we can apply Step 2 to $T^1$ to obtain a nontrivial simple loop $\gamma^1 \in T^1$ and a small ball $B(p, \varepsilon / 2) \supset \gamma^1$.

        If $\fg(T^2) = 1$, then by Theorem~\ref{thm:systolic_ineq}, we can find a nontrivial simple loop $\gamma^2 \in T^2$ of length $\leq \varepsilon / 10$. Therefore, there exists a geodesic ball $B(p, \varepsilon / 2) \supset \gamma^2$.

        In either case, $\gamma^1$ or $\gamma^2$ might intersect with some $D_j$'s. Since $D_j$'s are contractible, we can push $\gamma^1$ or $\gamma^2$ to the boundary, and obtain a nontrivial simple loop $\gamma \subset T$. In addition, $\gamma \subset B(p, \varepsilon)$ by the length estimates of $\cH^1(c_j)$.

        Finally, if $\fg(T^1) = \fg(T^2) = 0$, then one of $c_j$ is a nontrivial simple loop in $T$ and the conclusion follows.
    \end{proof}    

    To prove the existence of a nontrivial loop for punctate surfaces, we first need to approximate them by smooth surfaces as follows.

    \begin{lem}\label{lem:approx_PS}
        In an orientable closed Riemannian $3$-manifold $(M, g)$, suppose that $S$ is an embedded smooth orientable closed surface. For any $q\in \mathbb{N}^+$ and $\varepsilon \in \left(0, \operatorname{injrad}(M, \mathbf{g})/2\right)$, there exists a constant $\delta = \delta_1(S, q, \varepsilon) \in (0, \varepsilon)$ with the following property. 

        For any punctate surface $\Sigma \in \mathcal{S}(M)$ with a punctate set $P$ such that
        \[
            \# P \leq q\,, \quad \mathbf{F}([S], [\Sigma]) < \delta\,,
        \]
        there exist an integer $\tilde q \leq q$, a real number $r \in (0, \varepsilon)$ and geodesic balls $\{B(p_i, r)\}^{\tilde q}_{i = 1}$ in $(M, g)$ such that:
        \begin{enumerate}[label=\normalfont{(\arabic*)}]
            \item\label{item:approx_PS_disjoint} The closed balls $\{\overline{B(p_i, r)}\}^{\tilde q}_{i = 1}$ are pairwise disjoint geodesic balls.
            \item\label{item:approx_PS_genus} $\Sigma \setminus \bigcup^{\tilde q}_{i = 1} B(p_i, r)$ is an embedded smooth surface with boundary and genus at most $\mathfrak{g}(\Sigma)$.
            \item\label{item:approx_PS_approx} By replacing $\Sigma \cap \bigcup^{\tilde q}_{i = 1} B(p_i, r)$ by finitely many disks $\{D_j\}$, we obtain an embedded smooth orientable closed surface $T$ satisfying $\bF([T], [S]) < \varepsilon$ and $\sum^n_{j = 1} \mathcal{H}^1(\partial D_j) < \varepsilon$.
        \end{enumerate}
    \end{lem} %%
    \begin{proof}
        Since $S$ is a smooth closed surface, there exists a positive constant $C_S > 1$, such that for any $p \in M$ and $r \in (0, \operatorname{injrad}(M, g) / 2)$,
        \[
            \mathcal{H}^2(\overline{B(p, r)} \cap S) \leq C_S r^2\,, \quad \mathcal{H}^3(\overline{B(p, r)}) \leq C_S r^3\,.
        \]%%

        Let $\eta$ be a sufficiently small positive constant determined later. 
        
        By the definition of $\mathbf{F}$-metric, there exists $\delta \in (0, \varepsilon / 2)$ such that for any $\Sigma \in \GS(M)$ with $\mathbf{F}([S], [\Sigma]) \leq \delta$ and any $p \in M$,
        \[
            \mathcal{H}^2(\overline{B(p, 2\eta)} \cap \Sigma) \leq 8 C_S \eta^2\,.
        \] %%
        Suppose that the punctate set $P$ has $\# P \leq q$.

        By Sard's theorem and the coarea formula, we can choose a constant $C_q > 0$, only depending on $q$, and a radius $\tilde r \in \left(\eta_3 / 3^q, 2\eta_3 / 3^q\right)$ such that, for any $p \in P$ and any $l \in \{1, 3, 3^2, \dots, 3^q\}$, $\partial B(p, l\tilde r) \pitchfork S$ is a finite union of disjoint simple loops and
        \[
            \mathcal{H}^1(\partial B(p, l\tilde r) \cap \Sigma) \leq C_q \mathcal{H}^2(\overline{B(p, 2\eta)} \cap \Sigma) / \eta < 8C_qC_S \eta\,.
        \] %%
        Then one can find a subset $P' \subset P$ and a radius $r \in \{\tilde r, 3\tilde r, 3^2 \tilde{r}, \dots, 3^q \tilde{r}\} \subset (0, \eta)$ such that $\Sigma \setminus \bigcup_{p' \in P'} B(p', r)$ is a smooth surface with boundary satisfying
        \[
            \sum_{p' \in P'} \mathcal{H}^1(\partial B(p', r) \cap \Sigma) < 8qC_qC_S \eta\,,
        \]
        $P \subset \bigcup_{p' \in P'} B(p', r)$ and $\{\overline{B(p', r)}\}_{p' \in P'}$ are disjoint from each other. Clearly, we also have $\mathfrak{g}(\Gamma) \leq \mathfrak{g}(S)$. These confirms~\ref{item:approx_PS_disjoint} and~\ref{item:approx_PS_genus}. %

        By choosing $\eta$ sufficiently small, ~\ref{item:approx_PS_approx} follows from the isoperimetric inequality.
    \end{proof} %%

    \begin{lem}[Existence of small nontrivial loops: Punctate surfaces] \label{Lem_Existence of small  nontrivial loop}
        Let $\mathscr{S}$ be a compact subset of embeddings of $\mathbb{S}^2$ into $(M, g)$ (endowed with smooth topology), $q \in \mathbb{N}^+$, $\varepsilon\in (0,1)$. Then there exists $\delta = \delta_2(\mathscr{S}, q, \varepsilon)\in (0, 1)$ with the following property.
        
        Suppose that $\Sigma\in \GS(M)$ has $\mathfrak{g}(\Sigma) = 1$, its punctate set $P$ has $\# P \leq q$, and there exists a $S\in \mathscr{S}$ such that $\bF([\Sigma], [S])\leq \delta$. Then there exists $p\in S^3$ and an embedded loop $\gamma\subset \Sigma\cap B(p, \varepsilon)$ such that the surface obtained by neckpinch along $\gamma$ of $\Sigma$ has genus $0$.
    \end{lem}
    \begin{proof}
        For every $S \in \mathscr{S}$, let $\delta_0(S) := \delta_0(S, \varepsilon / 2)$ be as defined in Lemma~\ref{lem:small_loop} and $\delta_1(S) := \delta_1\left(S_i, q, \delta_0(S)/2\right)$ as defined in Lemma~\ref{lem:approx_PS}. The compactness of $\mathscr{S}$ implies that there exists finite subset $\{S_i\}$ such that for every $S' \in \mathscr{S}$, there exists $S_{i_0}$ with 
        \[ 
            \mathbf{F}([S'], [S_{i_0}]) \leq \delta_1(S_{i_0}) / 2\,.
        \]
        We define
        \[
            \delta := \min_{i} \delta_1\left(S_i\right).
        \]
        
        For every $\Sigma\in \GS(M)$ with $\mathfrak{g}(\Sigma) = 1$, if its punctate set $P$ has $\# P \leq q$, and there exists a $S\in \mathscr{S}$ such that $\bF([\Sigma], [S])\leq \delta$, then there exists $S_{i_0}$ such that
        \[
            \mathbf{F}([\Sigma], [S_{i_0}]) \leq \delta_1(S_{i_0})\,.
        \]
        We can apply Lemma~\ref{lem:approx_PS} to approximate $\Sigma$ by an embedded closed surface $T$ where 
        \[
            T \setminus \bigcup B(p_i, r) = \Sigma \setminus \bigcup B(p_i, r)
        \]
        for a disjoint union of balls and $r \in (0, \varepsilon / 2)$ and $\bF([T], [S_{i_0}]) < \delta_0(S_{i_0})$.
        
        Since $\fg(\Sigma) = 1$, $\fg(T) = 0$ or $1$: 
        \begin{itemize}
            \item If $\fg(T) = 0$, then there exists a nontrivial simple loop of $\Sigma$ lying inside some $B(p_i, r)$.
            \item Otherwise, by Lemma~\ref{lem:small_loop}, there exists a nontrivial simple loop $\gamma \subset T$ contained in a ball $B(p, \varepsilon / 2)$. As before, $\gamma$ might not lie inside $\Sigma$ but since each disk in $T \cap \bigcup B(p_i, r)$, we can push $\gamma$ to their boundary. The resulting simple loop $\gamma' \subset \Sigma$ and is inside the ball $B(p, \varepsilon)$. 
        \end{itemize} 
        
        These complete the proof.
    \end{proof}

\subsection{Local deformations}
    In the previous subsection, we located nontrivial loops for singular tori within small balls, provided that they are sufficiently close to a sphere. One key reason we do not simply pinch along the loops, which resemble mean curvature flow, is the need to construct the pinching in a continuous manner along a Simon-Smith family. The singular set presents a technical obstruction to this straightforward approach. Therefore, we employ a two-step modification process, Pinching and Shrinking, as detailed in the following lemmas.
    
    \begin{lem}[Local Deformation A: Pinching] \label{Lem_Loc Deform A}
        There exists a constant $C_g$ depending only on $(M, g)$ with the following property.
        
        Let $\Lambda>0$, $0<r<\injrad(M, g)/4$, $p\in M$. Suppose that $\Phi: X \to \GS^*(M)$ is a Simon-Smith family and $x_0 \in X$ so that $\Phi(x_0) \in \GS(M)$ satisfies
        \[
            \cH^2(\Phi(x_0) \cap A(p; r, 2r)) \leq \Lambda r^2\,.
        \] Then there exist $s\in (r, 2r)$, $\zeta\in (0, \min\{\Lambda r, s-r, 2r-s\}/5)$, an integer $K\geq 0$, a neighborhood $O_{x_0}$ of $x_0$ in $X$, and a Simon-Smith family 
        \[
            H: [0, 1] \times O_{x_0}\to \GS(M) \,,   
        \]
        with the following properties.
        \begin{enumerate}[label=\normalfont(\arabic*)]
            \item For every $y \in O_{x_0}$, $\Sigma_y := \Phi(y) \cap A(p; s-5\zeta, s+5\zeta)$ is a surface, varying smoothly in $y$.
            \item For every $s'\in [s-4\zeta, s+4\zeta]$ and every $y \in O_{x_0}$, we have $\Phi(y)$ intersects $\partial B_{s'}(p)$ transversally, and 
            \[
                \Phi(y)\cap \partial B_{s'}(p) =: \bigsqcup_{i=1}^K \gamma_i(s', y)\,, \quad \sum_{i=1}^K \cH^1(\gamma_i(s', y)) \leq 2\Lambda r\,.
            \]
        \end{enumerate}
        
        If $K=0$, then $H(t, y) \equiv \Phi(y)$ for every $y \in O_{x_0}$ and $t\in [0, 1]$; and if $K\geq 1$, then
        \begin{enumerate}[label=\normalfont(\arabic*)]
            \setcounter{enumi}{2}
            \item For every $y \in O_{x_0}$, $H(0, y) = \Phi(y)$.
            \item There exists $t_0 = 0<t_1<t_2<\dots<t_K<1 = t_{K+1}$ such that for every $0\leq i\leq K$, $t\in (t_i, t_{i+1})$ and $y \in O_{x_0}$, 
            \[
                \Sigma_{t, y} := H(t, y) \cap A(p, s-5\zeta, s+5\zeta) 
            \] 
            is a surface, which is diffeomorphic to $\Sigma_y \setminus \bigsqcup_{1\leq j\leq i} \gamma_j(s, y)$ attached with $2i$ disks. Moreover, $H(1, y)\cap A(p, s-5\zeta, s+5\zeta)$ is also a surface and
            \[
                H(1, y)\cap A(p, s-\zeta, s+\zeta) = \emptyset \,.
            \]
            \item\label{item:surgery_process} For every $i \in \{1, 2, \cdots, K\}$ and $y \in O_{x_0}$, near $t_i$, $H(t, y)$ is a surgery process via pinching the cylinder $\bigcup_{s' \in (s - 2\zeta, s + 2\zeta)} \gamma_i(s', y)$.
            \item For every $y \in O_{x_0}$ and $t \in [0, 1]$, we have
            \begin{align*}
                H(t, y)\setminus A(p; s-3\zeta, s+3\zeta) & = \Phi(y)\setminus A(p; s-3\zeta, s+3\zeta) \,,\\
                \cH^2(H(t, y)) & \leq \cH^2(\Phi(y)) + C_g\Lambda^2 r^2 \,.
            \end{align*}
        \end{enumerate}
    \end{lem}
    \begin{rmk}
        For every $y \in O_{x_0}$, by~\ref{item:surgery_process}, we see that $\{H(t, y)\}_{t \in [0, 1]}$ is a pinch-off process.
    \end{rmk}
    \begin{proof}
        Since $\cH^2(\Phi(x_0)) \leq \Lambda r^2$ and $\Phi(x_0) \setminus P$ is a smooth surface for some finite set $P$, by the slicing theorem and Sard's lemma, there exists $s \in (r, 2r)$ such that $\Phi(x_0) \pitchfork \partial B_r(p)$ is a finite union of loops $\{\gamma_i(s, x_0)\}^K_{i = 1}$ with 
        \[
            \sum_{i=1}^K \cH^1(\gamma_i(s, x_0)) \leq \Lambda r\,.
        \]
        
        By Definition~\ref{def:Simon_Smith_family}~\ref{item:SimonSmithFamilyLocalSmooth}, we can choose $\zeta$ and $O_{x_0}$ such that (1) and (2) hold. Moreover, one can find a continuous family of  $\{\varphi_y\}_{y \in O_{x_0}} \subset \operatorname{Diff}^\infty(M)$ such that 
        \[
            \varphi_y(\Phi(x_0)) \cap A(p; s - 5\zeta, s + 5\zeta) = \Sigma_y \,.
        \]
        Therefore, by possibly choosing a smaller $O_{x_0}$, it suffices to find a continuous map $\{H(t, x_0)\}_{t \in [0, 1]}$ for $\Sigma$ supported in $A(p; s - 4\zeta, s + 4\zeta)$ satisfying (3) and
        \begin{enumerate}
            \item[(4)'] There exists $t_0 = 0<t_1<t_2<\dots<t_K<1 = t_{K+1}$ such that for every $0\leq i\leq K$, $t\in (t_i, t_{i+1})$ and $x_0 \in O_{x_0}$, 
            \[
                \Sigma_{x_0, t} := H(t, x_0) \cap A(p, s-5\zeta, s+5\zeta) 
            \] 
            is a surface, which is diffeomorphic to $\Sigma_{x_0} \setminus \bigsqcup_{1\leq j\leq i} \gamma_j(s, {x_0})$ attached with $2i$ disks. Moreover, $H(1, x_0)\cap A(p, s-5\zeta, s+5\zeta)$ is also a surface and
            \[
                H(1, x_0)\cap A(p, s- 1.1\zeta, s+ 1.1\zeta) = \emptyset \,.
            \]
            \item[(5)'] For every $i \in \{1, 2, \cdots, K\}$, near $t_i$, $H(t, x_0)$ is a surgery process via pinching the cylinder $\bigcup_{s' \in (s - 1.9\zeta, s + 1.9\zeta)} \gamma_i(s', x_0)$;
            \item[(6)'] For every $t \in [0, 1]$, we have
            \begin{align*}
                H(t, x_0)\setminus A(p; s-3.1\zeta, s+3.1\zeta) & = \Phi({x_0})\setminus A(p; s-3.1\zeta, s+3.1\zeta) \,; \\
                % b_1(\cH(\Sigma', t)) & \leq b_1(\Sigma') \,; \\
                \cH^2(H(t, x_0)) & \leq \cH^2(\Phi({x_0})) + C_g \Lambda^2 r^2 / 1.1 \,.
            \end{align*}
        \end{enumerate}
        Indeed, the general $H: [0, 1] \times O_x\to \GS(M)$ can be defined by 
        \[
            H(t, y) := \left(\varphi_y(H(t, x_0)) \cap A(p; s - 5\zeta, s + 5\zeta)\right) \cup \left(\Phi(y) \setminus A(p; s - 5\zeta, s + 5\zeta)\right)\,,
        \]
        and one can verify that $H$ satisfies (3) - (6) following the properties of $H(\cdot, x_0)$.

        Note that in $(M, g)$, for every $r \in (0, \injrad(M, g) / 4)$ and $p \in S$, the annulus $A(p; r, 2r)$ is diffeomorphic to $\mathbb{A}(\mathbf{0}; r, 2r) \subset \R^3$ with a uniform bi-Lipschitz constant $C^{(1)}_g$ depending only on $g$. Without loss of generality, we may assume that $A(p; r, 2r)$ is $\mathbb{A}(\mathbf{0}; r, 2r)$.

        By the isoperimetric inequality in standard spheres, there exists a dimensional constant $C^{(2)} > 0$ such that for any loop $\gamma$ in $\gamma$ in a sphere $\partial \mathbb{B}_s$ for any $s > 0$, $\gamma$ bounds a disk $D$ in $\partial \mathbb{B}_s$ with
        \[
            \cH^2(D) \leq C^{(2)} \cH^1(\gamma)^2\,.
        \]
        Therefore, together with Jordan curve theorem, given a finite set of disjoint loops $\{\gamma_i\}$ in $\partial \mathbb{B}_s$, each of them bounds a disk $D_i$ in $\partial \mathbb{B}_s$ such that 
        \[
            \sum_i \cH^2(D_i) \leq C^{(2)} \sum_i \cH^1(\gamma_i)^2 \leq C^{(2)} (\sum_i \cH^1(\gamma_i))^2\,.
        \]
        In addition, for every pair $i$ and $j$, we have a trichotomy: $D_i \cap D_j = \emptyset$, $D_i \subset D_j$ or $D_j \subset D_i$.

        Hence, for $\Phi(x_0)$ with $\Phi(x_0) \cap \partial \mathbb{B}_s(p) = \bigsqcup^K_{i = 1}\gamma_i$, we can perform the neckpinch surgery starting from the innermost $D_i$ to the outermost $D_i$ in a the neighborhood $\mathbb{A}(\mathbf{0}; s - 1.9\zeta, s + 1.9\zeta)$ through removing cylinders in $\mathbb{A}(\mathbf{0}; s - 1.1\zeta, s + 1.1\zeta)$ and gluing disks. Since the total area of all the gluing disks is bounded by
        \[
            3\sum_i \cH^2(D_i) + 6(\sum_i \cH^1(\gamma_i)) \zeta \leq C^{(3)} \Lambda^2 r^2 \,,
        \]
        for some dimensional constant $C^{(3)}$. 
        
        In a general annulus $A(p; r, 2r)$, we only need to replace $C^{(3)}$ by a constant $C(C^{(1)}_g, C^{(2)}, C^{(3)})$, and thus, the statements (3), (4)', (5)' and (6)' above follow immediately.
    \end{proof}

    \begin{lem}[Local Deformation B: Shrinking] \label{Lem_Loc Deform B}
        In a $3$-dimensional Riemannian manifold $(M, g)$, let $p\in M$ and $0< r^- < r^+ < \injrad(M, g)/4$. Then there exists a smooth one parameter family of maps $\{\cR_t: M\to M\}_{t\in [0,1]}$ such that 
        \begin{enumerate}[label=\normalfont(\arabic*)]
            \item $\cR_t = \id$ when $t\in [0, 1/4]$; $\cR_t(B(p, r^-)) = \{p\}$ when $t\in [3/4, 1]$.
            \item $\cR_t$ is a diffeomorphism when $t\in [0, 3/4)$.
            \item $\cR_t|_{M \setminus B(p, r^+)} = \id$ and $\cR_t(B(p, r^\pm))\subset B(p, r^\pm)$ for every $t\in [0, 1]$.
            \item $\cR_t|_{B(p, r^-)}$ is $1$-Lipschitz for every $t\in [0, 1]$.
        \end{enumerate}
    \end{lem}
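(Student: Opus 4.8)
The plan is to build $\cR_t$ as a radial map with respect to geodesic polar coordinates centered at $p$, in three time stages. Because $r^+<\injrad(M,g)$, the exponential map $\exp_p$ identifies the Euclidean ball of radius $r^+$ with $B(p,r^+)$, so every point of $B(p,r^+)\setminus\{p\}$ is uniquely $\exp_p(r\omega)$ with $r\in(0,r^+)$, $\omega\in S^2$. To a smooth nondecreasing \emph{profile} $\sigma\colon[0,r^+]\to[0,r^+]$ that equals $r$ near $r=r^+$ and is odd with $\sigma'(0)>0$ near $r=0$, associate the map $R_\sigma$ that is the identity on $M\setminus B(p,r^+)$ and sends $\exp_p(r\omega)\mapsto\exp_p(\sigma(r)\omega)$ inside; then $R_\sigma$ is a diffeomorphism of $M$ fixing a neighborhood of $M\setminus B(p,r^+)$ and preserving every $B(p,s)$ for which $\sigma([0,s])\subseteq[0,s]$, while a profile that vanishes on $[0,r^-]$ makes $R_\sigma$ collapse $B(p,r^-)$ to $p$, realized as a $C^0$-limit of such diffeomorphisms. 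I would then take $\cR_t=R_{\sigma_t}$ for a family of profiles with $\sigma_t=\id$ on $[0,\tfrac14]$; $\sigma_t$ a monotone path of diffeomorphisms of $[0,r^+]$ for $t\in(\tfrac14,\tfrac34)$, degenerating at $t=\tfrac34$ to a profile vanishing on $[0,r^-]$; and $\sigma_t=\sigma_{3/4}$ on $[\tfrac34,1]$ — all preserving $[0,r^\pm]$, with vanishing $t$-derivatives at $t=\tfrac14,\tfrac34$. Then $\cR_t$ is smooth in $(t,x)$ for $t\ne\tfrac34$, $C^0$ throughout, and (1), (2), (3) are immediate.

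The heart of the matter is (4). Splitting $T_qM$ at $q=\exp_p(r\omega)$ by the Gauss lemma into the radial direction and the tangent space $N_q=T_qS(p,r)$ of the geodesic sphere, and writing $B_s\colon\omega^\perp\to T_{\exp_p(s\omega)}M$ for the Jacobi-field map $J'(0)\mapsto J(s)$ (for $J$ along $u\mapsto\exp_p(u\omega)$ with $J(0)=0$), one checks that $dR_\sigma|_q$ is block diagonal with radial factor $\sigma'(r)$ and angular part $B_{\sigma(r)}B_r^{-1}$. Thus $R_\sigma$ is $1$-Lipschitz on $B(p,r^-)$ once $\sigma'\le1$ and $B_{\sigma(r)}^*B_{\sigma(r)}\le B_r^*B_r$ on $[0,r^-]$. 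From $B_s'=S_sB_s$, with $S_s$ the self-adjoint shape operator of $S(p,s)$, we get $\tfrac{d}{ds}(B_s^*B_s)=2B_s^*S_sB_s$, which is $\ge0$ exactly where geodesic spheres are convex, i.e.\ for $s$ below the convexity radius $\rho_*=\rho_*(M,g,p)>0$. Hence when $r^-<\rho_*$ the angular bound is automatic as soon as $\sigma_t(r)\le r$, and prescribing the $\sigma_t$ additionally with $\sigma_t'\le1$ on $[0,r^-]$ completes the construction.

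I expect the main obstacle to be exactly (4) when $r^-$ is not below $\rho_*$: then a purely radial contraction cannot stay $1$-Lipschitz on $B(p,r^-)$, since compressing a non-convex geodesic sphere stretches $N_q$ in the focal directions, and one cannot even move off the identity through $1$-Lipschitz radial maps. To handle this I would insert a preliminary stage $t\in[\tfrac14,\tfrac12]$ that $1$-Lipschitzly deforms $B(p,r^-)$ into a sub-ball $B(p,\rho_0)$ with $\rho_0<\rho_*$ — e.g.\ the nearest-point retraction onto the convex set $\overline{B(p,\rho_0)}$, or a calibrated radial step with $\rho_0$ chosen very small — and then run the convex-ball collapse of the first two paragraphs inside $B(p,\rho_0)$ for $t\in[\tfrac12,\tfrac34]$, using that a composition of $1$-Lipschitz maps is $1$-Lipschitz. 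Verifying that this preliminary retraction is genuinely $1$-Lipschitz in the (possibly positively curved) ball $B(p,r^+)$, via Jacobi-field and conjugate-point estimates, is the delicate point; everything else — smoothness of $R_\sigma$ at $p$, gluing to the identity near $r=r^+$, the containments of $B(p,r^\pm)$, and the derivative matching at the stage boundaries — is routine. (In the applications the radii $r^\pm$ can be taken small relative to the fixed background geometry, in which case $r^-<\rho_*$ and this difficulty does not arise.)
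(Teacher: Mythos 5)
Your construction is essentially the paper's: both realize $\cR_t$ as a radial rescaling in geodesic normal coordinates (the paper writes $\cR_t(q)=(1-\psi(t)\eta(|q|))q$ with cutoffs $\psi,\eta$; on $B(p,r^-)$ this is exactly a profile map $\exp_p(r\omega)\mapsto\exp_p(\sigma_t(r)\omega)$ with $\sigma_t=(1-\psi(t))\cdot$), and items (1)--(3) are routine in both. The substantive difference is that the paper disposes of (4) with the phrase ``it is easy to check,'' whereas you isolate the actual content: via the Gauss-lemma splitting of $d\cR_t$ at $q=\exp_p(r\omega)$, the angular factor is $|J(\sigma_t(r))|/|J(r)|$ for the Jacobi field $J$ with $J(0)=0$, so $\cR_t|_{B(p,r^-)}$ is $1$-Lipschitz precisely when $|J|$ is nondecreasing on $[0,r^-]$, i.e.\ when the geodesic spheres $S(p,r)$, $r\le r^-$, have nonnegative shape operator. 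You are right that this is where the real work is, and your observation that it is the \emph{focal} radius (not $\injrad/4$ per se) that governs this is a fair criticism of the paper's brevity: in constant curvature $K>0$ the focal radius is $\pi/(2\sqrt K)=\tfrac12\injrad$, which covers $r^-<\injrad/4$, but a clean inequality $\rho_f(p)\ge\injrad(M)/4$ without a curvature hypothesis is not something one should call easy, and the paper supplies no argument. (Either the lemma is correct with a nontrivial comparison hidden behind ``easy to check,'' or the radius bound should have been stated as a geometry-dependent smallness condition; downstream this is harmless, since in the proof of Proposition \ref{Prop_Technical Deformation} the radii are taken as small as one likes relative to the fixed $(M,g)$.)

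Where your proposal actually goes wrong is the proposed repair for the case $r^-$ above the convexity radius. The nearest-point retraction onto $\overline{B(p,\rho_0)}$ is, for points at distance $>\rho_0$ from $p$, precisely $\exp_p(r\omega)\mapsto\exp_p(\rho_0\omega)$ — a \emph{radial} map — and its $1$-Lipschitz property on $B(p,r^-)$ again reduces to the very monotonicity of $|J|$ on $[\rho_0,r^-]$ that was the obstruction. The same goes for any ``calibrated radial step'': your own observation that one cannot move off the identity through $1$-Lipschitz radial maps when $S(p,r)$ is nonconvex for some $r\le r^-$ rules out exactly this class of fixes. You half-acknowledge this (``verifying that this preliminary retraction is genuinely $1$-Lipschitz \ldots is the delicate point''), but as written the proposed repair is not a repair; you would need a genuinely non-radial deformation, and it is not clear one exists. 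The correct resolution is the one you mention at the end: take the radii below the convexity radius of $(M,g)$, which is exactly what the application does.
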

    \begin{proof}
        Let $\psi\in C^\infty(\R)$ be a non-decreasing function such that $\psi=0$ on $\R_{\leq 1/4}$, $\psi=1$ on $\R_{\geq 3/4}$, $\psi\in (0, 1)$ on $(1/4, 3/4)$.
        Let $\eta\in C^\infty_c(\R_{< r_+})$ be a decreasing function such that $\eta=1$ on $\R_{\leq r_-}$. 
        
        Working under normal coordinates of $(M, g)$ at $p$, it is easy to check that the following $\cR_t$ satisfies (1)-(4) in the Lemma: 
        \begin{align*}
          \cR_t(q) := \begin{cases}
            q\,, & \text{ if } q\notin B(p, r^+)\,, \\
            \left(1 - \psi(t)\eta(|q|)\right)q\,, & \text{ if } q\in B(p, r^+)\,.
          \end{cases}
        \end{align*}
    \end{proof}

    \begin{lem} \label{Lem_Loc Deform Composition}
        In $(M, g)$, for $N \in \N^+$, let $\{A(p_j; r_j^-, r_j^+)\}_{j=1}^N$ be a collection of pairwise disjoint annuli in $M$, where $r_j^+<\injrad(M, g)/4$. For each $j \in \{1, 2, \cdots, N\}$, let $\cR^j_t: M\to M$ be a map from Lemma~\ref{Lem_Loc Deform B} with $p_j, r_j^\pm$ in place of $p, r^\pm$.
        For simplicity, we denote
        \begin{align*}
            A_j := A(p_j; r^-_j, r^+_j)\,,\quad B^\pm_j:= B(p_j; r^\pm_j)\,.
        \end{align*}
        Then the following hold.
        \begin{enumerate}[label=\normalfont(\arabic*)]
            \item There exists a permutation $\sigma\in \mathfrak{S}_N$ such that for every $1\leq i<j\leq N$, either $B_{\sigma(i)}^+\cap B_{\sigma(j)}^+ = \emptyset$, or $B_{\sigma(i)}^+\subset B_{\sigma(j)}^-$. In this case, we call such a $\sigma$ {\em admissible}.
            \item For every admissible $\sigma$, the map 
            \[
               \cR^{\sigma(N)}_t\circ\cR^{\sigma(N-1)}_t\circ \cdots \circ \cR^{\sigma(2)}_t\circ\cR^{\sigma(1)}_t: M \to M
            \]
            is $1$-Lipschitz on every connect component of $M\setminus \bigcup_{j=1}^N A_j$; and it is independent of the choice of admissible $\sigma$.
        \end{enumerate}
    \end{lem}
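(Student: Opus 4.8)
\emph{Proof strategy.} The argument has a geometric core --- a nesting dichotomy for the balls --- wrapped in a topological--sort argument for (1) and a Lipschitz--bookkeeping induction for (2).

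\emph{Step 1: the nesting dichotomy.} I would first show that for $i\neq j$, disjointness of the open annuli $A_i,A_j$ forces exactly one of
\[
B_i^+\cap B_j^+=\emptyset,\qquad B_i^+\subseteq B_j^-,\qquad B_j^+\subseteq B_i^-.
\]
Suppose $B_i^+\cap B_j^+\neq\emptyset$. Then $d(p_i,p_j)<r_i^++r_j^+<\injrad(M,g)/2$, so all four geodesic spheres $\partial B_i^\pm,\partial B_j^\pm$ lie at scale below $\injrad(M,g)/4$ about either center, where geodesic balls are strictly convex with connected complement and each geodesic sphere separates $M$ into its (connected) inside and (connected) outside. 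Hence $M\setminus\overline{A_i}=B_i^-\sqcup(M\setminus\overline{B_i^+})$ has exactly two components. After an arbitrarily small adjustment of the radii $r_j^\pm$ (Sard's theorem; the annuli remain pairwise disjoint) one may assume $A_j$ meets $\partial B_i^\pm$ transversally; but any transversal intersection of $A_j$ with $\partial B_i^+$ or $\partial B_i^-$ would place points of $A_j$ inside $A_i$, contradicting $A_i\cap A_j=\emptyset$. So $A_j\subset M\setminus\overline{A_i}$ lies in one of its two components, and symmetrically $A_i$ lies in one component of $M\setminus\overline{A_j}$; going through the four combinations, using $B_i^+=\overline{B_i^-}\sqcup A_i$ and discarding ``$A_i\subset B_j^-$ and $A_j\subset B_i^-$'' because it forces $B_i^-\subsetneq B_i^-$, yields the three alternatives. \textbf{I expect this step to be the main obstacle}: it is elementary but demands care with the locally Euclidean behaviour of small geodesic balls and with the bounding spheres.

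\emph{Step 2: part (1).} Define $a\prec b$ to mean $B_a^+\subseteq B_b^-$. By Step~1 this is a strict partial order on $\{1,\dots,N\}$ (transitivity from $B_a^+\subseteq B_b^-\subset B_b^+\subseteq B_c^-$, irreflexivity from $r_a^-<r_a^+$), and a permutation $\sigma$ is admissible exactly when it is a linear extension of $\prec$, since for $i<j$ the only configuration forbidden by admissibility, $B_{\sigma(j)}^+\subseteq B_{\sigma(i)}^-$, means $\sigma(j)\prec\sigma(i)$. Linear extensions exist by topological sort, which gives (1).

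\emph{Step 3: part (2).} Fix an admissible $\sigma$ and a component $U$ of $M\setminus\bigcup_j A_j$; let $V_k$ be the image of $U$ under $\cR^{\sigma(k)}_t\circ\cdots\circ\cR^{\sigma(1)}_t$, with $V_0=U$. I would prove by induction on $k$ that $V_k\cap A_{\sigma(j)}=\emptyset$ for every $j\geq k$: the base case is the definition of $U$, and for the step, since $\cR^{\sigma(k)}_t$ is the identity off $B_{\sigma(k)}^+$ and carries $B_{\sigma(k)}^+$ into itself we have $V_k\subseteq(V_{k-1}\setminus B_{\sigma(k)}^+)\cup B_{\sigma(k)}^+$, while for $j\geq k$ admissibility gives $B_{\sigma(k)}^+\cap A_{\sigma(j)}=\emptyset$, so $V_k\cap A_{\sigma(j)}=\emptyset$ too. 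In particular $V_{k-1}\cap A_{\sigma(k)}=\emptyset$; as $V_{k-1}$ is connected (a continuous image of $U$) and $M\setminus A_{\sigma(k)}=\overline{B_{\sigma(k)}^-}\sqcup(M\setminus B_{\sigma(k)}^+)$, either $V_{k-1}\subseteq\overline{B_{\sigma(k)}^-}$, where $\cR^{\sigma(k)}_t$ is $1$-Lipschitz by Lemma~\ref{Lem_Loc Deform B}(4), or $V_{k-1}\subseteq M\setminus B_{\sigma(k)}^+$, where it is the identity. Thus every factor of the composition acts $1$-Lipschitzly on the set to which it is applied, so the composition is $1$-Lipschitz on $U$. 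Finally, for independence of the admissible $\sigma$: any two linear extensions of $\prec$ are joined through linear extensions by adjacent transpositions of $\prec$-incomparable elements, and by Step~1 two $\prec$-incomparable indices $a,b$ satisfy $B_a^+\cap B_b^+=\emptyset$, so $\cR^a_t,\cR^b_t$ have disjoint supports, commute, and such a transposition leaves the composition unchanged.
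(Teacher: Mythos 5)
Your proof is correct and takes essentially the same route as the paper; you have simply filled in details that the paper leaves tacit. The paper's proof is very short: it \emph{asserts} the trichotomy (first bullet disjoint, or nesting one way, or nesting the other way) without argument, achieves (1) by sorting the indices so that $r^+_{\sigma(1)}\leq\cdots\leq r^+_{\sigma(N)}$ (which is admissible because $B_a^+\subset B_b^-$ forces $r_a^+\leq r_b^-<r_b^+$, so a radius-sorted permutation never violates admissibility), and for independence in (2) simply remarks that disjoint outer balls give disjoint supports, hence commuting factors. Your partial-order/topological-sort formulation and the linear-extension-connectivity argument are a more systematic packaging of the same facts, and the sort-by-radius trick is a slightly slicker way to produce the admissible permutation.

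Two small points in your write-up. First, your worry about Step~1 is unfounded: the separation argument and case analysis work, and in fact you can skip the transversality detour entirely --- if the \emph{open} set $A_j$ met $\partial B_i^\pm$, it would automatically contain points strictly inside $A_i$, so $A_j\subset M\setminus\overline{A_i}$ follows directly and no perturbation of radii is needed. Second, in the induction of Step~3 the displayed claim ``for $j\geq k$, $B_{\sigma(k)}^+\cap A_{\sigma(j)}=\emptyset$'' is false at $j=k$ (since $A_{\sigma(k)}\subset B_{\sigma(k)}^+$); the inductive statement should read $j>k$, and then $V_{k-1}\cap A_{\sigma(k)}=\emptyset$ follows from the hypothesis at stage $k-1$, which is exactly what you use in the dichotomy that yields the Lipschitz bound. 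This is a cosmetic indexing slip, not a gap, since your ``In particular\dots'' sentence already supplies the needed $j=k$ information.
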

    \begin{proof}
        Since $\{A_j\}$ is pairwise disjoint, for every $i \neq j$, we have exactly one of the three possibilities:
        \begin{itemize}
            \item $B_{\sigma(i)}^+\cap B_{\sigma(j)}^+ = \emptyset$;
            \item $B_{\sigma(i)}^+\subset B_{\sigma(j)}^-$;
            \item $B_{\sigma(j)}^+\subset B_{\sigma(i)}^-$.
        \end{itemize}
        Hence, to achieve (1), we can simply choose a permutation $\sigma$ such that for every $1 \leq i < j \leq N$, $r^+_{\sigma(i)} \leq r^+_{\sigma(j)}$.

        For (2), the $1$-Lipschitz property follows from Lemma~\ref{Lem_Loc Deform B} (3) and (4). The independence on the choice of admissible permutations follows from the fact that if $B^+_i \cap B^+_j = \emptyset$, then $\spt \cR^i_t \cap \spt \cR^j_t = \emptyset$.
    \end{proof}

\subsection{Combinatorial arguments}
    
    To extend the local deformations from the previous subsection to a global deformation along a Simon-Smith family, we rely on the following two useful combinatorial arguments, inspired by~\cite[Lemma~4.8,~Proposition~4.9]{Pit81} and~\cite[Lemma~5.7]{DLJ18}. 
    
    \begin{lem}[Combinatorial Argument I]\label{lem:combinatorial_I}
        For any positive integers $m \in \N^+$ and $q \in \N^+$, there exists $N = N(m, q) \in \N^+$ with the following property.

        Given any $k \in \N^+$ and a cubical complex $I(m, k)$, suppose that $\{\cF_\sigma\}_{\sigma\in I(m,k)}$ is a family of collections of open sets in $M$ assigned to each cell $\sigma$ of $I(m, k)$, where each collection $\cF_\sigma$ has the form
        \[
            \cF_\sigma = (\cO_{\sigma, 1}, \cO_{\sigma,2}, \cdots, \cO_{\sigma, q})
        \]
        and each $\cO_{\sigma, i} = \{U^1_{\sigma, i}, U^2_{\sigma, i}, \cdots, U^N_{\sigma, i}\}$ satisfies
        \begin{equation}\label{eqn:dist_diam}
            \dist(U^r_{\sigma, i}, U^s_{\sigma, i}) \geq 2 \min\{\diam(U^r_{\sigma, i}), \diam(U^s_{\sigma, i})\}\,,
        \end{equation}
        for all $\sigma \in I(m, k)$, $i\in \{1, 2, \cdots, q\}$ and $r, s \in \{1, 2, \cdots, N\}$ with $r \neq s$.
        Then we can extract a family of open sets 
        \[
            \{(U_{\sigma, 1}, U_{\sigma,2}, \cdots, U_{\sigma, q})\}_{\sigma \in I(m, k)}
        \]
        where $U_{\sigma, i} \in \cO_{\sigma, i}$ such that
        \[
            \dist(U_{\sigma, i} \cap U_{\tau, j}) > 0\,,
        \] whenever $\sigma, \tau \in I(m, k)$, $(\sigma, i) \neq (\tau, j)$ and $\sigma, \tau$ are faces of a common cell $\gamma$ of $I(m, k)$.

        Moreover, this is also true if the number of collections in each $\mathcal{F}_\sigma$ is no greater than $q$.
    \end{lem}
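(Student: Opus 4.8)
The plan is to adapt Pitts's combinatorial scheme \cite[Lemma~4.8]{Pit81} for a single family of annuli to the setting of $q$ simultaneous ``colors'' of collections. First I would fix the count: I would set $N = N(m,q)$ to be large enough that, after the coloring argument below, one can always avoid finitely many forbidden choices; concretely, I expect something of the shape $N(m,q) = (q \cdot 3^m)^{3^m} + 1$ will work, mirroring the bound $3^{m3^m}$ that already appears in the paper, but inflated by the factor $q$ to account for the $q$ collections per cell and the number of faces of a cell in $I(m,k)$ which is at most $3^m$. The key geometric input is \eqref{eqn:dist_diam}: among any $N$ pairwise ``well-separated'' open sets, only very few can meet a fixed open set $U$, because if $U^r_{\sigma,i}$ and $U^s_{\sigma,i}$ both intersect $U$ then $\dist(U^r_{\sigma,i},U^s_{\sigma,i}) \le \diam(U) + \diam(U^r_{\sigma,i}) + \diam(U^s_{\sigma,i})$, which combined with \eqref{eqn:dist_diam} forces $\diam(U) \ge \min\{\diam(U^r_{\sigma,i}),\diam(U^s_{\sigma,i})\}$; iterating, at most one of the $U^r_{\sigma,i}$ with smallest diameter can be ``large'' relative to $U$, so after discarding a bounded number of indices one finds a sub-collection whose members are pairwise so small and spread out that at most one meets $U$. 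This is exactly Pitts's Lemma~4.8 (his ``Main Lemma''), and I would cite it rather than reprove it, applying it separately to each color $i \in \{1,\dots,q\}$.

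Second, I would run the selection cell-by-cell over the cubical complex $I(m,k)$, ordered by increasing dimension of cells (or any linear order refining the face partial order). For a cell $\sigma$ and a color $i$, when we come to choose $U_{\sigma,i} \in \cO_{\sigma,i}$, the constraints are: $U_{\sigma,i}$ must be disjoint from every already-chosen $U_{\tau,j}$ with $\tau$ a proper face of $\sigma$ (there are at most $3^m$ such faces $\tau$, each contributing $q$ chosen sets, hence at most $q\,3^m$ sets to avoid), and also disjoint from the other $U_{\sigma,j}$, $j<i$, already chosen on $\sigma$ itself (at most $q-1$ more). By the previous paragraph applied to $\cO_{\sigma,i}$, each ``avoid $U_{\tau,j}$'' constraint kills only a bounded number $c(m)$ of the $N$ candidates in $\cO_{\sigma,i}$; so as long as $N > c(m)\cdot(q\,3^m + q)$ we can always make a valid choice. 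After processing every cell, the resulting family $\{(U_{\sigma,1},\dots,U_{\sigma,q})\}_{\sigma \in I(m,k)}$ satisfies $\dist(U_{\sigma,i}, U_{\tau,j}) > 0$ whenever $(\sigma,i)\ne(\tau,j)$ and $\sigma,\tau$ are faces of a common cell $\gamma$: indeed if $\sigma = \tau$ this is the within-cell constraint, and if (say) $\sigma$ is a proper face of $\tau$ then $U_{\sigma,i}$ was chosen before $U_{\tau,j}$ and $U_{\tau,j}$ was required to avoid it; the general case where $\sigma,\tau$ are both faces of $\gamma$ reduces to these after noting one of $\sigma,\tau$ precedes the other in our order, or $\sigma = \tau$. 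The final sentence of the statement (allowing fewer than $q$ collections per cell) is immediate: pad each $\cF_\sigma$ with dummy copies of its first collection, run the argument, and discard the dummy outputs.

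The main obstacle I anticipate is purely bookkeeping rather than conceptual: one must be careful that the ``bounded number of forbidden candidates'' estimate $c(m)$ coming from Pitts's lemma is genuinely uniform in $\sigma$, $i$, and the complex $I(m,k)$ (it is, because it depends only on the separation inequality \eqref{eqn:dist_diam} and the ambient dimension $3$, not on the sizes or positions of the sets), and that the face-counting $\le 3^m$ is correct for an $m$-dimensional cubical complex. A secondary subtlety is the order of selection: one needs a total order on pairs $(\sigma,i)$ that is compatible with the face relation on the $\sigma$'s so that ``avoid previously chosen'' captures all the required disjointnesses; processing cells by increasing dimension, and within a fixed cell by increasing $i$, does this. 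With $N(m,q)$ chosen as above, no other difficulties should arise, and the proof is essentially a transcription of \cite[Lemma~4.8]{Pit81} with the color index carried along.
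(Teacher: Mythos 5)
Your plan has a real gap in the step where you assert that each ``avoid $U_{\tau,j}$'' constraint kills only a bounded number $c(m)$ of candidates in $\cO_{\sigma,i}$. The separation hypothesis \eqref{eqn:dist_diam} only gives you the following: if a fixed set $U$ has diameter \emph{no larger} than the remaining candidates $V^1,\dots,V^l$, then at most one $V^j$ can meet $U$ (two would be within $\diam(U) \le \min\{\diam V^r,\diam V^s\}$ of each other, contradicting \eqref{eqn:dist_diam}). But if $U$ has \emph{large} diameter relative to the $V^j$, it can meet arbitrarily many of them: the separation requirement on the $V^j$ scales down with their diameters, so a swarm of tiny, mutually well-separated $V^j$ can all sit inside one big $U$. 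There is no $c(m)$, and in particular it does not ``depend only on \eqref{eqn:dist_diam} and the ambient dimension'' as you claim. Your proposed ordering — cells by increasing dimension, colors within a cell by index — gives you no control over the diameter of the previously-chosen $U_{\sigma,i}$ compared to the candidates you still have to choose from, so the counting in your second paragraph breaks.

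The paper avoids this by a different, global ordering: at every step it selects the set of \emph{smallest diameter among all remaining candidates across all cells and all colors}, assigns it, and only then prunes. Because the just-chosen $U$ has diameter $\le$ every surviving candidate $V$ in every neighboring collection, the observation above guarantees it eliminates at most one candidate from each such $\cO_{\tau,j}$, and counting the $\le 5^m$ neighboring cells times $q$ colors gives the clean bound $N = 5^m q + 1$. (Note also the correct face-adjacency count is $5^m$, not $3^m$: a cell $\sigma$ of $I(m,k)$ has up to $5^m$ cells that share a common cofacing cell with it, as used in Proposition~\ref{prop:an_deform}.) Your choice to cite Pitts rather than reprove would also not rescue this: Pitts's combinatorial lemma achieves its bound precisely by the same smallest-first ordering, not by a uniform ``forbidden count per constraint''. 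To repair your proof you would need to abandon the cell-dimension ordering and process candidates globally by diameter, at which point it becomes the paper's argument.
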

    \begin{proof}
        The proof is similar to that of~\cite[Lemma~5.7]{DLJ18}.
        
        Let $\{\cF_\sigma\}_{\sigma\in I(m,k)}$ be a family as in the lemma with some $N \in \N^+$ to be determined later.
        
        Note that by our assumption \eqref{eqn:dist_diam}, for each $U \in \cO_{\sigma, i}$ and $V^1, \cdots, V^l \in \cO_{\tau, j}$ with $\sigma \neq \tau$, if $\diam(U) \leq \diam(V^j)$ for all $j \in \{1, 2, \cdots, l\}$, then there exists at most one $V^j$ with 
        \[
            \dist(U, V^j) = 0\,.
        \]

        Here is the algorithm to generate the family of open sets:
        \begin{enumerate}
            \item We take an open set $U^r_{\sigma, i} \in \cO_{\sigma, i}$ with the smallest diameter among all the remaining open sets $U^\cdot_{\cdot, \cdot}$ and define $U_{\sigma, i} := U^r_{\sigma, i}$, and then remove all the open sets in $\cO_{\sigma, i}$.
            \item For every $\tau \in I(m, k)$ and every $j \in \{1, 2, \cdots, q\}$, where $(\sigma, i) \neq (\tau, j)$ and $\sigma, \tau$ are faces of some cell $\gamma \in I(m, k)$, and we remove all $V \in \cO_{\tau, j}$ with $\dist(U_{\sigma, i}, V) = 0$.
            \item Repeat Step (1) and Step (2), until there is no open set left. 
        \end{enumerate}

        In Step (1), we remove all the open sets in $\cO_{\sigma, i}$ after defining $U_{\sigma, i}$, so $U_{\sigma, i}$ can be only defined at most once. 
        
        In Step (2), by the argument above, before $U_{\tau, j}$ is defined, the size of $\cO_{\tau, j}$ will decrease at most by one if some $U_{\sigma, i}$ is defined where $(\sigma, i) \neq (\tau, j)$ and $\sigma, \tau$ are faces of some cell $\gamma \in I(m, k)$. Since there are at most $5^m$ many such $\sigma$, the size of $\cO_{\tau, j}$ will decrease at most by $5^m  \cdot q$ in total before $U_{\sigma, i}$ is defined. 
        
        Therefore, it suffices to choose
        \[
            N := 5^m \cdot q + 1\,,
        \]
        and then for every $\sigma \in I(m, k)$ and $i \in \{1, \cdots, q\}$, the open set $U_{\sigma, i}$ with required properties is defined before our algorithm halts.

        For the general case where $|\cF_\sigma| = q' < q$, we can replace each $\cF_\sigma$ by
        \[
            (\cO_{\sigma, 1}, \cO_{\sigma, 2}, \cdots, \cO_{\sigma, q'}, \cO_{\sigma, q'}, \cdots, \cO_{\sigma, q'})\,.
        \]
        The same arguments apply immediately.
    \end{proof}

    \begin{lem}[Combinatorial argument II]\label{lem:combinatorial_II}
        Given any integers $q, N \in \N^+$ and any $R_0 \in (0, \injrad(M, g) / 4)$, if $P$ is a finite set of at most $q$ points in $M$, then there exists a radius $R \in (5^{-2Nq^2}R_0, 5^{-2N} R_0)$ such that for every $p \in P$,
        \[
            A(p; R, 5^{2N} R) \cap P = \emptyset\,.
        \]
    \end{lem}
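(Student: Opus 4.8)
The statement is a pigeonhole-type claim: among the concentric shells $A(p; 5^{2N}R_0 \cdot 5^{-2N(j+1)}, 5^{2N}R_0 \cdot 5^{-2N\cdot j})$ for $j = 0, 1, \dots$, only finitely many can be ``bad'' (contain a point of $P$ other than the one we are considering, or obstruct the disjointness we want), and since $P$ has few points, a good radius must exist. The plan is to make this counting explicit and uniform in $p \in P$ simultaneously.

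First I would fix the geometric scales. For each unordered pair $\{p, p'\}$ of distinct points of $P$, the distance $d(p, p')$ determines at most one scale $j$ for which $p' \in A(p; R, 5^{2N}R)$ when $R = 5^{2N}R_0 \cdot 5^{-2N(j+1)}$: indeed, $p' \in A(p; R, 5^{2N}R)$ means $R < d(p,p') < 5^{2N}R$, and consecutive candidate radii differ by the factor $5^{2N}$, so the open intervals $(R_j, 5^{2N}R_j)$ for the discrete geometric sequence $R_j = 5^{2N}R_0 \cdot 5^{-2N(j+1)}$ are pairwise disjoint. Hence each ordered pair $(p, p')$ with $p \neq p'$ rules out at most one value of $j$ from the point of view of $p$. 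Since $|P| \leq q$, there are at most $q-1 \leq q$ other points seen from any fixed $p$, so the set of $j$ that are ``bad for $p$'' has size at most $q$; taking the union over all $p \in P$, the set of $j$ bad for \emph{some} $p$ has size at most $q \cdot q = q^2$. Actually, more crudely (and sufficiently), the total number of ordered pairs is at most $q(q-1) < q^2$, each killing at most one scale, so at most $q^2$ scales are excluded in total.

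Next I would count the available scales. The candidate radii $R_j = 5^{-2N}R_0 \cdot 5^{-2N j}$ lie in the interval $(5^{-2Nq^2}R_0, 5^{-2N}R_0)$ precisely for $j = 0, 1, \dots, q^2 - 2$ (one checks $R_0 = 5^{-2N}R_0$ at $j=0$ down to $R_{q^2-2} = 5^{-2N(q^2-1)}R_0 > 5^{-2Nq^2}R_0$), giving $q^2 - 1$ candidate scales. Wait — I should be a little careful with the endpoints and with whether we need strict inequalities; I would set it up so that there are at least $q^2$ candidate scales strictly inside the interval (this only costs enlarging constants slightly, and the statement as written has room since $5^{-2Nq^2}R_0$ is much smaller than $5^{-2N}R_0$). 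Since at most $q^2 - 1$ — or at most $q^2$ with a safely chosen window of more than $q^2$ scales — of these are bad, at least one scale $j_0$ survives: for this $j_0$, setting $R := R_{j_0}$, we get $5^{-2Nq^2}R_0 < R < 5^{-2N}R_0$ and, for every $p \in P$, no point $p' \in P$ lies in $A(p; R, 5^{2N}R)$. Since trivially $p \notin A(p; R, 5^{2N}R)$ as well, this gives $A(p; R, 5^{2N}R) \cap P = \emptyset$, which is exactly the claim.

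I do not expect a genuine obstacle here; the only thing requiring care is the bookkeeping on the geometric progression — matching the exponent $2Nq^2$ in the lower bound of the radius interval to the number of pairs one must avoid, and getting the strict-versus-non-strict inequalities and the indexing of the $R_j$ consistent so that the count of surviving scales is genuinely positive. I would present it by: (1) defining $R_j := 5^{-2N(j+1)}R_0$ for $j = 0, \dots, q^2-1$, noting these all lie in $(5^{-2Nq^2}R_0, 5^{-2N}R_0]$ after a harmless shrink; (2) proving the disjointness of the intervals $(R_j, 5^{2N}R_j)$; (3) the pigeonhole count above; (4) extracting $R$. No appeal to any earlier result in the excerpt is needed beyond elementary properties of the metric $\dist$ and $\injrad(M,g)$.
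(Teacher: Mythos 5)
Your proof is correct and follows essentially the same pigeonhole argument as the paper: the paper sets $s_l = 5^{-2Nl}R_0$, $r_l = 5^{-2N(l-1)}R_0$ for $l = 1, \dots, q^2$, observes that for each $p$ at most $q-1$ of the disjoint annuli $A(p;s_l,r_l)$ can meet $P$, and uses $q(q-1) < q^2$ to extract a good $l_0$. The indexing concern you flagged is not a real obstacle — with $q^2$ candidate scales and at most $q(q-1) = q^2 - q$ bad ones, a surviving scale always exists (and the paper is slightly cavalier about the open-versus-closed endpoints, exactly as you suspected, which is harmless for the application).
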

    \begin{proof}
        For $1 \leq l \leq q^2$, we define 
        \[
            s_l = 5^{-2N l} R_0\,, \quad r_l = 5^{-2N(l - 1)} R_0\,.
        \]

        For every $p \in P$, since $\# P \leq q$, there are at most $q - 1$ many $l$ such that
        \[
            A(p; s_l, r_l) \cap P \neq \emptyset\,.
        \]
        Since $q(q - 1) < q^2$, by the pigeon hole principle, there is a $l_0$ such that $R := 5^{-2N l_0} R_0$ satisfies the requirement of the lemma.
    \end{proof}

\subsection{Proof of Proposition~\ref{Prop_Technical Deformation}}
        The proof will proceed in three steps.

        \medskip
        
        \paragraph*{\bf Step 1. Set up}
        Let $C_g$ be as in Lemma~\ref{Lem_Loc Deform A} and $N := N(m, q)$ as in Lemma~\ref{lem:combinatorial_I}. Without loss of generality, we assume that 
        \[
            \varepsilon < 5^{-N-1} \min\left\{\injrad(M, g), \frac{1}{10000 \cdot 3^m q C_g }\right\}\,,
        \]
        and we set $\varepsilon_1:= 5^{-2N-10}\varepsilon$, and $\varepsilon_2:= 5^{-2Nq^2}\varepsilon_1$.
        
        Since $\mathscr{S}$ is a compact set, we can choose $\delta_0 = \delta_0(\mathscr{S}) > 0$ such that for every $\Sigma \in \GS^*(M)$, if $\bF([\Sigma], [\mathscr{S}]) < \delta_0$, then for every $r \in (\varepsilon_2, \varepsilon_1)$, 
        \begin{equation}\label{eqn:area_ratio}
            \sup_{p \in M} \left\{\cH^2\left(\Sigma\cap A(p, r, 2r)\right)\right\} \leq 100 r^2\,.
        \end{equation}

        We select $\delta_1 := \delta_1(\mathscr{S}, q, \varepsilon_2)$ from Lemma~\ref{Lem_Existence of small  nontrivial loop}, and set 
        \[
            \delta(\varepsilon, q, m, \mathscr{S}) := \min\{\delta_0, \delta_1\}\,.
        \]
        
        For a Simon-Smith family $\Phi$ from the proposition, by Lemma~\ref{Lem_Existence of small  nontrivial loop}, we see that for every $x \in X$ with $\mathfrak{g}(\Phi(x)) = 1$, there exists $p_x \in S^3$ such that $B_{\varepsilon_2}(p_x)$ contains a non-trivial loop in $\Phi(x)$. 
        
        For each $x \in X$, we set 
        \[
            \tilde{P}(x) := \begin{cases}
                P_\Phi(x) \cup \{p_x\} &\text{if } \mathfrak{g}(\Phi(x)) = 1\\
                P_\Phi(x) &\text{if } \mathfrak{g}(\Phi(x)) = 0\,.
            \end{cases}
        \]
        By Lemma~\ref{lem:combinatorial_II}, since $\# \tilde P(x) \leq q$, for each $x \in X$, take $r_x \in (\varepsilon_2, 5^{-2N}\varepsilon_1)$ such that for every pair $p, p' \in \tilde{P}(x)$, we have 
        \[
            p' \notin A(p; r_x, 5^{2N} r_x)\,.
        \]
        For each $l \in \{1, 2, \cdots, N\}$, we set 
        \[
            r_{x, l}:= 5^{2(l - 1)} r_x\,.
        \]

        By \eqref{eqn:area_ratio}, for every $1 \leq l \leq N$, $x \in X$ and $p \in \tilde P(x)$ we have
        \begin{align}
            \cH^2 \left(\Phi(x)\cap A(p; r_{x,l}, 2r_{x,l})\right) \leq 100 r_{x,l}^2 \,. \label{Equ_Uniform area bound in small annuli}
        \end{align}
        and thus, applying Lemma~\ref{Lem_Loc Deform A} to $\Phi(x)$ in $A(p; r_{x,l}, 2r_{x,l})$ with $\Lambda = 100$, we obtain the following data:
        \begin{enumerate}
            \item $s_{x,l,p}\in (r_{x,l}, 2r_{x,l})$,
            \item $\zeta_{x,l,p}\in (0, \min\{100 r_{x,l}, s_{x,l}-r_{x,l}, 2r_{x,l}-s_{x,l}\}/5)$,
            \item a neighborhood $O_{x,l,p}\subset X$ of $x$,
            \item and a Simon-Smith family
            \[
                H_{x,l,p}: [0, 1] \times O_{x,l,p}\to \GS(M)\,.
            \]
        \end{enumerate}
        For each $x \in X$, we set a neighborhood of $x$
        \[
            O_x \subset \bigcap_{l \in \{1, 2, \cdots, N\}, p \in \tilde P(x)} O_{x, l, p}\,,
        \]
        such that $y \mapsto \Phi(y) \setminus \bigcup_{p \in \tilde P(x)} B(p; r_{x, 1, p}, 2r_{x, 1, p})$ is continuous in the smooth topology for $y \in O_x$.

        Note that by the definition of $\tilde P(x)$, $\fg(\Phi(x) \setminus \bigcup_{p \in \tilde P(x)} B(p; r_{x, 1, p})) = 0$ and for every $y \in O_x$, we also have $\fg(\Phi(y) \setminus \bigcup_{p \in \tilde P(x)} B(p; r_{x, 1, p})) = 0$. Therefore, if $\fg(\Phi(y)) = 1$, then there exists at least one nontrivial loop contained in $\Phi(y) \cap \bigcup_{p \in \tilde P(x)} B(p; r_{x, 1, p})$.

        \medskip

        \paragraph*{\bf Step 2. Refinement}

        Since $X$ is compact, we can take a finite cover from $\{O_x\}_{x \in X}$. Then 
        we refine $X$ so that for every cell $\sigma$ of $X$, we can find an $x_\sigma \in X$ satisfying that every cell $\tau$ of $X$ is a subset of $O_{x_\sigma}$ provided that $\sigma$ and $\tau$ are faces of some cell $\gamma$ of $X$. Note that for each $\sigma$, the number of such $\tau$ is no more than $5^m$. 
        
        By applying Lemma~\ref{lem:combinatorial_I} with 
        \[
            \cF_\sigma = (\{A(p; r_{x_\sigma, i}, 2r_{x_\sigma, i})\}^N_{i = 1})_{p \in \tilde P(x_\sigma)}\,,
        \] 
        we obtain, associated with each $\sigma$, a collection of annuli denoted by
        \[
            (A_{\sigma, p})_{p \in \tilde P(x_\sigma)}
        \] such that whenever $\sigma$ and $\tau$ are faces of some cell $\gamma$ of $X$ and for any $p \in \tilde P(x_\sigma)$ and $p' \in \tilde P(x_\tau)$, we have 
        \[
            A_{\sigma, p} \cap A_{\tau, p'} = \emptyset\,,
        \]
        unless $(\sigma, p) = (\tau, p')$.
    
        For each cell $\sigma$ and $p \in \tilde P(x_\sigma)$, for convenience, we denote the data associated with each $A_{\sigma, p}$, constructed from Lemma~\ref{Lem_Loc Deform A} at the end of Step 1, as follows:
        \begin{enumerate}
            \item $s_{\sigma, p} \in (r_{\sigma, p}, 2r_{\sigma, p})$, 
            \item $\zeta_{\sigma, p} \in (0, \min\{s_{\sigma, p}-r_{\sigma, p}, 2r_{\sigma, p}-s_{\sigma, p}\}/5)$, 
            \item a neighborhood $O_{\sigma} := O_{x_\sigma}$,
            \item and a Simon-Smith family 
            \[
                H_{\sigma, p}: [0, 1] \times O_{\sigma} \to \GS(M)\,.
            \]
        \end{enumerate}
        We also denote
        \begin{align*}
            r_{\sigma, p}^\pm:= s_{\sigma, p}\pm \zeta_{\sigma, p}\,, \quad
            B_{\sigma, p}^\pm := B_{r_{\sigma, p}^\pm}(p_{\sigma, p})\,, \quad
            \hat{A}_{\sigma, p} := \overline{B_{\sigma, p}^+\setminus B_{\sigma, p}^-}\subset A_{\sigma, p} \,.
        \end{align*}

        \medskip

        \paragraph*{\bf Step 3. Construction of $H$}
        
        After the refinement in the previous step, suppose that $X$ is a cubical subcomplex of $I(m, k')$. For each cell $\sigma$ of $X$ and for each $x \in X$, we define
        \[
            \bd_\sigma(x) := \min\left\{\frac{2\|x- c_\sigma\|_{\ell_\infty}}{3^{-k'}} , 1\right\}\,.
        \]
        to be the normalized $l^\infty$ distant function to $\sigma$. Here, $c_\sigma$ is the center of $\sigma$.
        
        For each $t \in [0, 1/2]$ and each $x \in X$, we define $H(x, t)$ using pinching surgeries as described in Lemma~\ref{Lem_Loc Deform A}: 
        
        For every cell $\sigma$ of $X$ and every $p \in \tilde P(x_\sigma)$,
        \begin{align*}
            H(t, x)\cap A_{\sigma, p} = H_{\sigma, p}\left(\min\{8t(1-\bd_\sigma(x)), 1\}, x\right) \cap A_{\sigma, p}
        \end{align*}
        and that $H(t, x)=\Phi(x)$ outside $\bigcup_{\sigma \in X, p \in \tilde P(x_\sigma)} A_{\sigma, p}$. The map has the following properties for every $x \in X$.
        \begin{itemize}
            \item The set
            \[
                Z_x:= \{\sigma: \bd_\sigma(x)<1\}
            \] 
            has the property that every cell $\sigma$ in $Z_x$ is a face of the cell $\gamma$, which is the smallest cell containing $x$. By Step 2, the corresponding annuli $\{A_{\sigma, p}\}_{\sigma \in Z_x, p \in \tilde P(x_\sigma)}$ are pairwise disjoint, and the number is no greater than $3^m \cdot q$. In particular, the map $H(x, t)$ is well-defined.
            \item By Lemma~\ref{Lem_Loc Deform A} (4) and (5), $t \mapsto \mathfrak{g}(H(x, t))$ is non-increasing and $t \mapsto H(x, t)$ is a pinch-off process.
            \item By (\ref{Equ_Uniform area bound in small annuli}) and Lemma~\ref{Lem_Loc Deform A} (6), we have for every $t\in [0, 1/2]$, 
            \begin{align*}
                \cH^2(H(t, x)) &\leq \cH^2(\Phi(x)) + (3^m \cdot q) \cdot C_g 100^2 \varepsilon^2_1\\ 
                &\leq \cH^2(\Phi(x)) + \varepsilon \,.
            \end{align*}
            \item If $\bd_{\sigma}(x)\leq 3/4$, then for each $p \in \tilde P(x_\sigma)$, in $A_{\sigma, p}$, $H(1/2, x) = H_{\sigma, p}(1, x)$, thus by Lemma~\ref{Lem_Loc Deform A} (4), we know that 
            \[
                H(1/2, x)\cap \hat{A}_{\sigma, p} = \emptyset \,.
            \]
            \item If $\mathfrak{g}(\Phi(x)) = 1$ and $x \in \sigma$, then by the discussion at the end of Step 1, we know that for some $p \in \tilde P(x_\sigma)$, $B_{s_{\sigma, p}}(p)$ contains a nontrivial loop of $\Phi(x)$. Thus, the components of $H(1/2, x)$ outside $B_{\sigma}^+$ always has genus $0$.
        \end{itemize}

        Finally, for each $t \in [1/2, 1]$ and each $x \in X$, we define $H(x, t)$ using shrinking process as described in Lemma~\ref{Lem_Loc Deform B}:

        For each cell $\sigma$ of $X$ and $p \in \tilde P(x_\sigma)$, we let $\cR_{(\sigma, p), t}: S^3\to S^3$ be the one-parameter family of shrinking deformation from Lemma~\ref{Lem_Loc Deform B} with $r^- = r^-_{\sigma, p}$ and $r^+ = r^+_{\sigma, p}$.
        
        Then for each $x \in X$, we label $\{(\sigma, p) \mid \sigma \in Z_x,\ p \in P'(x_\sigma)\}$ as
        \[
            \{(\sigma_i, p_i)\}_{1 \leq i \leq N'}
        \]
        such that for every $1 \leq i < j \leq N'$, either $B_{\sigma_i, p_i}^+\cap B_{\sigma_j, p_j}^+ = \emptyset$, or $B_{\sigma_i, p_i}^+\subset B_{\sigma_j, p_j}^-$. The existence of such labeling follows from Lemma~\ref{Lem_Loc Deform Composition} (1). Then, we denote for simplicity, 
        \[
            \hat{\cR}^{(i)}_{t, x} := \cR_{(\sigma_i, p_i), (2t-1)(1-\bd_{\sigma}(x))} \,.
        \]
        Note that 
        \begin{itemize}
            \item By Lemma~\ref{Lem_Loc Deform B} (1), if $\bd_{\sigma_i}(x)\geq 3/4$ or $t\in [1/2, 5/8]$, then $\hat{\cR}^{i}_{t, x}=\id$; when $\bd_{\sigma_i}(x)< 3/4$, by the construction above, $H(1/2, x)\cap \hat{A}_{\sigma_i, p_i} = \emptyset$.
            \item if $\bd_{\sigma_i}(x)\leq 1/4$, then $\hat{\cR}^{(i)}_{1, x}(B_{\sigma_i, p_i}^-) \subset \{p_i\}$.
        \end{itemize}
    
        Now for $t\in [1/2, 1]$ and $x \in X$, we define
        \begin{align*}
            H(t, x):= \hat{\cR}^{(N')}_{t, x}\circ\cdots\circ \hat{\cR}^{(2)}_{t, x}\circ \hat{\cR}^{(1)}_{t, x}\left(H(x, 1/2) \right) \,.
        \end{align*}
        Intuitively, $\{H(t, x)\}_{t \in [1/2, 1]}$ is obtained by shrinking some connected components of $H(1/2, x)$ to points. Therefore, for every $x \in X$, $t\mapsto \mathfrak{g}(H(t, x))$ is non-increasing, and $\{H(t, x\}\}_{t \in [1/2, 1]}$ is a pinch-off process. This confirms statement (3) of the proposition.
    
        By Lemma~\ref{Lem_Loc Deform B} (3), (4) and Lemma~\ref{Lem_Loc Deform Composition} (2), we see that $\cH^2(H(t, x))$ is non-increasing for $t \in [1/2, 1]$. In particular, for every $t\in [1/2, 1]$, 
        \[
            \cH^2(H(t, x)) \leq \cH^2(\Phi(x)) + \varepsilon \,.
        \]
        This confirms statement (2) of the proposition.
    
        Furthermore, if $\fg(H(1/2, x)) = 1$ and $x \in \sigma$, then by the construction above, $H(1, x)$ consists of some connected components of $H(1/2, x)$ outside $\bigcup_{p \in \tilde P(x_\sigma)} B_{s_{\sigma, p}}(p)$, along with a finite set of points. From the final property of $H(1/2, x)$, it follows that $\fg(H(1, x)) = 0$. This verifies statement (1) of the proposition and completes the proof.

\subsection{Proof of Theorem~\ref{thm:mapping_cylinder}}
    Now we use Proposition~\ref{Prop_Technical Deformation} and the following local min-max theorem to prove Theorem~\ref{thm:mapping_cylinder}.

    \begin{thm}[Local min-max theorem {\cite[Theorem~6.1]{MN21}}]\label{thm:local-min-max}
        Let $\Sigma$ be a closed, smooth, embedded non-degenerate minimal surface with Morse index $k$ and multiplicity one, in a closed $3$-dimensional manifold $(M, g)$. For every $\beta > 0$, there exists $\varepsilon_0 > 0$ and a smooth family $\{F_v\}_{v \in \overline{\mathbb{B}}^k} \subset \operatorname{Diff}^\infty(M)$ such that
        \begin{enumerate}[label=\normalfont(\arabic*)]
            \item $F_0 = \operatorname{Id}$, $F_v = F^{-1}_v$ for all $v \in \overline{\mathbb{B}}^k$.
            \item $\|F_v - \operatorname{Id}\|_{C^1} < \beta$ for all $v \in \overline{\mathbb{B}}^k$.
            \item The function 
            \[
                A^\Sigma: \overline{\mathbb{B}}^k \to [0, \infty], v \mapsto \cH^2((F_v)_\# \Sigma)\,,
            \]
            is strictly concave.
            \item For every $T \in \cZ_2(M; \Z_2)$ with $\mathcal{F}(T, [\Sigma]) < \varepsilon_0$, we have
            \[
                \max_{v \in \overline{\mathbb{B}}^k} \bM((F_v)_\# T) \geq \cH^2(\Sigma)
            \]
            with equality only if $[\Sigma] = (F_v)_\# T$ for some $v \in \overline{\mathbb{B}}^k$.
        \end{enumerate}
    \end{thm}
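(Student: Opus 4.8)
\textbf{Proof strategy for the Local min-max theorem (Theorem~\ref{thm:local-min-max}).} The plan is to construct the family $\{F_v\}_{v\in\overline{\mathbb B}^k}$ directly from the Morse-theoretic structure of the area functional near $\Sigma$, exploiting non-degeneracy and multiplicity one. Since $\Sigma$ has Morse index $k$, the second variation (Jacobi) operator $L_\Sigma = \Delta_\Sigma + |A_\Sigma|^2 + \Ric_g(\nu,\nu)$ has exactly $k$ negative eigenvalues (counted with multiplicity) and no zero eigenvalue. Let $\phi_1,\dots,\phi_k$ be an $L^2(\Sigma)$-orthonormal basis of the negative eigenspace, and let $N_\varepsilon(\Sigma)$ be a tubular neighborhood identified via the normal exponential map with $\Sigma\times(-\varepsilon,\varepsilon)$. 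For $v=(v_1,\dots,v_k)\in\overline{\mathbb B}^k$ I would first build a one-parameter deformation in the direction $\sum_i v_i\phi_i$: define a normal graph $\Sigma_v$ over $\Sigma$ with graph function $u_v$ chosen so that $t\mapsto\area(\Sigma_{tv})$ is strictly concave and maximized at $t=0$ — this is possible because the Hessian of area restricted to this $k$-dimensional ``unstable'' slice is negative definite. Then I would promote the normal graphs $\Sigma_v$ to ambient diffeomorphisms $F_v$ by extending the normal variation field (cut off away from $N_\varepsilon(\Sigma)$ by a fixed bump function) and flowing; taking $\|v\|$ and $\varepsilon$ small makes $\|F_v-\operatorname{Id}\|_{C^1}<\beta$, giving (1) and (2). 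The concavity in (3) follows because $A^\Sigma(v) = \area(F_v{}_\#\Sigma)$ equals $\area(\Sigma) - \tfrac12\langle \mathcal H v,v\rangle + O(|v|^3)$ where $\mathcal H$ is the (positive-definite, since we restricted to the negative eigenspace of $L_\Sigma$ with a sign flip) Hessian matrix, so after shrinking $\overline{\mathbb B}^k$ the quadratic term dominates.

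The heart of the argument is property (4), the variational inequality: every cycle $T$ that is $\cF$-close to $[\Sigma]$ can be pushed, by at least one $F_v$, to mass $\ge\cH^2(\Sigma)$, with equality forcing $(F_v)_\#T=[\Sigma]$. The plan here is a degree/linking argument. Given $T$ with $\cF(T,[\Sigma])<\varepsilon_0$, I would use the constancy theorem and the isoperimetric inequality to write $T$ inside $N_\varepsilon(\Sigma)$ as (roughly) a graph-like cycle over $\Sigma$ plus a small error; more precisely, slicing $T$ by the level sets $\Sigma\times\{t\}$ of the tubular coordinate and applying the coarea formula produces a well-defined ``height profile,'' and the smallness of $\cF(T,[\Sigma])$ forces this profile to be $L^1$-close to the zero section. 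Consider the continuous map $\Psi_T:\overline{\mathbb B}^k\to\R^k$ given by $\Psi_T(v)_i = \int (\text{projection of } (F_v)_\#T \text{ onto }\phi_i\text{-direction})$; for $v$ on the boundary sphere $\partial\mathbb B^k$, the deformation $F_v$ moves $\Sigma$ itself strictly ``past'' $T$ in the $i$-th direction, so $\Psi_{[\Sigma]}$ restricted to $\partial\mathbb B^k$ is homotopic to the identity of $S^{k-1}$ and hence has nonzero degree; by continuity in $T$ (for $\varepsilon_0$ small) the same holds for $\Psi_T$, so $\Psi_T$ has a zero $v_0\in\mathbb B^k$. At $v_0$ the cycle $(F_{v_0})_\#T$ has no component in the unstable directions, and since $\Sigma$ is \emph{stable} in the orthogonal complement and has multiplicity one, a standard stability/area-monotonicity comparison (compare $(F_{v_0})_\#T$ against $[\Sigma]$ across the region between them, using that the relevant second variation is nonnegative there) yields $\bM((F_{v_0})_\#T)\ge\cH^2(\Sigma)$, with equality only if the region between them has zero volume, i.e. $(F_{v_0})_\#T=[\Sigma]$.

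For the actual writeup I would lean on the fact that this is \emph{exactly} Theorem~6.1 of \cite{MN21}: the deformation family $\{F_v\}$ and the degree argument are constructed there in the Almgren--Pitts/flat-cycle setting, and our ambient manifold $(M,g)$ is a closed $3$-manifold with $\Sigma$ a closed embedded non-degenerate minimal surface of multiplicity one, which is precisely their hypothesis. So the cleanest presentation is to state the theorem as a direct citation, then (if self-containedness is wanted) include the sketch above as a remark. I expect the main technical obstacle — were one to write it out in full — to be the slicing step that makes ``project $(F_v)_\#T$ onto the $\phi_i$-directions'' a well-defined continuous function of $(v,T)$ when $T$ is merely a flat cycle rather than a surface: this requires the no-mass-concentration estimates and the interpolation machinery from \cite{MN14,MN21} to replace $T$ near $\Sigma$ by a genuine normal graph without increasing mass by more than $o(1)$, and to ensure the replacement depends continuously on $T$. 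All the curvature and Morse-index bookkeeping (counting negative eigenvalues, orthonormalizing $\phi_i$, verifying the Hessian sign) is routine once that step is in place.
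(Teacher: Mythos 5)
The paper does not prove Theorem~\ref{thm:local-min-max}; it is imported verbatim as a citation of \cite[Theorem~6.1]{MN21} and then applied as a black box in the proof of Theorem~\ref{thm:mapping_cylinder}, which is precisely the presentation you ultimately recommend. In that sense your proposal matches the paper's approach exactly, and the accompanying sketch has no counterpart in the paper to compare against.

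For the record, since you supplied a sketch anyway: the construction of $\{F_v\}$ from an orthonormal basis $\phi_1,\dots,\phi_k$ of the negative eigenspace of the Jacobi operator, extended to ambient vector fields cut off near $\Sigma$ and integrated to diffeomorphisms, is indeed how Marques--Neves build the family, and items (1)--(3) follow from the second variation formula and the non-degeneracy assumption as you say. For item (4), however, the Marques--Neves argument does not proceed by defining a ``projection of $(F_v)_\# T$ onto the $\phi_i$-directions'' for an arbitrary flat cycle $T$ and running a degree count on the resulting $\R^k$-valued map; the obstacle you flag yourself (making such a projection well-defined and continuous in $T$ when $T$ is merely a flat cycle with no graphical structure) is exactly why that route is not taken. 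If you intend to include the sketch as an explanatory remark, item (4) should be rewritten to follow the actual argument in \cite{MN21} rather than the degree heuristic.
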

    
    We proceed the proof in three steps.

    \medskip
    
    \paragraph*{\bf Step 1. Set up}
    
    Since $\cW_{L, \leq \mathfrak{g}_0}(M, g)$ consists of a varifold associated with a non-degenerate multiplicity-one minimal sphere $S$, let $\{F_v\}_{v \in \overline{\mathbb{B}}^k}$ be the smooth family associated with $S$ given by Theorem~\ref{thm:local-min-max} such that
    \begin{itemize}
        \item[(i)] For every $T_1, T_2 \in \cZ_n(M; \Z_2)$ and $v \in \overline{\mathbb{B}}^k$, 
        \[ 
            \bF((F_v)_\#(T_1), (F_v)_\#(T_2)) \leq 2\bF((T_1), (T_2))\,. 
        \]
    \end{itemize}
    We can choose smaller $\varepsilon_0 > 0$ such that 
    \begin{itemize}
        \item[(ii)] For every $T \in \cZ_2(M; \Z_2)$ with $\mathbf{F}(T, [S]) < \varepsilon_0$, the function
        \[
            A^T: \overline{\mathbb{B}}^k \to [0, \infty], v \mapsto \bM((F_v)_\# T)\,,
        \]
        is strictly concave with a unique maximum in $\mathbb{B}^k_{1/2}$.
        \item[(iii)] There exists $\varepsilon_1 > 0$ such that for every $T \in \cZ_2(M; \Z_2)$ with $\mathbf{F}(T, [S]) < \varepsilon_0$,
        \[
            \varepsilon_1 < \min_{v \in \partial \mathbb{B}^k, T } (\bM(S) - \bM((F_v)_\# S))\,.
        \]
    \end{itemize}
    Let $\mathscr{S} := \{(F_v)_\#(S)\}_{v \in \overline{\mathbb{B}}^k}$, which is a compact subset of  embedding of $\mathbb{S}^2$ into $(M, g)$. Let
    \[
        \varepsilon_1 := \min_{v \in \partial \mathbb{B}^k} (\bM(S) - \bM((F_v)_\# S)) > 0\,.
    \]

    Let $\delta(\varepsilon_1 / 10, N_P(\Phi) + 1, m + 1, \mathscr{S})$ be chosen as in Proposition~\ref{Prop_Technical Deformation}.

    \medskip

    \paragraph*{\bf Step 2. Initial Simon-Smith family}
    
    It follows from Theorem~\ref{thm:currentsCloseInBoldF} with $r = \min(\varepsilon_0, \delta(\varepsilon_1 / 10, \mathcal{S}, m) / 10)$ that there exists $\eta > 0$ and $\Phi_1 \in \Lambda(\Phi)$ such that 
    \[
        \bM(\Phi_1(x)) \geq L - 4\eta \implies [\Phi_1(x)] \in \bB^\bF_r([S])\,.
    \]
    
    In particular, there exists a Simon-Smith family 
    \[
        H_1: [0, 1] \times X \to \GS^*(M)
    \] with $H_1(0, \cdot) = \Phi$ and $H_1(1, \cdot) = \Phi_1$, which is a pinch-off process, because for every $x \in X$, $H_1(\cdot, x)$ is induced by a one-parameter group of diffeomorphisms.
    
    We can refine the cubical subcomplex $X$ so that, each cell $\sigma$ of $X$, exactly one of the following conditions holds:
    \begin{itemize}
        \item There exists a point $x_0 \in \sigma$ such that $\bM(\Phi_1(x_0)) \geq L - 2\eta$. In this case, for every $x \in \sigma$, $\bM(\Phi_1(x)) \geq L - 4\eta$.
        \item For every $x \in \sigma$, $\bM(\Phi_1(x)) < L - 2\eta$.
    \end{itemize}
    Let $X_0$ be the smallest cubical subcomplex of $X$ containing all cells that satisfy the first condition, and let $X_1$ be the smallest cubical subcomplex containing all cells that satisfy the second condition. Let
    \[
        Z := X_0 \cap X_1\,,
    \]
    which is also compact.
    
    Clearly for every $z \in Z$, 
    \[
        L - 4\eta \leq \cH^2(\Phi_1(z)) \leq L - 2\eta\,.
    \]
    and thus, $\bF([\Phi_1(z)], [S]) < r \leq \varepsilon_0$. For every $z \in Z$, let $A^z: \overline{\mathbb{B}}^k \to [0, \infty)$ be the function
    \[
        A^z(v) := \bM((F_v)_\#([\Phi_1(z)]))\,,
    \]
    By (ii) of Step 1, $A^z$ is strictly concave and has a unique maximum $m(z) \in \mathbb{B}^k_{1/2}$. By Proposition~\ref{prop:SS_AP}, $[\Phi_1]$ is continuous in the $\bF$-metric, so the function
    \[
        m: Z \to \mathbb{B}^k_{1/2}
    \]
    is continuous.

    It follows from Theorem~\ref{thm:local-min-max} (4) and $\bM([\Phi_1(z)]) = \cH^2(\Phi_1(z)) < L$, that $m(z) \neq 0$ for every $z \in Z$. Hence, there exists $\alpha > 0$ such that 
    \[
        \alpha \leq |m(z)| < 1/2\,.
    \]
    Consider the one-parameter flow $\{\phi^z(\cdot, t)\}_{t \geq 0} \subset \operatorname{Diff}(\overline{\mathbb{B}}^k)$ generated by
    \[
        v \mapsto -(1 - |v|^2) \nabla A^z(v)\,.
    \]
    For every $v \in \overline{\mathbb{B}}^k \setminus m(y)$, $t \mapsto A^z(\phi^z(v, t))$ is decreasing, and the limit $\lim_{t \to \infty} \varphi^z(v, t) \in \partial \mathbb{B}^k$. In particular, by (iii) of Step 1, we have
    \[
        A^z(0) - \lim_{t \to \infty} A^z(\phi^z(v, t)) \geq \varepsilon_1\,.
    \]

    By the compactness of $Z$, we can choose $T_0 > 0$ such that for every $z \in Z$, $t \mapsto A^z(\phi^z(v, t))$ is decreasing along $[0, T_0]$ and
    \[
        A^z(0) - A^z(\phi^z(v, T_0)) \geq \varepsilon_1 / 2\,.
    \]

    Now, let us consider a cubical complex $X'$ in $I^{m+1}$, whose underlying space is
    \[
        X_0 \times \{0\} \cup Z \times [0, 1] \cup X_1 \times \{1\}\,.
    \]
    From the construction, we also know that the map 
    \[
        f: X' \to X,\quad (x, y) \mapsto x
    \] is a homotopy equivalence. Note that $f$ is a surjective cubical map, after refining both $X$ and $X'$.
    
    We can define a Simon-Smith family $\Phi'_2: X' \to \GS^*(M)$ by
    \begin{align*}
        \Phi'_2(x, 0) &= \Phi_1(x)\,, \quad \forall x \in X_0\,,\\
        \Phi'_2(x, 1) &= \Phi_1(x)\,, \quad \forall x \in X_1\,,\\
        \Phi'_2(x, y) &= F_{\phi^z(0, 3 y T_0)}(\Phi'(x))\,, \quad \forall x \in Z, y \in [0, 1/3]\,,\\
        \Phi'_2(x, y) &= F_{\phi^z(0, T_0)}(\Phi'(x))\,, \quad \forall x \in Z, y \in [1/3, 2/3]\,,\\
        \Phi'_2(x, y) &= F_{\phi^z(0, 3 (1 - y) T_0)}(\Phi'(x))\,, \quad \forall x \in Z, y \in [2/3, 1]\,.
    \end{align*}

    Note that we can define two Simon-Smith family $\Phi': X' \to \GS^*(M)$ and $\Phi'_1: X' \to \GS^*(M)$ by extending $\Phi$ and $\Phi_1$ to the domain $X'$:
    \begin{align*}
        \Phi'(x, y) = \Phi(x)\,, \quad \forall (x, y) \in X'\,,
    \end{align*}
    and
    \begin{align*}
        \Phi'_1(x, y) = \Phi_1(x)\,, \quad \forall (x, y) \in X'\,,
    \end{align*}
    Similarly, we also have two deformations $H'_1: [0, 1] \times X' \to \GS^*(M)$ from $\Phi'$ to $\Phi'_1$ and $H'_2: [0, 1] \times X' \to \GS^*(M)$ from $\Phi'_1$ to $\Phi'_2$:
    \begin{align*}
        H'_1(t, x, y) = H_1(t, x)\,, \quad \forall t \in [0, 1], (x, y) \in X'\,,
    \end{align*}
    and
    \begin{align*}
        H'_2(t, x, 0) &= \Phi_1(x)\,, \quad \forall t \in [0, 1], x \in X_0\,,\\
        H'_2(t, x, 1) &= \Phi_1(x)\,, \quad \forall t \in [0, 1], x \in X_1\,,\\
        H'_2(t, x, y) &= F_{\phi^z(0, 3 t y T_0)}(\Phi'(x))\,, \quad \forall t \in [0, 1], x \in Z, y \in [0, 1/3]\\
        H'_2(t, x, y) &= F_{\phi^z(0, t T_0)}(\Phi'(x))\,, \quad \forall t \in [0, 1], x \in Z, y \in [1/3, 2/3]\,,\\
        H'_2(t, x, y) &= F_{\phi^z(0, 3 t (1 - y) T_0)}(\Phi'(x))\,, \quad \forall t \in [0, 1], x \in Z, y \in [2/3, 1]\,.
    \end{align*}
    
    For any $i \in {1, 2}$ and any $(x, y) \in X'$, $\{H'_i(t, x, y)\}_{t \in [0, 1]}$ is induced by an isotopy, and thus, a pinch-off process. Furthermore, we have
    \begin{equation}\label{eqn:NP_estimates}
        N_P(\Phi) = N_P(\Phi_1) = N_P(\Phi') = N_P(\Phi'_1) = N_P(\Phi'_2)\,.
    \end{equation}

    Furthermore, $\Phi'_2$ has the following properties:
    For every $(x, y) \in X'$ with $y \geq 1/3$,
    \begin{equation}\label{eqn:comp_upper_bound}
        \cH^2(\Phi'_2(x, y)) \leq L - 2\eta\,.
    \end{equation}
    For every $(x, y) \in X'$ with $y \leq 2/3$, by (i) of Step 1, 
    \begin{equation}\label{eqn:close_to_cS}
        \bF([\Phi'_2(x, y)], [\mathscr{S}]) \leq 2 r < \delta(\varepsilon_1 / 10, m + 1, N_P(\Phi'_2) + 1, \mathscr{S})\,.
    \end{equation}
    For every $(x, y) \in X'$ with $y \in [1/3, 2/3]$,
    \begin{equation}\label{eqn:bdry_upper_bound}
        \cH^2(\Phi'_2(x, y)) \leq L - 2\eta - \varepsilon_1 / 2\,.
    \end{equation}

    \medskip

    \paragraph*{\bf Step 3. Interpolation}

    Let $Y' := \{(x, y) \in X : y \leq 1/3\}$. By \eqref{eqn:close_to_cS}, for every $x' \in Y'$, $\Phi'_2(x') \in \GS(M)$, and we can apply Proposition~\ref{Prop_Technical Deformation} to $\Phi'_2|_{Y'}$ and obtain a deformation 
    \[
        H'_{3, Y'}: [0, 1] \times Y' \to \GS(M)\,.
    \]

    Consequently, we can define a Simon-Smith family $H'_3: [0, 1] \times X' \to \GS^*(M)$ as
    \begin{align*}
        H'_3(t, x, y) &= H'_3(t, x, y)\,, \quad \forall t \in [0, 1], (x, y) \in X', y \in [0, 1/3]\,,\\
        H'_3(t, x, y) &= H'_3((2 - 3y)t, x, 1/3)\,, \quad \forall t \in [0, 1], x \in Z, y \in [1/3, 2/3]\,,\\
        H'_3(t, x, y) &= \Phi'_2(x, y)\,, \quad \forall t \in [0, 1], (x, y) \in X', y \in [2/3, 1]\,.
    \end{align*}
    We denote $H'_3(1, \cdot, \cdot): X' \to \GS^*(M)$ by $\Phi'_3$.
    
    By Proposition~\ref{Prop_Technical Deformation} (1), for every $(x, y) \in X'$ with $y \in [0, 1/3]$, 
    \[
        \fg(\Phi'_3(x, y)) = \fg(H'_3(1, x, y)) = 0\,.
    \] 
    By Proposition~\ref{Prop_Technical Deformation} (2) and \eqref{eqn:bdry_upper_bound}, for every $(x, y) \in X'$ with $y \in [1/3, 2/3]$, 
    \[
        \cH^2(\Phi'_3(x, y)) = \cH^2(H'_3((2 - 3y)t, x, 1/3)) \leq \cH^2(\Phi'_2(x, 1/3)) + \varepsilon_1 / 10 \leq L - 2\eta\,.
    \]
    By \eqref{eqn:comp_upper_bound}, for every $(x, y) \in X'$ with $y \in [2/3, 1]$,
    \[
        \cH^2(\Phi'_3(x, y)) = \cH^2(\Phi'_2(x, y)) \leq L - 2\eta\,.
    \]
    Therefore, for every $(x, y) \in X'$,
    \begin{equation}\label{eqn:genus_bound}
        \cH^2(\Phi'_3(x, y)) \geq L - \eta \implies y \in [0, 1/3] \implies \mathfrak{g}(\Phi'_3(x, y)) = 0\,.
    \end{equation}
    
    Moreover, by Proposition~\ref{Prop_Technical Deformation} (3), for every $(x, y) \in X'$, $\{H'_3(t, x, y)\}_{t \in [0, 1]}$ is a pinch-off process.

    Finally, we can set $\Phi' = \Phi'_3$, which satisfies (1) of the theorem.
    Let $W := M_f$, and consider the deformation map
    \[
        H': [0, 1] \times X' \to \GS^*(M)\,,
    \]
    by 
    \[
        H'(t, x) = \begin{cases}
            H_1(3t, x) & t \in [0, 1/3]\,,\\
            H_2(3t - 1, x) & t \in [1/3, 2/3]\,,\\
            H_3(3t - 2, x) & t \in [2/3, 1]\,.
        \end{cases}
    \]
    Since $H'(0, x) = \Phi'(x) = \Phi(f(x))$, $H'$ induces a Simon-Smith family $H: W \to \GS^*(M)$ of genus $\leq 1$ satisfying (2)(a) and (2)(b) of the theorem.

    This completes the proof.

\section{Pinch-off process and mean curvature flow}\label{min-max_iiiOffAndMCF}
    In this section, we show that a pinch-off process (Definition~\ref{defn:pinch_off}) would possess certain topological properties similar to those of mean curvature flow. These properties will be used in \S \ref{sect:genus1caps}.

    Given a level set flow $\{M(t)\}_{t\in[0,\infty)}$ in some $(n+1)$-Riemannian manifold $N$, consider the complements of the time-slices,   
    \[
        W(t):=\{t\}\x (N\backslash M(t))\subset [0,\infty)\x N\,,
    \]
    and 
    \[
        W[t_1,t_2]:=\bigcup_{t\in[t_1,t_2]}W(t)\subset [0,\infty)\x N\,.
    \]
    In his work~\cite{Whi95}, B. White ingeniously applied the avoidance principle to show:
    \begin{itemize}
        \item For any $T>0$, any loop in $W[0,T]$ is homotopic to some loop in $W[0]$.
        \item If $N$ is a complete Riemannian manifold with Ricci curvature bounded from below, then for any $T>0$, the homomorphism
        \[
            H_{n-1}(W(T);\Z)\to H_{n-1}(W[0,T];\Z)
        \]
        induced by the inclusion map $W(T)\hookrightarrow W[0,T]$ is an monomorphism.
        \item The rank of $H_1(W(t);\Z)$ is non-increasing in $t$.
    \end{itemize}

    In the following, we prove a similar theorem regarding pinch-off processes. Given a compact Riemannian $3$-manifold $M$, and a pinch-off process $\Phi:[0,T]\to \GS(M)$, consider the complements of the time-slices, 
    \[
        W(t):=\{t\}\x (M\backslash S(t))\subset [0,T]\x M\,,
    \]
    and 
    \[
        W[t_1,t_2]:=\bigcup_{t\in[t_1,t_2]}W(t)\subset [0,T]\x M\,.
    \]
 
    \begin{prop}\label{prop:analogousToMCF} 
        If $\Phi:[0,T]\to \GS(M)$ is a pinch-off process, we have:
        \begin{enumerate}[label=\normalfont(\arabic*)]
            \item Any loop in $W(T)$ is homotopic in $W[0,T]$ to a loop in $W(0)$.
            \item If a loop is homologically non-trivial in $W(T)$, then it is homotopic in $W[0,T]$ to a homologically non-trivial loop in $W(0)$.
            \item By Definition~\ref{def:punctate_surf}, for each $t\in[0,T]$, $W(t)$ can be written as the disjoint union of two open sets $\ins(\Phi(t))$ and $\out(\Phi(t))$, such that both of their reduced boundaries are  $\Phi(t) \setminus \Phi(t)_\text{iso}$, and they both vary continuously in $t$ (as Caccioppoli sets). Then, 
            \[ \rank(H_1(\ins(\Phi(t));\Z)) \text{ and } \rank(H_1(\out(\Phi(t));\Z))\]
            are both non-increasing in $t$.
        \end{enumerate}
    \end{prop}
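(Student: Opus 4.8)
The plan is to reduce everything to the local behavior of a pinch-off process, which by Definition \ref{defn:pinch_off} is governed at all but finitely many spacetime points by an ambient one-parameter group of diffeomorphisms, and at the finitely many exceptional points $(t_i,p_i)$ by either a surgery process or a shrinking process, each confined to a small ball $U_i$. The key observation is that between consecutive surgery/shrinking times $t_i<t_{i+1}$ the process is generated by an ambient isotopy, so $W[t_i,t_{i+1}]$ deformation retracts onto $W(t_i)$ (and onto $W(t_{i+1})$), and similarly $\ins(\Phi(t))$ and $\out(\Phi(t))$ are carried diffeomorphically to one another. Thus it suffices to analyze a single elementary step: a surgery in a ball $U$, or a shrinking in a ball $U$. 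I would set this up as a lemma: for a pinch-off process whose only exceptional point is a single surgery or shrinking in $U$, statements (1), (2), (3) hold. The proposition then follows by composing finitely many such steps (homotopies and homology maps compose; ranks that are non-increasing at each step are non-increasing overall).

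\textbf{Analysis of a surgery step.} Suppose the surgery occurs in $U$ at time $t_0$: a topological cylinder in $\Phi(t)\cap U$ is pinched to a double cone and then either split into two discs or left intact. Outside $U$ nothing changes. For (1): a loop $c\subset W(T)$ can first be pushed off $U$ by general position (it is $1$-dimensional, $\Phi(T)\cap U$ and its complement are $3$-dimensional, so we can homotope $c$ within $W(T)$ to avoid the closed ball $\overline U$, possibly after shrinking $U$ slightly using the transversality built into Definition \ref{defn:pinch_off}); once $c$ avoids $\overline U$ it lies in $W[0,T]$ at a level where the region is unchanged by the surgery, and is then carried back to $W(0)$ by the ambient isotopy on the complement of $U$. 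For (2): the surgery only \emph{adds} a disc (or nothing) inside $U$, which can only kill $1$-cycles, never create new independent ones in $H_1$; more precisely, pushing the non-trivial loop $c$ off $\overline U$ as above, its class in $H_1(W(T))$ is the image of a class supported away from $U$, and that same loop, viewed just before the surgery, is non-trivial in $H_1(W(t_0^-))$ — otherwise it would bound a $2$-chain which, being movable off the $1$-dimensional change locus, also exists after surgery, a contradiction. Then transport back to $W(0)$. For (3): a surgery that splits a cylinder into two discs, from the point of view of $\ins(\Phi(t))$, caps off a handle (decreasing or preserving $H_1$-rank), while $\out$ sees the dual operation; the ``left intact'' case changes nothing topologically. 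In all cases the ranks of $H_1(\ins)$ and $H_1(\out)$ do not increase. This requires a careful bookkeeping of which side the disc is glued into, using the ordering and the fact that surgery on a trivial (bounding) curve in a surface is a connect-sum-type operation on one side and attaches a handle-cancelling disc on the other.

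\textbf{Analysis of a shrinking step.} Here for $t<t_0$ near $t_0$ the family $\Phi(t)\cap U$ is a one-parameter group of diffeomorphisms inside $U$ with $\Phi(t)\cap\partial U=\emptyset$, and $\Phi(t)\cap U=\{p_0\}$ for $t\geq t_0$. So a whole union of closed components inside $U$ is swept to a point. For $W(t)$: removing these components from $M$ and then filling them back in changes the complement by collapsing some bounded components of $W(t)$ — this can only simplify $H_1$ of the relevant side. Concretely, $\ins(\Phi(t))$ (resp. $\out$) loses some connected components or has a component's topology simplified to a point, so its $H_1$-rank drops or stays the same. Loops in $W(T)$ avoiding $\overline U$ (again by dimension) are unaffected, giving (1) and (2) exactly as before. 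The main subtlety, and what I expect to be the chief obstacle, is (2): one must rule out that a loop non-trivial after the step could have been \emph{trivial} before — i.e. that the pinch-off process creates homology. The argument is that every elementary step is, on the level of the ambient manifold's complement, either an isotopy, or the filling-in of a $3$-ball region (shrinking: the collapsed components bound balls since they are generalized surfaces inside the small ball $U$ which is itself a ball), or the attachment of a $2$-disc along a curve bounding in the surface (surgery on a trivial curve). None of these operations enlarges $\pi_1$ or $H_1$ of the complement of the surface: filling a ball and attaching a trivializing $2$-cell both have Mayer–Vietoris / van Kampen descriptions in which the map on $\pi_1$ (resp. $H_1$) from the old complement to the new one is surjective. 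Surjectivity on $\pi_1$ gives (1); the fact that the kernel is generated by classes supported near the change locus, combined with the ``push off $\overline U$'' maneuver, gives (2). I would make this precise with van Kampen for $\pi_1$ and a Mayer–Vietoris sequence for $H_1$ applied to the decomposition of $W(t_0^\pm)$ into the part inside $U$ and the part outside, noting that the inclusion of the ``outside'' part into both $W(t_0^-)$ and $W(t_0^+)$ is $\pi_1$- and $H_1$-surjective because the inside part is either connected-and-simply-connected (a ball, in the shrinking case) or a handlebody attached along a disc (in the surgery case). Assembling the finitely many steps and interleaving the ambient isotopies on the smooth portions then yields all three conclusions of Proposition \ref{prop:analogousToMCF}.
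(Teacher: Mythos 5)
Your reduction to a single surgery or shrinking step is structurally similar to what the paper does (which also treats the singular times $t_i$ one at a time), but the two technical devices you rely on in the local step both fail for dimension reasons, and they are hiding exactly the step that the paper has to work for.

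For (1) you invoke general position to push a loop $c\subset W(T)$ off the closed ball $\overline U$. This is not a general position statement: $\overline U$ is $3$-dimensional inside the $3$-manifold $M$, so a $1$-dimensional loop cannot generically avoid it, and whether such a homotopy exists depends on the topology of $W(T)\cap\overline U$ (e.g.\ whether it deformation retracts to $\partial U\cap W(T)$). What \emph{is} generic, and what the paper actually does, is to perturb the one-parameter family of loops $\{\gamma_t\}_{t\in J}$ so as to avoid the single spatial point $p_i$: a $2$-dimensional family of curves in a $3$-manifold generically avoids a $0$-dimensional set, and away from $(t_i,p_i)$ the surgery or shrinking is induced by an isotopy, so the family of loops transports across $t_i$. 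You should not need (and cannot freely get) avoidance of the whole ball.

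For (2) the phrase ``movable off the $1$-dimensional change locus'' is dimension-incorrect. The locus on which $W(t_0^-)$ and $W(t_0^+)$ differ is at least $2$-dimensional — it contains the cylinder, the cones/discs, and the solid region swept between them — and a $2$-chain in a $3$-manifold cannot be pushed off a $2$-dimensional set by general position. This is precisely the delicate part of the paper's proof of (2): there, one cuts the $2$-chain $\Gamma_{s_1}$ along the disc $D\subset\partial U$ bounded by one boundary circle of the cylinder, writes $\Gamma_{s_1}\cap D$ as loops $c_1,\dots,c_m$ bounding subdiscs $d_j\subset D$, glues \emph{two} copies of each $d_j$ to $\Gamma_{s_1}$ and opens a small gap, so as to obtain a new relative $2$-chain with the same boundary that misses $\overline U$ and hence survives the surgery. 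Your proposal skips this entirely. Your fallback via van Kampen/Mayer--Vietoris has the same gap in a different guise: you need $W(t_0^\pm)\cap U$ to be simply connected (or a handlebody glued along a disc), but Definition \ref{defn:pinch_off} allows the closed component being shrunk to be a torus (or any generalized surface), in which case $U\setminus\Phi(t_0^-)$ has nontrivial $\pi_1$ and the inclusion of the ``outside'' piece is not a priori $\pi_1$- or $H_1$-surjective. So (2), and hence (3) which you reduce to it, is not established by your argument; the missing ingredient is the explicit modification of the bounding $2$-chain.
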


    In the remainder of this section, we will prove the above proposition. We will follow the notation in Definition~\ref{defn:pinch_off}. For simplicity, we assume that  the singular times $t_1,\cdots,t_n$ are distinct. It would be clear that the following strategy can still be applied when some of the $t_i$ coincide, by treating the neighborhood of each $(t_i,p_i)$ individually.
 
    \subsection*{Item (1)}
        Let $\gamma$ be a loop in $W(T)$. 
    In  order to homotope $\gamma$ back to some loop in $W(0)$, let us  construct a homotopy $\{\gamma_t\}_{t\in[0,T]}$ with $\gamma_T=\gamma$, and $\gamma_t\subset W(t)$ for each $t$, backwardly from time $t = T$ to $t = 0$. When we decrease $t$ from $T$ to $0$, if $t$ is not equal to any of the $t_i$, then near $t$,  $\Phi(\cdot)$ is moving by isotopy, so clearly the homotopy $\{\gamma_t\}$ can be constructed. So, let us assume $t$ is approaching  some  $t_i$ from above. Suppose  $(t_i,p_i)$ corresponds to ``shrinking some components into $p_i$". In this case, $\Phi(t)$ varies smoothly on some $[t_i,t_i+\epsilon]$, so we can define $\{\gamma_t\}$ up to $t=t_i$. As $t$ further decreases from $t_i$, the reversed shrinking process just produces some connected components, so clearly $\{\gamma_t\}$ can exist past $t_i$. Finally, the case where  $(t_i,p_i)$ corresponds to a neck-pinch surgery can be argued similarly (we can by perturbation assume $\gamma_{t_i}$ avoids $p_i$, and continue to decrease $t$).  Thus, the desired homotopy $\{\gamma_t\}_{t\in[0,T]}$ can be constructed, and so (1) is true.
    
\subsection*{Item (2)} To prove (2),  considering the homotopy  $\{\gamma_t\subset W(t)\}_{t\in[0,T]}$ of loops obtained previously, it suffices to show that if $\gamma_0$ is homologically trivial in $W(0)$ then $\gamma_T$ is homologically trivial in $W(T)$. Suppose $\gamma_0$ bounds a 2-chain in $W(0)$. We are going to show that $\gamma_t$ bounds some 2-chain $ C_t$ for each $t$. Note $C_t$ need not vary continuously in $t$.
        
        When we increase $t$ from $0$ to $T$, if $t$ is not equal to any of the  $t_i$, then as before, near $t$, $\Phi(\cdot)$ is moving by isotopy, so it is possible to construct the desired homotopy of 2-chains. So let us assume $(t_i,p_i)$ is a singularity. Now, we fix a continuous choice of inside region and outside region for $\Phi(t)$, for all $t$. Then there are four cases regarding $(t_i,p_i)$:
        
\begin{enumerate}[label=(\alph*)]
    \item $(t_i,p_i)$ is a neck-pinch point, such the solid cylinder region lies in the inside region.
    \item $(t_i,p_i)$ corresponds to shrinking some component of the inside region into the point $p_i$, at time $t_i$.
    \item $(t_i,p_i)$ is a neck-pinch point, such the solid cylinder region lies in the outside region.
    \item $(t_i,p_i)$ corresponds to shrinking some component of the outside region into the point $p_i$, at time $t_i$.
\end{enumerate}
Without loss of generality let us assume every $\gamma_t$ lies in the inside region. For the general case, we just consider the inside and outside components of $\gamma_t$ separately, and argue similarly.

In each of these four cases, let $[s_1,t_i]\x U$ be a spacetime neighborhood  within which the pinch-off process takes the form as described above. We can assume that we already have the family $\{ C_t\}$ defined  for $t$ up to $s_1$. Our goal is to extend it pass the time $t_i$.

\subsection*{Case (b)} 
By choosing the spacetime neighborhood small enough, we can assume that $[s_1,t_i]\x  U$ does not intersect $\gamma_t$  for every $t\in [s_1,T]$. Hence,    by the definition of case (b), {\it $C_{s_1}\cap U$ is a union of  $2$-cycles}. Removing these 2-cycles from $C_{s_1}$, and we can extend the family  onto the  time interval $[s_1,t_i]$, and after that.
        
\subsection*{Case (a)} From our  construction of the  family $\{\gamma_t\}_{t\in[0,T]}$, we may, by taking the surgery region $U$ to be sufficiently small, assume that $\gamma_{s_1}$ avoids $U$, even though $ C_{s_1}$ could intersect $U$. Now, the boundary of the cylinder $\Phi(s_1)\cap U$ consists of two loops. Pick one such loop, and   denote by $D$ the  disc that it  bounds in the sphere $\partial U$. When we intersect the  two chain $ C_{s_1}$ with $D$, we can assume that the cross section consists of finitely many loops $c_1,\cdots,c_m$, with each $c_j$  bounding a disc $d_j$ within $D$. By gluing two copies of each $d_j$ to $ C_{s_1}$, and slightly opening up a gap between the two $d_j$, one can obtain a 2-chain bounded by $\gamma_{s_1}$ that avoids $D$. By deforming this 2-chain, we can further assume that it avoids the whole ball $U$. Now,  let $C_{s_1}$  denote  this new  2-chain instead, replacing the old one. Then, we can extend the family $\{C_t\}$ pass the surgery time $t_i$. 

\subsection*{Case (c)} It is clear from the definition of Case (c) that we can construct the desired family $\{C_t\}$ pass the time $t_i$. Intuitively, this is because the neck-pinch is ``outward", so that the inside region is  gaining ground. 

\subsection*{Case (d)} Note  $C_{s_1}$ may intersect $U$. We perform surgery for $C_{s_1}$ along every loop component of $C_{s_1}\cap \partial U$, so that   it avoids the sphere $\partial U$. After discarding everything inside $U$,  we still call the new 2-chain $C_{s_1}$, replacing the old one. Now we can construct the desired family $\{C_t\}$ pass the time $t_i$.
    
        \subsection*{Item (3)} Let us just do the case for $\ins(\Phi(t))$, as the case for $\out(\Phi(t))$ is similar. Suppose we have loops $\gamma^1_T,\cdots,\gamma^k_T$ in $\ins(\Phi(T))$ such that the elements they induce in $H_1(\ins(\Phi(T));\Z)$ are linearly independent. From the proof of (1), we can construct for each $j=1,\cdots,k$ a homotopy $\{\gamma^j_t\subset \ins(\Phi(t))\}_{t\in[0,T]}$ of loops. To prove (3), it suffices to show that $\gamma^1_0,\cdots,\gamma^k_0$ are also linear independent in $H_1(\ins(\Phi(0));\Z)$. Suppose by contradiction that $\gamma^1_0+\cdots+\gamma^k_0$  bounds some 2-chain $C_0\subset \ins(\Phi(0))$. Following the proof of (2), we can similarly construct  for each $t$ a 2-chain $C_t\subset\ins(\Phi(t))$ bounded by $\gamma^1_t+\cdots+\gamma^k_t$. Putting $t=T$, contradiction arises. Thus (3) is true too.

\section{Topology of genus one cap}\label{sect:genus1caps} 

    The goal of this section is to prove Theorem~\ref{thm:trivialInFirsthomo}. 
    
    As explained at the end of \S \ref{sect:mainProof}, it suffices to show that for each genus $1$ cap $C\subset\dmn(\Xi)$, the map 
    \[
        i_*:H_1(C;\Z_2)\to H_1(\dmn(\Xi);\Z_2)
    \]
    induced by the inclusion $i:C\hookrightarrow\dmn(\Xi)$ is trivial. 
    
    In particular, for any fixed loop $c \subset C$, we aim to show that 
    \[
        [i(c)] = 0 \in H_1(\dmn(\Xi);\Z_2)
    \] For simplicity, we may just view $c$ as a subset of $\dmn(\Xi)$, we will show 
    \begin{equation}\label{eq:c=0}
         [c] = 0 \in H_1(\dmn(\Xi);\Z_2)\,.
    \end{equation}
    Note that, by the definition of a genus $1$ cap, the image
    \begin{equation}\label{eq:nearATorus}
        [\Xi](c)\subset  \bB^\bF_{d_0}([T])
    \end{equation}
    for some embedded minimal torus $T$. 

    The proof of (\ref{eq:c=0}) will consist of three main steps. First, we need to understand better the topology of the members of $\Xi|_{c}$.  In the second step, using family $\tilde\Xi$ obtained in Proposition~\ref{prop:XiPsiHomotopic}, we, in a certain sense, homotope the family $\Xi|_c$ back to some subfamily $\Phi|_{c_0}$ of $\Phi$, for some loop $c_0\subset Y$, while keeping track of the topology of the members of this homotopy. We will show that one can assume  $\Phi|_{c_0}$ to consist entirely Clifford tori (if $\Psi$ is viewed in $\mathbb S^3$), and to prove (\ref{eq:c=0}) it suffices to show that $c_0$ is homologically trivial in $Y$. In the third step, we prove that $c_0$ is homologically trivial  by studying the family $\Psi$.

\subsection{The family $\Xi|_c$}
    In Proposition~\ref{prop:XiPsiHomotopic}, we obtained a mapping cone $\tilde W$ containing $Y$ and $\dmn(\Xi)$, a map $\tilde F:[0,1]\x\tilde W\to\tilde W$, and a Simon-Smith family $\tilde\Xi:\tilde W\to \cS^*(S^3)$ of genus $\leq 1$. Then we can define a map 
    \[
        G_1:[0,1]\x S^1\to \tilde W
    \] 
    such that $G_1(1,\cdot)$ parametrizes a loop $c\subset\tilde W$, and for every $t \in [0,1]$ and every $\theta \in S^1$, 
    \[
        G_1(t,\theta) := \tilde F(t,G_1(1,\theta))\,.
    \] 
    By Proposition~\ref{prop:XiPsiHomotopic}, for each $\theta\in S^1$, the family $\{\tilde\Xi\circ G_1(t,\theta)\}_{t \in [0, 1]}$ is a pinch-off process.

    To prove Theorem~\ref{thm:trivialInFirsthomo}, we need more information about $\Xi(x)$ for $x \in C$ beyond the fact that $C$ is a genus $1$ cap. More precisely, we need that the two regions, inside and outside, enclosed by each surface resemble solid tori.

    \begin{prop}\label{prop:tauSigma}
        In a Riemannian $3$-sphere $(S^3,g)$, let $\Sigma$ be a smooth, embedded torus. There exists a constant $\tau(\Sigma)>0$ with the following property.
    
        Let $S\in \cS(S^3)$. By definition~\ref{def:punctate_surf}, let $\Omega,\Omega'\subset S^3$ be the two open subsets of $S^3 \setminus S$, whose reduced boundaries are both $S \setminus S_\text{iso}$. Assume that:
        \begin{enumerate}[label=\normalfont(\roman*)]
            \item $H_1(\Omega;\Z)$ and $H_1(\Omega';\Z)$ are both either $0$ or $\Z$.
            \item $\bF([S],[\Sigma])<\tau(\Sigma)$.
        \end{enumerate}
        Then, 
        \begin{enumerate}[label=\normalfont(\arabic*)]
            \item\label{item:homologyIsZ} $H_1(\Omega;\Z)\cong H_1(\Omega';\Z)\cong \Z.$
            \item\label{item:volClose} Fixing an inside direction for $\Sigma$ pointing towards $\ins(\Sigma)$, there is a unique choice of an inside direction for $S$ such that
            \[
                \vol(\ins(S)\triangle\ins(\Sigma)),\vol(\out(S)\triangle\out(\Sigma))<\tau(\Sigma)\,.
            \]
            Here, $\{\ins(S), \out(S)\} = \{\Omega, \Omega'\}$ and $\{\ins(\Sigma), \out(\Sigma)\}$ is the connected components of $S^3 \setminus \Sigma$.
            \item\label{item:uniqueGen} Fixing a generator $a_0$ for $H_1(\ins(\Sigma);\Z)$, there is a unique generator $a_1$ for $H_1(\ins(S);\Z)$ such that there exists a loop $\gamma_\ins\subset \ins(S)\cap \ins(\Sigma)$ that induces both $a_0$ and $a_1$.  And an analogous statement holds with $\out(\cdot)$ in place of $\ins(\cdot)$.
        \end{enumerate}
    \end{prop}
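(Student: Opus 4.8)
The plan is to prove all three items simultaneously by exploiting the $\bF$-metric convergence $[S]\to[\Sigma]$ together with the varifold/current compactness built into the $\bF$-metric. The first step is to fix the geometric data attached to $\Sigma$: a tubular neighborhood $T_\rho(\Sigma)$ on which the nearest-point projection $\pi:T_\rho(\Sigma)\to\Sigma$ is a smooth submersion, and the two components $\ins(\Sigma),\out(\Sigma)$ of $S^3\setminus\Sigma$, which are (open) solid tori since every embedded torus in $S^3$ bounds a handlebody on at least one side and, by a standard Alexander-type argument in $S^3$, actually on both sides exactly when $\Sigma$ is unknotted — but even if $\Sigma$ is knotted, the statement only asserts $H_1\cong\Z$, which holds for the exterior of any knotted solid-torus complement as well (the complement of a solid torus in $S^3$ has $H_1\cong\Z$ by Alexander duality, regardless of knotting). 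So $H_1(\ins(\Sigma);\Z)\cong H_1(\out(\Sigma);\Z)\cong\Z$ is automatic; fix generators $a_0\in H_1(\ins(\Sigma);\Z)$ and the analogous $b_0\in H_1(\out(\Sigma);\Z)$.

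Next I would choose $\tau(\Sigma)>0$ small enough to force, for any $S\in\cS(S^3)$ with $\bF([S],[\Sigma])<\tau(\Sigma)$, the containment $\spt[S]\subset T_\rho(\Sigma)$ together with $\bM([S])$ close to $\cH^2(\Sigma)$. This is where the hypothesis $\bF<\tau$ (a metric that dominates both the flat distance on currents and the varifold $\bF$-distance) is used: by the lower-semicontinuity and almost-minimizing structure one gets that $[S]$ restricted to each small ``vertical slab'' $\pi^{-1}(U)$, $U\subset\Sigma$ a disk, is homologous to the corresponding slab of $[\Sigma]$, hence $[S]-[\Sigma]=\partial Q$ with $\bM(Q)$ small and $\spt Q\subset T_\rho(\Sigma)$. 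From $[S]-[\Sigma]=\partial Q$ one reads off item (\ref{item:volClose}): writing $\ins(\Sigma)$ as a $3$-current $[\![\ins(\Sigma)]\!]$ with $\partial[\![\ins(\Sigma)]\!]=[\Sigma]$ (appropriately oriented), the current $[\![\ins(\Sigma)]\!]+Q$ has boundary $[S]$, and since $[S]=\partial[\![\Omega]\!]=\partial[\![\Omega']\!]$ with $\Omega,\Omega'$ the two complementary regions, constancy forces $[\![\Omega]\!]$ to equal either $[\![\ins(\Sigma)]\!]+Q$ or its complement; the mass bound $\bM(Q)<\tau$ then gives $\vol(\ins(S)\triangle\ins(\Sigma))<C\tau$, and after shrinking $\tau$, $<\tau$. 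This also picks out the \emph{unique} inside direction of $S$: it is the one for which $[\![\ins(S)]\!]$ is $\cF$-close to $[\![\ins(\Sigma)]\!]$ rather than to its complement, and the two complementary volumes $\vol(\ins(\Sigma)),\vol(\out(\Sigma))$ being bounded below and $\tau$ small ensures these two options are genuinely separated.

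For items (\ref{item:homologyIsZ}) and (\ref{item:uniqueGen}) I would argue as follows. Since $\spt[S]\subset T_\rho(\Sigma)$, the region $\out(\ins(S)\cup T_\rho(\Sigma))$ — that is, the part of $S^3$ genuinely far from $\Sigma$ — lies entirely inside $\ins(S)$ or entirely inside $\out(S)$, and similarly for $\Sigma$; combined with the volume-closeness from item (\ref{item:volClose}) this shows $\ins(\Sigma)\setminus T_\rho(\Sigma)\subset\ins(S)$ and $\out(\Sigma)\setminus T_\rho(\Sigma)\subset\out(S)$ (for the correctly chosen orientation of $S$). Now take a core loop $\gamma_\ins$ of the solid region $\ins(\Sigma)$ representing $a_0$, pushed into $\ins(\Sigma)\setminus T_\rho(\Sigma)$; it then lies in $\ins(S)\cap\ins(\Sigma)$. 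The inclusion $\ins(\Sigma)\setminus T_\rho(\Sigma)\hookrightarrow\ins(\Sigma)$ is a deformation retract (for $\rho$ small), so $[\gamma_\ins]=a_0$ still; this loop defines an element $a_1:=[\gamma_\ins]\in H_1(\ins(S);\Z)$. It remains to check $a_1$ generates $H_1(\ins(S);\Z)$ and is the unique such generator arising this way. For generation: using the hypothesis that $H_1(\ins(S);\Z)$ is $0$ or $\Z$, a Mayer–Vietoris argument over the cover $\ins(S)=(\ins(S)\cap\ins(\Sigma))\cup(\ins(S)\cap T_\rho(\Sigma)^{\circ})$, where the first piece deformation-retracts onto $\ins(\Sigma)$ minus a collar (so $H_1\cong\Z\cdot a_0$) and the intersection piece has $H_1$ generated by the meridian/longitude of the thin tube $T_\rho(\Sigma)$, rules out $H_1(\ins(S);\Z)=0$ (it would force $\ins(S)$ to be a homology ball whose boundary $S$ has genus $1$, contradicting the half-lives-half-dies / long-exact-sequence constraint $\operatorname{rank} H_1(\partial)=2\operatorname{rank}(\ker(H_1(\partial)\to H_1(\ins(S))))$) and identifies the generator with the image of $a_0$. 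The same argument on the outside gives (\ref{item:homologyIsZ}) and the $\out(\cdot)$-part of (\ref{item:uniqueGen}). Uniqueness of $a_1$ is then immediate: two loops in $\ins(S)\cap\ins(\Sigma)$ inducing $a_0$ differ by an element of the kernel of $H_1(\ins(S)\cap\ins(\Sigma);\Z)\to H_1(\ins(\Sigma);\Z)$, and by the Mayer–Vietoris computation that kernel maps to $0$ in $H_1(\ins(S);\Z)$ as well.

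\textbf{Main obstacle.} The delicate point is the passage from the \emph{metric} hypothesis $\bF([S],[\Sigma])<\tau$ to the \emph{topological} conclusion $\spt[S]\subset T_\rho(\Sigma)$ and the ``slab-by-slab'' homology comparison — a priori $S\in\cS(S^3)$ is only a generalized surface with possible singularities, so I cannot invoke graphicality of $S$ over $\Sigma$. Instead I will have to rely on the constancy theorem and mass estimates: $\bF$-closeness forces $\bM([S])\le\cH^2(\Sigma)+\tau$, while the mod-$2$ (or integral) constraint $[S]-[\Sigma]=\partial Q$ with $\bM(Q)\le\tau$ combined with the monotonicity-type lower density bound for $[S]$ on $S^3\setminus T_\rho(\Sigma)$ forces $\bM([S]\llcorner(S^3\setminus T_\rho(\Sigma)))$ to be small, hence (again by a density lower bound, since $S$ is a generalized surface with $\cH^2(S)>0$ on any component) that $S$ has no component outside $T_\rho(\Sigma)$. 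Making this last density lower bound precise for elements of $\cS(S^3)$ — which need not be minimal — is the step that needs the most care; it will use that $S\setminus S_\text{iso}$ is the reduced boundary of an open set, so a relative-isoperimetric inequality in small balls gives the needed lower area bound. Everything after that is bookkeeping with Mayer–Vietoris and the half-lives-half-dies lemma.
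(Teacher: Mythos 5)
Your proposal has two genuine gaps, and on top of those it takes a different route than the paper.

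The first and most serious gap is the claim that $\bF([S],[\Sigma])<\tau$ forces $\spt[S]\subset T_\rho(\Sigma)$. This is false: take $S=\Sigma\cup\partial B_\varepsilon(p)$ with $p$ far from $\Sigma$ and $\varepsilon$ tiny. This is a generalized surface in $\cS(S^3)$, and $\bF([S],[\Sigma])=O(\varepsilon^2)$ can be made arbitrarily small, yet $\spt[S]$ contains $p$. Generalized surfaces in the sense of Definition \ref{def:generalized_surf} carry no stationarity and hence no density lower bound, so the ``relative isoperimetric inequality in small balls'' you invoke gives you nothing: the isoperimetric inequality bounds enclosed volume \emph{above} in terms of area, not area below. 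Once $\spt[S]\subset T_\rho(\Sigma)$ is gone, the statement ``$\ins(\Sigma)\setminus T_\rho(\Sigma)\subset\ins(S)$'' and the Mayer--Vietoris cover you build from it both collapse (the piece $\ins(S)\cap\ins(\Sigma)$ need not deformation retract onto a core solid torus, since $S$ may have bubbles deep inside $\ins(\Sigma)$). The second gap is the half-lives-half-dies step: you invoke ``$\operatorname{rank}H_1(\partial)=2\operatorname{rank}\ker(\cdots)$'' to rule out $H_1(\ins(S);\Z)=0$, but this requires $\ins(S)$ to be a compact $3$-manifold with boundary of known genus, and in the proposition you are given neither -- $S$ may be singular and disconnected, and the genus of $S$ is not part of the hypotheses (the only topological input is that $H_1(\Omega;\Z)$, $H_1(\Omega';\Z)$ are each $0$ or $\Z$).

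The paper avoids both problems with a shorter linking-number argument that is robust to stray small components. It first uses $\cF$-closeness to get the volume closeness (item \eqref{item:volClose}), then uses the coarea formula together with $\bF$-closeness to find a level torus $S_{t_0}$ of the distance function to $\Sigma$ on which $S\cap S_{t_0}$ is confined to balls of small total radius. One can then draw loops $\gamma_\ins\subset S_{t_0}\cap\ins(S)$ and $\gamma_\out\subset S_{-t_0}\cap\out(S)$ that avoid these balls, represent $a_0$ and $b_0$, and have $\link(\gamma_\ins,\gamma_\out)=1$. The nonzero linking number immediately forces both classes to be nontrivial, and since the groups are assumed to be $0$ or $\Z$, the classes are generators; items \eqref{item:homologyIsZ} and \eqref{item:uniqueGen} both drop out. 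No global containment $\spt S\subset T_\rho(\Sigma)$ is needed, and no assumption on the genus of $S$ is used. I would recommend replacing the Mayer--Vietoris / half-lives machinery with this linking-number argument; the coarea-plus-slicing step (``find $t_0$ with $S\cap S_{t_0}$ small'') plays the role your $\spt[S]\subset T_\rho(\Sigma)$ claim was meant to play, and it is actually provable.
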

    \begin{rmk}
        The condition ``$H_1(\Omega;\Z)$ and $H_1(\Omega';\Z)$ are both either $0$ or $\Z$" is different from requiring ``$\fg(S)=0$ or 1". Indeed, consider this counterexample: Take three 2-spheres, identify all of their north poles as a point, and similarly, identify all of their three south poles as another point.
    \end{rmk}
        
    \begin{proof}
        First, choose a constant $d_1>0$ small enough such that $\Sigma$ has a tubular neighborhood of width $2d_1$. Fix an inside direction for $\Sigma$. For each $t\in (0,d_1)$, the boundary of the  $t$-neighborhood of $\Sigma$ consists of two smooth surfaces, $S_t$ and $S_{-t}$: We assume that $S_t$ lies outside $\Sigma$, while $S_{-t}$ inside. We denote $S_0:=\Sigma$. Now, for sufficiently small $\tau(\Sigma)$, from the fact that 
        \[
            \cF([S],[\Sigma])\leq  \bF([S],[\Sigma])<\tau(\Sigma)\,,
        \]
        item~\ref{item:volClose} follows. 

        Fix a small $\delta = \delta(\Sigma) >0$ to be determined by $\Sigma$ only. By $ \bF(|S|,|\Sigma|)<\tau(\Sigma)$ and by assuming $\tau(\Sigma)$ sufficiently small (depending on $\delta$ and $\Sigma$), we can assert that:
        By the coarea formula and Sard's theorem, there exists a  $t_0\in (-d_1,-d_1/2)$  such that:
        \begin{itemize}
            \item The intersection $S\cap S_{t_0}$ is transverse and is a finite union of  smooth loops.
            \item These loops are contained in finitely many balls $\{B_{r_i}(q_i)\}_i$ with
            \[
                \sum_i r_i<\delta\,.
            \]
        \end{itemize}
        
        Since $\Sigma$ is a torus, we can fix generating elements 
        \[
            a_0\in H_1(\ins(\Sigma);\Z)\cong \Z\,, \quad b_0\in H_1(\out(\Sigma);\Z)\cong \Z\,,
        \]
        such that they have linking number 1 in $S^3$. Then, by the bullet points above and $\cF([S],[\Sigma])<\tau(\Sigma)$, we can choose a sufficiently small $\delta = \delta(\Sigma)$, and thus $\tau(\Sigma)$, such that there exists a loop $\gamma_\ins\subset S_{t_0}\cap \ins(S)$ which avoids all loops in $S_{t_0} \cap S$ and satisfies $[\gamma_\ins]=a_0 \in H_1(\ins(\Sigma);\Z)$. Arguing similarly, we can choose a loop $\gamma_\out\subset \out(\Sigma)\cap \out(S)$ such that $[\gamma_\out]=b_0 \in H_1(\out(\Sigma);\Z)$. Noting the linking number $\link(\gamma_\ins,\gamma_\out)$ is $1$, we obtain item~\ref{item:homologyIsZ} of the proposition. Now, set 
        \[
            a_1:=[\gamma_\ins]\in H_1(\ins(S);\Z)\,, \quad b_1:=[\gamma_\out]\in H_1(\out(S);\Z)\,.
        \] 
        From $\link(\gamma_\ins,\gamma_\out)= 1$, item \ref{item:uniqueGen} also follows easily. 
    \end{proof}

    Recall that by assumption, $(S^3, g')$ has only finitely many embedded minimal tori. For each embedded minimal torus $\Sigma$, the above proposition gives a constant $\tau(\Sigma)>0$.
    
    Without loss of generality, we now assume that the constant $d_0>0$, chosen in the proof of Theorem~\ref{thm:main} (in the paragraph right before \S \ref{subsubsect:firstStage}) is smaller than $\tau(\Sigma)$ for every embedded minimal torus $\Sigma$. Then for each $x\in c\;(\subset \dmn(\Xi))$, we have the following:
    \begin{itemize}
        \item $\bF([\Xi(x)],[T])<\tau(T)$. In particular, $\Xi(x) \in \GS(M)$.
        \item Since the family $ \tilde\Xi\circ \tilde F(\cdot,x )$ is a pinch-off process (by Proposition~\ref{prop:XiPsiHomotopic}), by considering the shape of the members in original family $\Psi$ (see \S \ref{subsubsect:PsiIsSimonSmith}), we know by Proposition~\ref{prop:analogousToMCF}, that 
        $\Xi(x)$ satisfies the assumptions on $S$ in the first bullet point of Proposition~\ref{prop:tauSigma}.
    \end{itemize}
    As a result, we can apply Proposition~\ref{prop:tauSigma} to  $\tilde\Xi\circ G_1(1,\theta)$ for each $\theta\in S^1$ (as $G_1(1,\cdot)$ is a parametrization for the loop $c$). Thus, fixing an inside direction for the torus $T$, we have:
    \begin{enumerate}
        \item For each $\theta\in S^1$,
        \begin{equation}\label{eq:firstHomoZ}
            H_1(\ins(\tilde\Xi\circ G_1(1,\theta));\Z)\cong H_1(\out(\tilde\Xi\circ G_1(1,\theta));\Z)\cong \Z\,.
        \end{equation}
        \item For each $\theta$, there is a unique way to choose an inside direction for $\tilde\Xi\circ G_1(1,\theta)$ such that 
        \[
            \vol(\ins(T)\triangle\ins(\tilde\Xi\circ G_1(1,\theta))),\vol(\out(T)\triangle\out(\tilde\Xi\circ G_1(1,\theta)))<\tau(T)\,,
        \]
        and this choice is continuous in $\theta$.
        \item\label{item:uniqueGenXiG1} Let us fix a generator $a_0$ for $H_1(\ins(T);\Z)$. For each $\theta$, there is a unique generator $a(\theta)$ of $H_1(\ins(\tilde\Xi\circ G_1(1,\theta));\Z)$ such that there exists a loop $\gamma_a(\theta) \subset \ins(\tilde\Xi\circ G_1(1,\theta))\cap\ins(T)$ that induces both $a_0$ and $a(\theta)$. \item\label{item:uniqueGenXiG1outside} Let us fix a generator $b_0$ for $H_1(\out(T);\Z)$. For each $\theta$, there is a unique generator $b(\theta)$ of $H_1(\out(\tilde\Xi\circ G_1(1,\theta));\Z)$ such that there exists a loop $\gamma_b(\theta) \subset \out(\tilde\Xi\circ G_1(1,\theta))\cap\out(T)$ that induces both $b_0$ and $b(\theta)$.
    \end{enumerate}

    \subsection{Relating $\Xi|_c$ to $\Psi$}
    Then, using the item (2) above above, we can continuously and uniquely choose an inside direction for $\tilde\Xi\circ G_1(t,\theta)$ for each $(t,\theta)\in [0,1]\x S^1$. Moreover, from the fact that $\tilde\Xi\circ G_1(\cdot,\theta)$ is a pinch-off process for each $\theta$, we know by Proposition~\ref{prop:analogousToMCF}   that 
    \[
        \rank(H_1(\ins(\tilde\Xi\circ G_1(t,\theta));\Z))\,,\quad \rank(H_1(\out(\tilde\Xi\circ G_1(t,\theta));\Z))\,,
    \]
    are both non-increasing in $t$. Thus, together with (\ref{eq:firstHomoZ}) and the description of the family $\Psi$ given in \S \ref{subsubsect:PsiIsSimonSmith}, we obtain:
    \begin{itemize}
        \item For each $(t,\theta) \in [0, 1] \times S^1$, 
        \[
            H_1(\ins(\tilde\Xi\circ G_1(t,\theta));\Z)\cong H_1(\out(\tilde\Xi\circ G_1(t,\theta));\Z)\cong \Z\,.
        \]
        (Recall that the first homology, in $\Z$-coefficients, of an open subset of $S^3$ has no torsion.)
        \item The family $\tilde\Xi\circ G_1(0,\cdot)$, which can be viewed as the same as $\Psi|_{\tilde F(0,c)}$, consists entirely of smooth tori, where $\tilde F(0,c)$ is a loop in $Y$. 
    \end{itemize}

    Now, we prove a lemma stating that all smooth tori in $\Psi$ can be deformation retracted to Clifford tori.
    
    \begin{lem}
        If we view $\Psi$ as a Simon-Smith family in the unit $3$-sphere $\mathbb S^3$, and let $Z_0, Z_1\subset Y$ denote the sets of parameters corresponding to Clifford tori and smooth tori, respectively, then $Z_1$ can be deformation retracted onto $Z_0$.
    \end{lem}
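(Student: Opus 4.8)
The plan is to identify explicitly, within the family $\Psi$ viewed in $\mathbb{S}^3$, precisely which parameters in $Y$ produce smooth tori, and then exhibit a geometric deformation retraction onto the subset producing Clifford tori. Recall from the construction in \S\ref{sect:spaceClifford}--\S\ref{sect:PsiIsSimonSmith} that $Y$ is an $\mathbb{RP}^5$-bundle over $Z=\mathbb{RP}^2\times\mathbb{RP}^2 \cong \cC$, the space of unoriented Clifford tori, and that over a point $b\in Z$ corresponding to $\{\Sigma,-\Sigma\}$, the fiber $Y_b\cong\mathbb{RP}^5$ is parametrized (after closing up) by pairs $[(v,t)]$ with $v\in\overline{\mathbb{B}}^4$, $t\in[-\pi,\pi]$, and $\Psi$ takes $[(v,t)]$ to (essentially) the level surface at signed distance $t$ of $F_v(\Sigma)$, suitably blown up near $\Sigma(z)$. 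By Proposition \ref{lem:behaviourSigmavtz}, for a fixed oriented Clifford torus and $v\in\mathbb{B}^4\setminus\overline{\Omega_\varepsilon(z)}$ the surface $\Sigma(v,t)$ is a smooth torus precisely when $t$ lies strictly between the first two "pinch" times (the interval $(t_1,t_2)$ or $(t_2,t_3)$ in the notation there), and is a sphere, point, circle, or singular surface otherwise; members coming from $v\in\mathbb{S}^3\cup\overline{\Omega}_\varepsilon(z)$ are geodesic spheres (or points), hence never smooth tori. So $Z_1$ is the open subset of $Y$ lying over the "smooth torus window" in each fiber, and $Z_0$ is the zero-section $\{[(0,0)]\}$-type locus (the subbundle where $v=0$, $t=0$), which corresponds exactly to the Clifford tori themselves.

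\textbf{Key steps.} First I would make the identification of $Z_0$ and $Z_1$ precise: $Z_0$ is a section of the bundle $Y\to Z$ (indeed a copy of $Z$), namely the parameters $([(0,0)],z)/{\sim}$, and this is well-defined globally despite the orientation ambiguity because $([(0,0)],z)$ and $([(0,0)],\sigma(z))$ are identified by the $\mathbb{Z}_2$-action (\ref{eq:Z2action}) and both give the same unoriented Clifford torus. Second, I would show $Z_1$ deformation retracts fiberwise onto $Z_0$ by first retracting in the $t$-direction and then in the $v$-direction. Concretely: for a smooth torus $\Sigma(v,t,z)$ with $v\in\mathbb{B}^4\setminus\overline{\Omega}_\varepsilon(z)$ and $t$ in the smooth-torus window $(t^-(v,z),t^+(v,z))$, the window depends continuously on $(v,z)$ and contains the value "$t=$ (the distance at which we recover $F_v(\Sigma_0)$ itself)"; scaling $t$ linearly toward that central value keeps the surface a smooth torus throughout (by Proposition \ref{lem:behaviourSigmavtz}, since smoothness persists on the whole open window) and lands us on the family $\{F_v(\Sigma_0(z))\}$. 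Third, retract $v\mapsto (1-s)v$, $s\in[0,1]$: since $F_v$ is a conformal diffeomorphism for every $v\in\mathbb{B}^4$ and $F_0=\mathrm{id}$, the surfaces $F_{(1-s)v}(\Sigma_0(z))$ are all smooth tori, and at $s=1$ we reach $\Sigma_0(z)$, a point of $Z_0$. One must check these two retractions glue into a single homotopy that is continuous across $Z$ (including where $v$ approaches $\overline{\Omega}_\varepsilon(z)$ — but $Z_1$ excludes that region, so no issue), respects the $\mathbb{Z}_2$-identification (it does, because the $v$-scaling commutes with $\sigma$ and the $t$-reparametrization is equivariant under $t\mapsto -t$, $z\mapsto\sigma(z)$ after recentering), and fixes $Z_0$ pointwise (it does: $v=0,t=0$ is fixed by both stages).

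\textbf{Main obstacle.} The delicate point is continuity of the endpoints $t^\pm(v,z)$ of the smooth-torus window as $v$ ranges over $\mathbb{B}^4\setminus\overline{\Omega}_\varepsilon(z)$ and, relatedly, making sure the "central value" of $t$ (toward which we contract) stays strictly inside the window uniformly, so that the linear homotopy in $t$ never crosses a pinch time. This requires extracting from the proof of Proposition \ref{lem:behaviourSigmavtz} that the pinch times $t_i$ depend continuously on $v$ (they are determined by the max/min of the radius function $\tilde{r}(\beta)$ of the channel-surface description, which varies smoothly in $v$ off the bad circle) and that $t=0$ in the reparametrized coordinate — i.e. the value producing $F_v(\Sigma_0)$ — always lies in the open window because $F_v(\Sigma_0)$ is itself a smooth torus. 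Once this continuity is in hand, assembling the fiberwise homotopy into a global deformation retraction of $Z_1$ onto $Z_0$ is routine: cover $Z$ by trivializing neighborhoods of the bundle $Y\to Z$, perform the above construction in each, and note that the construction is canonical (it does not depend on the trivialization, only on $v$, $t$, and the oriented Clifford torus up to the $\sigma$-identification), so the local homotopies patch.
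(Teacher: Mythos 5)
Your proposal is correct and follows essentially the same two-stage fiberwise retraction as the paper: first contract the $t$-parameter linearly toward the value giving $F_v(\Sigma(z))$ itself (which is signed distance $0$, always interior to the smooth-torus window by Proposition \ref{lem:behaviourSigmavtz}), landing in the slice $\{t=0\}$, and then contract $v\mapsto(1-s)v$ to reach $Z_0=\{v=0,t=0\}$. One small simplification you could make: your stated "main obstacle" (continuity of the window endpoints $t^\pm(v,z)$) is not actually needed, because the retraction $t\mapsto(1-s)t$ is globally defined and continuous on $Y$ regardless, and the only thing to check is that it maps $Z_1$ into itself, which is automatic once you know $I_{v,z}$ is an interval containing $0$; likewise you should note that $\Omega_\varepsilon(z)\subset\{|v|>1-\varepsilon\}$, so the radial contraction $v\mapsto(1-s)v$ never enters the tubular region where the family degenerates into geodesic spheres. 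Your extra care about the $\mathbb{Z}_2$-equivariance and the $\gamma(|v|)$-reparametrization near the boundary is reasonable but not strictly necessary, since $Z_1$ avoids $|v|=1$ entirely; the paper glosses over these points in the same way.
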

    \begin{proof}
        Proposition~\ref{lem:behaviourSigmavtz}  tells us that for each $v\in {\mathbb B^4}$ and $z\in\tilde \cC$, we have an interval $I_{v,z}\subset (-\pi,\pi)$ such that for each $t\in I_{v,z}$,  
        \[
            \Sigma(v,t,z):=\partial\{x\in S^3:d(v,z)(x)<t\}
        \]
        is a smooth torus.  Thus, by shrinking the interval $(-\pi,\pi)$ into the point $\{0\}$, we can deform $I_{v,z}$ into the point $\{0\}$ for each $v $ and $z$.  This  gives us a deformation retraction of $Z_1$ onto a subset $Z_2$ of $Y$. Note that 
        \[
            Z_2\subset \{[(v,t,z)]\in Y:v\in {\mathbb B^4},t=0\}.
        \]
        Thus, $Z_2$ can be deformation retracted onto the set 
        \[
            Z_0=\{[(v,t,z)]\in Y:v=0,t=0\}\,,
        \]
        which parametrizes the set of Clifford tori under $\Psi$ in $\mathbb S^3$.
    \end{proof}
    Thus, by the above lemma, there exists a homotopy 
    \[
        G_2:[0,1]\x S^1 \to Y \subset \tilde W
    \]
    such that $G_2(0,\cdot)$ lies in $Z_0$ and $G_2(1,\cdot )=G_1(0,\cdot)$. Let $c_0$ denote the loop in $Z_0$ parametrized by $G_2(0,\cdot)$. By concatenating the two homotopies $G_1$ and $G_2$, we  obtain a homotopy 
    \[
        G_3:[0,1]\x S^1\to \tilde W
    \]
    such that:
    \begin{itemize}
        \item $G_3(0,\cdot)$ parametrizes $c_0\subset Z_0\subset \tilde  W$.
        \item $G_3(1,\cdot)$ parametrizes $c\subset \dmn(\Xi)\subset \tilde  W$. 
        \item For each $\theta \in S^1$, the family $\tilde\Xi\circ G_3(\cdot,\theta)$ is a pinch-off process.
        \item For each $(t,\theta) \in [0, 1] \times S^1$, 
        \[
            H_1(\ins(\tilde\Xi\circ G_3(t,\theta));\Z)\cong H_1(\out(\tilde\Xi\circ G_3(t,\theta));\Z)\cong \Z\,.
        \]
    \end{itemize}
    In particular, by the first two bullet points and the fact that $\tilde F$ induces a strong deformation retraction of $\tilde W$ onto $Y$,  to prove that $[c]=0$ in $H_1(\dmn(\Xi);\Z_2)$, it suffices to show that $[c_0]=0$ in $H_1(Z_0;\Z_2)$.

    Recall that $Z_0\cong \RP^2\x\RP^2$, so $H_1(Z_0;\Z_2) = \Z^2 \times \Z^2$ has 3 non-trivial elements: $(1,0), (0,1), (1,1)$. 
    Note, on one hand,  $(1,0)$ and $(0,1)$ (but not $(1,1)$) both correspond to 1-sweepouts under $\Psi$ as we discussed in \S \ref{subsect:7sweepout}. On the other hand, $\Xi|_c$ is not a 1-sweepout as it is close to a single embedded minimal torus in the flat topology by (\ref{eq:nearATorus}), and this implies $\Psi|_{c_0}$, which is homotopic to $\Xi|_c$ in the flat topology via $\tilde\Xi\circ G_3$,  is not a 1-sweepout either. Thus, $[c_0]$ cannot be $(1,0)$ or $(0,1)$.

    It suffices rule out the possibility that $[c_0]=(1,1)$. Let us extend the inside directions we chose for the family $\tilde\Xi\circ G_1$ to the family $\tilde\Xi\circ G_3$. Note that this extension is continuous and unique. As a result, we can ``naturally identify" the groups 
    \[
        H_1(\ins(\tilde\Xi\circ G_3(t,\theta));\Z)\,,\quad(t,\theta)\in [0,1]\x S^1\,,
    \] in the following sense. Let $k\in \N$. For each $i,j=1,\cdots,k$, define the closed rectangle 
    \[
        R_{i,j}=\left[\frac {i-1}k,\frac{i}k\right]\x \left[2\pi\frac {j-1}k ,2\pi\frac{j}k \right]\subset[0,1]\x S^1\,.
    \]

    \begin{prop}\label{prop:identifyHomoGroups}
        For sufficiently large $k$, there exist for each $(i,j)$ loops $\gamma^\ins_{i,j}$ and $\gamma^\out_{i,j}$ with linking number $1$ in $S^3$
        such that the following hold: If $R_{m,n}$ is one of the rectangles that intersect $R_{i,j}$ then for every $p\in R_{m,n}$ we have:
        \begin{enumerate}[label=\normalfont(\arabic*)]
            \item $\gamma^\ins_{i,j}$ lies in $\ins(\tilde\Xi\circ G_3(p))$ and generates $H_1(\ins(\tilde\Xi\circ G_3(p));\Z)$.
            \item $\gamma^\out_{i,j}$ lies in $\out(\tilde\Xi\circ G_3(p))$ and generates $H_1(\out(\tilde\Xi\circ G_3(p));\Z)$.
            \item  $\gamma^\ins_{i,j}$ and $\gamma^\ins_{m,n}$ are homologous in $\ins(\tilde\Xi\circ G_3(p))$. 
            \item $\gamma^\out_{i,j}$ and $\gamma^\out_{m,n}$ are homologous in $\out(\tilde\Xi\circ G_3(p))$.
        \end{enumerate}
    \end{prop}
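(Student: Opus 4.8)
\textbf{Proof proposal for Proposition \ref{prop:identifyHomoGroups}.}

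The plan is to run a standard compactness–continuity argument over the compact parameter box $[0,1]\times S^1$, using the already-established facts that $H_1(\ins(\tilde\Xi\circ G_3(t,\theta));\Z)\cong\Z$ and $H_1(\out(\tilde\Xi\circ G_3(t,\theta));\Z)\cong\Z$ for all $(t,\theta)$, together with the local smoothness and varifold-continuity built into the definition of a Simon-Smith family (Definition \ref{def:genusGFamily}). First I would fix a point $p_0=(t_0,\theta_0)\in[0,1]\times S^1$. By Proposition \ref{prop:analogousToMCF}(3) and the classification of members of $\Psi$ in \S\ref{sect:PsiIsSimonSmith}, $\tilde\Xi\circ G_3(p_0)$ is a generalized surface whose inside and outside regions both have first homology $\Z$; choose generating loops $\gamma^\ins_{p_0}\subset\ins(\tilde\Xi\circ G_3(p_0))$ and $\gamma^\out_{p_0}\subset\out(\tilde\Xi\circ G_3(p_0))$ with linking number $1$ in $S^3$ (that they can be taken with linking number $1$ follows exactly as in the proof of Proposition \ref{prop:tauSigma}, since a single generator on each side of an embedded torus-like surface pairs to $\pm 1$ and we may orient to get $+1$). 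Using the continuity conditions (\ref{item:closedFamily}) and (\ref{item:SimonSmithFamilyLocalSmooth}) of Definition \ref{def:genusGFamily} — namely that nearby members are contained in any fixed neighborhood of $\tilde\Xi\circ G_3(p_0)$ and converge smoothly away from the finitely many singular points — one shows there is an open neighborhood $O_{p_0}$ of $p_0$ such that for all $p\in O_{p_0}$ the loops $\gamma^\ins_{p_0}$ and $\gamma^\out_{p_0}$ still lie in $\ins(\tilde\Xi\circ G_3(p))$ and $\out(\tilde\Xi\circ G_3(p))$ respectively, and still generate the respective $\Z$'s (a loop disjoint from the surface persists under small perturbation, and generating a torsion-free rank-one group is an open condition because the linking pairing with the fixed loop on the other side is locally constant and equal to $\pm1$).

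Next I would extract a finite subcover $\{O_{p_\ell}\}$ of $[0,1]\times S^1$ and take $k$ large enough (Lebesgue number lemma) that every union $R_{i,j}\cup(\text{rectangles meeting }R_{i,j})$ lies inside some single $O_{p_\ell}$. Then for each $(i,j)$ set $\gamma^\ins_{i,j}:=\gamma^\ins_{p_\ell}$ and $\gamma^\out_{i,j}:=\gamma^\out_{p_\ell}$ for a chosen such $\ell$. Properties (1) and (2) are then immediate from the choice of $O_{p_\ell}$. For (3) and (4): if $R_{m,n}$ meets $R_{i,j}$ then, by the choice of $k$, there is a point $p$ lying in both neighborhoods $O_{p_{\ell(i,j)}}$ and $O_{p_{\ell(m,n)}}$ simultaneously — indeed we may take $p\in R_{m,n}$, and the union of $R_{m,n}$ with its neighbors contains $R_{i,j}$'s cluster, so refining $k$ once more we can arrange that a common Lebesgue ball covers both clusters. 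At such a $p$, both $\gamma^\ins_{i,j}$ and $\gamma^\ins_{m,n}$ lie in $\ins(\tilde\Xi\circ G_3(p))\cong\Z$ and both are generators; since a torsion-free rank-one group has exactly two generators $\pm a$, and since their linking numbers with the common outside loop $\gamma^\out$ at $p$ are both $+1$ (by our orientation convention and the fact that the linking pairing $H_1(\ins)\times H_1(\out)\to\Z$ is a perfect pairing sending generators to $\pm1$), they must represent the same class, hence are homologous in $\ins(\tilde\Xi\circ G_3(p))$. The same argument with $\ins$ and $\out$ swapped gives (4).

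The main obstacle I anticipate is bookkeeping the orientation/sign: "generates $H_1\cong\Z$" allows two choices differing by sign, and to get the homology statements (3)–(4) rather than just "$\gamma_{i,j}$ and $\gamma_{m,n}$ generate the same group" one must pin down the sign consistently across the whole box. The clean way to do this is to carry along the already-chosen continuous inside-direction on the whole family $\tilde\Xi\circ G_3$ (constructed just before the proposition) and to use the linking pairing with a continuously-chosen outside loop as the sign-normalization: at any single parameter $p$, the linking form $H_1(\ins(\tilde\Xi\circ G_3(p));\Z)\times H_1(\out(\tilde\Xi\circ G_3(p));\Z)\to\Z$ is a perfect pairing of $\Z$ with $\Z$ (this is where the assumption that both sides have $H_1=\Z$, rather than just genus $\le 1$, is used, exactly as in Proposition \ref{prop:tauSigma}), so requiring $\link(\gamma^\ins_{i,j},\gamma^\out_{i,j})=+1$ removes the sign ambiguity. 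A secondary technical point is verifying that a generating loop truly persists in the open region $\ins(\tilde\Xi\circ G_3(p))$ across the singular set: since the singular points $P(p)$ are finite in number and vary only by appearing/disappearing, and $\gamma^\ins$ can be taken in the smooth part at positive distance from $P(p_0)$, shrinking $O_{p_0}$ keeps $\gamma^\ins$ uniformly away from the singular locus, so the argument goes through. Once the signs are handled, all four assertions follow from openness of the conditions and the Lebesgue number lemma, and no further estimates are needed.
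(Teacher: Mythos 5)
Your high‑level strategy (choose generators at each parameter, use local smooth convergence of the Simon–Smith family to get persistence of each generator on an open neighborhood, pass to a finite subcover, and take $k$ large via the Lebesgue number lemma) matches the paper's setup. However, the part you yourself flag as the main obstacle — fixing the sign of the generators consistently — is not actually resolved by your proposed normalization, and this is the essential content of the proposition.

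The condition $\link(\gamma^\ins_{i,j},\gamma^\out_{i,j})=+1$ does not eliminate the sign ambiguity, because it is invariant under the simultaneous replacement $(\gamma^\ins_{i,j},\gamma^\out_{i,j})\mapsto(-\gamma^\ins_{i,j},-\gamma^\out_{i,j})$. When you then write that $\gamma^\ins_{i,j}$ and $\gamma^\ins_{m,n}$ ``both have linking number $+1$ with the common outside loop $\gamma^\out$ at $p$,'' you are implicitly assuming an outside loop that has already been chosen consistently across the two rectangles — but that choice is exactly as ambiguous as the inside one. The appeal to a ``continuously-chosen outside loop'' is circular: constructing such a coherent family of loops is precisely what the proposition asks for, and the continuous inside/outside direction constructed before the proposition only distinguishes $\ins$ from $\out$, not a generator of $H_1$ from its negative. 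In fact, a coherent choice is not automatic: the discussion immediately after the proposition shows that for $[c_0]=(1,1)$ a naive local family of generators would have to flip sign when traversing the $S^1$ direction, so one cannot hope to pin down signs by purely local considerations.

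The paper resolves this by introducing an external anchor. On the boundary row $i=k$ (i.e.\ $t=1$), the parameters lie over the original cap $c$, where $[\Xi]$ is $\bF$-close to the fixed minimal torus $T$, and the earlier consequences of Proposition~\ref{prop:tauSigma} (items~(\ref{item:uniqueGenXiG1}) and~(\ref{item:uniqueGenXiG1outside}) in the list preceding the proposition) produce, for each $\theta$, a loop $\gamma_a(\theta)$ lying simultaneously in $\ins(\tilde\Xi\circ G_3(1,\theta))$ and in $\ins(T)$, inducing a fixed generator $a_0\in H_1(\ins(T);\Z)$. This pins down the generator at each $\theta$ against the single fixed reference group $H_1(\ins(T);\Z)$ (and similarly $b_0\in H_1(\out(T);\Z)$), so the row $i=k$ is coherent across the whole $\theta$-circle without any monodromy question. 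The paper then runs a backward induction in $i$, at each step choosing the sign to match the previously chosen loop at the shared parameter, and verifies coherence of neighbors in the new row by computing linking numbers against the anchored loops from row $k$. Your proposal would become correct if you replaced the circular sign-normalization by this anchoring at $t=1$ and the subsequent propagation by induction in $t$; without it, steps (3) and (4) of the conclusion are not established.
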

    \begin{proof}
        For each $(t,\theta) \in [0, 1] \times S^1$, since
        \[
            H_1(\ins(\tilde\Xi\circ G_3(t,\theta));\Z)\cong H_1(\out(\tilde\Xi\circ G_3(t,\theta));\Z)\cong \Z\,,
        \]
        there exist $\gamma^\ins_{(t, \theta)} \subset \ins(\tilde\Xi\circ G_3(t,\theta))$ and $\gamma^\out_{(t, \theta)} \subset \out(\tilde\Xi\circ G_3(t,\theta))$ generating these two homology groups, respectively. Moreover, from the fact that $\tilde\Xi\circ G_3$ is a Simon-Smith family (Definition \ref{def:Simon_Smith_family} \ref{item:closedFamily}), there exists a neighborhood $O_{(t, \theta)} \subset [0, 1] \times S^1$ of $(t, \theta)$, such that $\gamma^\ins_{(t, \theta)} \subset \ins(\tilde\Xi\circ G_3(p))$ and generates 
        \[
            H_1(\ins(\tilde\Xi\circ G_3(p));\Z)\,,
        \]
        and $\gamma^\out_{(t, \theta)} \subset \out(\tilde\Xi\circ G_3(p))$ and generates 
        \[
            H_1(\out(\tilde\Xi\circ G_3(p));\Z)\,. 
        \]

        Clearly, $\{O_{(t, \theta)}\}_{(t, \theta) \in [0, 1] \times S^1}$ is an open covering of $[0, 1] \x S^1$. Since $[0, 1] \times S^1$ is compact, we can find a finite covering, denoted by
        \[
            \{O_l\}^t_{l = 1}\,,
        \]
        together with the corresponding loops $\{\gamma^\ins_l\}$ and $\{\gamma^\out_l\}$.
        Moreover, we can find a sufficiently large $k$ such that for each $i, j = 1, \cdots, k$, there exists some $l$, such that
        \[
            N(R_{i, j}) \subset O_l
        \]
        where $N(R_{i, j})$ is the union of $R_{i,j}$ and all the rectangles adjacent to $R_{i, j}$. 

        Let us first work on the rectangles $R_{k,1},\cdots,R_{k,k}$. 
        
        For each $R_{k, j}$, it contains some point $(1, \theta)$. Using the property (\ref{item:uniqueGenXiG1}) and (\ref{item:uniqueGenXiG1outside}) for the family $\tilde\Xi\circ G_3(1,\cdot)$, there exists a loop $\gamma_a(\theta)$ that lies in 
        \[
            \ins(\tilde\Xi\circ G_3(1,\theta))\cap\ins(T)\,.
        \]
        induces both $a_0$ and $a(\theta)$. Suppose that $R_{k, j} \subset O_l$, and then we can choose $\gamma^\ins_{k, j}$ to be either $\gamma^\ins_l$ or $-\gamma^\ins_l$ such that
        \[
            [\gamma^\ins_{k, j}] = a_0 \in H_1(\ins(T); \Z)\,,
        \]
        and thus,
        \[
            [\gamma^\ins_{k, j}] = a(\theta) \in H_1(\ins(\tilde\Xi\circ G_3(1, \theta));\Z)\,.
        \]
        Similarly, we can choose a $\gamma^\out_{k, j}$. This confirms statements (1) and (2) of the proposition for $(k, j)$. Note that the loops $\gamma^\ins_{k, j}$ and $\gamma^\out_{k, j}$ have linking number $1$, and statements (3) and (4) of the proposition follows from the fact that all $\gamma^\ins_{k, j}$ generates $a_0 \in H_1(\ins(T); \Z)$ and all $\gamma^\out_{k, j}$ generates $b_0 \in H_1(\out(T); \Z)$.

        Using backward induction, suppose that for some $i$, $\gamma^\ins_{i, j}$ and $\gamma^\out_{i, j}$ have been chosen for all $j = 1, 2, \cdots, k$. For each $(i - 1, j)$, as before, suppose that $R_{k, i} \subset O_l$, and then we can choose $\gamma^\ins_{k, j}$ to be either $\gamma^\ins_l$ or $-\gamma^\ins_l$ such that
        \[
            [\gamma^\ins_{i - 1, j}] = [\gamma^\ins_{i, j}] \in H_1(\ins(\tilde\Xi\circ G_3(i/k, 2\pi j/k));\Z)\,.
        \]
        We can similarly choose a $\gamma^\out_{k, j}$. This confirms statements (1) and (2) of the proposition for $(i - 1, j)$. 
        From the construction and the choice of $k$, that for each rectangle $R_{m,n}$ with $m \geq i - 1$, which intersects $R_{i-1,j}$,
        \[
            \link(\gamma^\ins_{m,n},\gamma^\out_{k,j})=1,\quad \link(\gamma^\out_{m,n},\gamma^\ins_{k,j})=1\,.
        \]
        This confirms statements (3) and (4) of the proposition in this case.

        In conclusion, one can find such a large $k$ and the corresponding loops.
    \end{proof}

\subsection{The family $\Phi|_{c_0}$}
    Focusing on the part in $\{0\}\x S^1\subset [0,1]\x S^1$ in Proposition~\ref{prop:identifyHomoGroups}, one obtains a map $L_0:S^1\x S^1\to S^3$ such that for each $\theta\in S^1$, the loop $L_0(\theta,\cdot) = \gamma^\ins_{0, j}$ for some $j$, which lies in $\ins(\tilde\Xi\circ G_3(0,\theta))$ and generates 
    \[
        H_1(\ins(\tilde\Xi\circ G_3(0,\theta));\Z)\cong \Z\,,
    \]
    satisfying conclusions (3) and (4) of the previous proposition.
    
    However, by the topological lemma below, this is impossible if $[c_0]=(1,1)$.

    \begin{lem}
        Let $d:[0,2\pi]\to Z_0$ with $d(0) = d(2\pi)$ parametrize a loop representing $(1,1)\in H_1(Z_0;\Z_2)$. We continuously choose an inward direction for $\Psi\circ d(\theta)$, for each $\theta\in [0,2\pi]$. Let 
        \[
            L:[0,2\pi]\x S^1\to S^3
        \] be a continuous map such that each loop $L(\theta,\cdot)$ lies in $\ins(\Psi\circ d(\theta))$ and generates 
        \[
            H_1(\ins(\Psi\circ d(\theta));\Z)\cong\Z\,.
        \]
        Then 
        $[L(0,\cdot)]$ and $[L(2\pi,\cdot)]$, which both are elements in $H_1(\ins(\Psi\circ d(0));\Z)$, differ exactly by a sign.
    \end{lem}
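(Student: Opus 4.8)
The plan is to track the monodromy of the family of solid regions $\ins(\Psi\circ d(\theta))$ along the loop $d$ and show that a loop in $Z_0$ representing $(1,1)$ induces the orientation-reversing monodromy on $H_1$ of the inside solid torus. First I would recall the explicit description of $Z_0$ from \S\ref{sect:spaceClifford}: $Z_0\cong\RP^2\times\RP^2$ arises as $S^2\times S^2/{\sim}$, and an oriented Clifford torus is determined by an ordered pair of orthogonal oriented $2$-planes $(P,Q)$ in $\R^4$, with $\ins$ being (say) the side accumulating on the equator of $P$. A loop $d$ representing $(1,1)$ lifts to a path in $S^2\times S^2$ from $(x,y)$ to $(-x,-y)$; concretely one can take the standard generator, e.g.\ $\theta\mapsto(R_\theta x_0, R'_\theta y_0)$ where $R_\theta, R'_\theta$ are rotations by angle $\theta$ in appropriate $2$-planes, so that at $\theta=2\pi$ we return to the same unoriented Clifford torus but via the identification $(x,y)\sim(-x,-y)$ (this is precisely what distinguishes $(1,1)$ from $(1,0)$ and $(0,1)$, which return via $(x,-y)$ or $(-x,y)$).

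Next I would identify the core circle of $\ins(\Psi\circ d(\theta))$ explicitly: for the Clifford torus determined by the oriented $2$-plane $P(\theta)$, the inside region deformation retracts onto the equatorial circle $P(\theta)\cap\mathbb S^3$, and $L(\theta,\cdot)$ is homotopic in $\ins$ to a $\pm1$-fold cover of this equator. The key computation is then to determine, as $\theta$ runs from $0$ to $2\pi$, how the generator of $H_1$ of this equatorial circle is transported. Since $d$ lifted to $S^2\times S^2$ ends at the \emph{antipode} in the first factor, the oriented $2$-plane $P(2\pi)$ equals $P(0)$ as an unoriented plane but carries the \emph{opposite} orientation — and the orientation of the equator $P\cap\mathbb S^3$, hence the preferred generator of its $H_1$, is exactly the orientation of $P$. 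Therefore the monodromy sends the chosen generator $a_0$ to $-a_0$. I would make this rigorous by writing the lifted path $P(\theta)$ as $\mathrm{span}(u(\theta),v(\theta))$ for an explicit oriented orthonormal frame with $(u(2\pi),v(2\pi))=(-u(0),-v(0))$, noting that the loop $\theta\mapsto$ (equator of $P(\theta)$ with its $P(\theta)$-orientation) is a continuous family of oriented circles whose oriented-homotopy class at $\theta=2\pi$ is the reverse of that at $\theta=0$, precisely because reversing both basis vectors reverses the induced orientation on the $1$-dimensional boundary-type cycle. A clean way to package this: the map $\theta\mapsto$ (oriented equator) defines a loop in the space of oriented great circles $\cong G_2^+(\R^4)$, and its class is detected by whether the lifted path in $S^2\times S^2$ (first factor) is a loop or a path to the antipode; in the $(1,1)$ case it is the latter, forcing the sign.

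The main obstacle I anticipate is bookkeeping the various sign conventions consistently: the choice of "inside" vs "outside" and which factor of $\RP^2\times\RP^2$ corresponds to $\ins$, the orientation conventions relating a $2$-plane $P$ to its equatorial circle and to the generator of $H_1(\ins)$, and making sure that the "continuously chosen inward direction" in the statement genuinely matches the one coming from the lift to $S^2\times S^2$ (this is where the hypothesis that $d$ is a \emph{loop} in $Z_0$, so the inward direction \emph{does} close up, is used — the obstruction to closing up is exactly the first Stiefel–Whitney-type class, which vanishes here because the inward direction was assumed chosen continuously around the whole loop). Once these conventions are pinned down, the argument reduces to the elementary fact that a rotation by $2\pi$ in a $2$-plane combined with the antipodal identification reverses orientation of that plane, hence the monodromy on $H_1\cong\Z$ of the core circle is multiplication by $-1$; since $L(0,\cdot)$ and $L(2\pi,\cdot)$ are the transported generators at the two (coinciding) endpoints, they differ by a sign, as claimed. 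I would also remark that the cases $[d]=(1,0)$ and $(0,1)$ are already excluded upstream (they give $1$-sweepouts), so it genuinely suffices to treat $(1,1)$, and the same frame computation shows that for those classes the monodromy would be $+1$, consistent with the dichotomy.
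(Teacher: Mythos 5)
Your high-level strategy coincides with the paper's: view $\Psi$ on the round $\mathbb S^3$ so that $\Psi\circ d$ is a loop of oriented Clifford tori, lift to a path in $S^2\times S^2\cong G_2^+(\R^4)$ from $(x,y)$ to $(-x,-y)$, identify the generator of $H_1(\ins)$ with the orientation of the equatorial great circle of the plane $P$, and conclude by showing that the two endpoints $(x,y)$ and $(-x,-y)$ represent the same $2$-plane with opposite orientations. The paper phrases this as a statement about the double cover $\{(\Omega,a_0)\}\cong G_2^+(\R^4)$ over solid tori and defers the last fact to Nurser; you phrase it via oriented equators — these are literally the same double cover, so the skeletons match.

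However, your ``rigorous'' justification of the crucial sign has a genuine gap. You assert that one can choose a lifted frame $(u(\theta),v(\theta))$ for $P(\theta)$ with $(u(2\pi),v(2\pi))=(-u(0),-v(0))$, and then claim that ``reversing both basis vectors reverses the induced orientation on the $1$-dimensional boundary-type cycle.'' That last claim is false: the change-of-basis matrix from $(u,v)$ to $(-u,-v)$ is $-I_2$, whose determinant is $+1$, so $(-u,-v)$ gives the \emph{same} orientation of the $2$-plane and of its equator (the parametrization $s\mapsto\cos s\,(-u)+\sin s\,(-v)$ is just the antipodal copy of $s\mapsto\cos s\,u+\sin s\,v$, traversed in the same sense). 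If your asserted frame behavior were correct, your own reasoning would actually produce monodromy $+1$, contradicting the lemma. The actual mechanism is different: under the explicit biholomorphism $F:\mathbb{CP}^1\times\mathbb{CP}^1\to Q_2\subset\mathbb{CP}^3$ used in \S\ref{sect:spaceClifford} (with $S^2$ identified with $\mathbb{CP}^1$ stereographically, so the antipodal map on $S^2$ is $w\mapsto -1/\bar w$), one computes $F(-x,-y)=\overline{F(x,y)}$ in $\mathbb{CP}^3$. Complex conjugation on $Q_2$ sends $u+iv\mapsto u-iv$, i.e.\ the frame transition is $(u,v)\mapsto(u,-v)$, with determinant $-1$ — \emph{this} is what reverses the orientation, not the antipodal action on a frame. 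The paper cites Nurser for exactly this computation; your proof needs to replace the incorrect frame-negation heuristic with this (or an equivalent) verification for the argument to close.

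A minor but separate slip: your explicit representative $\theta\mapsto(R_\theta x_0,R'_\theta y_0)$ with rotations by angle $\theta$ returns to $(x_0,y_0)$ at $\theta=2\pi$, not $(-x_0,-y_0)$; you want half-speed rotations so the endpoint is the antipode. And your aside that the cases $(1,0),(0,1)$ would give $+1$ monodromy isn't quite the right framing: for those classes the lift of $d$ to the double cover $G_2(\R^4)\to Z_0$ does not close up at all, so the hypotheses of the lemma (a continuous inward direction around the whole loop) simply cannot be satisfied, rather than yielding a trivial monodromy.
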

    \begin{proof}
        Let us view $\Psi$ as a Simon-Smith family in the unit $3$-sphere $\mathbb S^3$, so that each $\Psi\circ d(\theta)$ is an unoriented Clifford torus. Since $Z_0\cong \RP^2\x\RP^2$, $\pi_1(Z_0)=\Z_2\x\Z_2$, so we can also have $[d]=(1,1)$ in $\pi_1(Z_0)$. 

        Fix a pair $(x,y)\in S^2\x S^2$, such that the point $[(x,y)]$ in $S^2\x S^2/\sim$ with $(x,y)\sim (-x,-y)$  corresponds to the Clifford torus $\Psi\circ d(0)$ equipped with the chosen inside direction (recall the parametrization of the set of oriented Clifford tori described in \S ~\ref{subsect:spaceClifford}). We consider a path $\tilde d$ in $S^2\x S^2$ joining $(x,y)$ to $(-x,-y)$, which gives us the loop $d$ in $Z_0\cong \RP^2\x\RP^2$ under the projection map $S^2\x S^2\to\RP^2\x\RP^2$.

        Consider the set 
        \begin{align}\label{eq:cliffordTori}
            \{(\Omega,a_0):\;&\Omega\subset \mathbb S^3 \textrm{ is a solid torus bounded by some Clifford torus},\\
            \nonumber & a_0 \textrm{ is one of the two generators of }H_1(\Omega;\Z)\cong \Z\}.
        \end{align}
        Evidently, there is a natural topology that can equipped on this set, such that it is a double cover of the space 
        \[
            \{\Omega\subset \mathbb S^3:\Omega \textrm{ is a solid torus bounded by some Clifford torus}\}\,.
        \] 
        Note that $L$ induces a map $L'$ from $[0,2\pi]$ into   the space (\ref{eq:cliffordTori}). To prove the lemma, it suffices to show that, while the solid tori corresponding to $L'(0)$ and $L'(2\pi)$ are the same, the first homology classes of the solid torus given by $L'(0)$ and $L'(2\pi)$ differ by a sign.

        Following the discussion in \S \ref{subsect:spaceClifford}, we know that the space (\ref{eq:cliffordTori}) is homeomorphic to $ G_2^+(\R^4)$, the space of oriented 2-planes in $\R^4$. But $G_2^+(\R^4)\cong S^2\x S^2$. Thus, to prove the lemma, it suffices to show that the two oriented 2-planes corresponding to $(x,y),(-x,-y)\in S^2\x S^2$ are the same except differing by an orientation. This was already proved in~\cite[\S 3.4.3]{Nur16}.
    \end{proof}

    In conclusion, $[c_0]\ne (1,1)$ in $H_1(Y;\Z_2)$, so $[c_0]=(0, 0)$, as desired. This finishes the proof of Theorem~\ref{thm:trivialInFirsthomo}.

%\bibliographystyle{alpha}
%\bibliography{reference}
\printbibliography
\end{document}